\definecolor{myColor}{RGB}{0,0,0}
\renewcommand{\Pr}{\mathrm{pr}\, }
\DeclareMathOperator{\mytr}{tr}
\DeclareMathOperator{\mydiag}{diag}
\DeclareMathOperator{\myE}{E}
\DeclareMathOperator{\myVar}{var}
\newcommand{\Ba}{\mathbf{a}}    \newcommand{\Bb}{\mathbf{b}}                                        
                                            \newcommand{\Bx}{\mathbf{x}}
\newcommand{\BA}{\mathbf{A}}    \newcommand{\BB}{\mathbf{B}}        \newcommand{\BD}{\mathbf{D}}            \newcommand{\BG}{\mathbf{G}}        \newcommand{\BI}{\mathbf{I}}            
        \newcommand{\BO}{\mathbf{O}}    \newcommand{\BP}{\mathbf{P}}            \newcommand{\BS}{\mathbf{S}}        \newcommand{\BU}{\mathbf{U}}    \newcommand{\BV}{\mathbf{V}}        \newcommand{\BX}{\mathbf{X}}
\newcommand{\BY}{\mathbf{Y}}        
\newcommand{\myT}{\intercal}
\newcommand{\bfsym}[1]{\ensuremath{\boldsymbol{#1}}}
              \def\bGamma{\bfsym \Gamma}
              \def\bSigma{\bfsym \Sigma}
         \def\bLambda {\bfsym {\Lambda}}
           \def\bOmega {\bfsym {\Omega}}
 \def\bxi{\boldsymbol {\xi}}
 \def\bPsi{\boldsymbol {\Psi}}
 \def\bzeta{\boldsymbol {\zeta}}
 \def\bZeta{\boldsymbol {\zeta}}
\theoremstyle{plain}
\newtheorem{theorem}{Theorem}
\newtheorem{corollary}{Corollary}
\newtheorem{lemma}{Lemma}
\newtheorem{assumption}{Assumption}
\theoremstyle{definition}
\newtheorem{remark}{Remark}
\theoremstyle{remark}
\begin{document}

\title{
An approximate randomization test for high-dimensional two-sample Behrens-Fisher problem under arbitrary covariances
}

\author{Rui Wang}
\author{Wangli Xu\thanks{Corresponding author\\Email address: wlxu@ruc.edu.cn}}
\affil{Center for Applied Statistics and School of Statistics, Renmin University of China, Beijing 100872, China
}

\maketitle

\begin{abstract}
    This paper is concerned with the problem of comparing the population means of two groups of independent observations.
    An approximate randomization test procedure based on the test statistic of \cite{Chen2010ATwo} is proposed.
    The asymptotic behavior of the test statistic as well as the randomized statistic is studied under weak conditions.
    In our theoretical framework, 
    observations are not assumed to be identically distributed even within groups.
    No condition on the eigenstructure of the covariance matrices is imposed.
    And the sample sizes of the two groups are allowed to be unbalanced.
    Under general conditions,
    all possible asymptotic distributions of the test statistic are obtained.
    We derive the asymptotic level and local power of the approximate randomization test procedure.
    Our theoretical results show that the proposed test procedure
    can adapt to all possible asymptotic distributions of the test statistic and always has correct test level asymptotically.
    Also, the proposed test procedure has good power behavior.
    Our numerical experiments 
    show that the proposed test procedure has favorable performance compared with several alternative test procedures.
\end{abstract}

\noindent {\it Key words}: 
Behrens-Fisher problem;
High-dimensional data;
Randomization test;
Lindeberg principle.

\section{Introduction}
Two-sample mean testing is a fundamental problem in statistics with an enormous range of applications.
In modern statistical applications,
high-dimensional data, where the data dimension may be much larger than the sample size, is ubiquitous.
However, most classical two-sample mean tests are designed for low-dimensional data,
and
may not be feasible, or may have suboptimal power, for high-dimensional data; see, e.g., \cite{Bai1996Effect}.
In recent years, the study of high-dimensional two-sample mean tests has attracted increasing attention.

Suppose that $X_{k,i}$, $i = 1, \ldots, n_k$, $k=1,2$, are independent $p$-dimensional random vectors with $\myE (X_{k,i}) = \mu_k $, $\myVar (X_{k,i}) = \bSigma_{k,i}$.
The hypothesis of interest is
\begin{align}\label{hypothesis}
    \mathcal H_0 :  \mu_1 = \mu_2
    \quad \text{v.s.} \quad
    \mathcal H_1 :  \mu_1 \neq \mu_2.
\end{align}
Denote by
$\bar \bSigma_k = n_k^{-1} \sum_{i=1}^{n_k} \bSigma_{k,i}$ the average covariance matrix within group $k$, $k = 1,2$.
Most existing methods on two-sample mean tests assumed that the observations within groups are identically distributed.
In this case,
$\bSigma_{k,i} = \bar \bSigma_{k}$, $i = 1, \ldots, n_k$, $k = 1, 2$,
and the considered testing problem reduces to the well-known Behrens-Fisher problem.
In this paper, we consider the more general setting where the observations are allowed to have different distributions even within groups.
Also, we consider the case where the data is possibly high-dimensional, that is, the data dimension $p$ may be much larger than the sample size $n_k$, $k = 1, 2$.

Let $\bar X_k = n_k^{-1} \sum_{i=1}^{n_k}X_{k,i}$ and $\BS_k = (n_k - 1)^{-1} \sum_{i=1}^{n_k} (X_{k,i} - \bar X_k)(X_{k,i} - \bar X_k)^\myT $ denote the sample mean vector and the sample covariance matrix of group $k$, respectively, 
$k = 1,2$.
Denote $n = n_1 + n_2$.
A classical test statistic for hypothesis \eqref{hypothesis} is Hotelling's $T^2$ statistic, defined as
\begin{align*}
    \frac{n_1 n_2}{n}  (\bar X_1 - \bar X_2)^\myT 
    \BS^{-1}
    (\bar X_1 - \bar X_2),
\end{align*}
where $\BS = ( n -2)^{-1} \{ (n_1 - 1) \BS_1 + (n_2 - 1) \BS_2 \}$ is the pooled sample covariance matrix.
When $p > n - 2$, the matrix $\BS$ is not invertible, and consequently Hotelling's $T^2$ statistic is not well-defined.
In a seminal work, \cite{Bai1996Effect} 
proposed the test statistic 
\begin{align*}
    T_{\mathrm{BS}} (\BX_1, \BX_2) = \|\bar X_1 - \bar X_2 \|^2 
    {- \frac{n}{n_1 n_2} \mytr( \BS )}
    ,
\end{align*}
where $\BX_k = (X_{k,1}, \ldots, X_{k,n_k})^\myT$ is the data matrix of group $k$, $k = 1,2$.
The test statistic $T_{\mathrm{BS}}(\BX_1, \BX_2)$ is well-defined for arbitrary $p$.
\cite{Bai1996Effect} assumed that the observations within groups are identically distributed and $\bar \bSigma_1 = \bar \bSigma_2$. 
The main term of $T_{\mathrm{BS}} (\BX_1, \BX_2)$ is $\|\bar X_1 - \bar X_2\|^2$.
\cite{Chen2010ATwo} removed terms $X_{k,i}^\myT X_{k,i}$ from $\|\bar X_1 - \bar X_2\|^2$ and proposed the test statistic
\begin{align*}
    T_{\text{CQ}} (\BX_1, \BX_2) =
    \sum_{k=1}^2
    \frac{2
    \sum_{i=1}^{n_k}
    \sum_{j=i+1}^{n_k}
    X_{k,i}^\myT X_{k,j}
    }{n_k (n_k -1)}
    -
    \frac{2
\sum_{ i=1}^{n_1}
    \sum_{ j = 1}^{n_2}
    X_{1,i}^\myT X_{2,j}
    }{n_1 n_2}
    .
\end{align*}
The test method of \cite{Chen2010ATwo} can be applied when $\bar \bSigma_1 \neq \bar \bSigma_2$.
Both \cite{Bai1996Effect} and \cite{Chen2010ATwo} used the asymptotical normality of the statistics to determine the critical value of the test.
However, the asymptotical normality only holds for a restricted class of covariance structures.
For example, \cite{Chen2010ATwo} assumed that
\begin{align}\label{eq:condition_chen}
\mytr(\bar \bSigma_k^4 )
=o\left[[\mytr \{(\bar \bSigma_1 + \bar \bSigma_2)^2\}]^2\right]
,\quad
k=1,2
.
\end{align}
However, 
\eqref{eq:condition_chen} may not hold when $\bar \bSigma_k$ has a few eigenvalues significantly larger than the others, excluding some important senarios in practice. 
For example, \eqref{eq:condition_chen} is often violated when the variables are affected by some common factors; see, e.g., \cite{Fan2021Robust} and the references therein.
If the condition \eqref{eq:condition_chen} is violated, the test procedures of \cite{Bai1996Effect} and \cite{Chen2010ATwo} may have incorrect test level; see, e.g., \cite{Wang2019A}.
For years,
how to construct test procedures that are valid under general covariances has been an important open problem in the field of high-dimensional hypothesis testing;
see the review paper of \cite{Zhidong2015AReview}.

Intuitively, one may resort to the bootstrap method to control the test level under general covariances.
Surprisingly, 
as shown by our numerical experiments, the empirical bootstrap method does not work well for $T_{\mathrm{CQ}}(\BX_1, \BX_2)$.
Also,
the wild bootstrap method, which is popular in high-dimensional statistics, does not work well either.
We will give heuristic arguments to understand this phenomenon in Section \ref{sec:test_procedure}.
Recently, several methods were proposed to give a better control of the test level of $T_{\mathrm{CQ}}(\BX_1, \BX_2)$ and related test statistics.
In the setting of $\bar \bSigma_1 = \bar \bSigma_2$,
\cite{Zhang2019ASimple} considered the statistic $\|\bar X_1 - \bar X_2\|^2$
and proposed to use the Welch-Satterthwaite $\chi^2$-approximation to determine the critical value.
Subsequently,
\cite{Zhang2021Two-sample} extended the test procedure of  \cite{Zhang2019ASimple} to the two-sample Behrens-Fisher problem,
{and \cite{Zhang2020AFurtherStudy} considered a $\chi^2$-approximation of $T_{\mathrm{CQ}}(\BX_1, \BX_2)$.}
However, the test methods of \cite{Zhang2019ASimple}, \cite{Zhang2021Two-sample} and \cite{Zhang2020AFurtherStudy} still require certain conditions on the eigenstructure of $\bar \bSigma_{k}$, $k = 1, 2$.
In another work,
\cite{Wu2018HypothesisTesting} investigated the general distributional behavior of $\|\bar X_1\|^2$ for one-sample mean testing problem.
They proposed a half-sampling procedure to determine the critical value of the test statistic.
A generalization of the method of \cite{Wu2018HypothesisTesting} to the statistc $\|\bar X_1 - \bar X_2\|^2$ for the two-sample Behrens-Fisher problem when both $n_1$ and $n_2$ are even numbers was presented in Chapter 2 of \cite{Lou2020HighDimensional}.
Unfortunately, 
\cite{Wu2018HypothesisTesting} and \cite{Lou2020HighDimensional} did not include detailed proofs of their theoretical results.
Recently, \cite{Wang2019A} considered a randomization test based on the test statistic of \cite{Chen2010ATwo} for the one-sample mean testing problem.
Their randomization test has exact test level if the distributions of the observations are symmetric and has asymptotically correct level in the general setting.
Although many efforts have been made,
no existing test procedure based on
$T_{\mathrm{CQ}}(\BX_1, \BX_2)$ is valid for arbitrary $\bar \bSigma_k$ with rigorous theoretical guarantees.

The statistics $T_{\mathrm{BS}}(\BX_1, \BX_2)$ and $T_{\mathrm{CQ}}(\BX_1, \BX_2)$ are based on sum-of-squares.
{Some researchers investigated variants of sum-of-squares statistics that are scalar-invariant; see, e.g., \cite{Srivastava2008A}, \cite{SRIVASTAVA2013349} and \cite{Feng2014Two}.
These test methods also impose some nontrivial assumptions on the eigenstructure of the covariance matrices.
}
There is another line of research
initiated by \cite{Tony2013Two}
which utilizes extreme value type statistics to test hypothesis \eqref{hypothesis}.
\cite{Chang2017Sim} considered a parametric bootstrap method to determine the critical value of the extreme value type statistics.
The method of \cite{Chang2017Sim} allows for general covariance structures.
Recently, \cite{Xue2020Distribution} investigated a wild bootstrap method. 
The test procedure in \cite{Xue2020Distribution} does not require that the observations within groups are identically distributed.
The theoretical analyses in \cite{Chang2017Sim} and \cite{Xue2020Distribution} are based on recent results about high-dimensional bootstrap methods developed by \cite{Chernozhukov2013Gaussian} and \cite{Chernozhukov2017Central}.
While their theoretical framework
can be applied to extreme value type statistics,
it can not be applied to sum-of-squares type statistics like $T_{\mathrm{CQ}}(\BX_1, \BX_2)$.
Indeed, we shall see that the wild bootstrap method 
does not work well for $T_{\mathrm{CQ}}(\BX_1, \BX_2)$.

In the present paper, we propose a new test procedure based on $T_{\mathrm{CQ}}(\BX_1, \BX_2)$.
{
In the proposed test procedure,
we use a new randomization method to determine the critical value of the test statistic.
The proposed randomization method is motivated by the randomization method proposed in \cite{Fisher1935experiments}, Section 21.
While this randomization method is widely used in one-sample mean testing problem,
it can not be directly applied to two-sample Behrens-Fisher problem.
In comparison,
the proposed randomization method has satisfactory performance for two-sample Behrens-Fisher problem.
}
We investigate the asymptotic properties of the proposed test procedure
and rigorously prove that it has correct test level asymptotically under fairly weak conditions.
Compared with existing test procedures based on sum-of-squares, the present work has the following favorable features.
First, 
the proposed test procedure has correct test level asymptotically
without any restriction on $\bar \bSigma_k$, $k = 1, 2$.
As a consequence, the proposed test procedure serves as a rigorous solution to the open problem that how to construct a valid test based on $T_{\mathrm{CQ}} (\BX_1, \BX_2)$ under general covariances.
Second, 
the proposed test procedure is valid even if the observations within groups are not identically distributed.
To the best of our knowledge, the only existing method that can work in such a general setting
is the test procedure in \cite{Xue2020Distribution}.
However, the test procedure in \cite{Xue2020Distribution} is based on extreme value statistic, which is different from the present work.
Third,
our theoretical results are valid for arbitrary $n_1$ and $n_2$ as long as $\min(n_1, n_2) \to \infty$.
In comparison, all the above mentioned methods only considered the balanced data, that is, the sample sizes $n_1$ and $n_2$ are of the same order.
We also derive the asymptotic local power of the proposed method.
It shows that 
the asymptotic local power of
the proposed test procedure 
is the same as that of the oracle test procedure 
where the distribution of the test statistic is known.
A key tool for the proofs of our theoretical results is a universality property of the generalized quadratic forms.
This result may be of independent interest.
We also conduct numerical experiments to compare the proposed test procedure with some existing methods.
Our theoretical and numerical results show that the proposed test procedure has good performance in both level and power.


\section{Asymptotics of $T_{\mathrm{CQ}}(\BX_1, \BX_2)$}\label{sec:background}
In this section, we investigate the asymptotic behavior of $T_{\mathrm{CQ}}(\BX_1, \BX_2)$ under general conditions.
We begin with some notations.
For two random variables $\xi$ and $\zeta$, let $\mathcal L (\xi)$ denote the distribution of $\xi$, and $\mathcal L (\xi \mid \zeta)$ denote the conditional distribution of $\xi$ given $\zeta$.
    For two probability measures $\nu_1$ and $\nu_2$ on $\mathbb R$, let $\nu_1 - \nu_2$ be the signed measure such that for any Borel set $\mathcal A$, $(\nu_1 - \nu_2)(\mathcal A) = \nu_1(\mathcal A) - \nu_2 (\mathcal A)$.
    Let $\mathscr C_b^3(\mathbb R)$ denote the class of bounded functions with bounded and continuous derivatives up to order $3$.
    It is known that a sequence of random variables $\{\xi_n\}_{n=1}^\infty$ converges weakly to a random variable $\xi$ if and only if for every $f \in \mathscr C_b^3 (\mathbb R)$, $\myE f(\xi_n) \to \myE f(\xi)$; see, e.g., \cite{pollard1984convergence}, Chapter III, Theorem 12.
    We use this property to give a metrization of the weak convergence in $\mathbb R$.
For a function $f\in \mathscr C_b^3(\mathbb R)$, let $f^{(i)}$ denote the $i$th derivative of $f$, $i =1 , 2, 3$.
    For a finite signed measure $ \nu $ on $\mathbb R$, we define the norm $\|\nu\|_3$ as
    $
        \sup_f
            \left|
            \int_{\mathbb R^d} f(\Bx) \, \nu(\mathrm d \Bx)
            \right|
    $,
    where the supremum is taken over all 
    $f \in \mathscr C_b^3 (\mathbb R)$
    such that
    $
          \sup_{\Bx\in \mathbb R}
          |f (x)| \leq 1
    $
    and
    $
            \sup_{\Bx\in \mathbb R}
            |f^{(\ell)} (x)| \leq 1
    $, $\ell = 1, 2, 3$.
    It is straightforward to verify that $\|\cdot\|_3$ is indeed a norm for finite signed measures.
    Also, 
    a sequence of probability measures $\{\nu_n\}_{n=1}^\infty$ converges weakly to a probability measure $\nu$ if and only if $\| \nu_n - \nu \|_3 \to 0$; see, e.g., \cite{dudleyProbability}, Corollary 11.3.4.

The test procedure of \cite{Chen2010ATwo}
determines the critical value
based on the asymptotic normality of $T_{\mathrm{CQ}}(\BX_1, \BX_2)$ under the null hypothesis.
Define $ Y_{k, i} = X_{k,i} - \mu_k$, $i = 1, \ldots, n_k$, $k = 1, 2$.
Then $\myE (Y_{k,i}) = \mathbf{0}_p$.
Under the null hypothesis, we have
    $T_{\text{CQ}} (\BX_1, \BX_2) =
    T_{\text{CQ}} (\BY_1, \BY_2)
    $
where $\BY_k = (Y_{k,1}, \ldots, Y_{k, n_k})^\myT$, $k = 1,2$.
Denote by $\sigma_{T,n}^2$ the variance of $T_{\text{CQ}}(\BY_1, \BY_2)$.
Under certain conditions, \cite{Chen2010ATwo} proved that $T_{\mathrm{CQ}} (\BY_1, \BY_2) / \sigma_{T,n}$ converges weakly to $\mathcal N (0, 1)$.
They reject the null hypothesis if $T_{\mathrm{CQ}} (\BX_1, \BX_2) / \hat \sigma_{T,n}$ is greater than the $1-\alpha$ quantile of $\mathcal N( 0, 1 )$ where $\hat \sigma_{T,n}$ is an estimate of $\sigma_{T,n}$ and $ \alpha \in (0, 1)$ is the test level.
However, we shall see that normal distribution is just one of the possible asymptotic distributions of $T_{\mathrm{CQ}}(\BY_1, \BY_2)$ in the general setting.

Now we list the main conditions imposed by \cite{Chen2010ATwo} to prove the asymptotic normality of $T_{\mathrm{CQ}}(\BY_1, \BY_2)$.
First, \cite{Chen2010ATwo} imposed the condition \eqref{eq:condition_chen} on the eigenstructure of $\bar \bSigma_k$, $k =1, 2$.
Second, \cite{Chen2010ATwo} assumed that as $n \to \infty$,
\begin{align}\label{chen:cond}
     {n_1}/{n} \to b \in (0, 1).
\end{align}
That is, \cite{Chen2010ATwo} assumed the sample sizes in two groups are balanced.
This condition is commonly adopted by existing test procedures for hypothesis \eqref{hypothesis}.
Third, \cite{Chen2010ATwo} assumed the general multivariate model
\begin{align}\label{chen:cond1}
    Y_{k,i} = \bGamma_k Z_{k,i}, \quad i = 1, \ldots, n_k, \quad k = 1,2,
\end{align}
where $\bGamma_k$ is a $p \times m$ matrix for some $m \geq p$, $k =1, 2$, and $\{Z_{k,i}\}_{i=1}^{n_k}$ are $m$-dimensional independent and identically distributed random vectors such that 
$\myE(Z_{k,i}) = \mathbf 0_m $,
$\myVar(Z_{k,i}) = \BI_m$,
and the elements of $Z_{k,i} = (z_{k,i,1}, \ldots, z_{k,i,m})^\myT$ satisfy
$\myE (z_{k,i,j}^4) =3 + \Delta < \infty$,
and $\myE (z_{k,i,\ell_1}^{\alpha_1} z_{k,i,\ell_2}^{\alpha_2} \cdots z_{k,i,\ell_q}^{\alpha_q}) = \myE (z_{k,i,\ell_1}^{\alpha_1}) \myE( z_{k,i,\ell_2}^{\alpha_2}) \cdots \myE( z_{k,i,\ell_q}^{\alpha_q})$
for a positive integer $q$ such that $\sum_{\ell = 1}^q \alpha_{\ell}
\leq 8$ and distinct $\ell_1, \ldots, \ell_q \in \{1, \ldots, m\}$.
We note that the general multivariate model \eqref{chen:cond1} assumes that the observations within group $k$ are identically distributed, $k = 1, 2$.
Also, the $m$ elements $z_{k,i,1}, \ldots, z_{k,i,m}$ of $Z_{k,i}$ have finite moments up to order $8$ and behave as if they are independent.

As we have noted, the condition \eqref{eq:condition_chen} can be violated when the variables are affected by some common factors.
The condition \eqref{chen:cond} is assumed by most existing test methods.
However, this condition is not reasonable if the sample sizes are unbalanced.
The general multivariate model
\eqref{chen:cond1} is commonly adopted by many high-dimensional test methods; see, e.g., \cite{Chen2010ATwo}, \cite{Zhang2019ASimple} and \cite{Zhang2021Two-sample}.
However, the conditions on $Z_{k,i}$ may be difficult to verify
{and are not generally satisfied by the elliptical symmetric distributions}.
Also, the model \eqref{chen:cond1} is not valid if the observations within groups are not identically distributed.
Thus, we would like to investigate the asymptotic behavior of $T_{\mathrm{CQ}}(\BX_1, \BX_2)$
beyond the conditions \eqref{eq:condition_chen}, \eqref{chen:cond} and \eqref{chen:cond1}.
We consider the asymptotic setting where $n \to \infty$ and
all quantities except for absolute constants are indexed by $n$, a subscript we often suppress.
We make the following assumption on $n_1$ and $n_2$.
\begin{assumption}\label{assumption:n}
    Suppose as $n \to \infty$, $\min(n_1, n_2) \to \infty$.
\end{assumption}
Assumption \ref{assumption:n} only requires that both $n_1$ and $n_2$ tend to infinity, which allows for the unbalanced sample sizes.
This relaxes the condition \eqref{chen:cond}.
We make the following assumption on the distributions of $Y_{k,i}$, $i = 1, \ldots, n_k$, $k =1, 2$.
\begin{assumption}\label{assumption:wangwang}
Assume there exists an absolute constant $\tau \geq 3$ such that for any $p\times p$ positive semi-definite matrix $\BB$,
\begin{align*}
    \myE \big\{ (Y_{k,i}^\myT \BB Y_{k,i})^2 \big\}
    \leq 
    \tau 
     \big\{ \myE (Y_{k,i}^\myT \BB Y_{k,i}) \big\}^2
     < \infty
    ,
    \quad
 i = 1, \ldots, n_k,
 \quad
 k =1, 2
    .
\end{align*}
\end{assumption}
Intuitively,
Assumption \ref{assumption:wangwang} requires that the fourth moments of $Y_{k,i}$ are of the same order as the squared second moments of $Y_{k,i}$.
We shall see that this assumption is fairly weak.
However, 
the above inequality
is required to be satisfied for all positive semi-definite matrix $\BB$, which may not be straightforward to check in some cases.
The following lemma gives a sufficient condition of
Assumption \ref{assumption:wangwang}.

\begin{lemma}\label{lemma:11_30}
    Suppose 
    $Y_{k,i} = \bGamma_{k,i} Z_{k,i}$, $i = 1, \ldots, n_k$, $k = 1, 2$,
    where $\bGamma_{k,i}$ is an arbitrary $p \times m_{k,i}$ matrix and $m_{k,i}$ is an arbitrary positive integer.
    Suppose
    $\myE(Z_{k,i}) = \mathbf 0_{m_{k,i}} $,
    $\myVar(Z_{k,i}) = \BI_{m_{k,i}}$,
    and the elements of $Z_{k,i} = (z_{k,i,1}, \ldots, z_{k,i,m_{k,i}})^\myT$ satisfy
$\myE (z_{k,i,j}^4) \leq C < \infty$ where $C$ is an absolute constant.
Suppose for any distinct $\ell_1, \ell_2, \ell_3, \ell_4 \in \{1, \ldots, m_{k,i}\}$,
\begin{align}\label{eq:11_30a}
    &
 \myE (
z_{k,i,\ell_1}
z_{k,i,\ell_2} z_{k,i,\ell_3} z_{k,i,\ell_4}) = 0 
,
\quad
 \myE (
z_{k,i,\ell_1}
z_{k,i,\ell_2}
z_{k,i,\ell_3}^2
) = 0,
\quad
 \myE (
z_{k,i,\ell_1}
z_{k,i,\ell_2}^3
) = 0
.
\end{align}
Then Assumption \ref{assumption:wangwang} holds with $\tau = 3C$.
\end{lemma}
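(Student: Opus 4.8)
The plan is to fix the indices $k$ and $i$ and, suppressing them, write $Y = \bGamma Z$ with $Z = (z_1, \ldots, z_m)^\myT$. Given a positive semi-definite $\BB$, set $\BA = \bGamma^\myT \BB \bGamma$; then $\BA$ is symmetric and positive semi-definite, and $Y^\myT \BB Y = Z^\myT \BA Z$. So the whole statement reduces to showing $\myE\{(Z^\myT \BA Z)^2\} \le 3C\{\myE(Z^\myT \BA Z)\}^2$. First I would record the trivial identity $\myE(Z^\myT \BA Z) = \sum_{j,l} A_{jl}\,\myE(z_j z_l) = \mytr(\BA)$, which follows from $\myVar(Z) = \BI_m$, so that the right-hand target is $3C(\mytr\BA)^2$.

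The core computation is the expansion
\[
\myE\{(Z^\myT\BA Z)^2\} = \sum_{j,l,s,t} A_{jl}A_{st}\,\myE(z_j z_l z_s z_t).
\]
Here the three identities in \eqref{eq:11_30a}, together with $\myE(z_j) = 0$ and $\myE(z_j^2) = 1$, force $\myE(z_j z_l z_s z_t)$ to vanish unless every value in the multiset $\{j,l,s,t\}$ occurs an even number of times: the displayed identities kill, respectively, the all-distinct case, the one-pair case $\{a,a,b,c\}$, and the three-and-one case $\{a,a,a,b\}$. The surviving contributions come from the two-pairs tuples $\{a,a,b,b\}$ with $a\ne b$ and the all-equal tuples $\{a,a,a,a\}$. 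Sorting the two-pairs tuples according to the three ways of pairing up the four positions and using the symmetry $A_{ab}=A_{ba}$, I would arrive at
\[
\myE\{(Z^\myT\BA Z)^2\} = \sum_a A_{aa}^2\,\myE(z_a^4) + \sum_{a\ne b} A_{aa}A_{bb}\,\myE(z_a^2 z_b^2) + 2\sum_{a\ne b} A_{ab}^2\,\myE(z_a^2 z_b^2).
\]

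Next I would bound every fourth moment by $C$: directly $\myE(z_a^4)\le C$, and by Cauchy--Schwarz $\myE(z_a^2 z_b^2)\le\{\myE(z_a^4)\myE(z_b^4)\}^{1/2}\le C$ for the cross terms. Regrouping the diagonal and off-diagonal sums via $\sum_a A_{aa}^2 + \sum_{a\ne b}A_{aa}A_{bb} = (\mytr\BA)^2$ and $\sum_{a\ne b}A_{ab}^2 = \mytr(\BA^2) - \sum_a A_{aa}^2$ gives
\[
\myE\{(Z^\myT\BA Z)^2\} \le C\Big[(\mytr\BA)^2 + 2\,\mytr(\BA^2) - 2\textstyle\sum_a A_{aa}^2\Big] \le C\big[(\mytr\BA)^2 + 2\,\mytr(\BA^2)\big].
\]
The proof then closes by invoking positive semi-definiteness: $\mytr(\BA^2) = \sum_i \lambda_i^2 \le (\sum_i\lambda_i)^2 = (\mytr\BA)^2$ for the nonnegative eigenvalues $\lambda_i$ of $\BA$, yielding $\myE\{(Z^\myT\BA Z)^2\}\le 3C(\mytr\BA)^2 = 3C\{\myE(Z^\myT\BA Z)\}^2$. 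Since $\myE(z_a^4)\ge(\myE z_a^2)^2 = 1$ forces $C\ge1$, we also get $\tau=3C\ge 3$, as required.

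The moment bookkeeping is routine; the one place needing genuine care is the final step, where positive semi-definiteness is used twice — once to ensure $A_{aa}\ge0$ (so the diagonal cross terms assemble into $(\mytr\BA)^2$ and the discarded $-2\sum_a A_{aa}^2$ carries the favorable sign) and once for the eigenvalue inequality $\mytr(\BA^2)\le(\mytr\BA)^2$. It is precisely the interplay of these two facts that produces the sharp constant $3C$; without positive semi-definiteness of $\BB$ (hence of $\BA$) the argument would not deliver this bound.
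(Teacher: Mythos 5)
Your proposal is correct and takes essentially the same route as the paper's own proof: both reduce to $\BA=\bGamma_{k,i}^\myT\BB\bGamma_{k,i}$, use the conditions in \eqref{eq:11_30a} to annihilate all odd-multiplicity index tuples in the expansion of $\myE\{(Z_{k,i}^\myT\BA Z_{k,i})^2\}$, bound the surviving fourth moments by $C$ via Cauchy--Schwarz, and invoke positive semi-definiteness exactly twice (for $A_{aa}\geq 0$ and for $\mytr(\BA^2)\leq\{\mytr(\BA)\}^2$) to assemble the constant $3C$. The only cosmetic difference is organizational: the paper writes the quadratic form as (off-diagonal part $+$ diagonal part) and squares, with the cross term vanishing, whereas you expand the full fourth-moment sum and regroup by tuple type --- the ingredients, the constants, and the final bound are identical.
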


It can be seen that the conditions of Lemma \ref{lemma:11_30} is strictly weaker than the multivariate model \eqref{chen:cond1}.
In fact, Lemma \ref{lemma:11_30} does not require $m_{k,i} \geq p$, nor does it require the finiteness of $8$th moments of $z_{k,i,j}$.
Also, the moment conditions in \eqref{eq:11_30a} are much weaker than that required by the multivariate model \eqref{chen:cond1}.
In addition to the multivariate model \eqref{chen:cond1},
the conditions of Lemma \ref{lemma:11_30} also allow for an important class of elliptical symmetric distributions.
In fact, if $Y_{k,i}$ has an elliptical symmetric distribution,
then $Y_{k,i}$ can be written as $Y_{k,i} = \eta_{k,i} \bGamma_{k,i} U_{k,i}$, 
where $\bGamma_{k,i}$ is a $p \times m_{k,i}$ matrix,
$U_{k,i}$ is a random vector distributed uniformly on the unit sphere in  $\mathbb R^{m_{k,i}}$, and $\eta_{k,i}$ is a nonnegative random variable which is independent of $U_{k,i}$; see, e.g., \cite{Fang1990Symmetric}, Theorem 2.5.
Suppose there is an absolute constant $C$ such that $\myE(\eta_{k,i}^4) \leq C \{\myE(\eta_{k,i}^2)\}^2 < \infty$.
Then from the symmetric property of $U_{k,i}$ and the independence of $\eta_{k,i}$ and $U_{k,i}$,
the conditions of Lemma \ref{lemma:11_30} hold.
In comparison, the multivariate model \eqref{chen:cond1} does not allow for elliptical symmetric distributions in general.


Most existing test methods for the hypothesis \eqref{hypothesis} assume that the observations within groups are identically distributed.
In this case, Assumptions \ref{assumption:n} and \ref{assumption:wangwang} are all we need,
and we completely avoid an assumption on the eigenstucture of $\bar \bSigma_k$ like \eqref{eq:condition_chen}.
In the general setting, $\bSigma_{k,i}$ may be different within groups, and we make the following assumption to avoid the case in which there exist certain observations with significantly larger variance than the others.
\begin{assumption}\label{assumption7}
    Suppose that as $n \to \infty$,
    \begin{align*}
        \frac{1}{n_k^2}
        \sum_{i=1}^{n_k}
        \mytr(\bSigma_{k,i}^2)
        =
        o\left\{
        \mytr
        (\bar \bSigma_k^2
        )
    \right\}
    ,
    \quad
    k = 1, 2
        .
    \end{align*}
\end{assumption} 
If the covariance matrices within groups are equal, i.e., $\bSigma_{k,i} = \bar \bSigma_k$, $i = 1, \ldots, n_k$, $k =1 ,2$,
then Assumption \ref{assumption7} 
    holds for arbitrary $\bar \bSigma_k$, $k = 1, 2$, provided $\min(n_1, n_2) \to \infty$ as $n \to \infty$.
    In this view, Assumption \ref{assumption7} imposes no restriction on $\bar \bSigma_k$, $k = 1, 2$.

    Define $\bPsi_n = 
             n_1^{-1} \bar \bSigma_1 + n_2^{-1} \bar \bSigma_2
    $.
    Let $\bxi_p$ be a $p$-dimensional standard normal random vector.
    We have the following theorem.
\begin{theorem}
    \label{thm:universality_TCQ}
    Suppose Assumptions \ref{assumption:n}, \ref{assumption:wangwang} and \ref{assumption7} hold, and $\sigma_{T,n}^2 > 0$ for all $n$.
    Then as $n \to \infty$,
    \begin{align*}
        \left\|
        \mathcal L \left\{
            \frac{T_{\mathrm{CQ}} (\BY_1, \BY_2) }{\sigma_{T,n} }
        \right\}
        -
        \mathcal L \left[
        \frac{
         \bxi_p^\myT
         \bPsi_n
         \bxi_p
            - 
            \mytr(\bPsi_n)
        }{
        \{
            2\mytr(
            \bPsi_n^2
        )
    \}^{1/2}
    }
\right]
    \right\|_3
    \to 0.
    \end{align*}
\end{theorem}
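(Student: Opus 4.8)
The plan is to treat $T_{\mathrm{CQ}}(\BY_1,\BY_2)$ as a \emph{generalized quadratic form}: it is a linear combination of inner products $Y_a^\myT Y_b$ with $a \neq b$, so no vector $Y_{k,i}$ ever appears squared and $T_{\mathrm{CQ}}$ is an affine function of each $Y_{k,i}$ when the others are held fixed. Because $T_{\mathrm{CQ}}$ has no diagonal terms, $\myE(T_{\mathrm{CQ}}^2)$ is a sum of products of two inner products whose expectation survives only when the four indices pair up, and each surviving term equals some $\mytr(\bSigma_a \bSigma_b)$; hence $\sigma_{T,n}^2$ depends on the $\bSigma_{k,i}$ only through their second moments. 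Evaluating these pairings and discarding the diagonal contributions via Assumption \ref{assumption7} gives $\sigma_{T,n}^2 = 2\mytr(\bPsi_n^2)\{1+o(1)\}$; in particular the hypothesis $\sigma_{T,n}^2 > 0$ forces $\bPsi_n \neq \mathbf 0$ (else all $\bSigma_{k,i} = \mathbf 0$ and $\sigma_{T,n}^2 = 0$), so $\mytr(\bPsi_n^2) > 0$ and the target law is well defined. I would then prove the theorem in two stages: replace the observations by Gaussian vectors with matching covariances by a Lindeberg interpolation, then identify the resulting Gaussian law.

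Stage 1 (Lindeberg replacement). Let $\tilde Y_{k,i} \sim \mathcal N(\mathbf 0_p, \bSigma_{k,i})$ be independent Gaussian vectors, mutually independent of the data, and enumerate all $n = n_1 + n_2$ vectors, swapping one original vector for its Gaussian twin at a time. Writing $(k,i)$ for the index swapped at step $t$ and fixing the remaining vectors, linearity lets me express $T_{\mathrm{CQ}}/\sigma_{T,n} = (Y_t^\myT v_t + r_t)/\sigma_{T,n}$, where the coefficient vector $v_t$ and the remainder $r_t$ are built from the other vectors and are independent of both $Y_t$ and $\tilde Y_t$. For any admissible $f \in \mathscr C_b^3(\mathbb R)$, a third-order Taylor expansion about $r_t/\sigma_{T,n}$ shows that the constant terms cancel, the first-order terms vanish since $\myE Y_t = \myE \tilde Y_t = \mathbf 0_p$, and the second-order terms agree since $\myE\{(Y_t^\myT v_t)^2 \mid \text{rest}\} = v_t^\myT \bSigma_{k,i} v_t = \myE\{(\tilde Y_t^\myT v_t)^2 \mid \text{rest}\}$. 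Thus each swap costs only a third-order remainder bounded by $\sigma_{T,n}^{-3}\myE|Y_t^\myT v_t|^3$ plus its Gaussian analogue. Here Assumption \ref{assumption:wangwang} enters with the rank-one choice $\BB = v_t v_t^\myT$: conditionally $\myE\{(Y_t^\myT v_t)^4\} \le \tau (v_t^\myT \bSigma_{k,i} v_t)^2$, whence $\myE|Y_t^\myT v_t|^3 \le \tau^{3/4}\myE\{(v_t^\myT \bSigma_{k,i} v_t)^{3/2}\}$, and the Gaussian term obeys the same bound with $\tau$ replaced by an absolute constant.

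The crux, and the main obstacle, is to show that the accumulated error $\sigma_{T,n}^{-3}\sum_t \myE\{(v_t^\myT \bSigma_{k,i} v_t)^{3/2}\}$ tends to zero. I would first note that $v_t^\myT \bSigma_{k,i} v_t$ is itself a generalized quadratic form in the remaining vectors, so a second application of Assumption \ref{assumption:wangwang} together with Jensen's inequality yields $\myE\{(v_t^\myT \bSigma_{k,i} v_t)^{3/2}\} \lesssim (\myE\{v_t^\myT \bSigma_{k,i} v_t\})^{3/2}$, a purely deterministic quantity of the form $(c\,d_i)^{3/2}$ with $d_i = \mytr(\bSigma_{k,i}\bar\bSigma_k) + (\text{cross term})$ and $\sum_i d_i = n_k \mytr(\bar\bSigma_k^2) + \cdots$. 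The naive estimate $d_i \le (\mytr \bSigma_{k,i}^2)^{1/2}(\mytr \bar\bSigma_k^2)^{1/2}$ is too lossy; instead I would use $\sum_i d_i^{3/2} \le (\max_i d_i)^{1/2}\sum_i d_i$ and control $\max_i d_i$ through $\max_i \mytr(\bSigma_{k,i}^2) \le \sum_i \mytr(\bSigma_{k,i}^2) = o\{n_k^2 \mytr(\bar\bSigma_k^2)\}$, which is precisely Assumption \ref{assumption7}. Combined with $\sigma_{T,n}^2 \asymp \mytr(\bPsi_n^2) \gtrsim n_k^{-2}\mytr(\bar\bSigma_k^2)$, this makes the per-group contribution $o(1)$ (of order $\min(n_1,n_2)^{-1/2}$ in the homogeneous case), and the cross terms are handled in the same way. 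Hence the Lindeberg error vanishes and $\|\mathcal L(T_{\mathrm{CQ}}(\BY_1,\BY_2)/\sigma_{T,n}) - \mathcal L(T_{\mathrm{CQ}}(\tilde\BY_1,\tilde\BY_2)/\sigma_{T,n})\|_3 \to 0$.

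Stage 2 (Gaussian identification and conclusion). Writing $U = n_1^{-1}\sum_i \tilde Y_{1,i} - n_2^{-1}\sum_i \tilde Y_{2,i} \sim \mathcal N(\mathbf 0_p, \bPsi_n)$, an algebraic identity gives $T_{\mathrm{CQ}}(\tilde\BY_1,\tilde\BY_2) = \|U\|^2 - \mytr(\bPsi_n) - R_n$, where the mean-zero remainder $R_n$ collects the removed diagonal pieces $n_k^{-2}\sum_i(\|\tilde Y_{k,i}\|^2 - \mytr\bSigma_{k,i})$ and the $O(n_k^{-1})$ normalization discrepancy in the within-group sums. Using $\myVar(\|\tilde Y_{k,i}\|^2) = 2\mytr(\bSigma_{k,i}^2)$ and Assumption \ref{assumption7}, one obtains $\myVar(R_n) = o\{\mytr(\bPsi_n^2)\}$. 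Since $\|U\|^2$ is distributed as $\bxi_p^\myT\bPsi_n\bxi_p$, setting $G_n = \{\bxi_p^\myT\bPsi_n\bxi_p - \mytr(\bPsi_n)\}/\{2\mytr(\bPsi_n^2)\}^{1/2}$ and recalling $\sigma_{T,n}^2 = 2\mytr(\bPsi_n^2)\{1+o(1)\}$, we have $T_{\mathrm{CQ}}(\tilde\BY_1,\tilde\BY_2)/\sigma_{T,n} = c_n G_n - R_n/\sigma_{T,n}$ with $c_n = \{2\mytr(\bPsi_n^2)\}^{1/2}/\sigma_{T,n} \to 1$ and $\myE\{(R_n/\sigma_{T,n})^2\} = \myVar(R_n)/\sigma_{T,n}^2 \to 0$. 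For admissible $f$, the elementary bound $|\myE f(c_n G_n - R_n/\sigma_{T,n}) - \myE f(G_n)| \le \myE|R_n/\sigma_{T,n}| + |c_n - 1|\,\myE|G_n|$ tends to $0$ uniformly because $\myE|G_n| \le (\myE G_n^2)^{1/2} = 1$. This controls the $\|\cdot\|_3$ distance from the target law, and the triangle inequality with Stage 1 completes the proof.
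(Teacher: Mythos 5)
Your proposal is correct and takes essentially the same route as the paper: a Lindeberg interpolation replacing each observation by a Gaussian twin with matched covariance (the paper packages this as a general universality theorem for generalized quadratic forms in its supplement, with the accumulated third-order error controlled by influence terms exactly as in your $(\max_i d_i)^{1/2}\sum_i d_i$ bound under Assumptions 2 and 3), followed by the same decomposition of the Gaussian statistic into $\|\bar Y_1^* - \bar Y_2^*\|^2 - \mytr(\bPsi_n)$ plus mean-zero diagonal and normalization remainders made negligible by Assumption 3, and the final rescaling via $\sigma_{T,n}^2 = \{1+o(1)\}\,2\mytr(\bPsi_n^2)$. The only cosmetic difference is that you inline the swap on the bilinear statistic using rank-one applications of Assumption 2 with $\BB = v_t v_t^\myT$ (plus a second application to get $\myE\{(v_t^\myT \bSigma_{k,i} v_t)^{3/2}\} \lesssim (\myE\, v_t^\myT \bSigma_{k,i} v_t)^{3/2}$), whereas the paper verifies abstract conditional moment-matching conditions and a fourth-moment ratio bound $\rho_n \le \tau^2$ for its standalone lemma.
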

Theorem \ref{thm:universality_TCQ} characterizes the general distributional behavior of $T_{\mathrm{CQ}}(\BY_1, \BY_2)$.
It implies that the distributions of 
$T_{\mathrm{CQ}}(\BY_1, \BY_2) / \sigma_{T,n}$
and
$
{
    \{\bxi_p^\myT
         \bPsi_n
         \bxi_p
            - 
            \mytr(\bPsi_n)
        \}
        }/{
        \{
            2\mytr(
            \bPsi_n^2
        )
    \}^{1/2}
    }
$
are equivalent asymptotically.
To gain further insights on the distributional behavior of $T_{\mathrm{CQ}}(\BY_1, \BY_2)$,
we would like to derive the asymptotic distributions of $T_{\mathrm{CQ}}(\BY_1, \BY_2) / \sigma_{T,n}$.
However,
Theorem \ref{thm:universality_TCQ} implies that 
$T_{\mathrm{CQ}}(\BY_1, \BY_2) / \sigma_{T,n}$
may not converge weakly in general.
Nevertheless,
$\mathcal L \{T_{\mathrm{CQ}}(\BY_1, \BY_2) / \sigma_{T,n}\}$
is uniformly tight
and
we can use Theorem \ref{thm:universality_TCQ}
to derive all possible asympotitc distributions of $T_{\mathrm{CQ}}(\BY_1, \BY_2) / \sigma_{T,n}$.

\begin{corollary}\label{corollary:the}
    Suppose the conditions of Theorem \ref{thm:universality_TCQ} hold.
    Then
    $
    \mathcal L \left\{
        {T_{\mathrm{CQ}} (\BY_1, \BY_2) }/{\sigma_{T,n} }
    \right\}
    $
    is uniformly tight and
    all possible asymptotic distributions of $
        {T_{\mathrm{CQ}} (\BY_1, \BY_2) }/{\sigma_{T,n} }
        $
        are give by
    \begin{align}\label{eq:representation}
    \mathcal L 
    \left\{
            (1 - \sum_{i=1}^{\infty} \kappa_{i}^2)^{1/2}
            \xi_0
    +
    2^{-1/2}
    \sum_{i=1}^{\infty} \kappa_i (\xi_i^2-1)
\right\}
    ,
    \end{align}
    where
$\{\xi_i \}_{i=0}^\infty$ is a sequence of independent standard normal random variables,
$\{\kappa_i\}_{i=1}^\infty$ is a sequence of positive numbers such that $\sum_{i=1}^\infty \kappa_i^2 \in [0, 1]$ and 
    $\sum_{i=1}^{\infty} \kappa_i (\xi_i^2-1)$ is the almost sure limit of $\sum_{i=1}^{r} \kappa_i (\xi_i^2-1)$ as $r \to \infty$.
\end{corollary}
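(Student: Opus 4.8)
The plan is to route everything through Theorem~\ref{thm:universality_TCQ}, which reduces the question to the single Gaussian quadratic form $W_n := \{\bxi_p^\myT \bPsi_n \bxi_p - \mytr(\bPsi_n)\}/\{2\mytr(\bPsi_n^2)\}^{1/2}$. Diagonalizing the positive semi-definite matrix $\bPsi_n$, with eigenvalues $\lambda_{n,1}\ge\lambda_{n,2}\ge\cdots\ge 0$, and setting $\kappa_{n,j} = \lambda_{n,j}/\{\sum_\ell \lambda_{n,\ell}^2\}^{1/2}$ (extended by zero for $j>p$), one obtains the explicit representation $W_n = 2^{-1/2}\sum_{j\ge1}\kappa_{n,j}(g_j^2-1)$, where $\{g_j\}$ are i.i.d.\ standard normal, $\kappa_{n,1}\ge\kappa_{n,2}\ge\cdots\ge0$ and $\sum_j \kappa_{n,j}^2 = 1$. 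Because $\|\cdot\|_3$ metrizes weak convergence, $\mathcal L\{T_{\mathrm{CQ}}(\BY_1,\BY_2)/\sigma_{T,n}\}$ and $\mathcal L(W_n)$ have exactly the same set of subsequential weak limits, so it suffices to analyze $W_n$.

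Uniform tightness is then immediate: $\myE(W_n)=0$ and $\myVar(W_n)=\sum_j\kappa_{n,j}^2=1$ for every $n$, so Chebyshev's inequality bounds $\myP(|W_n|>M)$ uniformly in $n$. Tightness transfers to $T_{\mathrm{CQ}}(\BY_1,\BY_2)/\sigma_{T,n}$ since the $\|\cdot\|_3$-distance between the two laws vanishes: approximating a tail indicator by a fixed smooth profile lying (after rescaling) in the test class shows that $\myP(|T_{\mathrm{CQ}}/\sigma_{T,n}|>M+1)\le \myP(|W_n|>M)+c_0\|\cdot\|_3$, and the finitely many early terms are handled individually.

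For the characterization I would work along subsequences. Given any subsequence, since each $\kappa_{n,i}\in[0,1]$, a diagonal extraction yields a further subsequence along which $\kappa_{n,i}\to\kappa_i$ for every $i$, with $\kappa_1\ge\kappa_2\ge\cdots\ge0$ and, by Fatou, $\sum_i\kappa_i^2\le1$. The candidate limit is $V = (1-\sum_i\kappa_i^2)^{1/2}\xi_0 + 2^{-1/2}\sum_i\kappa_i(\xi_i^2-1)$, the infinite series converging in $L^2$ and almost surely because $\sum_i\kappa_i^2<\infty$. I would establish $W_n\Rightarrow V$ by characteristic functions, writing $\psi(t;c):=\myE\exp\{it\,2^{-1/2}c(g^2-1)\}$, so that $\log\psi(t;c) = -\tfrac{itc}{\sqrt2}-\tfrac12\log(1-i\sqrt2\,tc) = -\tfrac{t^2c^2}{2}+R(t,c)$ with $|R(t,c)|\le C(t)c^3$ for $c\le 1/2$. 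Splitting $\log\phi_n(t)=\sum_{i\le r}\log\psi(t;\kappa_{n,i})+\sum_{i>r}\log\psi(t;\kappa_{n,i})$, the finite block converges termwise as $n\to\infty$, while the tail block equals $-\tfrac{t^2}{2}\sum_{i>r}\kappa_{n,i}^2+\sum_{i>r}R(t,\kappa_{n,i})$. Here the key monotonicity bound $\kappa_{n,i}\le i^{-1/2}$, forced by $\sum_j\kappa_{n,j}^2=1$ together with the ordering, guarantees $c=\kappa_{n,i}<1/2$ once $i\ge 5$ and gives $|\sum_{i>r}R(t,\kappa_{n,i})|\le C(t)\kappa_{n,r+1}\sum_{i>r}\kappa_{n,i}^2\le C(t)(r+1)^{-1/2}$ uniformly in $n$. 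Letting $n\to\infty$ and then $r\to\infty$ yields $\log\phi_n(t)\to\sum_i\log\psi(t;\kappa_i)-\tfrac{t^2}{2}(1-\sum_i\kappa_i^2)$, which is exactly $\log$ of the characteristic function of $V$; Lévy's continuity theorem then gives $W_n\Rightarrow V$. Thus every subsequential limit has the form \eqref{eq:representation}, and conversely any admissible $\{\kappa_i\}$ with $\sum_i\kappa_i^2\in[0,1]$ is realized by choosing $\bPsi_n$ (equivalently $\bar\bSigma_k$) with the corresponding eigenvalue configuration.

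The main obstacle is the uniform control of the ``small-eigenvalue'' tail when interchanging the limits $n\to\infty$ and $r\to\infty$: one must show that the aggregate of the negligible weights contributes precisely a Gaussian of variance $1-\sum_i\kappa_i^2$, with a remainder small uniformly in $n$. The estimate $\kappa_{n,i}\le i^{-1/2}$, arising from the monotone ordering and the normalization $\sum_j\kappa_{n,j}^2=1$, is the device that legitimizes this double limit; without it the tail remainder could not be bounded independently of $n$, and the Gaussian-plus-chi-square decomposition would fail to close.
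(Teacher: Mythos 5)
Your proposal is correct, and its skeleton coincides with the paper's: reduce to the Gaussian quadratic form via Theorem \ref{thm:universality_TCQ} (using that $\|\cdot\|_3$ metrizes weak convergence, so the two sequences share all subsequential limits), get tightness from zero mean and unit variance, extract a diagonal subsequence with $\kappa_{n,i}\to\kappa_i$, invoke Fatou for $\sum_i\kappa_i^2\le 1$, and close with a converse construction. Where you genuinely diverge is the engine for the key convergence step: the paper's Lemma \ref{lemma:713aa} proves $2^{-1/2}\sum_{i}\kappa_{i,n}(\zeta_i^2-1)\Rightarrow V$ by a Lindeberg-principle swap --- retaining the first $r$ chi-square terms and replacing each tail term $2^{-1/2}\kappa_{j,n}(\zeta_j^2-1)$ by a Gaussian $\kappa_{j,n}\xi_j$, with a third-order Taylor bound controlled by $\sum_{j>r}\kappa_{j,n}^3\le\kappa_{r+1,n}\sum_{j>r}\kappa_{j,n}^2$ --- whereas you compute characteristic functions directly and control the tail of $\log\phi_n$ by the same cubic estimate, sharpened to the uniform-in-$n$ bound $\kappa_{n,r+1}\le(r+1)^{-1/2}$ forced by ordering and $\sum_j\kappa_{n,j}^2=1$. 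Your CF route is arguably more self-contained (Lévy continuity replaces the smooth-test-function bookkeeping, and the tail bound needs no limit values $\kappa_i$ at all), and you correctly anticipated the branch issue by restricting the expansion to small $c$ and treating the finitely many large weights termwise; the paper's route, in exchange, stays entirely inside the $\|\cdot\|_3$ framework used throughout (reusing Lemma \ref{lemma:add_error}) and yields a quantitative bound in that norm rather than bare weak convergence, which is what Theorem \ref{thm:final_thm} later consumes. Two small points you should flesh out to match the paper's completeness: the converse direction, which you only assert, is handled there by an explicit construction ($p=n^2$, $\bPsi_n=\mydiag(\kappa_{1,n},\ldots,\kappa_{n^2,n})$ with $\kappa_{i,n}=\kappa_i$ for $i\le n$ and the remaining mass spread evenly so that $\sum_i\kappa_{i,n}^2=1$, realizable e.g.\ with equal within-group covariances so Assumption \ref{assumption7} holds automatically); and the tightness transfer you sketch via smoothed indicators is unnecessary, since $T_{\mathrm{CQ}}(\BY_1,\BY_2)/\sigma_{T,n}$ itself has zero mean and unit variance by the definition of $\sigma_{T,n}$, so Chebyshev applies to it directly.
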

\begin{remark}
    From L\'evy's equivalence theorem and three-series theorem (see, e.g., \cite{dudleyProbability}, Theorem 9.7.1 and Theorem 9.7.3), the series $\sum_{i=1}^{\infty} \kappa_i (\xi_i^2-1)$ converges almost surely and weakly.
    Hence the distribution \eqref{eq:representation} is well defined.
\end{remark}

Corollary \ref{corollary:the} gives a full characteristic of the possible asymptotic distributions of
$T_{\mathrm{CQ}}(\BY_1, \BY_2) / \sigma_{T,n}$.
In general, these possible asymptotic distributions are weighted sums of an independent normal random variable and centered $\chi^2$ random variables.
From the proof of Corollary \ref{corollary:the}, 
the parameters $\{\kappa_i\}_{i=1}^\infty$ are in fact the limit of the eigenvalues of the matrix $
\bPsi_n / \{\mytr(\bPsi_n^2) \}^{1/2}
$ along a subsequence of $\{n\}$.
If 
$\sum_{i=1}^\infty \kappa_i^2 = 0$, then \eqref{eq:representation} becomes the standard normal distribution,
and the test procedure of \cite{Chen2010ATwo} has correct level asymptotically.
On the other hand, if $\sum_{i=1}^\infty \kappa_i^2 = 1$, then \eqref{eq:representation} becomes the distribution of a weighted sum of independent centered $\chi^2$ random variables.
This case was considered in \cite{Zhang2021Two-sample}.
However, these two settings are just special cases among all possible asymptotic distributions where $\sum_{i=1}^\infty \kappa_i^2  \in [0, 1]$.
In general, the distribution \eqref{eq:representation} relies on the nuisance paramters $\{\kappa_i\}_{i=1}^\infty$.
To construct a test procedure based on the asymptotic distributions, one needs to estimate these nuisance parameters consistently.
Unfortunately, the estimation of the eigenvalues of high-dimensional covariance matrices may be a highly nontrivial
task; see, e.g., \cite{Kong2017SpectrumEstimation} and the references therein.
Hence in general,
it may not be a good choice to construct test procedures based on the asymptotic distributions.

\section{Test procedure}\label{sec:test_procedure}
An intuitive idea to control the test level of $T_{\mathrm{CQ}} (\BX_1, \BX_2)$ in the general setting is to use the bootstrap method.
Surprisingly, the empirical bootstrap method and wild bootstrap method may not work for $T_{\mathrm{CQ}}(\BX_1, \BX_2)$.
This phenomenon will be shown by our numerical experiments.
For now, we give a heuristic argument to understand this phenomenon.
First we consider the empirical bootstrap method.
Suppose the resampled observations $\{X_{k,i}^*\}_{i=1}^{n_k}$ are uniformly sampled from $\{X_{k,i} - \bar X_k\}_{i=1}^{n_k}$ with replacement, $k = 1,2$.
Denote $\BX_k^* = (X_{k,1}^*, \ldots, X_{k,n_k}^*)^\myT$, $k = 1,2$.
The empirical bootstrap method uses the conditional distribution $\mathcal L \{T_{\mathrm{CQ}} (\BX_1^*, \BX_2^*) \mid \BX_1, \BX_2 \}$ to approximate the null distribution of $T_{\mathrm{CQ}}(\BX_1, \BX_2)$.
If this bootstrap method can work, one may expect that the first two moments of
$\mathcal L \{T_{\mathrm{CQ}} (\BX_1^*, \BX_2^*) \mid \BX_1, \BX_2 \}$ 
can approximately match the first two moments of $T_{\mathrm{CQ}}(\BX_1, \BX_2)$ under the null hypothesis.
Under the null hypothesis, we have $ \myE \{T_{\mathrm{CQ}}(\BX_1, \BX_2) \} = 0$.
    Lemma \ref{lemma:712}
implies that 
under Assumptions \ref{assumption:n} and \ref{assumption7} and under the null hypothesis,
\begin{align*}
    \myVar \{T_{\mathrm{CQ}} (\BX_1, \BX_2) \}
=
    \sigma_{T,n}^2
=
\{1+o(1)\} 2   
        \mytr
        (
        \bPsi_n^2
        )
        .
\end{align*}
On the othen hand,
it is straightforward to show that
$\myE \{T_{\mathrm{CQ}} (\BX_1^*, \BX_2^*) \mid \BX_1, \BX_2 \} = 0$.
Also, under Assumptions \ref{assumption:n} and \ref{assumption7},
\begin{align*}
    \myVar \{T_{\mathrm{CQ}} (\BX_1^*, \BX_2^*) \mid \BX_1, \BX_2 \} = 
    \{1+o_p(1)\} 2 \mytr\{ ( n_1^{-1} \BS_1 + n_2^{-1} \BS_2 )^2 \}
.
\end{align*}
Unfortunately,
$
\mytr\{ ( n_1^{-1} \BS_1 + n_2^{-1} \BS_2 )^2 \}
$
is not a ratio-consistent estimator of
$
        \mytr
        (
        \bPsi_n^2
        )
        $ even in the settings where $X_{k,i}$ is normally distributed, the covariance matrices $\bSigma_{k,i}$, $i = 1, \ldots, n_k$, $k = 1, 2$, are all equal and $n_1 = n_2$; see, e.g., \cite{Bai1996Effect} and \cite{Zhou2017a}.
        Consequently, the empirical bootstrap method may not be valid for $T_{\mathrm{CQ}}(\BX_1, \BX_2)$.

We turn to the wild bootstrap method.
Recently, the wild bootstrap method has been widely used for extreme value type statistics in the high-dimensional setting; 
see, e.g., \cite{Chernozhukov2013Gaussian}, \cite{Chernozhukov2017Central},
\cite{Xue2020Distribution} and \cite{Deng2020Beyond}.
For the wild bootstrap method, the resampled observations are defined as $X_{k,i}^* = \varepsilon_{k,i}  (X_{k,i} - \bar X_k) $, $i = 1, \ldots, n_k$, $k = 1, 2$, where $\{\varepsilon_{k,i}\}$ are independent and identically distributed random variables with $\myE (\varepsilon_{k,i}) = 0$ and $\myVar (\varepsilon_{k,i}) = 1$, and are independent of the original data $\BX_1$, $\BX_2$.
We have $\myE \{ T_{\mathrm{CQ}} (\BX_1^*, \BX_2^*) \mid \BX_1, \BX_2 \} = 0$.
With some tedious but straightforward derivations, it can be seen that under Assumptions \ref{assumption:n} and \ref{assumption7},
\begin{align*}
    \myE
    \{
    \myVar(T_{\mathrm{CQ}} (\BX_1^*, \BX_2^*) \mid \BX_1, \BX_2)
\}
    =&
    \{1+o(1)\} 2 \mytr(\bPsi_n^2)
    +
    b
    ,
\end{align*}
where $b$ is the bias term and satisfies
\begin{align*}
    b=&
    \{1+o(1)\}
        \sum_{k=1}^2
        \sum_{i=1}^{n_k}
        \frac{4}{n_k^5}
        \myE \{(Y_{k,i}^\myT Y_{k,i} )^2\}
    -
    \{1+o(1)\}
    \sum_{k=1}^2
    \frac{2}{n_k^4}
        \{\mytr(\bar \bSigma_k)\}^2
        \\
    \geq&
    \{1+o(1)\}
        \sum_{k=1}^2
        \sum_{i=1}^{n_k}
        \frac{2}{n_k^5}
 \{\mytr (\bSigma_{k,i})\}^2
 .
\end{align*}
The above inequality implies that the bias term $b$ may not be negligible compared with $2\mytr(\bPsi_n^2)$.
For example, if $\bSigma_{k,i} = \BI_p$, $i = 1, \ldots, n_k$, $k = 1, 2$, then we have
$2\mytr(\bPsi_n^2) = 2 (n_1^{-1} + n_2^{-1})^2 p$
and $b \geq \{1+o(1)\} 2 (n_1^{-4} + n_2^{-4}) p^2 $.
In this case, the bias term $b$ is not negligible provided $n^2 = O(p)$.
Thus, the wild bootstrap method may not be valid for $T_{\mathrm{CQ}}(\BX_1, \BX_2)$ either.


We have seen that the methods based on asymptotic distributions and the bootstrap methods may not work well for the test statistic $T_{\mathrm{CQ}} (\BX_1, \BX_2)$.
These phenomenons imply that it is highly nontrivial to construct a valid test procedure based on $T_{\mathrm{CQ}}(\BX_1, \BX_2)$.
To construct a valid test procedure,
we resort to the idea of the randomization test, a powerful tool in statistical hypothesis testing.
The randomization test is an old idea and can at least date back to \cite{Fisher1935experiments}, Section 21;
see \cite{Hoeffding1952The},
\cite{Lehmann}, Section 15.2, \cite{Zhu2005NonparametricMonteCarlo}
and
\cite{Hemerik2017Exact}
for general frameworks and extensions of the randomization tests.
The original randomization method considered in
\cite{Fisher1935experiments}, Section 21
can be abstracted into the following general form.
Suppose $\xi_1, \ldots, \xi_n$ are independent $p$-dimensional random vectors such that $\mathcal L (\xi_i) = \mathcal L (- \xi_i)$.
Let $T(\xi_1, \ldots, \xi_n)$ be any statistics taking values in $\mathbb R$.
Suppose $\epsilon_1, \ldots, \epsilon_n$ are independent and identically distributed Rademacher random variables, i.e., $\Pr(\epsilon_i = 1) = \Pr(\epsilon_i = -1) = 1/2$, and are independent of $\xi_1, \ldots, \xi_n$.
Define the conditional cumulative distribution function $\hat F (\cdot)$ as $ \hat F(x) = \Pr\{T(\epsilon_1 \xi_1, \ldots, \epsilon_n \xi_n) \leq x \mid \xi_1, \ldots, \xi_n \}$.
Then from the theory of randomization test (see, e.g., \cite{Lehmann}, Section 15.2), 
for any $\alpha \in (0, 1)$,
\begin{align*}
    \Pr\left\{
        T (\xi_1, \ldots,  \xi_n)
        > \hat F^{-1} (1-\alpha)
    \right\}
    \leq \alpha,
\end{align*}
where 
for any right continuous cumulative distribution function $F(\cdot)$ on $\mathbb R$, 
$F^{-1}(q) = \min \{{x\in \mathbb R}: F(x) \geq q \}$ for $q \in (0,1)$.
Also, under mild conditions, the difference between the above probability and $\alpha$ is negligible.

To apply Fisher's randomization method to specific problems,
the key is to construct random variables $\xi_i$ such that $\mathcal L (\xi_i) = \mathcal L (- \xi_i)$ under the null hypothesis.
This randomization method can be directly applied to one-sample mean testing problem, as did in \cite{Wang2019A}.
However, it can not be readily applied to the testing problem \eqref{hypothesis}.
In fact, the mean vector $\mu_k$ of $X_{k,i}$ is unknown under the null hypothesis,
and consequently,
one can not expect that $\mathcal L (X_{k,i}) = \mathcal L (- X_{k, i})$ holds under the null hypothesis.
As a result, $X_{k,i}$ can not serve as $\xi_i$ in Fisher's randomization method.

We observe that the difference $X_{k,i} - X_{k,i+1}$ has zero means and hence is free of the mean vector $\mu_k$.
Also, if
$ \mathcal L (X_{k,i}) = \mathcal L (X_{k,i+1}) $,
then $\mathcal L (X_{k,i} - X_{k,i+1} ) = \mathcal L ( X_{k,i+1} - X_{k,i} ) $.
These facts 
imply that Fisher's randomization method may be applied to
the random vectors
$\tilde X_{k,i} = (X_{k,2i} -  X_{k,2i - 1}) / 2 $, $i = 1, \ldots, m_k$, $k =1, 2$ where $m_k = \lfloor n_k/2 \rfloor$, $k =1 ,2$.
Define
\begin{align*}
    T_{\mathrm{CQ}} (
    \tilde \BX_1, \tilde \BX_2
    )
    =&
    \sum_{k=1}^{2}
\frac{2
\sum_{i=1}^{m_k}
\sum_{j=i+1}^{m_k}
\tilde X_{k,i}^\myT 
\tilde X_{k,j}
}{m_k (m_k - 1)}
-
\frac{2
\sum_{i=1}^{m_1}
\sum_{j=1}^{m_2}
\tilde X_{1,i}^\myT \tilde X_{2,j}
}{m_1 m_2}
,
\end{align*}
where
$\tilde \BX_k = (\tilde X_{k,1}, \ldots, \tilde X_{k, m_k})^\myT$, $k = 1, 2$.
Let $E = (\epsilon_{1, 1}, \ldots, \epsilon_{1, m_1}, \epsilon_{2, 1}, \ldots, \epsilon_{2, m_2})^\myT$
where $\{\epsilon_{k, i}\}$
are independent and identically distributed Rademacher random variables which are independent of $\tilde \BX_1$, $\tilde \BX_2$.
Define randomized statistic 
\begin{align*}
    T_{\mathrm{CQ}} (
    E
    ;
    \tilde \BX_1, \tilde \BX_2
    )
    =&
    \sum_{k=1}^{2}
\frac{2
\sum_{i=1}^{m_k}
\sum_{j=i+1}^{m_k}
\epsilon_{k,i}
\epsilon_{k,j}
\tilde X_{k,i}^\myT 
\tilde X_{k,j}
}{m_k (m_k - 1)}
-
\frac{2
\sum_{i=1}^{m_1}
\sum_{j=1}^{m_2}
\epsilon_{1,i}
\epsilon_{2,j}
\tilde X_{1,i}^\myT \tilde X_{2,j}
}{m_1 m_2}
.
\end{align*}
Define the conditional distribution function 
$
    \hat F_{\mathrm{CQ}} (x)
    =
    \Pr
    \{
        T_{\mathrm{CQ}}(
        E;
        \tilde \BX_1, \tilde \BX_2
        )
\leq x
\mid
\tilde \BX_1, \tilde \BX_2
\}
$.
From Fisher's randomization method,
if $\mathcal L (\tilde X_{k,i}) = \mathcal L (-\tilde X_{k,i})$,
then for $\alpha \in (0,1)$,
$\Pr \{T_{\mathrm{CQ}}(\tilde \BX_1, \tilde \BX_2) > \hat F_{\mathrm{CQ}}^{-1}(1-\alpha) \} \leq \alpha$.
It can be seen that
$T_{\mathrm{CQ}}(\tilde \BX_1, \tilde \BX_{2})$
and 
$T_{\mathrm{CQ}}(\BX_1, \BX_{2})$
take a similar form.
Also,
under the null hypothesis,
$\myE \{ T_{\mathrm{CQ}}(\tilde \BX_1, \tilde \BX_{2})\}
    =
    \myE \{ T_{\mathrm{CQ}}(\BX_1, \BX_{2}) \} = 0$
    and $\myVar \{ T_{\mathrm{CQ}} (\tilde \BX_1, \tilde \BX_2) \} = \{ 1+o(1) \}  \myVar \{ T_{\mathrm{CQ}} (\BX_1, \BX_2) \} $.
    Thus, it may be expected that $\mathcal L \{ T_{\mathrm{CQ}}( \BX_1, \BX_{2} ) \} \approx \mathcal L \{ T_{\mathrm{CQ}}(\tilde \BX_1, \tilde \BX_{2}) \} $ under the null hypothesis.
On the other hand, the classical results of \cite{Hoeffding1952The} on randomization tests give the insight that
the randomness of $\hat F_{\mathrm{CQ}}^{-1}(1-\alpha)$ may be negligible for large samples.
From the above insights,
it can be expected that under the null hypothesis,
for $\alpha \in (0,1)$, 
\begin{align*}
\Pr \{
T_{\mathrm{CQ}}(\BX_1, \BX_2) > \hat F_{\mathrm{CQ}}^{-1}(1-\alpha)
\} 
    \approx
\Pr \{T_{\mathrm{CQ}}(\tilde \BX_1, \tilde \BX_2) > \hat F_{\mathrm{CQ}}^{-1}(1-\alpha)\} \approx \alpha
.
\end{align*}
Motivated by the above heuristics, we propose a new test procedure which rejects the null hypothesis if
\begin{align*}
T_{\mathrm{CQ}}(\BX_1, \BX_2) > \hat F_{\mathrm{CQ}}^{-1}(1-\alpha)
.
\end{align*}

{
While the assumption $\mathcal L (\tilde X_{k,i}) = \mathcal L (- \tilde X_{k,i})$ is used in the above heuristic arguments, 
it will not be assumed in our theoretical analysis.
This generality may not be surprising.
Indeed,
for low-dimensional testing problems, it is known that such symmetry conditions can often be relaxed for randomization tests; see, e.g., \cite{Romano1990On}, \cite{Chung2013Exact} and \cite{Canay2017Randomization}.
}
In the proposed procedure,
the conditional distribution
    $
    \mathcal L \{
T_{\mathrm{CQ}}(
E;
\tilde \BX_1, \tilde \BX_2
)
\mid \tilde \BX_1, \tilde \BX_2 \}
    $
    is used to
    approximate the null distribution of $T_{\mathrm{CQ}} (\BX_1, \BX_2)$.
    We have
    $
    \myE\{
T_{\mathrm{CQ}}(
E;
\tilde \BX_1, \tilde \BX_2
)
\mid \tilde \BX_1, \tilde \BX_2 \}
=0
    $.
From Lemma \ref{lemma:variance},
under Assumptions 
\ref{assumption:n}-\ref{assumption7}, 
we have 
\begin{align*}
    \myVar\{
T_{\mathrm{CQ}}(
E;
\tilde \BX_1, \tilde \BX_2
)
\mid \tilde \BX_1, \tilde \BX_2 \}
= \{ 1+o_P(1) \} \sigma_{T,n}^2.
\end{align*}
That is, the first two moments of
$
    \mathcal L \{
T_{\mathrm{CQ}}(
E;
\tilde \BX_1, \tilde \BX_2
)
\mid \tilde \BX_1, \tilde \BX_2 \}
$
can match those of $\mathcal L \{T_{\mathrm{CQ}}(\BX_1, \BX_2)\}$.
As we have seen, this favorable property is not shared by the empirical bootstrap method and wild bootstrap method.

We should emphasis that the proposed test procedure is only inspired by the randomization test, and is not a randomization test in itself.
As a consequence, it can not be expected that the proposed test can have an exact control of the test level.
In fact,
even if the observations are $1$-dimensional and normally distributed,
the exact control of the test level for Behrens-Fisher problem is not trivial; see \cite{Linnik1966Latest} and the references therein.
Nevertheless, we shall show that the proposed test procedure can control the test level asymptotically under Assumptions 
\ref{assumption:n}-\ref{assumption7}.

The conditional distribution
$
    \mathcal L \{
T_{\mathrm{CQ}}(
E;
\tilde \BX_1, \tilde \BX_2
)
\mid \tilde \BX_1, \tilde \BX_2 \}
$
is a discrete distribution uniformly distributed on $2^{m_1 + m_2}$ values.
Hence it is not feasible to compute the exact quantile of $\hat F_{\mathrm{CQ}}(\cdot)$.
In practice, one can use a finite sample from
$
    \mathcal L \{
T_{\mathrm{CQ}}(
E;
\tilde \BX_1, \tilde \BX_2
)
\mid \tilde \BX_1, \tilde \BX_2 \}
$
to approximate the $p$-value of the proposed test procedure; see, e.g., \cite{Lehmann}, Chapter 15.
More specifically, 
given data,
we can independently sample $E^{(i)}$ and compute 
$
T_{\mathrm{CQ}}^{(i)} = 
T_{\mathrm{CQ}}(
E^{(i)};
\tilde \BX_1, \tilde \BX_2
)
$, $i = 1, \ldots, B$, where $B$ is a sufficiently large number.
Then the null hypothesis is rejected if
\begin{align*}
    \frac{1}{B+1} \left[
        1 + \sum_{i=1}^B \mathbf 1_{ \{T_{\mathrm{CQ}}^{(i)} \geq T_{\mathrm{CQ}}(\BX_1, \BX_2)\} }
    \right] \leq \alpha
        .
\end{align*}
In the above procedure, one needs to compute the original statistic
$
T_{\mathrm{CQ}}(
\BX_1, \BX_2
)
$
and
the randomized statistics $T_{\mathrm{CQ}}^{(1)}, \ldots, T_{\mathrm{CQ}}^{(B)}$.
The direct computation of the original statistic
$
T_{\mathrm{CQ}}(
\BX_1, \BX_2
)
$
costs $O(n^2 p)$ time.
During the computation of 
$
T_{\mathrm{CQ}}(
\BX_1, \BX_2
)
$, we can cache the inner products $\tilde X_{k,i}^\myT \tilde X_{k,j}$, $1\leq i < j \leq m_k$, $k = 1,2$, and $\tilde X_{1,i}^\myT \tilde X_{2,j}$, $i = 1, \ldots, m_1$, $j = 1, \ldots, m_2$.
Then the computation of $T_{\mathrm{CQ}}^{(i)}$ only requires $O(n^2)$ time.
In total, the computation of the proposed test procedure can be completed within $O\{n^2 (p + B)\}$ time.



Now we rigorously investigate the theoretical properties of the proposed test procedure.
From Theorem \ref{thm:universality_TCQ}, 
the distribution of $T_{\mathrm{CQ}}(\BX_1, \BX_2)$ is asymptotically equivalent to 
the distribution of a quadratic form in normal random variables.
Now we show that the conditional distribution
$
    \mathcal L 
    \{
        T_{\mathrm{CQ}}( E ; \tilde \BX_1, \tilde \BX_2 ) 
        \mid \tilde \BX_1, \tilde \BX_2
    \}
    $
is equivalent to the same quadratic form.
In fact, our result is more general and includes the case that
    the elements of $E$ are generated from the standard normal distribution.

\begin{theorem}\label{thm:final_thm}
Suppose the conditions of Theorem \ref{thm:universality_TCQ} hold.
Let $E^* = (\epsilon^*_{1,1}, \ldots, \epsilon^*_{1, m_1}, \epsilon^*_{2,1}, \ldots, \epsilon^*_{2, m_2} )^\myT$,
where $ \{\epsilon^*_{k,i}\} $ are independent and identically distributed random variables, and $\epsilon^*_{1,1}$ is a standard normal random variable or a Rademacher random variable.
    Then as $n \to \infty$,
\begin{align*}
    \left\|
    \mathcal L \left\{
        \frac{T_{\mathrm{CQ}}( E^* ; \tilde \BX_1, \tilde \BX_2 ) }{
            \sigma_{T,n}
        }
        \mid \tilde \BX_1, \tilde \BX_2
    \right\}
    -
    \mathcal L \left[
    \frac{
        \bxi_p^\myT
        \bPsi_n
    \bxi_p
    -\mytr
        \left(
            \bPsi_n
    \right)
        }{
        \left\{
            2
            \mytr(
            \bPsi_n^2
            )
        \right\}^{1/2}
    }
\right]
    \right\|_3
    \xrightarrow{P} 0 .
\end{align*}
\end{theorem}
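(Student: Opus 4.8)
The plan is to recognize the randomized statistic as an ordinary $T_{\mathrm{CQ}}$ statistic applied to sign‑flipped data and then to invoke the universality of Theorem~\ref{thm:universality_TCQ} conditionally. Writing $\hat Y_{k,i} = \epsilon^*_{k,i}\tilde X_{k,i}$, one checks directly from the definitions that $T_{\mathrm{CQ}}(E^*;\tilde\BX_1,\tilde\BX_2) = T_{\mathrm{CQ}}(\hat\BY_1,\hat\BY_2)$, since $\hat Y_{k,i}^\myT\hat Y_{k,j} = \epsilon^*_{k,i}\epsilon^*_{k,j}\tilde X_{k,i}^\myT\tilde X_{k,j}$. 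Conditionally on $\tilde\BX_1,\tilde\BX_2$, the vectors $\{\hat Y_{k,i}\}$ are independent with $\myE(\hat Y_{k,i}\mid\tilde\BX)=\mathbf 0_p$ and conditional covariance $\tilde X_{k,i}\tilde X_{k,i}^\myT$ (because $\myE\{(\epsilon^*_{1,1})^2\}=1$ in both the normal and the Rademacher case). Hence the conditional problem is an instance of the setting of Theorem~\ref{thm:universality_TCQ}, with sample sizes $m_k$, per‑observation covariances $\tilde X_{k,i}\tilde X_{k,i}^\myT$, and aggregate matrix $\hat{\bPsi}_n = m_1^{-1}\hat{\bSigma}_1 + m_2^{-1}\hat{\bSigma}_2$, where $\hat{\bSigma}_k = m_k^{-1}\sum_{i=1}^{m_k}\tilde X_{k,i}\tilde X_{k,i}^\myT$.

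Next I would verify that the hypotheses of Theorem~\ref{thm:universality_TCQ} hold for the conditional problem on an event of probability tending to one. Assumption~\ref{assumption:n} is immediate since $m_k=\lfloor n_k/2\rfloor\to\infty$. For Assumption~\ref{assumption:wangwang}, a direct computation gives $\myE\{(\hat Y_{k,i}^\myT\BB\hat Y_{k,i})^2\mid\tilde\BX\} = \myE\{(\epsilon^*_{1,1})^4\}(\tilde X_{k,i}^\myT\BB\tilde X_{k,i})^2$ while $\{\myE(\hat Y_{k,i}^\myT\BB\hat Y_{k,i}\mid\tilde\BX)\}^2 = (\tilde X_{k,i}^\myT\BB\tilde X_{k,i})^2$, so the assumption holds with $\tau=1$ in the Rademacher case and $\tau=3$ in the normal case, uniformly in $\BB$; this is where both choices of $E^*$ are treated on the same footing. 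The conditional form of Assumption~\ref{assumption7} reads $\sum_{i=1}^{m_k}\|\tilde X_{k,i}\|^4 = o_P\{\sum_{i,j}(\tilde X_{k,i}^\myT\tilde X_{k,j})^2\}$, since $\mytr\{(\tilde X_{k,i}\tilde X_{k,i}^\myT)^2\}=\|\tilde X_{k,i}\|^4$ and $\mytr(\hat{\bSigma}_k^2)=m_k^{-2}\sum_{i,j}(\tilde X_{k,i}^\myT\tilde X_{k,j})^2$, and I would establish it by bounding the moments of both sides using Assumptions~\ref{assumption:wangwang} and~\ref{assumption7} together with $\tilde X_{k,i}=(X_{k,2i}-X_{k,2i-1})/2$. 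Applying the quantitative Lindeberg bound underlying Theorem~\ref{thm:universality_TCQ} on this event then yields $\|\mathcal L\{T_{\mathrm{CQ}}(\hat\BY_1,\hat\BY_2)/\hat{\sigma}_n\mid\tilde\BX\} - \mathcal L[\mathrm{GQF}(\hat{\bPsi}_n)]\|_3\xrightarrow{P}0$, where $\hat{\sigma}_n^2=\myVar\{T_{\mathrm{CQ}}(E^*;\tilde\BX)\mid\tilde\BX\}$ and $\mathrm{GQF}(\BA)$ abbreviates $\{\bxi_p^\myT\BA\bxi_p-\mytr(\BA)\}/\{2\mytr(\BA^2)\}^{1/2}$. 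Because the conditional variance involves only off‑diagonal terms, it coincides for the two choices of $E^*$, and Lemma~\ref{lemma:variance} gives $\hat{\sigma}_n^2/\sigma_{T,n}^2\xrightarrow{P}1$, so $\hat{\sigma}_n$ may be replaced by $\sigma_{T,n}$.

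It remains to compare the two Gaussian quadratic forms, i.e.\ to show $\|\mathcal L[\mathrm{GQF}(\hat{\bPsi}_n)]-\mathcal L[\mathrm{GQF}(\bPsi_n)]\|_3\xrightarrow{P}0$, and this is the step I expect to be the main obstacle. Both laws are determined by their normalized spectra through the representation of Corollary~\ref{corollary:the}, so it suffices to argue along subsequences: whenever the ordered normalized eigenvalues of $\bPsi_n/\{\mytr(\bPsi_n^2)\}^{1/2}$ converge to a sequence $\{\kappa_i\}$, the corresponding normalized eigenvalues of $\hat{\bPsi}_n$ must converge to the same $\{\kappa_i\}$ in probability. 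The point is that the bulk of the spectrum enters $\mathrm{GQF}$ only through its total squared mass, which is controlled since $\mytr(\hat{\bPsi}_n^2)/\mytr(\bPsi_n^2)\xrightarrow{P}1$ (equivalently $\hat{\sigma}_n^2\approx\sigma_{T,n}^2$), whereas only the finitely many non‑vanishing normalized eigenvalues affect the shape of the limit, and these correspond to eigenvalues large enough relative to the bulk to be estimated consistently by $\hat{\bPsi}_n$. Making this dichotomy precise—separating a consistently estimated spiked part from a bulk whose distributional effect is washed out by a central limit theorem for quadratic forms—is the delicate part; I would formalize it by truncating the normalized spectrum at a level $\delta$, bounding the contribution of the eigenvalues below $\delta$ in $\|\cdot\|_3$ by a quantity of order $\delta$ uniformly in $n$, matching the finitely many remaining eigenvalues by their consistency, and then letting $\delta\to 0$. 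Combining the three displays via the triangle inequality for $\|\cdot\|_3$ completes the proof, uniformly over the normal and Rademacher choices of $E^*$.
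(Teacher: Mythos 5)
Your opening identity $T_{\mathrm{CQ}}(E^*;\tilde\BX_1,\tilde\BX_2)=T_{\mathrm{CQ}}(\hat\BY_1,\hat\BY_2)$ and the conditional moment computations are fine, but the pivotal step—invoking Theorem \ref{thm:universality_TCQ} conditionally—breaks down, because the conditional analogue of Assumption \ref{assumption7} is false in exactly the high-dimensional regime the theorem targets. Conditionally, the per-observation covariances are the rank-one matrices $\tilde X_{k,i}\tilde X_{k,i}^\myT$, for which $\mytr\{(\tilde X_{k,i}\tilde X_{k,i}^\myT)^2\}=\|\tilde X_{k,i}\|^4=\{\mytr(\tilde X_{k,i}\tilde X_{k,i}^\myT)\}^2$, so there is no gap between the trace of the square and the squared trace—the gap that Assumption \ref{assumption7} exploits. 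Quantitatively, already for $\bSigma_{k,i}=\BI_p$ one has $\sum_{i=1}^{m_k}\|\tilde X_{k,i}\|^4\asymp m_k p^2$ while $\sum_{i\neq j}(\tilde X_{k,i}^\myT\tilde X_{k,j})^2\asymp m_k^2 p$, so the condition you state requires $p=o(n_k)$; no moment bound can establish it when $p\gtrsim n$. Worse, your intermediate target $\mathcal L[\mathrm{GQF}(\hat\bPsi_n)]$ is not merely unproven but wrong in general: the nonzero eigenvalues of $\hat\bPsi_n$ are those of a weighted Gram matrix whose diagonal entries, of order $\mytr(\bar\bSigma_k)/(2m_k^2)$, inflate the whole spectrum—this is the same diagonal-bias phenomenon the paper isolates for the wild bootstrap in Section \ref{sec:test_procedure}, and the very reason the statistic deletes diagonal terms. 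For instance, with Gaussian data, a single spike of size $\surd p$ in $\bar\bSigma_k$ plus an identity bulk (so that $\kappa_1=2^{-1/2}$) and $p/n\to\infty$, the diagonal bulk of $\hat\bPsi_n$ dominates the spike in normalized squared spectral mass by a factor of order $p/n$, so $\mathrm{GQF}(\hat\bPsi_n)$ is asymptotically standard normal while the correct limit is the nondegenerate $\kappa_1$-mixture; for the same reason the spiked eigenvalues of $\hat\bPsi_n$ are not consistent, so the truncation-and-matching scheme of your final paragraph cannot be carried out on $\hat\bPsi_n$.

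The paper's proof avoids all of this. For the Rademacher case it compares the two choices of $E^*$ directly via Theorem \ref{thm:universality_GQF}, applied conditionally on the data to the generalized quadratic form in the $\epsilon$'s with kernels $w_{i,j}(a,b)$ proportional to $ab$ times the inner products of the $\tilde X$'s (Lemma \ref{lemma:universality_rad}); there the conditional first and second moments match exactly, so no conditional analogue of Assumption \ref{assumption7} is needed—only an influence bound, supplied by Lemma \ref{lemma:variance} and the unconditional Assumption \ref{assumption7}. For normal $E^*$ the conditional law is \emph{exactly} a Gaussian quadratic form in $m_1+m_2$ variables with a zero-diagonal matrix, and the paper analyzes its spectrum directly: it projects onto the \emph{population} eigenvectors of $\bPsi_n$, splits into a top-$r$ block, a middle block (up to the $r_n^*$ of Lemma \ref{lemma:low_sequence}), and a tail block; it shows the fixed-dimensional projected matrix $\BB_n\BB_n^\myT$ converges in probability to $2^{-1/2}\mydiag(\kappa_1,\ldots,\kappa_r)$, that the middle block's conditional variance vanishes, and that the tail block satisfies $\mytr(\BA_n^4)=o_P(1)$ and is hence conditionally asymptotically normal, assembling the limit via Lemma \ref{lemma:630} (Lemma \ref{lemma:T13}). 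Your spike/bulk dichotomy is the right instinct, but it must be executed on the zero-diagonal conditional matrix with projections onto population eigenvectors, not on the spectrum of $\hat\bPsi_n$.
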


From Theorems \ref{thm:universality_TCQ} and \ref{thm:final_thm},
the conditional distribution 
    $\mathcal L \{
        T_{\mathrm{CQ}}( E ; \tilde \BX_1, \tilde \BX_2 ) 
        \mid \tilde \BX_1, \tilde \BX_2
    \}
    $
is asymptotically equivalent to the distribution of $T_{\mathrm{CQ}}(\BX_1, \BX_2)$ under the null hypothesis.
These two theorems allow us to derive the asymptotic level and local power of the proposed test procedure.
Let $G_n(\cdot)$ denote the cumulative distribution function of 
$
    \{
        \bxi_p^\myT
        \bPsi_n
    \bxi_p
    -\mytr
        (
            \bPsi_n
    )
    \}/
        \{
            2
            \mytr(
            \bPsi_n^2
            )
        \}^{1/2}
$.

\begin{corollary}\label{corollary:the2}
Suppose the conditions of Theorem \ref{thm:universality_TCQ} hold,
    $\alpha \in (0, 1)$ is an absolute constant and as $n \to \infty$,
    $
     (\mu_1 - \mu_2)^\myT \bPsi_n (\mu_1 - \mu_2)
     =o\{
    \mytr(\bPsi_n^2)
\}
$.
    Then as $n \to \infty$,
    \begin{align*}
    \Pr \left\{
    T_{\mathrm{CQ}} (\BX_1, \BX_2) > \hat F_{\mathrm{CQ}}^{-1} (1-\alpha)
\right\}
=
1- G_n
\left[
    G_n^{-1} (1-\alpha) - 
    \frac{
    \|\mu_1 - \mu_2\|^2
    }{
        \left\{ 2 \mytr (\bPsi_n^2) \right\}^{1/2}
}
\right]
+o(1)
.
    \end{align*}

\end{corollary}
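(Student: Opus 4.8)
The plan is to write the rejection probability as the probability that a mean-shifted copy of $T_{\mathrm{CQ}}(\BY_1,\BY_2)/\sigma_{T,n}$ exceeds a data-dependent threshold that is asymptotically $G_n^{-1}(1-\alpha)$, and then feed this into Theorems \ref{thm:universality_TCQ} and \ref{thm:final_thm}. First I would expand the statistic under the alternative. Writing $X_{k,i}=\mu_k+Y_{k,i}$ and substituting into the bilinear form, the constant and linear parts separate, giving
\begin{align*}
T_{\mathrm{CQ}}(\BX_1,\BX_2)=T_{\mathrm{CQ}}(\BY_1,\BY_2)+2(\mu_1-\mu_2)^\myT(\bar Y_1-\bar Y_2)+\|\mu_1-\mu_2\|^2,
\end{align*}
with $\bar Y_k=n_k^{-1}\sum_i Y_{k,i}$. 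The cross term $L:=2(\mu_1-\mu_2)^\myT(\bar Y_1-\bar Y_2)$ has mean zero and, by independence of the two groups, variance $4(\mu_1-\mu_2)^\myT\bPsi_n(\mu_1-\mu_2)$. The assumed $(\mu_1-\mu_2)^\myT\bPsi_n(\mu_1-\mu_2)=o\{\mytr(\bPsi_n^2)\}$ together with $\sigma_{T,n}^2=\{1+o(1)\}2\mytr(\bPsi_n^2)$ (Lemma \ref{lemma:712}) yields $\myVar(L/\sigma_{T,n})\to0$, so $L/\sigma_{T,n}\xrightarrow{P}0$ by Chebyshev. Writing $\delta_n:=\|\mu_1-\mu_2\|^2/\{2\mytr(\bPsi_n^2)\}^{1/2}$ and noting $\|\mu_1-\mu_2\|^2/\sigma_{T,n}=\delta_n\{1+o(1)\}$, this gives $T_{\mathrm{CQ}}(\BX_1,\BX_2)/\sigma_{T,n}=W_n+\delta_n\{1+o(1)\}+o_P(1)$ with $W_n:=T_{\mathrm{CQ}}(\BY_1,\BY_2)/\sigma_{T,n}$.

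Next I would control the critical value. By Theorem \ref{thm:final_thm}, the conditional law of $T_{\mathrm{CQ}}(E;\tilde\BX_1,\tilde\BX_2)/\sigma_{T,n}$ converges to the oracle quadratic-form law (CDF $G_n$) in $\|\cdot\|_3$, in probability. I would upgrade this to quantile convergence, namely $c_n:=\hat F_{\mathrm{CQ}}^{-1}(1-\alpha)/\sigma_{T,n}$ satisfies $c_n-G_n^{-1}(1-\alpha)\xrightarrow{P}0$. Since $\|\cdot\|_3$ only controls smooth test functions, this step needs anti-concentration, which I would supply via a subsequence argument: by the uniform tightness from Corollary \ref{corollary:the}, along any subsequence $G_n$ converges weakly to one of the limit laws \eqref{eq:representation}, which is absolutely continuous and strictly increasing at its $(1-\alpha)$ quantile (Remark after Corollary \ref{corollary:the}). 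Convergence to such a limit converts distributional closeness into closeness of quantiles, and simultaneously gives $G_n^{-1}(1-\alpha)$ convergence along the subsequence.

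Finally I would combine the pieces. The rejection probability equals $\Pr\{W_n>c_n-\delta_n\{1+o(1)\}-o_P(1)\}$. The delicate point is that $W_n$ and $c_n$ are built from the same data and are therefore dependent, so $c_n$ cannot be treated as an exogenous threshold. I would instead sandwich: on the event $\{|c_n-G_n^{-1}(1-\alpha)|\le\eta\}$, whose probability tends to $1$, bound the rejection probability above and below by $\Pr\{W_n>G_n^{-1}(1-\alpha)\mp\eta-\delta_n\{1+o(1)\}\}$ up to $o(1)$. Using $\mathcal L(W_n)\approx G_n$ from Theorem \ref{thm:universality_TCQ} and the anti-concentration of the continuous subsequential limit to let $\eta\downarrow0$, both bounds converge to $1-G_n(G_n^{-1}(1-\alpha)-\delta_n)$; replacing $\delta_n\{1+o(1)\}$ by $\delta_n$ is justified by uniform continuity of the limit when $\delta_n$ stays bounded, and by both quantities tending to $1$ when $\delta_n\to\infty$ (since $G_n^{-1}(1-\alpha)$ is bounded along convergent subsequences). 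The subsequence principle then delivers the stated equality.

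The main obstacle is precisely this last interplay: transferring the smooth-metric conclusions of Theorems \ref{thm:universality_TCQ} and \ref{thm:final_thm} into statements about CDFs and quantiles, while the critical value and the test statistic share the same randomness and while $G_n$ itself need not converge. I expect the subsequence-plus-anti-concentration machinery, resting on the absolute continuity of the limit laws in Corollary \ref{corollary:the}, to be the technical heart of the argument; the algebraic decomposition of $T_{\mathrm{CQ}}(\BX_1,\BX_2)$ and the variance bound for $L$ are routine.
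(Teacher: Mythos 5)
Your proposal is correct and follows essentially the same route as the paper's proof: the same decomposition $T_{\mathrm{CQ}}(\BX_1,\BX_2)=T_{\mathrm{CQ}}(\BY_1,\BY_2)+2(\mu_1-\mu_2)^\myT(\bar Y_1-\bar Y_2)+\|\mu_1-\mu_2\|^2$ with the Chebyshev bound on the cross term, the same subsequence-plus-continuity machinery (the paper's Lemma \ref{lemma:714} supplies exactly the continuity and strict monotonicity of the subsequential limit $\tilde F$ that you invoke) to turn the $\|\cdot\|_3$ conclusions of Theorems \ref{thm:universality_TCQ} and \ref{thm:final_thm} into quantile convergence $\hat F_{\mathrm{CQ}}^{-1}(1-\alpha)/\sigma_{T,n}=\tilde F^{-1}(1-\alpha)+o_P(1)$. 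The only cosmetic difference is at the end: where you sandwich on the event $\{|c_n-G_n^{-1}(1-\alpha)|\le\eta\}$, the paper absorbs the quantile error as an $o_P(1)$ additive perturbation of the test statistic and applies uniform convergence of the perturbed CDF to the continuous limit, which handles the dependence between the statistic and the critical value in the same Slutsky-type way.
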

Corollary \ref{corollary:the2} implies that
under the conditions of Theorem \ref{thm:universality_TCQ},
the proposed test procedure has correct test level asymptotically.
In particular, Corollary \ref{corollary:the2} provides a rigorous theoretical guarantee of the validity of the proposed test procedure for two-sample Behrens-Fisher problem with arbitrary $\bar \bSigma_k$, $i= 1,2$.
Furthermore, the proposed test procedure is still valid when the observations are not identically distributed within groups and the sample sizes are unbalanced.
To the best of our knowledge, the proposed test procedure is the only one that is guaranteed to be valid in such a general setting.
Corollary \ref{corollary:the2} also gives the asymptotic power of the proposed test procedure under the local alternative hypotheses.
If $T_{\mathrm{CQ}}(\BX_1, \BX_2)$ is asymptotically normally distributed,
that is,
$G_n$ converges weakly to the cumulative distribution function of the standard normal distribution,
then the proposed test procedure has the same local asymptotic power as the test procedure of \cite{Chen2010ATwo}.
In general, the proposed test procedure has the same local asymptotic power as the oracle test procedure which rejects the null hypothesis when $T_{\mathrm{CQ}}(\BX_1, \BX_2)$ is greater than the $1-\alpha$ quantile of $\mathcal L \{T_{\mathrm{CQ}} (\BX_1, \BX_2) \}$.
Thus, the proposed test procedure has good power behavior.


{
\section{Simulations}
In this section, we conduct simulations to examine the performance of the proposed test procedure and compare it with $9$ alternative test procedures.
The first $4$ competing test procedures are based on $T_{\mathrm{CQ}}(\BX_1, \BX_2)$, 
including
the original test procedure of \cite{Chen2010ATwo} which is based on the asymptotic normality of $T_{\mathrm{CQ}}(\BX_1, \BX_2)$,
the test procedure based on the empirical bootstrap method,
the wild bootstrap method described in Section \ref{sec:test_procedure} where $\{\varepsilon_{k,i}\}$ are Rademacher random variables,
and the $\chi^2$-approximation method in \cite{Zhang2020AFurtherStudy}.
The next $2$ competing test procedures are based on the statistic $\|\bar X_1 - \bar X_2\|^2$, including the $\chi^2$-approximation method in \cite{Zhang2021Two-sample} and 
the half-sampling method in \cite{Lou2020HighDimensional}.
The last $3$ competing test procedures are scalar-invariant tests of \cite{Srivastava2008A}, \cite{SRIVASTAVA2013349} and \cite{Feng2014Two}.
}

In our simulations, the nominal test level is $\alpha = 0.05$.
For the proposed method and competing resampling methods, the resampling number is $B = 1,000$.
The reported empirical sizes and powers are computed based on $10,000$ independent replications.
We consider the following data generation models for $\{Y_{k,i}\}$.
\begin{itemize}
    \item
Model I: $Y_{k,i} \sim \mathcal N ( \mathbf 0_p, \BI_p )$, $i =1 ,\ldots, n_k$, $k = 1, 2$.
\item
    {
Model II:
$Y_{k,i} \sim \mathcal N ( \mathbf 0_p, \bar \bSigma_k )$, $i =1 ,\ldots, n_k$, $k = 1, 2$, where 
$
    \bar \bSigma_k
    =
    \BV_k \bLambda \BV_k^\myT + \BI_p
$ with
\begin{align*}
    \bLambda = 
    \begin{pmatrix}
        1 & 0
        \\
        0 & 1/2
    \end{pmatrix}
    ,
    \quad
    \BV_1
    =
    \begin{pmatrix}
\mathbf 1_{p / 4}
&
\mathbf 1_{p / 4}
\\
\mathbf 1_{p / 4}
&
-
\mathbf 1_{p / 4}
\\
\mathbf 1_{p / 4}
&
\mathbf 1_{p / 4}
\\
\mathbf 1_{p / 4}
&
-\mathbf 1_{p / 4}
    \end{pmatrix}
    ,
    \quad
    \BV_2
    =
    \begin{pmatrix}
\mathbf 1_{p / 4}
&
\mathbf 1_{p / 4}
\\
\mathbf 1_{p / 4}
&
-
\mathbf 1_{p / 4}
\\
-\mathbf 1_{p / 4}
&
-\mathbf 1_{p / 4}
\\
-\mathbf 1_{p / 4}
&
\mathbf 1_{p / 4}
    \end{pmatrix}
    .
\end{align*}

\item
Model III:
$Y_{k,i} = \bGamma_{k,i} Z_{k,i}$, $i =1 ,\ldots, n_k$, $k = 1, 2$, where $Z_{k,i} = (z_{k,i,1}, \ldots, z_{k,i,p})^\myT$,
and $\{z_{k,i,j}\}$ are independent standardized $\chi^2$ random variables with $1$ degree of freedom, that is, $z_{k,i,1} \sim \{ \chi^2(1) - 1 \} / \surd {2}$.
For $ i = 1, \ldots, n_k / 2 $, $\bGamma_{k,i} = \{k \mydiag(1, 2, \ldots, p) \}^{1/2}$,
and for $i = n_k / 2 + 1, \ldots, n_k $,
$\bGamma_{k,i} = \{ k \mydiag(p, p-1, \ldots, 1) \}^{1/2} $.
\item
Model IV:
The $j$th element of $Y_{k,i}$ is
$y_{k,i,j} = \sum_{\ell=0}^5 1.01^{j+\ell - 1} z_{k,i,j + \ell}$,
$j=1, \ldots, p$,
$i = 1, \ldots, n_k$, $k = 1, 2$,
where
$\{z_{k,i,j}\}$ are independent standardized $\chi^2$ random variables with $1$ degree of freedom.
}
\end{itemize}

{
For Model I,
observations are simply normal random vectors with identity covariance matrix.
For Model II,
the variables are correlated,
$\bar \bSigma_1 \neq \bar \bSigma_2$ and
the condition \eqref{eq:condition_chen} is not satisfied.
For Model III,
$\bSigma_{k,1}, \ldots, \bSigma_{k,n_k}$ are not equal and the observations have skewed distributions.
For Model IV,
the variables are correlated and their variances are different.
}

\begin{table}[t]
    \small
\def~{\hphantom{0}} 
{
    \caption{Empirical sizes (multiplied by $100$) of test procedures
    }
    \label{table1}
    \begin{center}
    \begin{tabular}{*{14}{c}}
    Model &
$n_1$ & $n_2$ & $p$  & NEW & CQ & EB & WB & ZZ & ZZGZ & LOU & SD & SKK & FZWZ
 \\
I&
8 & 12 & 300
 & 4.95 & 5.42 & 0.00 & 1.32 & 4.92 & 5.46 & 6.13 & 3.77 & 32.8 & 3.20 \\ 
 &
16 & 24 & 300
 & 5.19 & 5.94 & 0.00 & 3.88 & 5.41 & 5.72 & 5.98 & 4.37 & 14.4 & 5.08 \\ 
 &
32 & 48 & 300
 & 5.00 & 5.65 & 0.00 & 4.79 & 5.21 & 5.33 & 5.42 & 4.21 & 8.36 & 5.11 \\ 
 &
8 & 12 & 600
 & 5.07 & 5.52 & 0.00 & 0.45 & 4.99 & 5.73 & 6.22 & 2.62 & 48.2 & 2.55 \\ 
 &
16 & 24 & 600
 & 4.85 & 5.44 & 0.00 & 2.25 & 5.07 & 5.40 & 5.57 & 3.05 & 17.5 & 4.21 \\ 
 &
32 & 48 & 600
 & 5.23 & 5.50 & 0.00 & 4.22 & 5.21 & 5.35 & 5.40 & 3.94 & 9.48 & 5.14 \\ 
II &
8 & 12 & 300
 & 8.99 & 10.1 & 5.91 & 8.56 & 8.23 & 9.87 & 9.14 & 3.31 & 5.94 & 7.78 \\ 
 &
16 & 24 & 300
 & 6.65 & 8.15 & 5.54 & 6.70 & 6.41 & 7.39 & 6.70 & 2.12 & 3.52 & 7.37 \\ 
 &
32 & 48 & 300
 & 5.46 & 7.45 & 5.14 & 5.56 & 5.34 & 6.25 & 5.57 & 1.85 & 2.52 & 7.04 \\ 
 &
8 & 12 & 600
 & 9.02 & 10.4 & 6.10 & 8.80 & 8.44 & 9.85 & 9.48 & 2.64 & 4.77 & 8.02 \\ 
 &
16 & 24 & 600
 & 6.10 & 7.83 & 4.79 & 5.95 & 5.60 & 6.84 & 6.06 & 1.64 & 2.60 & 6.72 \\ 
 &
32 & 48 & 600
 & 5.27 & 7.09 & 4.71 & 5.20 & 5.07 & 5.87 & 5.20 & 1.18 & 1.78 & 6.65 \\ 
III&
8 & 12 & 300
 & 5.23 & 5.50 & 0.00 & 1.30 & 1.63 & 2.18 & 6.14 & 0.17 & 14.1 & 0.00 \\ 
 &
16 & 24 & 300
 & 5.31 & 5.63 & 0.00 & 3.78 & 2.89 & 3.22 & 5.71 & 0.22 & 8.30 & 0.00 \\ 
 &
32 & 48 & 300
 & 5.56 & 6.09 & 0.01 & 5.10 & 3.94 & 4.16 & 5.81 & 0.16 & 6.79 & 0.13 \\ 
 &
8 & 12 & 600
 & 5.58 & 5.77 & 0.00 & 0.58 & 2.04 & 2.46 & 6.38 & 0.00 & 19.5 & 0.00 \\ 
 &
16 & 24 & 600
 & 5.01 & 5.28 & 0.00 & 2.39 & 2.64 & 2.94 & 5.46 & 0.00 & 10.5 & 0.00 \\ 
 &
32 & 48 & 600
 & 5.00 & 5.32 & 0.00 & 4.20 & 3.64 & 3.82 & 5.12 & 0.02 & 7.11 & 0.03 \\ 
IV&
8 & 12 & 300
 & 5.90 & 7.27 & 1.33 & 5.92 & 5.22 & 6.15 & 6.64 & 3.05 & 11.2 & 2.05 \\ 
 &
16 & 24 & 300
 & 5.21 & 6.73 & 2.29 & 5.49 & 4.63 & 5.35 & 5.48 & 3.25 & 6.90 & 3.53 \\ 
 &
32 & 48 & 300
 & 5.03 & 6.76 & 3.17 & 5.58 & 4.92 & 5.50 & 5.38 & 4.02 & 5.82 & 5.21 \\ 
 &
8 & 12 & 600
 & 5.97 & 7.49 & 1.29 & 6.03 & 5.20 & 6.17 & 6.75 & 2.24 & 14.4 & 1.48 \\ 
 &
16 & 24 & 600
 & 5.60 & 7.19 & 2.37 & 5.99 & 5.06 & 5.91 & 5.94 & 2.52 & 7.62 & 3.13 \\ 
 &
32 & 48 & 600
 & 5.50 & 7.11 & 3.59 & 5.90 & 5.16 & 5.90 & 5.85 & 3.66 & 5.86 & 4.46 \\ 
\end{tabular}
    \end{center}
    NEW, the proposed test procedure;
    CQ, the test of \cite{Chen2010ATwo}; 
    EB, the empirical bootstrap method based on $T_{\mathrm{CQ}}(\BX_1, \BX_2)$;
    WB, the wild bootstrap method based on $T_{\mathrm{CQ}}(\BX_1, \BX_2)$;
    ZZ, the test of \cite{Zhang2020AFurtherStudy};
    ZZGZ, the test of \cite{Zhang2021Two-sample}; 
    LOU, the test of \cite{Lou2020HighDimensional};
    SD, the test of \cite{Srivastava2008A}; 
    SKK, the test of \cite{SRIVASTAVA2013349};
    FZWZ, the test of \cite{Feng2014Two}.
}
\end{table}

{
    In Section \ref{sec:anr},
 we give quantile-quantile plots to examine the correctness of Theorem \ref{thm:universality_TCQ} and Corollary \ref{corollary:the} of the proposed test statistic.
The results show that the distribution approximation in Theorem \ref{thm:universality_TCQ}
is quite accurate,
and the asymptotic distributions in Corollary \ref{corollary:the} are reasonable even for finite sample size.
}

Now we consider the simulations of empirical sizes.
We take $\mu_1 = \mu_2 = \mathbf 0_p$.
Table \ref{table1} lists the empirical sizes of various test procedures.
{
It can be seen that the test procedure of \cite{Chen2010ATwo}
tends to have inflated empirical sizes, especially for Models II and IV.
The empirical bootstrap method does not work well for Models I, III and IV.
While the wild bootstrap method has a better performance than the empirical bootstrap method, it is overly conservative for Models I and III when $n$ is small.
The performance of bootstrap methods confirm our heuristic arguments in Section \ref{sec:test_procedure}.
It is interesting that the empirical bootstrap method has relatively good performance for Model II.
In fact, in Model II, $\bar \bSigma_k$ has a low-rank structure,
and therefore, the test statistic $T_{\mathrm{CQ}}(\BX_1, \BX_2)$
may have a similar behavior as in the low-dimensional setting.
It is known that the empirical bootstrap method has good performance in the low-dimensional setting.
Hence it is reasonable that the empirical bootstrap method works well for Model II.
The $\chi^2$-approximation methods of \cite{Zhang2021Two-sample} and \cite{Zhang2020AFurtherStudy} have reasonable performance under Model I, II and IV, but are overly conservative for Model III.
The half-sampling method of \cite{Lou2020HighDimensional} has inflated empirical sizes, especially for small $n$.
Compared with the test procedures based on $\|\bar \BX_1 - \bar \BX_2\|^2$ or $T_{\mathrm{CQ}}(\BX_1, \BX_2)$, the scalar-invariant tests of \cite{Srivastava2008A}, \cite{SRIVASTAVA2013349} and \cite{Feng2014Two} have relatively poor performance, especially when $n$ is small.
For Models I, III and IV, the empirical sizes of the proposed test procedure are quite close to the nominal test level.
For Model II, all test procedures based on $\|\bar \BX_1 - \bar \BX_2\|^2$ or $T_{\mathrm{CQ}}(\BX_1, \BX_2)$ have inflated empirical sizes.
For this model, the proposed test procedure outperforms the test procedure of \cite{Chen2010ATwo}.
Also, as $n$ increases, the empirical sizes of the proposed test procedure tends to the nominal test level.
For $n_1 = 32$, $n_2 = 48$, the proposed test procedure has reasonable performance in all settings.
Overall, the proposed test procedure has reasonably good performance in terms of empirical sizes, which confirms our theoretical results.
}

\begin{table}[h]
    \small
    {
    \caption{Empirical powers (multiplied by $100$) of test procedures for $p = 300$
    }
    \label{table2}
    \begin{center}
    \begin{tabular}{*{14}{c}}
    Model &
$n_1$ & $n_2$ & $\beta$  & NEW & CQ & EB & WB & ZZ & ZZGZ & LOU & SD & SKK & FZWZ
\\
I&
8& 12 & 1
 & 24.4 & 26.4 & 0.00 & 10.6 & 23.5 & 26.7 & 28.0 & 19.9 & 65.9 & 16.7 \\ 
 &
16& 24 & 1
 & 25.3 & 27.1 & 0.01 & 20.7 & 26.0 & 26.7 & 27.2 & 21.5 & 43.7 & 23.2 \\ 
 &
32& 48 & 1
 & 25.3 & 27.2 & 0.37 & 24.2 & 25.8 & 26.2 & 26.5 & 22.8 & 33.7 & 25.6 \\ 
 &
8& 12 & 2
 & 55.5 & 59.1 & 0.0 & 35.1 & 53.8 & 59.6 & 61.0 & 48.2 & 88.7 & 43.4 \\ 
 &
16& 24 & 2
 & 57.7 & 60.5 & 0.22 & 51.5 & 58.9 & 59.9 & 60.7 & 52.1 & 75.5 & 54.5 \\ 
 &
32& 48 & 2
 & 58.5 & 60.5 & 3.88 & 57.3 & 58.8 & 59.3 & 59.6 & 55.5 & 66.3 & 58.2 \\ 
II &
8& 12 & 1
 & 28.1 & 31.7 & 24.0 & 28.3 & 27.8 & 31.0 & 29.5 & 16.4 & 21.8 & 26.4 \\ 
 &
16& 24 & 1
 & 25.5 & 30.0 & 24.2 & 26.6 & 26.0 & 28.0 & 26.6 & 14.9 & 17.9 & 27.8 \\ 
 &
32& 48 & 1
 & 24.5 & 29.3 & 23.7 & 25.0 & 24.5 & 26.7 & 25.3 & 13.7 & 16.1 & 28.4 \\ 
 &
8& 12 & 2
 & 46.7 & 50.9 & 42.4 & 47.5 & 46.8 & 50.3 & 48.7 & 31.8 & 38.7 & 45.0 \\ 
 &
16& 24 & 2
 & 44.3 & 50.6 & 43.1 & 46.2 & 45.4 & 48.5 & 46.5 & 29.6 & 34.6 & 48.1 \\ 
 &
32& 48 & 2
 & 44.0 & 50.0 & 43.1 & 44.6 & 44.2 & 46.6 & 44.9 & 29.4 & 32.9 & 48.8 \\ 
III &
8& 12 & 1
 & 24.8 & 25.9 & 0.00 & 11.1 & 12.7 & 14.7 & 27.3 & 0.01 & 34.0 & 0.00 \\ 
 &
16& 24 & 1
 & 25.5 & 27.2 & 0.02 & 20.8 & 17.5 & 18.7 & 27.1 & 0.00 & 28.2 & 0.01 \\ 
 &
32& 48 & 1
 & 25.1 & 26.8 & 0.49 & 24.2 & 20.7 & 21.3 & 26.0 & 0.03 & 24.4 & 1.22 \\ 
 &
8& 12 & 2
 & 57.9 & 60.7 & 0.00 & 36.9 & 39.4 & 43.9 & 62.0 & 0.00 & 80.1 & 0.00 \\ 
 &
16& 24 & 2
 & 57.9 & 60.2 & 0.29 & 52.5 & 47.8 & 49.6 & 60.0 & 0.10 & 69.3 & 0.87 \\ 
 &
32& 48 & 2
 & 59.7 & 61.7 & 5.37 & 58.6 & 54.0 & 54.8 & 60.6 & 0.36 & 63.2 & 12.4 \\ 
IV &
8& 12 & 1
 & 22.3 & 26.9 & 7.83 & 22.6 & 20.6 & 23.3 & 25.1 & 100 & 100 & 100 \\ 
 &
16& 24 & 1
 & 21.4 & 25.9 & 12.2 & 22.8 & 20.2 & 22.3 & 22.7 & 100 & 100 & 100 \\ 
 &
32& 48 & 1
 & 21.3 & 26.4 & 15.7 & 22.5 & 20.8 & 22.6 & 22.2 & 100 & 100 & 100 \\ 
 &
8& 12 & 2
 & 49.9 & 57.0 & 24.2 & 51.2 & 47.7 & 51.6 & 54.3 & 100 & 100 & 100 \\ 
 &
16& 24 & 2
 & 49.5 & 56.5 & 33.1 & 51.7 & 47.7 & 51.2 & 51.9 & 100 & 100 & 100 \\ 
 &
32& 48 & 2
 & 49.4 & 56.5 & 39.6 & 51.1 & 48.4 & 51.0 & 50.5 & 100 & 100 & 100 \\ 
\end{tabular}
\end{center}
    NEW, the proposed test procedure;
    CQ, the test of \cite{Chen2010ATwo}; 
    EB, the empirical bootstrap method based on $T_{\mathrm{CQ}}(\BX_1, \BX_2)$;
    WB, the wild bootstrap method based on $T_{\mathrm{CQ}}(\BX_1, \BX_2)$;
    ZZ, the test of \cite{Zhang2020AFurtherStudy};
    ZZGZ, the test of \cite{Zhang2021Two-sample}; 
    LOU, the test of \cite{Lou2020HighDimensional};
    SD, the test of \cite{Srivastava2008A}; 
    SKK, the test of \cite{SRIVASTAVA2013349};
    FZWZ, the test of \cite{Feng2014Two}.
}
\end{table}

Now we consider the empirical powers of various test procedures.
In view of the expression of the asymptotic power given in Corollary \ref{corollary:the2}, 
we define the signal-to-noise ratio $\beta = \|\mu_1 - \mu_2\|^2 / \{2\mytr(\bPsi_n^2)\}^{1/2}$.
We take $\mu_1 = \mathbf 0_p$ and $\mu_2 = c \mathbf 1_p$ where $c$ is chosen such that $\beta$ reaches given values of signal-to-noise ratio.
Table \ref{table2} lists the empirical powers of various test procedures when $p = 300$.
{
    It can be seen that for Model IV where the variables have different variance scale,
the scalar-invariant tests have better performance than other tests.
However, for Model III where $\bSigma_{k,1}, \ldots, \bSigma_{k, n_k}$ are not identical and the observations have skewed distributions,
the scalar-invariant tests have relatively low powers.
The proposed test procedure has a reasonable power behavior
among the test procedures based on $\|\bar \BX_1 - \bar \BX_2\|^2$ or $T_{\mathrm{CQ}}(\BX_1, \BX_2)$.
We have seen that some competing tests do not have a good control of test level.
To get rid of the effect of distorted test level on the power, we present the receiver operating characteristic curves of the test procedures in Section \ref{sec:anr}.
It shows that for a given test level, all test procedures based on $\|\bar \BX_1 - \bar \BX_2\|^2$ or $T_{\mathrm{CQ}}(\BX_1, \BX_2)$ have quite similar power behavior.
In summary,
the proposed test procedure has promising performance of empirical sizes, and has no power loss compared with existing tests based on
$\|\bar \BX_1 - \bar \BX_2\|^2$ or $T_{\mathrm{CQ}}(\BX_1, \BX_2)$.
}

\begin{table}[t]
    \small
    {
    \caption{$p$-values of test procedures for the real data
    }
    \label{table_real}
    \begin{center}
    \begin{tabular}{*{14}{c}}
NEW & CQ & EB & WB & ZZ & ZZGZ & LOU & SD & SKK & FZWZ
\\
9.99e-4 &1.89e-10&9.99e-4&9.99e-4&7.74e-4&7.47e-5 & 0& 0.27 & 0.18& 4.78e-3
\\
\end{tabular}
\end{center}
    NEW, the proposed test procedure;
    CQ, the test of \cite{Chen2010ATwo}; 
    EB, the empirical bootstrap method based on $T_{\mathrm{CQ}}(\BX_1, \BX_2)$;
    WB, the wild bootstrap method based on $T_{\mathrm{CQ}}(\BX_1, \BX_2)$;
    ZZ, the test of \cite{Zhang2020AFurtherStudy};
    ZZGZ, the test of \cite{Zhang2021Two-sample}; 
    LOU, the test of \cite{Lou2020HighDimensional};
    SD, the test of \cite{Srivastava2008A}; 
    SKK, the test of \cite{SRIVASTAVA2013349};
    FZWZ, the test of \cite{Feng2014Two}.
}
\end{table}

{
\section{Real-data example}
In this section, we apply the proposed test procedure to the gene expression dataset released by \cite{Alon1999BroadPatterns}.
This dataset consists of the gene expression levels of $n_1 = 22$ normal and $n_2 = 40$ tumor colon tissue samples.
It contains the expression of $p = 2,000$ genes with highest minimal intensity across the $n = 62$ tissues.
We would like to test if the normal and tumor colon tissue samples have the same average gene expression levels.
Table \ref{table_real} lists the $p$-values of various test procedures.
With $\alpha = 0.05$,
all but the test procedures of \cite{Srivastava2008A} and \cite{SRIVASTAVA2013349} reject the null hypothesis,
claiming that the average gene expression levels of normal and tumor colon tissue samples are significantly different.
}

{
We would also like to examine the empirical sizes of various test procedures on the gene expression data.
To mimick the null distribution of the gene expression data, we generate resampled datasets as follows:
the resampled observations $\{X_{1,i}^*\}_{i=1}^{22}$ are uniformly sampled from $\{X_{1,i} - \bar X_1\}_{i=1}^{22}$ with replacement, and $\{X_{2,i}^*\}_{i=1}^{40}$ are uniformly sampled from $\{X_{2,i} - \bar X_2\}_{i=1}^{40}$ with replacement.
We conduct various test procedures with $\alpha = 0.05$ on the resampled observations $\{X_{k,i}^*\}$.
The above procedure is independently replicated for $10,000$ times to compute the empirical sizes.
The results are listed in Table \ref{table_real_sizes}.
It can be seen that the test procedures of \cite{Srivastava2008A} and \cite{SRIVASTAVA2013349} are overly conservative.
Hence the $p$-values of these two test procedures for gene expression data may not be reliable.
The test procedures of \cite{Chen2010ATwo} and \cite{Feng2014Two} are a little inflated.
In comparison, the remaining test procedures, including the proposed test procedure, have a good control of the test level for the resampled gene expression data.
This implies that the $p$-value of the proposed test procedure for the gene expression data is reliable.
}

\begin{table}
    \small
    {
    \caption{Empirical sizes (multiplied by $100$) of test procedures for the resampled real datasets
    }
    \label{table_real_sizes}

    \begin{center}
    \begin{tabular}{*{14}{c}}
NEW & CQ & EB & WB & ZZ & ZZGZ & LOU & SD & SKK & FZWZ
\\
5.43 & 7.28 & 5.22 & 5.84 & 5.19 & 5.98 & 5.71 & 0.60 & 0.79 & 7.03
\end{tabular}
    \end{center}
    NEW, the proposed test procedure;
    CQ, the test of \cite{Chen2010ATwo}; 
    EB, the empirical bootstrap method based on $T_{\mathrm{CQ}}(\BX_1, \BX_2)$;
    WB, the wild bootstrap method based on $T_{\mathrm{CQ}}(\BX_1, \BX_2)$;
    ZZ, the test of \cite{Zhang2020AFurtherStudy};
    ZZGZ, the test of \cite{Zhang2021Two-sample}; 
    LOU, the test of \cite{Lou2020HighDimensional};
    SD, the test of \cite{Srivastava2008A}; 
    SKK, the test of \cite{SRIVASTAVA2013349};
    FZWZ, the test of \cite{Feng2014Two}.
}
\end{table}

\section*{Acknowledgements}
The authors thank the editor, associate editor and three reviewers for their valuable comments and suggestions.
This work was supported by Beijing Natural Science Foundation (No Z200001), National Natural Science Foundation of China (No 11971478).
Wangli Xu serves as the corresponding author of the present paper.

\bibliographystyle{apalike}
\bibliography{mybibfile}

\appendix

\setcounter{section}{0}
\setcounter{equation}{0}
\setcounter{theorem}{0}
\setcounter{lemma}{0}
\setcounter{assumption}{0}
\renewcommand\thesection{S.\arabic{section}}
\renewcommand\thesubsection{S.\arabic{subsection}}
\renewcommand\theequation{S.\arabic{equation}}
\renewcommand{\thetheorem}{S.\arabic{theorem}}
\renewcommand{\thelemma}{S.\arabic{lemma}}
\renewcommand{\theassumption}{S.\arabic{assumption}}

{\color{myColor}
\section{Additional numerical results}
\label{sec:anr}
In this section, we present additional numerical results.
The experimental setting is as described in the main text.


We would like to use quantile-quantile plots to examine the correctness of Theorem \ref{thm:universality_TCQ} and Corollary \ref{corollary:the}.
First we consider the correctness of Theorem \ref{thm:universality_TCQ}.
Theorem \ref{thm:universality_TCQ} implies that the distribution of $T_{\mathrm{CQ}(\BY_1, \BY_2)} / \sigma_{T,n}$ can be approximated by that of $\{\bxi_p^\myT \bPsi_n \bxi_p - \mytr(\bPsi_n)\} / \{2 \mytr(\bPsi_n^2)\}^{1/2}$.
Fig. \ref{fig:QQ1} illustrates the plots of 
the empirical quantiles of $T_{\mathrm{CQ}(\BY_1, \BY_2)} / \sigma_{T,n}$ against that of $ \{ \bxi_p^\myT \bPsi_n \bxi_p - \mytr(\bPsi_n) \} / \{2 \mytr(\bPsi_n^2)\}^{1/2}$
under Models I-IV described in the main text with $n_1 = 16$, $n_2 = 24$, $p = 300$.
The empirical quantiles of $T_{\mathrm{CQ}}(\BY_1, \BY_2) / \sigma_{T,n}$ are obtained by $10,000$ replications.
The results imply that the distribution approximation in Theorem \ref{thm:universality_TCQ} is quite accurate for finite sample size.

\begin{figure}
    \centering
    \subfigure[Model I]{\includegraphics[width = 0.45\textwidth]{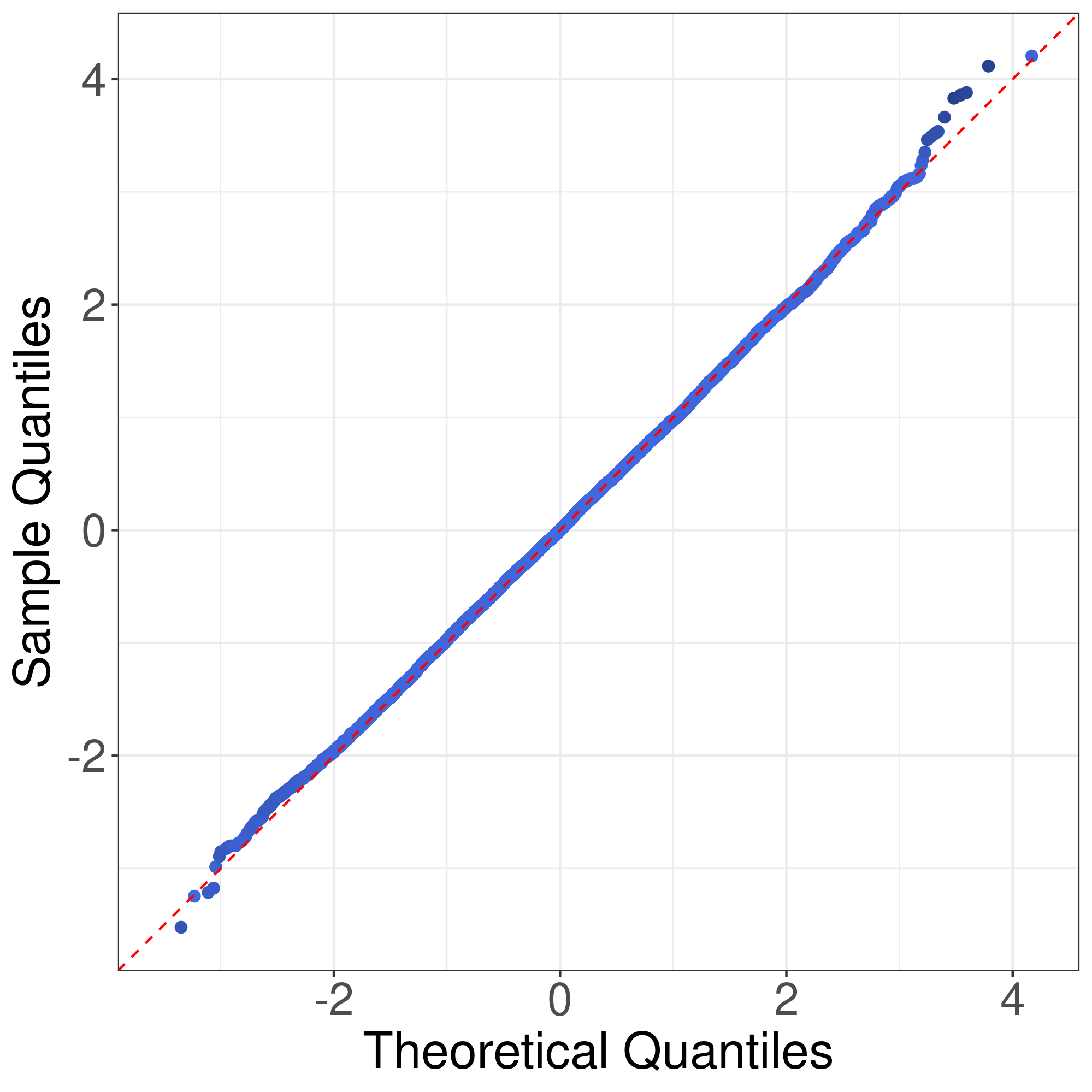}}
    \quad
    \subfigure[Model II]{\includegraphics[width = 0.45\textwidth]{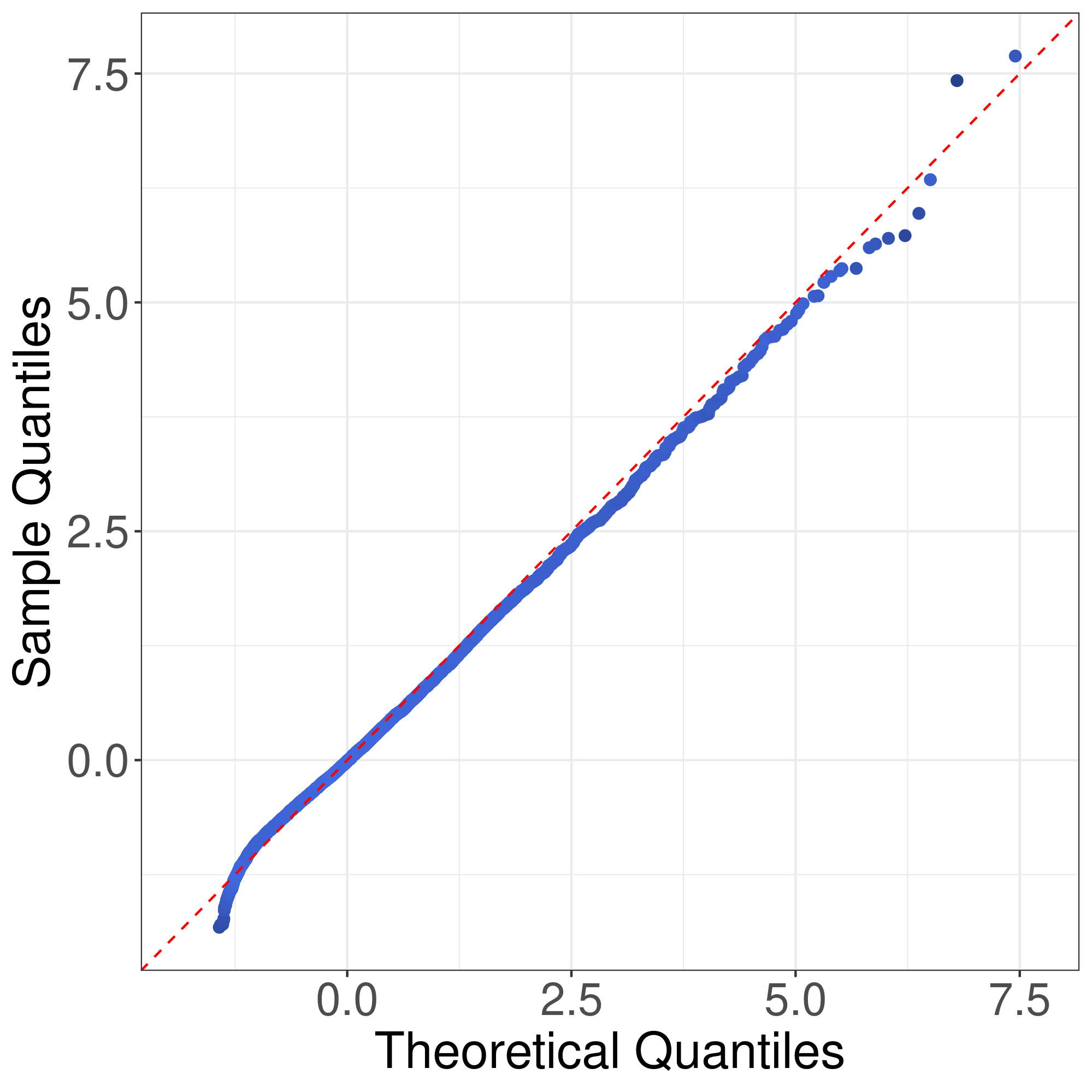}}
    \subfigure[Model III]{\includegraphics[width = 0.45\textwidth]{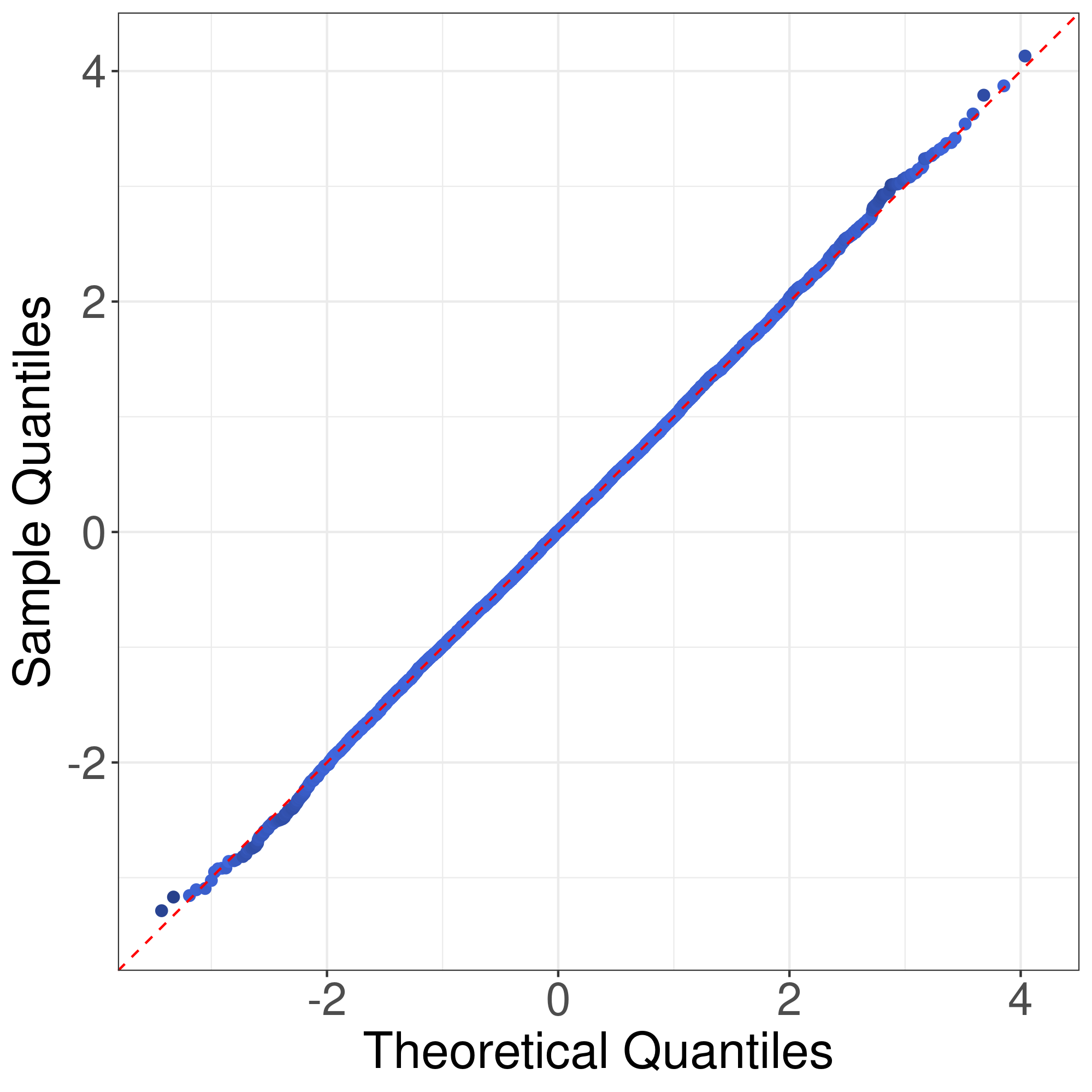}}
    \quad
    \subfigure[Model IV]{\includegraphics[width = 0.45\textwidth]{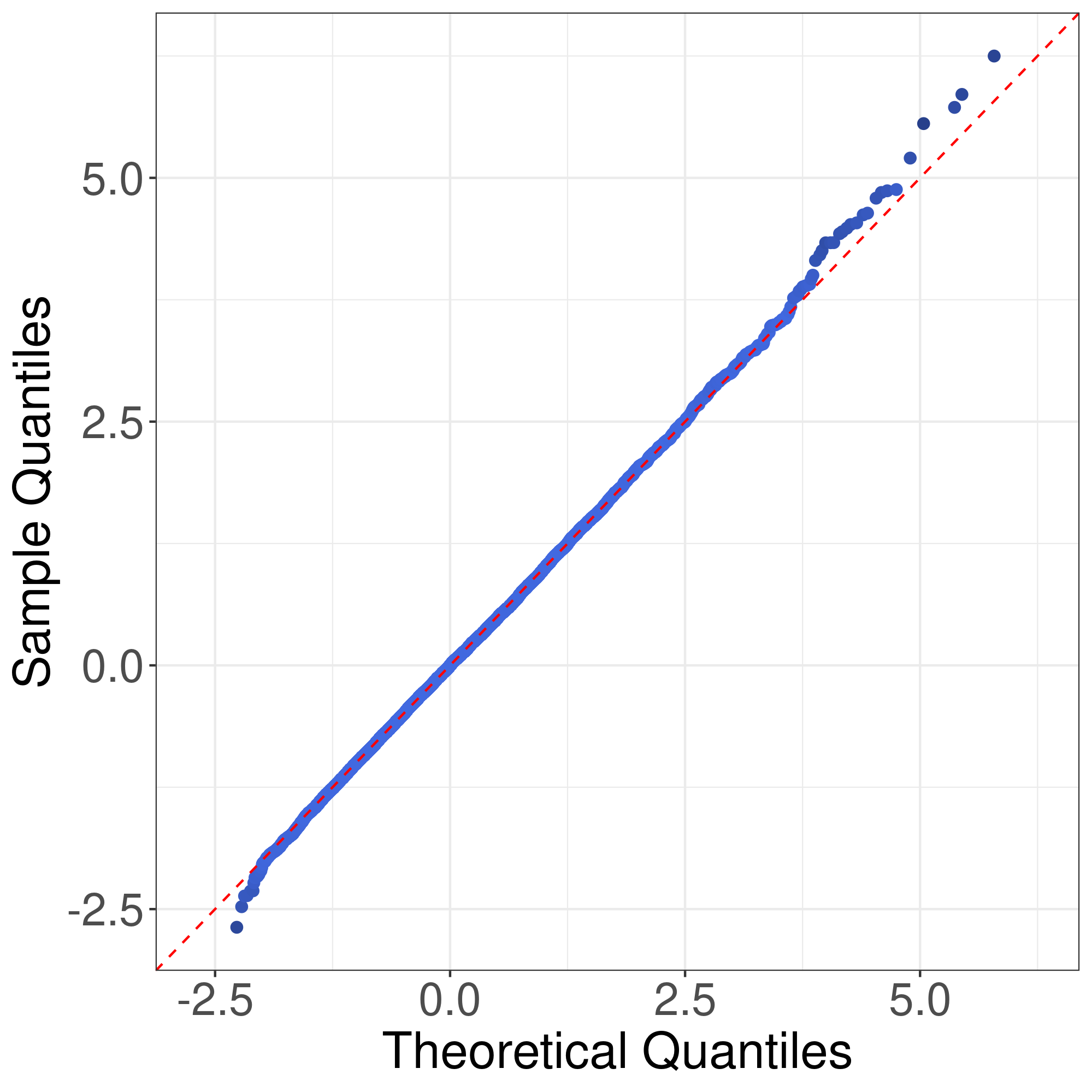}}
    \caption{
        Plots of the empirical quantiles of $T_{\mathrm{CQ}(\BY_1, \BY_2)} / \sigma_{T,n}$ against that of $(\bxi_p^\myT \bPsi_n \bxi_p - \mytr(\bPsi_n)) / \{2 \mytr(\bPsi_n^2)\}^{1/2}$.
        $n_1 = 16$, $n_2 = 24$, $p = 300$.
    }
    \label{fig:QQ1}
\end{figure}

\begin{figure}
    \centering
    \subfigure[$\gamma = 0$]{\includegraphics[width = 0.45\textwidth]{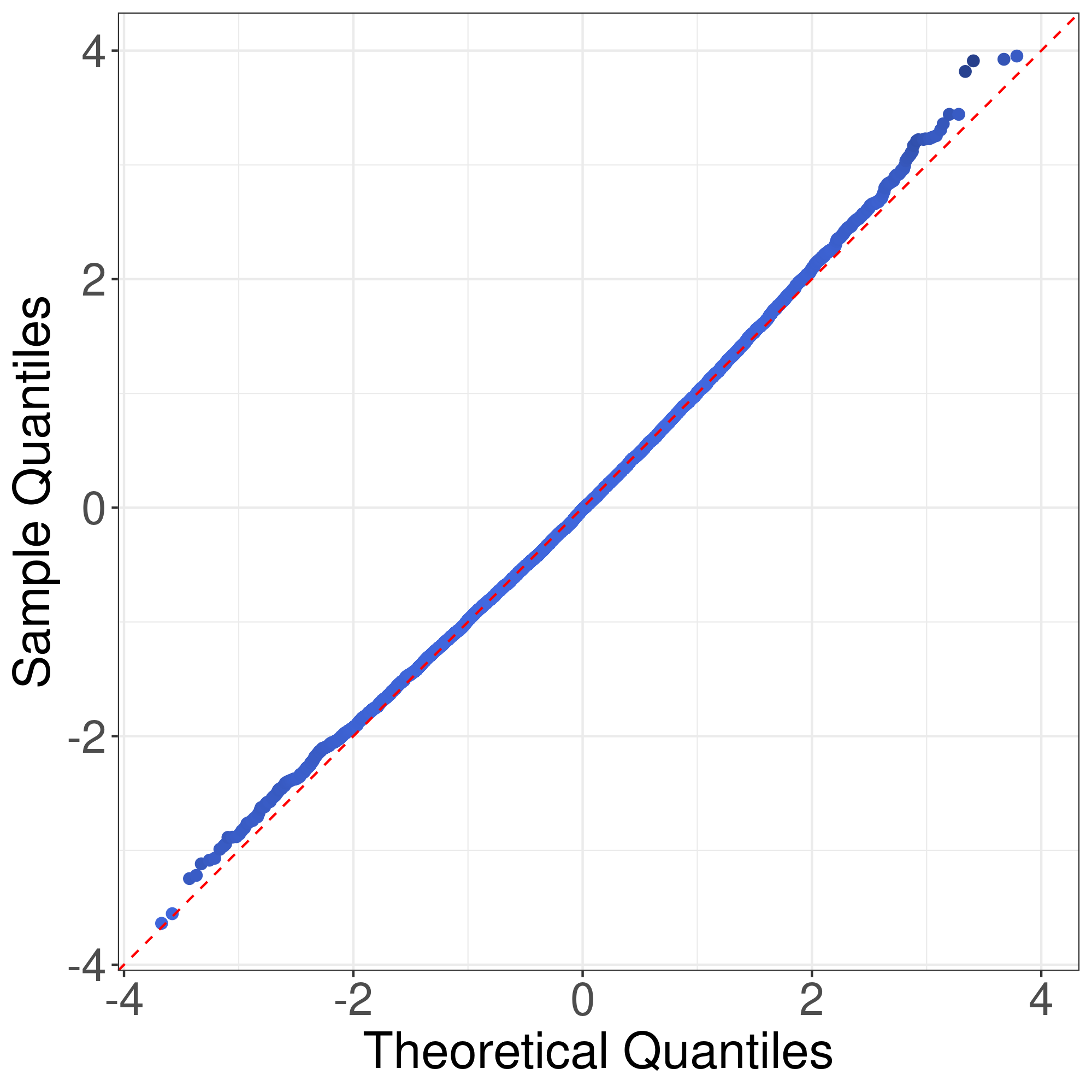}}
    \quad
    \subfigure[$\gamma = 1 / p^{1/2}$]{\includegraphics[width = 0.45\textwidth]{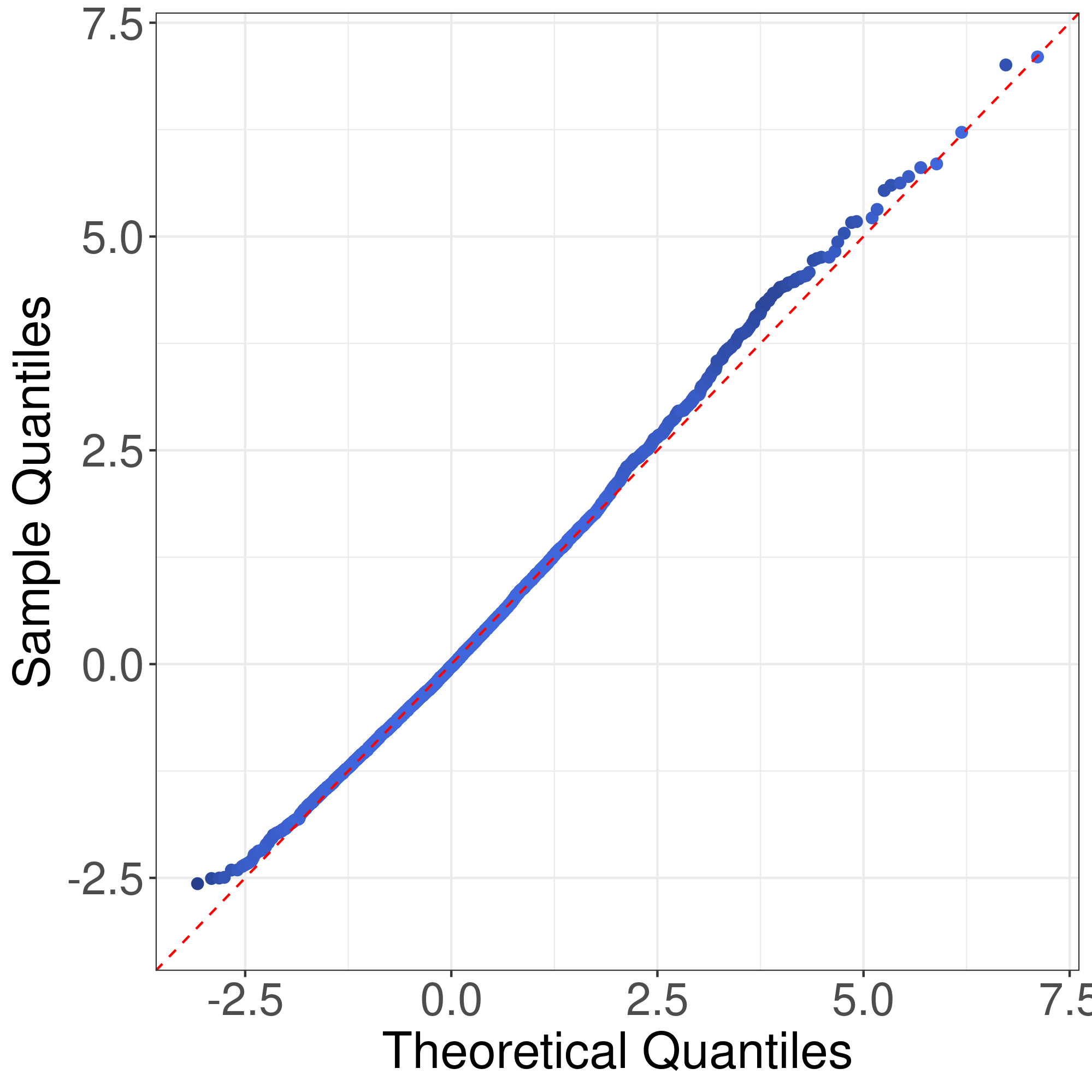}}
    \subfigure[$\gamma = 1/ 2$]{\includegraphics[width = 0.45\textwidth]{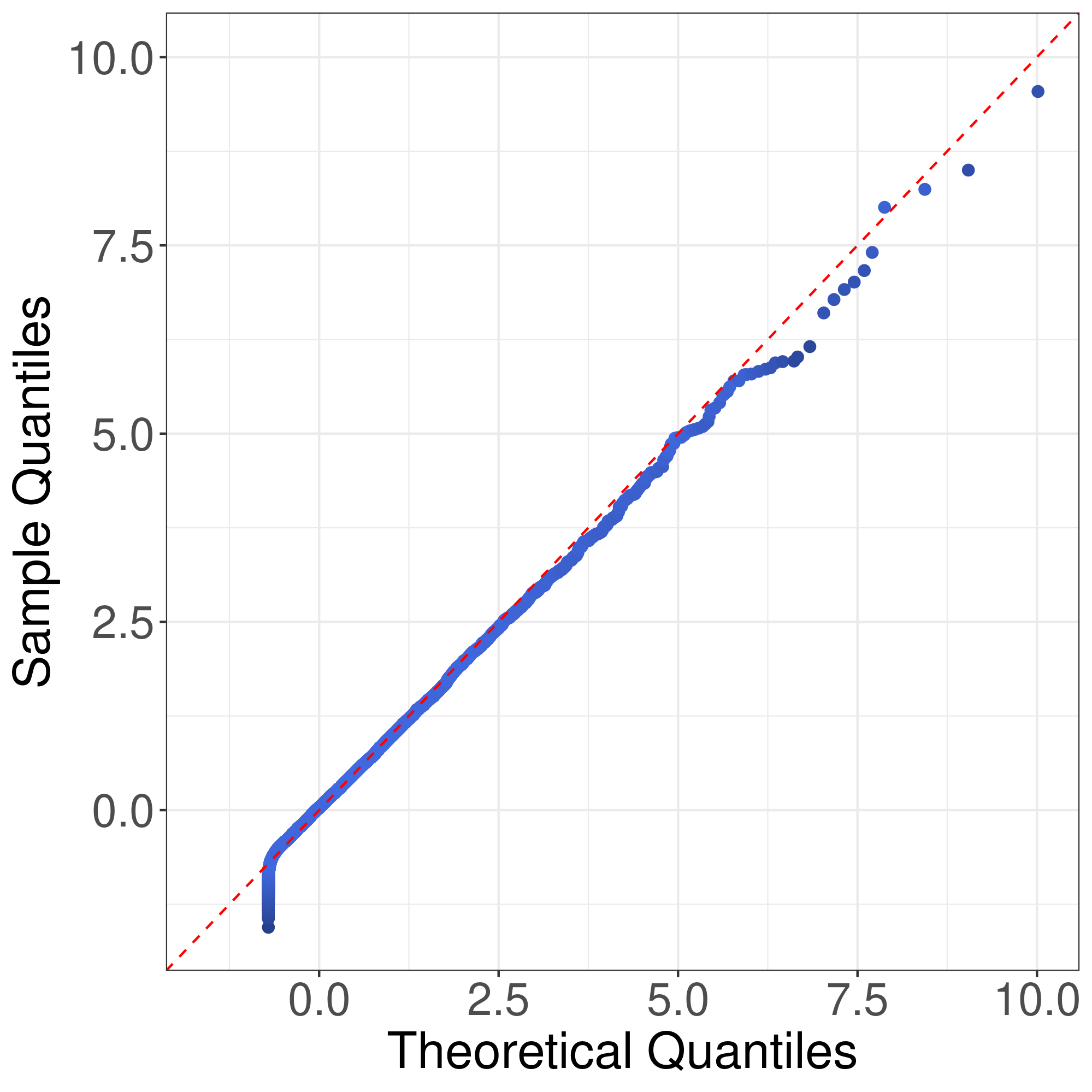}}
    \quad
    \subfigure[$\gamma = 1$]{\includegraphics[width = 0.45\textwidth]{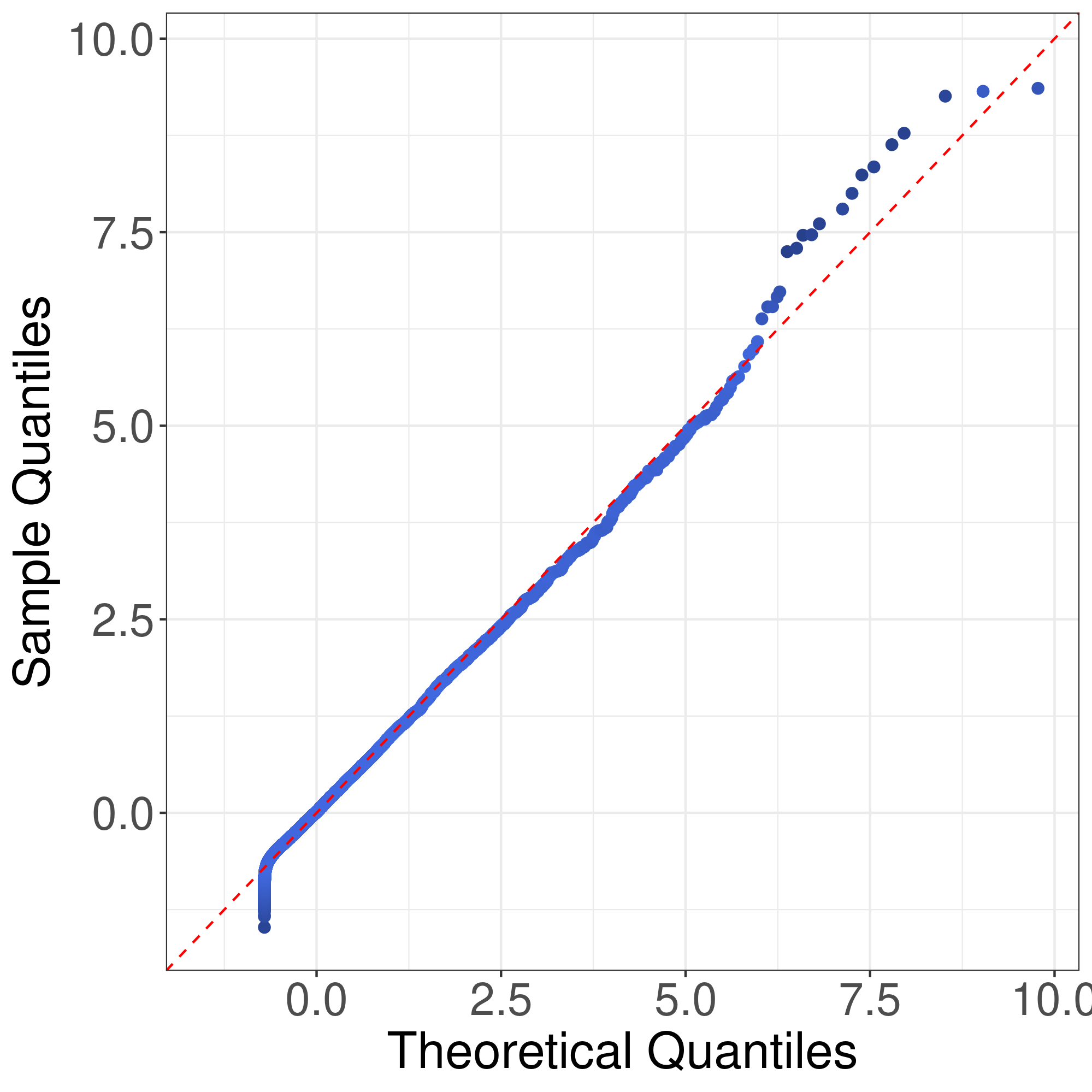}}
    \caption{
        Plots of the empirical quantiles of $T_{\mathrm{CQ}}(\BY_1, \BY_2) / \sigma_{T,n}$ against that of the asymptotic distribution in \eqref{eq:representation}.
        $n_1 = 16$, $n_2 = 24$, $p = 300$.
    }
    \label{fig:QQ2}
\end{figure}

\begin{figure}
     \centering
     \subfigure[Model I]{\includegraphics[width = 0.48\textwidth]{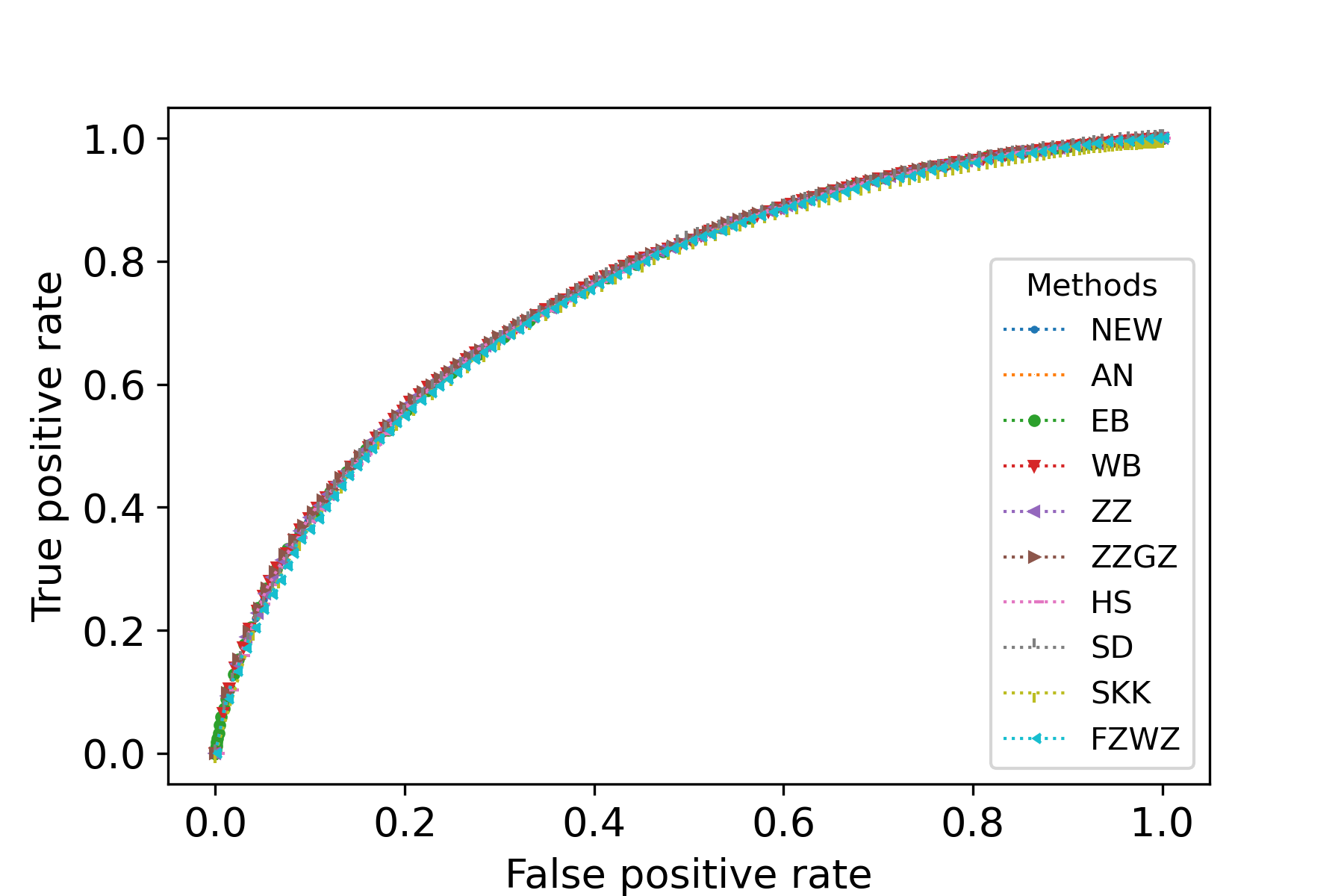}
     }
     \subfigure[Model II]{\includegraphics[width = 0.48\textwidth]{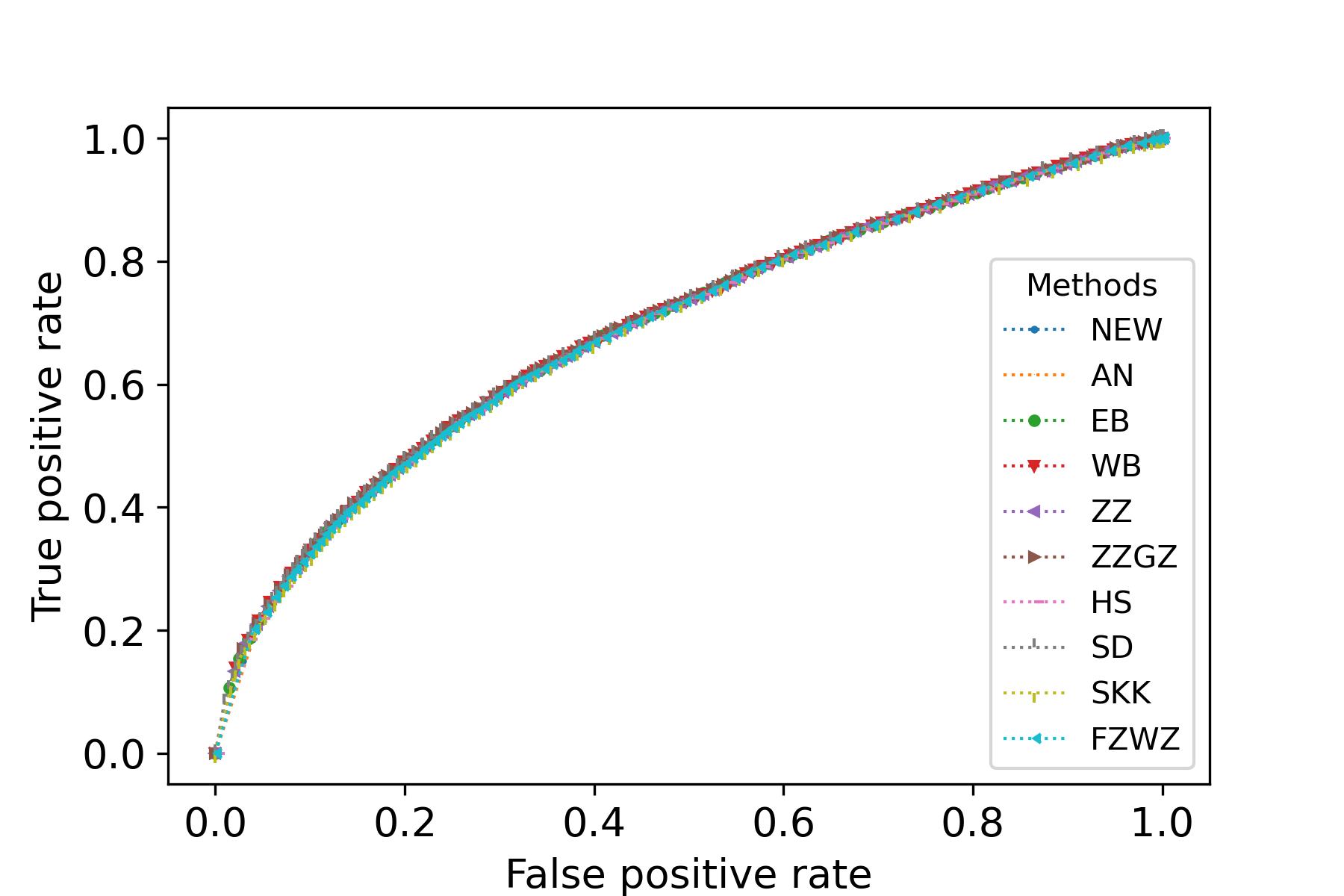}
     }
     \\
     \subfigure[Model III]{\includegraphics[width = 0.48\textwidth]{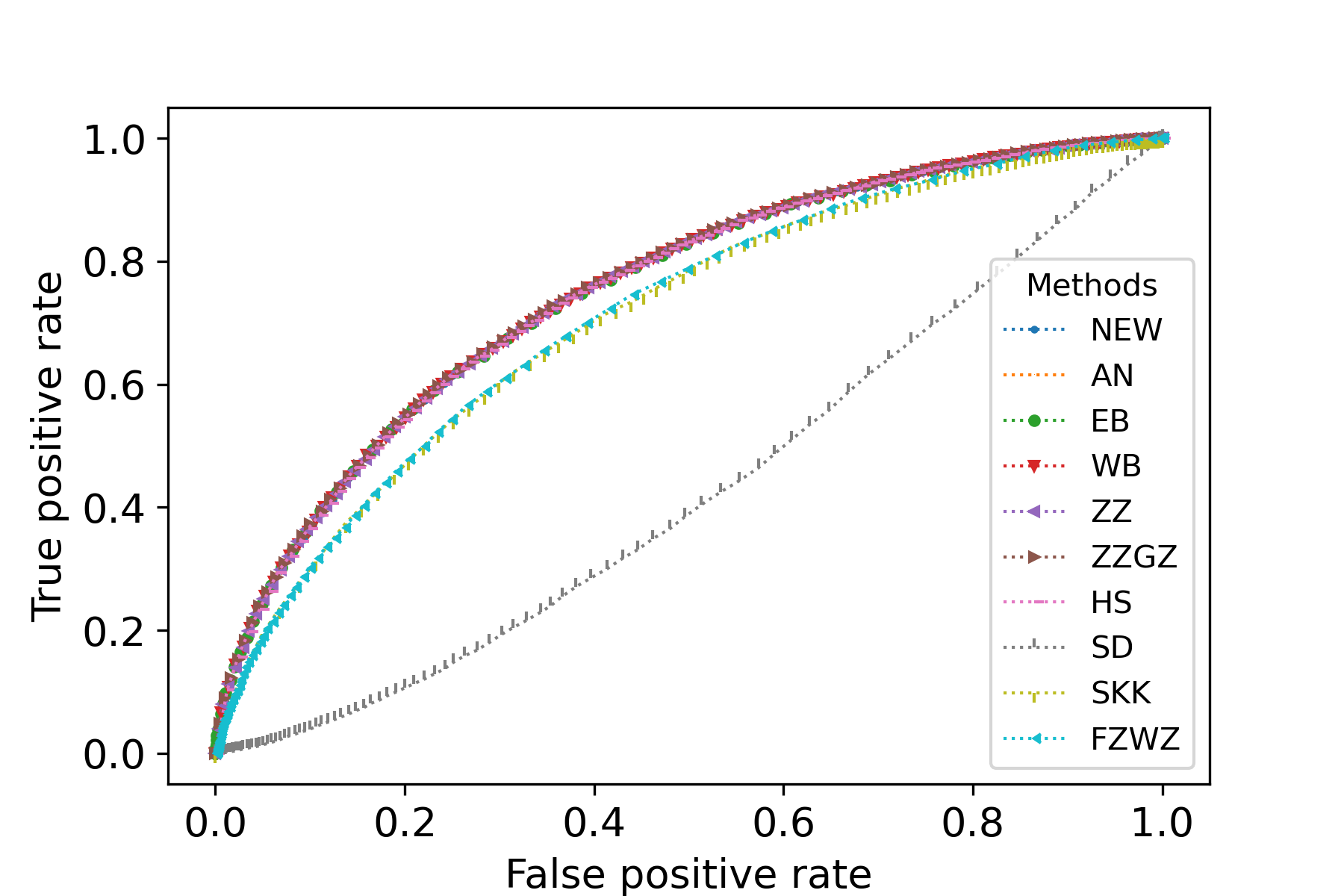}
     }
     \subfigure[Model IV]{\includegraphics[width = 0.48\textwidth]{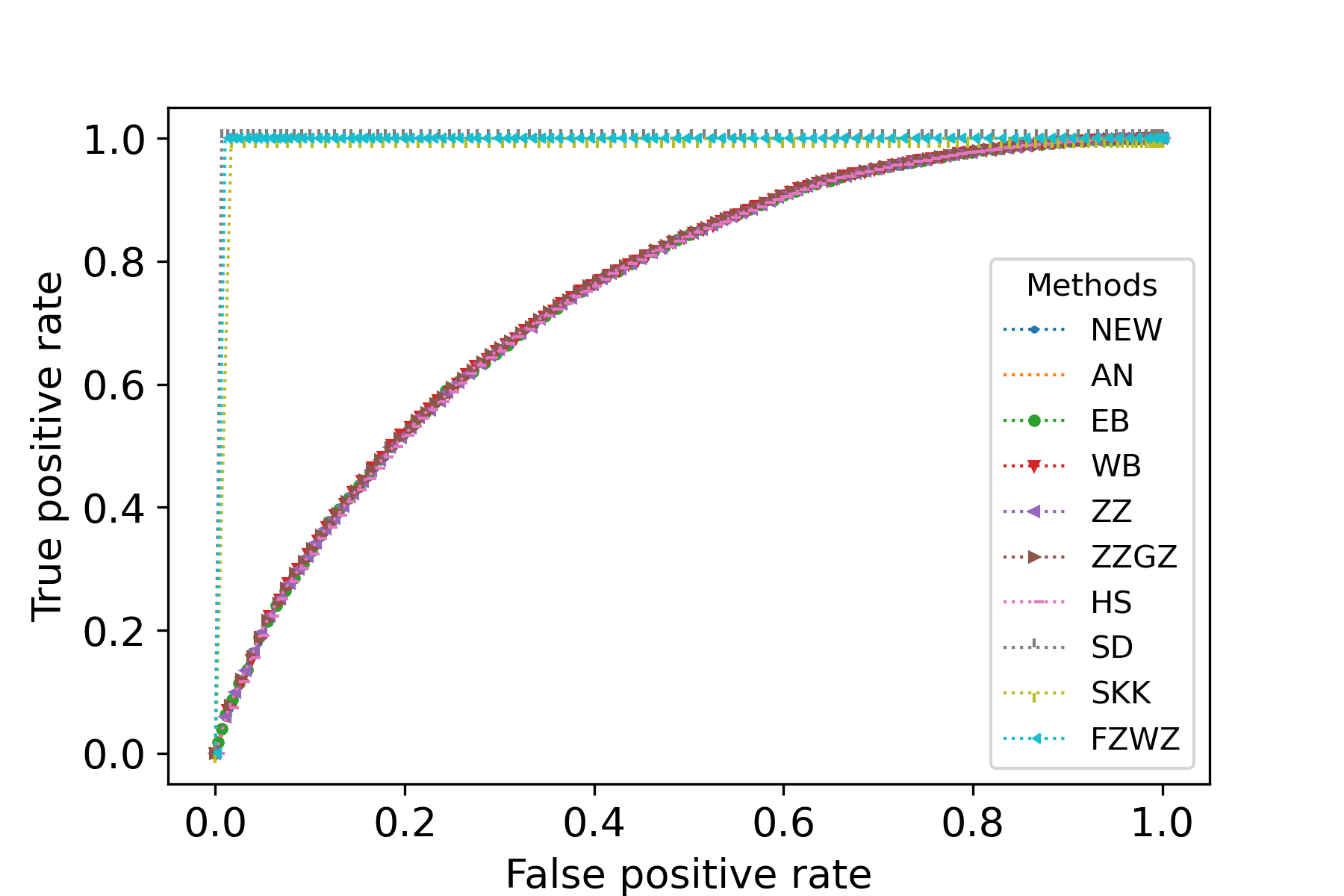}
     }

     \captionsetup{width=\textwidth}
     \caption{Receiver operating characteristic curve of various test procedures.
    NEW, the proposed test procedure;
    CQ, the test procedure of \cite{Chen2010ATwo}; 
    EB, the empirical bootstrap method based on $T_{\mathrm{CQ}}(\BX_1, \BX_2)$;
    WB, the wild bootstrap method based on $T_{\mathrm{CQ}}(\BX_1, \BX_2)$;
    ZZ, the test procedure of \cite{Zhang2020AFurtherStudy};
    ZZGZ, the test procedure of \cite{Zhang2021Two-sample}; 
    LOU, the test procedure of \cite{Lou2020HighDimensional};
    SD, the test procedure of \cite{Srivastava2008A}; 
    SKK, the test procedure of \cite{SRIVASTAVA2013349};
    FZWZ, the test procedure of \cite{Feng2014Two}.
     }
     \label{fig:1}
 \end{figure}

Now we consider the correctness of Corollary \ref{corollary:the}.
Corollary \ref{corollary:the} claims that the general asymptotic distributions of $T_{\mathrm{CQ}}(\BY_1, \BY_2) / \sigma_{T, n}$ are weighted sums of independent normal random variable and centered $\chi^2$ random variables.
In Corollay \ref{corollary:the}, the parameters $\{\kappa_i\}_{i=1}^\infty$ relies on the limits of the eigenvalues of $\bPsi_n / \{\mytr(\bPsi_n^2)\}^{1/2}$ along a subsequence of $\{n\}$.
To cover different senarios of asymptotic distributions, we consider the following model.
Suppose $Y_{k,i} \sim \mathcal N  \{ \mathbf 0_p, \gamma \mathbf 1_{p} \mathbf 1_p^\myT + (1 - \gamma) \BI_p \} $, $i = 1, \ldots, n_k$, $k = 1, 2$, where $\gamma \in [0, 1]$.
In this case, the eigenvalues of $\bPsi_n / \{\mytr(\bPsi_n^2)\}^{1/2}$ are 
\begin{align*}
    \frac{
        p \gamma + 1 - \gamma
        }{
        \{
            (
                p \gamma + 1 - \gamma
            )^2
            +(p-1)(1-\gamma)^2
        \}^{1/2}
    }
    \quad 
    \textrm{ and }
    \quad
    \frac{
        1 - \gamma
        }{
        \{
            (
                p \gamma + 1 - \gamma
            )^2
            +(p-1)(1-\gamma)^2
        \}^{1/2}
    }
    ,
\end{align*}
with multiplicities $1$ and $p-1$, respectively.
We assume $p \to \infty$ as $n \to \infty$.
We consider four choices of $\gamma$.
First, we consider $\gamma = 0$.
In this case, $\kappa_i = 0$, $i =1, 2, \ldots$, and the asymptotic distribution of $T_{\mathrm{CQ}}(\BY_1, \BY_2) / \sigma_{T, n}$ is the standard normal distribution.
Second, we consider $\gamma = 1 / p^{1/2}$.
In this case, $\kappa_1 = 1/ \surd 2$, and $\kappa_i = 0$, $i = 2, 3, \ldots$, and the asymptotic distribution of $T_{\mathrm{CQ}}(\BY_1, \BY_2) / \sigma_{T, n}$ is $\mathcal N (0, 1) / \surd 2 + \{ \chi^2(1) - 1 \} / 2$.
In the third and fourth cases,  we consider $\gamma = 1/2$ and $1$, respectively.
In these two cases, $\kappa_1 = 1$, and $\kappa_i = 0$, $i = 2, 3, \ldots$, and the asymptotic distribution of $T_{\mathrm{CQ}}(\BY_1, \BY_2) / \sigma_{T, n}$ is the standardized $\chi^2$ distribution with $1$ degree of freedom. 
Fig. \ref{fig:QQ2} illustrates the plots of 
the empirical quantiles of $T_{\mathrm{CQ}}(\BY_1, \BY_2) / \sigma_{T,n}$ against that of the asymptotic distribution in \eqref{eq:representation} for various values of $\gamma$.
The empirical quantiles of $T_{\mathrm{CQ}}(\BY_1, \BY_2) / \sigma_{T,n}$ are obtained by $10,000$ replications.
It can be seen that the distribution of $T_{\mathrm{CQ}}(\BY_1, \BY_2) / \sigma_{T,n}$
can be well approximated by the asymptotic distributions given in Corollary \ref{corollary:the}.
This verifies the conclusion of Corollary \ref{corollary:the}.
We note that the approximation $ \{ \bxi_p^\myT \bPsi_n \bxi_p - \mytr(\bPsi_n) \} / \{2 \mytr(\bPsi_n^2)\}^{1/2}$ in Theorem \ref{thm:universality_TCQ} is slightly better than the distribution approximations in Corollary \ref{corollary:the}.
This phenomenon is reasonable since the distributions in Corollary \ref{corollary:the} are in fact the asymptotic distributions of $ \{ \bxi_p^\myT \bPsi_n \bxi_p - \mytr(\bPsi_n) \} / \{2 \mytr(\bPsi_n^2)\}^{1/2}$ in Theorem \ref{thm:universality_TCQ}, and hence may have larger approximation error than $ \{ \bxi_p^\myT \bPsi_n \bxi_p - \mytr(\bPsi_n) \} / \{2 \mytr(\bPsi_n^2)\}^{1/2}$.

We have seen that many competing test procedures do not have a good control of test level.
To get rid of the effect of distorted test level, we plot the receiver operating characteristic curve of the test procedures.
Fig. \ref{fig:1} illustrates the receiver operating characteristic curve of various test procedures with $n_1=16$, $n_2 = 24$ and $p = 300$.
 It can be seen that for Models I and II, all test procedures have similar power behavior.
 For Model III, the scalar-invariant tests are less powerful than other tests.
 For Model IV, the scalar-invariant tests are more powerful than other tests.
These  results show that 
various test procedures based on $\|\bar \BX_1 - \bar \BX_2\|^2$ or $T_{\mathrm{CQ}}(\BX_1, \BX_2)$
may not have essential difference in power, and their performances are largely driven by the test level.
}

\section{Universality of generalized quadratic forms}
\label{sec:univer}
In this section, we investigate the universality property of generalized quadratic forms, which is the key tool to study the distributional behavior of the proposed test procedure.
The result in this section is also interesting in its own right.

    Suppose $\xi_1, \ldots, \xi_n$ are independent random elements taking values in a Polish space $\mathcal X$.
    We consider the generalized quadratic form
    {\small
    \begin{align*}
        W(\xi_1, \ldots, \xi_n) = \sum_{1\leq i < j \leq n} w_{i,j}(\xi_i, \xi_j),
    \end{align*}
}%
    where $w_{i,j}(\cdot, \cdot): \mathcal X \times \mathcal X \to \mathbb R$ is measurable with respect to the product $\sigma$-algebra on $\mathcal X \times \mathcal X$,
    $1\leq i < j \leq n$.
    The generalized quadratic form includes the statistic $T_{\mathrm{CQ}}(\BY_1, \BY_2)$ as a special case.
    To see this, consider
    $\xi_i = Y_{1,i}$, $i = 1, \ldots, n_1$ 
    and 
    $\xi_j = Y_{2, j - n_1}$, $j = n_1 + 1, \ldots, n$.
    Let
    {\small
\begin{align*}
w_{i,j}(\xi_i, \xi_j) = 
\left\{
    \begin{array}{ll}
        \frac{2\xi_i^\myT \xi_j}{n_1 (n_1 - 1) }
&
\text{for } 1\leq i< j \leq n_1
,
\\
\frac{-2\xi_i^\myT \xi_j}{n_1 n_2 } 
&
\text{for } 1\leq i \leq n_1  \text{ and }  n_1+1 \leq j \leq n
,
\\
\frac{2\xi_i^\myT \xi_j}{n_2 (n_2 - 1) } 
&
\text{for } n_1 + 1\leq i< j \leq n
.
    \end{array}
\right.
\end{align*}
}%
In this case, the generalized quadratic form $W(\xi_1, \ldots, \xi_n) $ becomes the statistic $T_{\mathrm{CQ}} (\BY_1, \BY_2)$.
Similarly, conditioning on $\tilde \BX_1$ and $\tilde \BX_2$, the randomized statistic $T_{\mathrm{CQ}}(E ; \tilde \BX_1, \tilde \BX_2)$ is a special case of the generalized quadratic form with $\xi_i = \epsilon_{1,i}$, $i = 1, \ldots, m_1$ 
and 
$\xi_j = \epsilon_{2, j - m_1}$, $j = m_1 + 1, \ldots, m_1 + m_2$.
Hence it is meaningful to investigate the general behavior of the generalized quadratic form.

    The asymptotic normality of the generalized quadratic forms was studied by \cite{Jong1987A} via martingale central limit theorem and by \cite{Doebler2017Quantitative} via Stein's method.
    However, we are interested in the general setting in which
    $W(\xi_1, \ldots, \xi_n)$ may not be asymptotically normally distributed.
    Therefore,
    compared with the asymptotic normality,
    we are more interested in the \emph{universality} property of $W(\xi_1, \ldots, \xi_n)$; i.e., the distributional behavior of $W(\xi_1, \ldots, \xi_n)$ does not rely on the particular distribution of $\xi_1, \ldots, \xi_n$ asymptotically.
In this regard, many achievements have been made for the universality of $W(\xi_1, \ldots, \xi_n)$ for special form of $w_{i,j}(\xi_i, \xi_j)$; see, e.g., \cite{Mossel2010NoiseStability}, \cite{Nourdin2010InvariancePrinciples}, \cite{Xu2019Pearson} and the references therein.
    However, these results can not be used to deal with $T_{\mathrm{CQ}}(\BY_1, \BY_2)$ in our setting.
    In fact, the results in \cite{Mossel2010NoiseStability} and \cite{Nourdin2010InvariancePrinciples}
    can not be readily applied to $T_{\mathrm{CQ}}(\BY_1, \BY_2)$ while
the result in \cite{Xu2019Pearson} can only be applied to identically distributed observations.
    To the best of our knowledge, the universality of the generalized quadratic forms was never considered in the literature.
    We shall derive a universality property of the generalized quadratic forms using Lindeberg principle, an old and powerful technique; see, e.g., \cite{Chatterjee2006AGeneralization}, \cite{Mossel2010NoiseStability} for more about Lindeberg principle.

    \begin{assumption}\label{assumption:oh1}
    Suppose $\xi_1, \ldots, \xi_n$ are independent random elements taking values in a Polish space $\mathcal X$.
    Assume the following conditions hold for all $1\leq i < j \leq n $:
    \begin{enumerate}[(a)]
        \item
    $
    \myE \{w_{i,j}(\xi_i, \xi_j)^4\}
    < \infty
    $.
\item
For all $\Ba \in \mathcal X$,
$
\myE  \left \{w_{i,j}(\xi_i, \Ba)\right\}
            =
            \myE \left\{w_{i,j}(\Ba, \xi_j) \right\}
            = 0
            $.
    \end{enumerate}
    \end{assumption}
    Define
    $
    \sigma_{i,j}^2 = \myE \{w_{i,j}(\xi_i, \xi_j)^2\} 
    $.
    Under Assumption \ref{assumption:oh1},
    we have
    $\myE \{W(\xi_1, \ldots, \xi_n) \} = 0 $ and $ \myVar \{W(\xi_1, \ldots, \xi_n)\} = \sum_{1 = 1}^n \sum_{j=i+1}^{n} \sigma_{i,j}^2$.
    We would like to give explicit bound for the difference between the distributions of $W(\xi_1, \ldots, \xi_n)$ and $W(\eta_1, \ldots, \eta_n)$ for a general class of random vectors $\eta_1, \ldots, \eta_n$.
    We impose the following conditions on $\eta_1, \ldots, \eta_n$.
    \begin{assumption}\label{assumption:oh2}
    Suppose $\eta_1, \ldots, \eta_n$ are independent random elements taking values in $\mathcal X$
    and are independent of $\xi_1, \ldots, \xi_n$.
    Assume the following conditions hold for all $1\leq i < j \leq n $:
    \begin{enumerate}[(a)]
        \item
$
    \myE 
    \{
    w_{i,j}(\xi_i, \eta_j)^4
\} 
< \infty
    $,
$
    \myE 
    \{
    w_{i,j}(\eta_i, \xi_j)^4
\}
< \infty
    $ and
    $
    \myE \{w_{i,j}(\eta_i, \eta_j)^4\}
< \infty
    $.
\item
For all $\Ba \in \mathcal X$,
$
\myE \{ w_{i,j}(\eta_i, \Ba) \}
            =
            \myE \{ w_{i,j}(\Ba, \eta_j) \}
            =
            0
    $.
\item
    For any $\Ba, \Bb \in \mathcal X$,
    {\small
    \begin{align*}
    &
    \myE \{
        w_{i,k} (\Ba, \xi_k)
        w_{j,k} (\Bb, \xi_k)
    \}
    =
    \myE \{
        w_{i,k} (\Ba, \eta_k)
        w_{j,k} (\Bb, \eta_k)
    \},
    \quad
    \mathrm{for }
    \quad
    1\leq i \leq j < k \leq n
    ,
    \\
     &
    \myE \{
        w_{i,j} (\Ba, \xi_j)
        w_{j,k} (\xi_j, \Bb)
    \}
    =
    \myE \{
        w_{i,j} (\Ba, \eta_j)
        w_{j,k} (\eta_j, \Bb)
    \},
    \quad
    \mathrm{for }
    \quad
    1\leq i < j < k \leq n
    ,
    \\
     &
    \myE \{
        w_{i,j} (\xi_i, \Ba)
        w_{i,k} (\xi_i, \Bb)
    \}
    =
    \myE \{
        w_{i,j} (\eta_i, \Ba)
        w_{i,k} (\eta_i, \Bb)
    \}
    ,
    \quad
    \mathrm{for }
    \quad
    1\leq i < j \leq k \leq n
    .
\end{align*}
}%
    \end{enumerate}
    \end{assumption}
    We claim that under Assumptions \ref{assumption:oh1} and \ref{assumption:oh2}, there exists a nonnegative $C$ (which possibly depends on $n$) such that for all $1\leq i< j \leq n$,
    {\small
\begin{align}\label{eq:zuoteng}
    \max\left[
     \myE \{w_{i,j}(\xi_i, \xi_j)^4\} 
     ,
     \myE \{w_{i,j}(\xi_i, \eta_j)^4\} 
,
     \myE \{w_{i,j}(\eta_i, \xi_j)^4\} 
,
     \myE \{w_{i,j}(\eta_i, \eta_j)^4\} 
 \right]
     \leq C \sigma_{i,j}^4.
\end{align}
}%
In fact, by Assumption \ref{assumption:oh1}, (a)  and Assumption \ref{assumption:oh2}, (a), the left hand side of \eqref{eq:zuoteng} is finite.
Also, if $\sigma_{i,j}^4 = 0$, that is, $\myE\{w_{i,j}(\xi_i, \xi_j)^2\} = 0$, then from (c) of Assumption \ref{assumption:oh2},
{\small
\begin{align*}
    0
    =
    \myE\{w_{i,j}(\xi_i, \xi_j)^2\}
    =
    \myE\{w_{i,j}(\xi_i, \eta_j)^2\}
    =
    \myE\{w_{i,j}(\eta_i, \xi_j)^2\}
    =
    \myE\{w_{i,j}(\eta_i, \eta_j)^2\}
    .
\end{align*}
}%
It follows that
$
w_{i,j}(\xi_i, \xi_j)
=
w_{i,j}(\xi_i, \eta_j)
=
w_{i,j}(\eta_i, \xi_j)
=
w_{i,j}(\eta_i, \eta_j)
=0
$
almost surely.
In this case, the left hand side of \eqref{eq:zuoteng} is also $0$.
Hence our claim is valid.
Let $\rho_n$ denote the minimum nonnegative $C$ such that \eqref{eq:zuoteng} holds for all $1\leq i< j \leq n$,
It will turn out that the difference between the distributions of $W(\xi_1, \ldots, \xi_n)$ and $W(\eta_1, \ldots, \eta_n)$ relies on $\rho_n$.

    As in \cite{Mossel2010NoiseStability},
    we define the 
    \emph{influence}
    of $\xi_i$ on $W(\xi_1, \ldots, \xi_n)$ as
    {\small
    \begin{align*}
        \mathrm{Inf}_{i}
        =
        \myE\left\{
            \myVar( W(\xi_1, \ldots, \xi_n) \mid \xi_1, \ldots, \xi_{i-1}, \xi_{i+1}, \ldots, \xi_n )
        \right\}.
    \end{align*}
}%
    It can be seen that
        $\mathrm{Inf}_i
        =
    \sum_{j=1}^{i-1} 
    \sigma_{j,i}^2
        +
    \sum_{j=i+1}^{n}
    \sigma_{i,j}^2
    $.

    The following theorem provides a universality property of $W(\xi_1, \ldots, \xi_n)$.

    \begin{theorem}\label{thm:universality_GQF}
        Under Assumptions \ref{assumption:oh1} and \ref{assumption:oh2},
        we have
        {\small
\begin{align*}
\left\|
\mathcal L \{ W(\xi_1, \ldots, \xi_n) \}
-
\mathcal L \{ W(\eta_1, \ldots, \eta_n) \}
\right\|_3
    \leq
\frac{
    \rho_n^{3/4}
}
{3^{1/4}}
        \sum_{i=1}^n \mathrm{Inf}_i^{3/2}
        .
\end{align*}
}%
    \end{theorem}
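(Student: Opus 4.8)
The plan is to prove the bound by the Lindeberg replacement principle, swapping the coordinates $\xi_i$ into $\eta_i$ one at a time and controlling each swap through a third-order Taylor expansion. For $0 \le k \le n$ set $W_k = W(\eta_1, \ldots, \eta_k, \xi_{k+1}, \ldots, \xi_n)$, so that $W_0 = W(\xi_1, \ldots, \xi_n)$ and $W_n = W(\eta_1, \ldots, \eta_n)$. Fix $f \in \mathscr{C}_b^3(\mathbb{R})$ with $\sup_x|f(x)| \le 1$ and $\sup_x |f^{(\ell)}(x)| \le 1$ for $\ell = 1,2,3$; by the definition of $\|\cdot\|_3$ it suffices to bound $|\myE f(W_0) - \myE f(W_n)| \le \sum_{k=1}^n |\myE f(W_{k-1}) - \myE f(W_k)|$. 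For each $k$ I would split off the contribution of coordinate $k$: writing $U_k$ for the sum of those $w_{i,j}$ with $i \ne k$ and $j \ne k$ (a function of $\eta_1,\ldots,\eta_{k-1},\xi_{k+1},\ldots,\xi_n$ only), and $V_k^\xi = \sum_{i<k} w_{i,k}(\eta_i,\xi_k) + \sum_{j>k} w_{k,j}(\xi_k,\xi_j)$, $V_k^\eta = \sum_{i<k} w_{i,k}(\eta_i,\eta_k) + \sum_{j>k} w_{k,j}(\eta_k,\xi_j)$, one has $W_{k-1} = U_k + V_k^\xi$ and $W_k = U_k + V_k^\eta$.

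Then I would Taylor expand $f$ about $U_k$ to second order, so that
\begin{align*}
f(W_{k-1}) - f(W_k) = f'(U_k)(V_k^\xi - V_k^\eta) + \tfrac12 f''(U_k)\{(V_k^\xi)^2 - (V_k^\eta)^2\} + (R_k^\xi - R_k^\eta),
\end{align*}
with $|R_k^\xi| \le \tfrac16 |V_k^\xi|^3$ and $|R_k^\eta| \le \tfrac16 |V_k^\eta|^3$ since $\|f'''\|_\infty \le 1$. The heart of the argument is that the first two terms have vanishing expectation. Conditioning on everything outside coordinate $k$ (so that $U_k$ is frozen), the only remaining randomness in $V_k^\xi$ is $\xi_k$, and each summand then has conditional mean zero by part (b) of Assumptions \ref{assumption:oh1} and \ref{assumption:oh2} (e.g. $\myE\{w_{i,k}(\eta_i,\xi_k)\mid \text{rest}\} = \myE\{w_{i,k}(\mathbf a, \xi_k)\}$ at the frozen value $\mathbf a = \eta_i$, which is $0$); the same holds for $V_k^\eta$, so $\myE(V_k^\xi - V_k^\eta \mid \text{rest}) = 0$ and the linear term drops out. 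For the quadratic term I would expand $(V_k^\xi)^2$ into the three families of cross products — both indices below $k$, one below and one above, both above — and match the conditional expectation of each family against its $\eta_k$ counterpart using precisely the three moment identities in part (c) of Assumption \ref{assumption:oh2}; this gives $\myE\{(V_k^\xi)^2 \mid \text{rest}\} = \myE\{(V_k^\eta)^2 \mid \text{rest}\}$, so the quadratic term's expectation also vanishes.

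It then remains to control the remainder, giving $|\myE f(W_{k-1}) - \myE f(W_k)| \le \tfrac16(\myE|V_k^\xi|^3 + \myE|V_k^\eta|^3)$. Here I would use Lyapunov's inequality $\myE|V_k^\xi|^3 \le (\myE(V_k^\xi)^4)^{3/4}$ and compute the fourth moment by conditioning on $\xi_k$, which renders the summands of $V_k^\xi$ independent and conditionally centred. Expanding, $\myE\{(V_k^\xi)^4 \mid \xi_k\}$ equals the sum of conditional fourth moments plus three times the sum over distinct pairs of products of conditional second moments; bounding the cross products by Cauchy--Schwarz together with conditional Jensen reduces everything to the quantities $\myE\{w_{i,k}(\cdot,\cdot)^4\}$, each at most $\rho_n \sigma_{i,k}^4$ (resp. $\rho_n\sigma_{k,j}^4$) by the definition of $\rho_n$ in \eqref{eq:zuoteng}. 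Collecting terms yields $\myE(V_k^\xi)^4 \le 3\rho_n (\sum_{i<k}\sigma_{i,k}^2 + \sum_{j>k}\sigma_{k,j}^2)^2 = 3\rho_n \mathrm{Inf}_k^2$, and likewise for $V_k^\eta$; hence $\myE|V_k^\xi|^3 \le 3^{3/4}\rho_n^{3/4}\mathrm{Inf}_k^{3/2}$. Each swap then contributes at most $\tfrac26\cdot 3^{3/4}\rho_n^{3/4}\mathrm{Inf}_k^{3/2} = 3^{-1/4}\rho_n^{3/4}\mathrm{Inf}_k^{3/2}$, and summing over $k$ and taking the supremum over $f$ produces the claimed bound with the exact constant. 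I expect the main obstacle to be the bookkeeping in the quadratic step: one must organize the expansion of $(V_k^\xi)^2$ so that every cross term is matched by exactly one of the three identities in Assumption \ref{assumption:oh2}(c), keeping careful track of the index ranges ($1\le i \le j < k$, $1 \le i < j < k$, $1 \le i < j \le k$) and of the two roles played by the shared coordinate.
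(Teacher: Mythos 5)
Your proposal is correct and follows essentially the same route as the paper's own proof: the same one-coordinate-at-a-time Lindeberg swap with the decomposition $W_{k}=W_{k,0}+V_k$, cancellation of the first- and second-order Taylor terms via Assumptions \ref{assumption:oh1}(b), \ref{assumption:oh2}(b) and the three identities in \ref{assumption:oh2}(c), and the bound $\myE\{(V_k)^4\}\leq 3\rho_n \mathrm{Inf}_k^2$ combined with Lyapunov's inequality, yielding the exact constant $3^{-1/4}$. No gaps; the conditional-independence bookkeeping you flag as the main obstacle is handled in the paper exactly as you outline it.
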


    From Theorem \ref{thm:universality_GQF}, the distance between $W(\xi_1, \ldots, \xi_n)$ and $W(\eta_1, \ldots, \eta_n)$ is bounded by a function of $\rho_n$ and the influences.
    Suppose as $n \to \infty$, $\rho_n$ is bounded and $\sum_{i=1}^n \mathrm{Inf}_i^{3/2}$ tends to $0$.
    Then Theorem \ref{thm:universality_GQF} implies that
    $W(\xi_1, \ldots, \xi_n)$
    and
    $W(\eta_1, \ldots, \eta_n)$
    share the same possible asymptotic dsitribution.
    That is, the distribution of $W(\xi_1, \ldots, \xi_n)$ enjoys a universality property.
In the proof of Theorem \ref{thm:universality_TCQ}
and Theorem \ref{thm:final_thm},
    we apply Theorem \ref{thm:universality_GQF} to $T_{\mathrm{CQ}}(\BY_1, \BY_2)$ and $T_{\mathrm{CQ}}(E; \tilde \BX_1, \tilde \BX_2)$, respectively, and consider normally distributed $\eta_i$, $i = 1, \ldots, n$.
    With this technique, the distributional behaviors of
$T_{\mathrm{CQ}}(\BY_1, \BY_2)$ and $T_{\mathrm{CQ}}(E ; \tilde \BX_1, \tilde \BX_2)$
are reduced to the circumstances where the observations are normally distributed.

\begin{proof}[{of Theorem \ref{thm:universality_GQF}}]

    For $k = 1, \ldots, n + 1$,
    define 
    {\small
    \begin{align*}
        W_k =& W(\eta_1, \ldots, \eta_{k-1}, \xi_{k}, \dots, \xi_n)
        .
    \end{align*}
}%
    Then $W_1 = W(\xi_1, \ldots, \xi_n)$ and $W_{n+1} = W(\eta_1, \ldots, \eta_n)$.
    Fix an $f \in \mathscr C_b^3 (\mathbb R)$.
    We have
    {\small
    \begin{align}\label{eq:723_1}
        \left|
        \myE f(W(\xi_1, \ldots, \xi_n))
        -
        \myE f(W(\eta_1, \ldots, \eta_n))
        \right|
        \leq&
        \sum_{k=1}^n
        \left|
        \myE
        \left\{
        f(W_k)
        -
        f(W_{k+1})
    \right\}
        \right|
        .
    \end{align}
}%

    Define
    {\small
    \begin{align*}
        W_{k,0}
        =
        \sum_{1\leq i < j \leq k-1} w_{i,j}(\eta_i, \eta_j)
        +
        \sum_{k+1\leq i < j \leq n} w_{i,j}(\xi_i, \xi_j)
        +
        \sum_{1\leq i \leq k-1}
        \sum_{k+1\leq j \leq n}
        w_{i,j}(\eta_i, \xi_j)
        .
    \end{align*}
}%
    Note that $W_{k,0}$ only relies on $\eta_1, \ldots, \eta_{k-1}, \xi_{k+1}, \ldots, \xi_{n}$.
    It can be seen that 
    {\small
\begin{align*}
    W_k 
    =&
    W_{k,0}
    +\sum_{i=1}^{k-1} w_{i,k}(\eta_i, \xi_k)
    + \sum_{j=k+1}^n w_{k,j}(\xi_k, \xi_j)
    ,
    \\
    W_{k+1} =&
    W_{k,0} 
    +\sum_{i=1}^{k-1} w_{i,k}(\eta_i, \eta_k)
    + \sum_{j=k+1}^n w_{k,j}(\eta_k, \xi_j)
    .
\end{align*}
}%
From Taylor's theorem, 
{\small
\begin{align}
    \label{eq:lindeberg_taylor_1}
    &
    \left|
    f(W_k) 
    -
    f(W_{k,0}) 
    -
    \sum_{i=1}^2 
    \frac{1}{i!}
    (W_{k} - W_{k,0})^i f^{(i)} (W_{k,0})
    \right|
    \leq
    \frac{\sup_{x\in \mathbb R} |f^{(3)}(x)| }{6} 
    \left|W_k - W_{k,0}\right|^3
    ,
    \\
    \label{eq:lindeberg_taylor_2}
    &
    \left|
    f(W_{k+1}) 
    -
    f(W_{k,0}) 
    -
    \sum_{i=1}^2 
    \frac{1}{i!}
    (W_{k+1} - W_{k,0})^i f^{(i)} (W_{k,0})
    \right|
    \leq
    \frac{\sup_{x\in \mathbb R} |f^{(3)}(x)| }{6} 
    \left|W_{k+1} - W_{k,0}\right|^3
    .
\end{align}
}%
Now we show that
{\small
\begin{align}\label{eq:moment_matching}
    \myE
    \left\{
    \sum_{i=1}^2 
    \frac{1}{i!}
    (W_{k} - W_{k,0})^i f^{(i)} (W_{k,0})
\right\}
    = 
    \myE
    \left\{
    \sum_{i=1}^2 
    \frac{1}{i!}
    (W_{k+1} - W_{k,0})^i f^{(i)} (W_{k,0})
\right\}
.
\end{align}
}%
By conditioning on $\eta_1, \ldots, \eta_{k-1}, \xi_{k+1}, \ldots, \xi_n$, it can be seen that \eqref{eq:moment_matching} holds provided that for $k = 1, \ldots, n$ and $\ell = 1, 2$,
{\small
\begin{align*}
    \myE \{(W_k - W_{k,0})^\ell \mid \eta_1, \ldots, \eta_{k-1}, \xi_{k+1}, \ldots, \xi_n \}
    =
    \myE \{(W_{k+1} - W_{k,0})^\ell \mid \eta_1, \ldots, \eta_{k-1}, \xi_{k+1}, \ldots, \xi_n \}
    .
\end{align*}
}%
For the case of $\ell = 1$, we have
{\small
\begin{align*}
    &
    \myE \{W_k - W_{k,0} \mid \eta_1, \ldots, \eta_{k-1}, \xi_{k+1}, \ldots, \xi_n \}
    =
    \sum_{i=1}^{k-1} \myE \{ w_{i,k}(\eta_i, \xi_k) \mid \eta_i \}
    + \sum_{j=k+1}^n \myE \{ w_{k,j}(\xi_k, \xi_j) \mid \xi_j \}
    ,
\end{align*}
}%
which equals $0$ by (b) of Assumption \ref{assumption:oh1}.
Similarly,
we have
{\small
\begin{align*}
\myE \{W_{k+1} - W_{k,0} \mid \eta_1, \ldots, \eta_{k-1}, \xi_{k+1}, \ldots, \xi_n \}
    =0
.
\end{align*}
}%
Now we deal with the case of $\ell = 2$.
From (c) of Assumption \ref{assumption:oh2}, we have
{\small
\begin{align*}
    &
    \myE \{(W_{k} - W_{k,0})^2 \mid \eta_1, \ldots, \eta_{k-1}, \xi_{k+1}, \ldots, \xi_n \}
    \\
    =&
    \sum_{i_1=1}^{k-1} 
    \sum_{i_2=1}^{k-1} 
    \myE \{
    w_{i_1,k}(\eta_{i_1}, \xi_k)
    w_{i_2,k}(\eta_{i_2}, \xi_k)
    \mid \eta_{i_1}, \eta_{i_2} \}
    +
    \sum_{j_1=k+1}^n 
    \sum_{j_2=k+1}^n 
    \myE \{
        w_{k,j_1}(\xi_k, \xi_{j_1})
        w_{k,j_2}(\xi_k, \xi_{j_2})
    \mid \xi_{j_1}, \xi_{j_2} \}
    \\
     &
    +
    2
    \sum_{i=1}^{k-1}
    \sum_{j=k+1}^n
    \myE \{
        w_{i,k}(\eta_{i}, \xi_k)
        w_{k,j}(\xi_k, \xi_{j})
        \mid
        \eta_i, \xi_j
    \}
    \\
    =&
    \sum_{i_1=1}^{k-1} 
    \sum_{i_2=1}^{k-1} 
    \myE \{
    w_{i_1,k}(\eta_{i_1}, \eta_k)
    w_{i_2,k}(\eta_{i_2}, \eta_k)
    \mid \eta_{i_1}, \eta_{i_2} \}
    +
    \sum_{j_1=k+1}^n 
    \sum_{j_2=k+1}^n 
    \myE \{
        w_{k,j_1}(\eta_k, \xi_{j_1})
        w_{k,j_2}(\eta_k, \xi_{j_2})
    \mid \xi_{j_1}, \xi_{j_2} \}
    \\
     &
    +
    2
    \sum_{i=1}^{k-1}
    \sum_{j=k+1}^n
    \myE \{
        w_{i,k}(\eta_{i}, \eta_k)
        w_{k,j}(\eta_k, \xi_{j})
        \mid
        \eta_i, \xi_j
    \}
    \\
    =
    &
    \myE \{(W_{k+1} - W_{k,0})^2 \mid \eta_1, \ldots, \eta_{k-1}, \xi_{k+1}, \ldots, \xi_n \}
    .
\end{align*}
}%
Therefore, \eqref{eq:moment_matching} holds.
It follows from 
\eqref{eq:lindeberg_taylor_1}, \eqref{eq:lindeberg_taylor_2} and \eqref{eq:moment_matching} that
{\small
\begin{align*}
        \left|
        \myE
        f(W_{k+1})
        -
        \myE
        f(W_k)
        \right|
        \leq&
        \frac{\sup_{x\in \mathbb R} |f^{(3) }(x)|}{6}
        \left(
        \myE
        \left|W_k - W_{k,0}\right|^3
        +
        \myE
        \left|W_{k+1} - W_{k,0}\right|^3
        \right)
        \\
        \leq&
        \frac{\sup_{x\in \mathbb R} |f^{(3) }(x)|}{6}
        \left[
            \left[
        \myE
        \left\{
            \left(W_k - W_{k,0}\right)^4
        \right\}
    \right]^{3/4}
        +
        \left[
        \myE
        \left\{
        \left(W_{k+1} - W_{k,0}\right)^4
    \right\}
\right]^{3/4}
        \right]
        .
\end{align*}
}%
Combining
\eqref{eq:723_1}
and the above inequality leads to
{\small
\begin{align*}
    &
        \left|
        \myE
        f(W(\eta_1, \ldots, \eta_n))
        -
        \myE
        f(W(\xi_1, \ldots, \xi_n))
        \right|
        \\
        \leq&
        \frac{
\sup_{x\in \mathbb R} |f^{(3)} (x)|
        }{6}
        \sum_{k=1}^n
        \left[
            \left[
        \myE
        \left\{
        \left(W_k - W_{k,0}\right)^4
    \right\}
\right]^{3/4}
        +
        \left[
            \myE\left\{\left(W_{k+1} - W_{k,0}\right)^4 \right\}
        \right]^{3/4}
    \right]
        .
\end{align*}
}%
Now we derive upper bounds for 
$
        \myE
        \{
        \left(W_{k} - W_{k,0}\right)^4
    \}
$
and
$
\myE
\{
\left(W_{k+1} - W_{k,0}\right)^4
\}
$.
We have
{\small
\begin{align*}
        \myE
        \left\{
        \left(W_{k} - W_{k,0}\right)^4
    \right\}
        =&
        \myE
        \Big[
\Big\{
    \sum_{i=1}^{k-1} w_{i,k}(\eta_i, \xi_k)
\Big\}^4
\Big]
    + 
        \myE
        \Big[
\Big\{
    \sum_{j=k+1}^n w_{k,j}(\xi_k, \xi_j)
\Big\}^4
\Big]
\\
         &
+
6
\myE
\Big[
\Big\{
    \sum_{i=1}^{k-1} w_{i,k}(\eta_i, \xi_k)
\Big\}^2
\Big\{
    \sum_{j=k+1}^n w_{k,j} (\xi_k, \xi_j)
\Big\}^2
\Big]
\\
        =&
    \sum_{i=1}^{k-1}
        \myE
\left\{
    w_{i,k} (\eta_i, \xi_k)^4
\right\}
+
6
\sum_{i_1=1}^{k-1}
\sum_{i_2=i_1 + 1}^{k-1}
\myE
\left\{
    w_{i_1,k}(\eta_{i_1}, \xi_k)^2
    w_{i_2,k}(\eta_{i_2}, \xi_k)^2
\right\}
\\
         &
    +\sum_{j=k+1}^{n}
        \myE
\left\{
    w_{k,j}(\xi_k, \xi_j)^4
\right\}
+
6
\sum_{j_1=k + 1}^{n}
\sum_{j_2=j_1 + 1}^{n}
\myE
\left\{
    w_{k, j_1}(\xi_k, \xi_{j_1})^2
    w_{k, j_2}(\xi_k, \xi_{j_2})^2
\right\}
         \\
         &
+
    6\sum_{i=1}^{k-1} 
    \sum_{j=k+1}^n 
\myE
\left\{
    w_{i,k}(\eta_i, \xi_k)^2
    w_{k,j} (\xi_k, \xi_j)^2
\right\}
.
\end{align*}
}%
From the above equality and the definition of $\rho_n$, 
we have
{\small
\begin{align*}
        \myE
        \left\{
            \left(W_k - W_{k,0} \right)^4
    \right\}
        \leq&
        \rho_n
        \bigg(
    \sum_{i=1}^{k-1}
    \sigma_{i,k}^4
+
6
\sum_{i_1=1}^{k-1}
\sum_{i_2=i_1 + 1}^{k-1}
\sigma_{i_1, k}^2
\sigma_{i_2, k}^2
    \\
            &
    +\sum_{j=k+1}^{n}
    \sigma_{k,j}^4
+
6
\sum_{j_1=k + 1}^{n}
\sum_{j_2=j_1 + 1}^{n}
\sigma_{k, j_1}^2
\sigma_{k, j_2}^2
+
    6\sum_{i=1}^{k-1} 
    \sum_{j=k+1}^n 
\sigma_{i, k}^2
\sigma_{k, j}^2
\bigg)
\\
        \leq&
        3\rho_n
            \left(
            \sum_{i=1}^{k-1}\sigma_{i,k}^2 
            +
            \sum_{j=k+1}^n \sigma_{k,j}^2
            \right)^2
            \\
        =&
        3\rho_n \mathrm{Inf}_k^2
        .
\end{align*}
}%
Similarly,
        $\myE
        \left\{
            \left(W_{k+1} - W_{k,0} \right)^4
    \right\}
        \leq
        3\rho_n
        \mathrm{Inf}_k^2
        $.
Thus,
{\small
\begin{align*}
        \left|
        \myE f(W(\eta_1, \ldots, \eta_n))
        -
        \myE f(W(\xi_1, \ldots, \xi_n))
        \right|
        \leq&
\sup_{x\in \mathbb R} |f^{(3)} (x)|
\frac{
    \rho_n^{3/4}
}
{3^{1/4}}
        \sum_{i=1}^n \mathrm{Inf}_i^{3/2}
        .
\end{align*}
}%
This completes the proof.
\end{proof}

    \section{Technical results}

We begin with some notations that will be used throughout our proofs of main results.
For a matrix $\BA$,
let $\|\BA\|_F$ denote the Frobenious norm of $\BA$.
For a matrix $\BA \in \mathbb R^{p\times p}$, let $\lambda_i(\BA)$ denote the $i$th largest eigenvalue of $\BA$.
We adopt the convention that $\lambda_i(\BA) = 0$ for $i  > p$.


The following lemma collects some trace inequalities that will be used in our proofs of main results.
To make our proofs of main results concise, the application of these trace inequalities are often tacit.
\begin{lemma}\label{lemma:tao_matrix}
    The following trace inequalities hold:
    \begin{enumerate}[(i)]
        \item
            For any matrices $\BA \in \mathbb R^{p \times q}$, $\BB \in \mathbb R^{q \times r}$, 
        $\mytr( \BA \BB ) \leq \{\mytr(\BA \BA^\myT) \mytr(\BB \BB^\myT)\}^{1/2}$.
        \item
            For any positive semi-definite matrices $\BA, \BB  \in \mathbb R^{p \times p}$, 
                $0 \leq \mytr (\BA \BB) \leq \lambda_1(\BA) \mytr(\BB)$.
        \item
            Suppose $f$ is an increasing function from $\mathbb R$ to $\mathbb R$,
            and $\BA, \BB \in \mathbb R^{p \times p}$ are symmetric.
            Then
            $\mytr(f(\BA)) \leq \mytr (f(\BB))$.
        \item
            For any symmetric matrices $\BA, \BB \in \mathbb R^{p \times p}$, $\mytr\{(\BA \BB )^2\} \leq \mytr(\BA^2 \BB^2 ) $.
    \end{enumerate}
\end{lemma}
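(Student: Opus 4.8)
The plan is to establish the four inequalities separately, each reducing to a standard fact from matrix analysis. For (i), I would note that for the product $\BA\BB$ to be square (so that its trace is defined) one needs $\BB\in\mathbb R^{q\times p}$, and then recognize $\mytr(\BA\BB)=\sum_{i,j}A_{ij}B_{ji}$ as a Frobenius inner product. The Cauchy--Schwarz inequality immediately yields $\mytr(\BA\BB)\leq \|\BA\|_F\,\|\BB^\myT\|_F=\{\mytr(\BA\BA^\myT)\}^{1/2}\{\mytr(\BB\BB^\myT)\}^{1/2}$, using $\|\BB^\myT\|_F^2=\mytr(\BB\BB^\myT)$. For (ii), the nonnegativity follows by writing $\mytr(\BA\BB)=\mytr(\BA^{1/2}\BB\BA^{1/2})$ and observing that $\BA^{1/2}\BB\BA^{1/2}$ is positive semi-definite, hence has nonnegative trace. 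For the upper bound I would use the spectral decomposition $\BA=\sum_i\lambda_i(\BA)\Bu_i\Bu_i^\myT$ with orthonormal $\{\Bu_i\}$, giving $\mytr(\BA\BB)=\sum_i\lambda_i(\BA)\,\Bu_i^\myT\BB\Bu_i\leq\lambda_1(\BA)\sum_i\Bu_i^\myT\BB\Bu_i=\lambda_1(\BA)\mytr(\BB)$, where each $\Bu_i^\myT\BB\Bu_i\geq0$ by positive semi-definiteness of $\BB$ and $\sum_i\Bu_i\Bu_i^\myT=\BI$.

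For (iii), the statement is to be read under the Loewner ordering $\BA\preceq\BB$ (that is, $\BB-\BA$ positive semi-definite); this hypothesis is needed, since without it the claim fails. Under $\BA\preceq\BB$, Weyl's monotonicity theorem---obtained from the Courant--Fischer min--max characterization of eigenvalues---gives $\lambda_i(\BA)\leq\lambda_i(\BB)$ for every $i$. Since $f$ is increasing, $f(\lambda_i(\BA))\leq f(\lambda_i(\BB))$ for each $i$, and summing over $i$ yields $\mytr(f(\BA))=\sum_i f(\lambda_i(\BA))\leq\sum_i f(\lambda_i(\BB))=\mytr(f(\BB))$.

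Part (iv) is the least immediate and I expect it to be the main obstacle. The key observation is that for symmetric $\BA$ and $\BB$ the commutator $\BM=\BA\BB-\BB\BA$ is skew-symmetric, since $\BM^\myT=\BB\BA-\BA\BB=-\BM$. Consequently $\mytr(\BM^2)=-\mytr(\BM\BM^\myT)=-\|\BM\|_F^2\leq0$. Expanding $\BM^2=\BA\BB\BA\BB-\BA\BB\BB\BA-\BB\BA\BA\BB+\BB\BA\BB\BA$ and invoking the cyclic invariance of the trace, the two crossed terms both equal $\mytr\{(\BA\BB)^2\}$ while the two straight terms both equal $\mytr(\BA^2\BB^2)$, producing the identity $\mytr(\BM^2)=2\mytr\{(\BA\BB)^2\}-2\mytr(\BA^2\BB^2)$. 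Combining this with $\mytr(\BM^2)\leq0$ gives $\mytr\{(\BA\BB)^2\}\leq\mytr(\BA^2\BB^2)$, as required. The only genuine subtleties across the four parts are this commutator manipulation in (iv) and the Loewner-order hypothesis implicitly required in (iii); the remaining arguments are routine applications of Cauchy--Schwarz and the spectral theorem.
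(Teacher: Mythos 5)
Your proposal is correct, but it cannot be compared step-for-step with the paper, because the paper supplies no proof at all: it declares (i) and (ii) standard and defers (iii) to \cite{Tropp2015AnIntroduction}, Proposition 8.3.3, and (iv) to \cite{Tao2012TopicsInRMT}, inequality (3.18). Your self-contained arguments are exactly the canonical ones underlying those citations: the Frobenius inner-product Cauchy--Schwarz computation for (i), including the correct observation that $r=p$ is forced for $\mytr(\BA\BB)$ to be defined; the conjugation $\mytr(\BA\BB)=\mytr(\BA^{1/2}\BB\BA^{1/2})$ and the spectral decomposition for (ii); Weyl eigenvalue monotonicity via Courant--Fischer for (iii), which is essentially Tropp's own proof; and for (iv) the skew-symmetry of the commutator $\BM=\BA\BB-\BB\BA$, yielding $0\geq\mytr(\BM^2)=2\mytr\{(\BA\BB)^2\}-2\mytr(\BA^2\BB^2)$, which is the standard derivation behind Tao's inequality. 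Your most valuable contribution is the repair of (iii): as printed, the lemma omits the hypothesis that $\BB-\BA$ is positive semi-definite (the Loewner order $\BA\preceq\BB$), without which the claim is false (take $f(x)=x$, $\BA=\BI_p$, $\BB=\BO_{p\times p}$); the citation to Tropp confirms this hypothesis is intended, and your Weyl-monotonicity argument uses it correctly. In short, the paper's approach buys brevity by outsourcing to references, while yours buys a complete verification and catches the misstatement, at no loss of correctness.
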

\begin{remark}
    Lemma \ref{lemma:tao_matrix}, (i) and (ii) are standard.
We refer to \cite{Tropp2015AnIntroduction}, Proposition 8.3.3 for a proof of Lemma \ref{lemma:tao_matrix}, (iii).
We refer to \cite{Tao2012TopicsInRMT}, inequality (3.18) for a proof of Lemma \ref{lemma:tao_matrix}, (iv).
\end{remark}

\begin{lemma}\label{lemma:final_matrix_1}
    Suppose $\BA_1, \ldots, \BA_n$ are $p \times p$ positive semi-definite matrices.
    Let $\bar \BA = n^{-1} \sum_{i=1}^n \BA_i$.
    Then
    {\small
        \begin{align*}
            &
            \left|
            \sum_{i=1}^n 
            \sum_{j=1}^n 
            \sum_{k=1}^n 
            \sum_{\ell=1}^n 
            \mytr(\BA_i \BA_j \BA_k \BA_{\ell})
            \mathbf 1_{\{i,j,k,\ell \textrm{ are distinct}\}}
            \right|
            \\
            \leq
            &
            n^4
            \mytr\left\{
                (\bar \BA)^4
            \right\}
            +
            10
            n^2
            \mytr\left\{
                (\bar \BA)^2
        \right\}
            \sum_{i=1}^n
            \mytr\left(
                \BA_i^2
            \right)
            +13
            \left\{
            \sum_{i=1}^n 
            \mytr(\BA_i^2 )
            \right\}^2
            .
        \end{align*}
    }%
\end{lemma}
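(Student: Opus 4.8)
The plan is to compare the constrained sum with the unconstrained one. Writing $\sum_{i=1}^n\BA_i=n\bar\BA$ and using linearity and cyclicity of the trace, the unrestricted sum satisfies
\[
\sum_{i,j,k,\ell=1}^n\mytr(\BA_i\BA_j\BA_k\BA_\ell)=\mytr\{(n\bar\BA)^4\}=n^4\mytr\{(\bar\BA)^4\}.
\]
Hence the quantity to be bounded equals $n^4\mytr\{(\bar\BA)^4\}$ minus the sum of all \emph{collision} terms, i.e.\ those tuples $(i,j,k,\ell)$ in which at least two indices coincide. By the triangle inequality it therefore suffices to bound the absolute value of the collision sum by $10\,PS+13\,S^2$, where I abbreviate $s_i=\mytr(\BA_i^2)$, $S=\sum_{i=1}^n\mytr(\BA_i^2)$ and $P=n^2\mytr\{(\bar\BA)^2\}=\sum_{i,j=1}^n\mytr(\BA_i\BA_j)$; these are exactly the two factors on the right-hand side, and both are nonnegative because the $\BA_i$ are positive semi-definite.

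I would organize the collision terms by the coincidence pattern of the four positions (a set partition), using cyclic invariance of the trace to identify equivalent patterns. Up to cyclic rotation there are five types: all four indices equal (giving $\sum_i\mytr(\BA_i^4)$); three equal and one distinct ($4$ patterns, each $\sum_{i\ne m}\mytr(\BA_i^3\BA_m)$); two equal pairs ($3$ patterns, two of the ``adjacent'' form $\sum_{i\ne k}\mytr(\BA_i^2\BA_k^2)$ and one ``diagonal'' form $\sum_{i\ne j}\mytr\{(\BA_i\BA_j)^2\}$); and one pair with two further distinct singletons ($6$ patterns, four of the form $\sum\mytr(\BA_a^2\BA_b\BA_c)$ and two of the form $\sum\mytr(\BA_a\BA_b\BA_a\BA_c)$, over distinct triples). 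The basic estimates are $\mytr(\BA_i^4)\le s_i^2$, the Cauchy--Schwarz bound of Lemma \ref{lemma:tao_matrix}(i) giving $\mytr(\BA_i^2\BA_j^2)\le s_is_j$, the inequality of Lemma \ref{lemma:tao_matrix}(iv), and the operator domination $\BA_i^3\preceq\lambda_1(\BA_i)^2\BA_i$ combined with $\lambda_1(\BA_i)^2\le s_i\le S$ and the nonnegativity in Lemma \ref{lemma:tao_matrix}(ii). These handle the first three types directly: the all-equal type is at most $\sum_i s_i^2\le S^2$; each three-equal pattern is at most $\sum_{i,m}s_i\mytr(\BA_i\BA_m)\le S\sum_{i,m}\mytr(\BA_i\BA_m)=PS$; and each two-pair pattern is at most $\sum_{i,j}s_is_j=S^2$.

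The genuinely delicate contributions are the one-pair, two-singleton patterns, since products such as $\mytr(\BA_a^2\BA_b\BA_c)$ are not sign-definite, so I cannot drop the distinctness constraint, and splitting $(b,c)$ by Cauchy--Schwarz would spuriously introduce a factor of $n$. My resolution is to carry out the inner summation over the two free indices in closed form. Fixing the repeated index $a$ and writing $\BB_a=\sum_{b\ne a}\BA_b=n\bar\BA-\BA_a$, the inner double sum telescopes into a difference of two nonnegative traces,
\[
\sum_{\substack{b,c\ne a,\ b\ne c}}\mytr(\BA_a^2\BA_b\BA_c)=\mytr(\BA_a^2\BB_a^2)-\sum_{b\ne a}\mytr(\BA_a^2\BA_b^2),
\]
and analogously $\sum\mytr(\BA_a\BA_b\BA_a\BA_c)=\mytr\{(\BA_a\BB_a)^2\}-\sum_{b\ne a}\mytr\{(\BA_a\BA_b)^2\}$ in the diagonal case. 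Bounding the absolute value by the sum of the two nonnegative pieces and applying Lemma \ref{lemma:tao_matrix}(ii),(iv) with $\lambda_1(\BA_a)^2\le s_a$ and $\mytr(\BB_a^2)\le P$, each such pattern is controlled by $s_aP+s_aS$; summing over $a$ gives at most $PS+S^2$ per pattern.

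Finally I would total the contributions. The $PS$ multiplicities sum to $4+4+2=10$ (three-equal, the four adjacent one-pair, and the two diagonal one-pair patterns) and the $S^2$ multiplicities to $1+3+4+2=10$ (all-equal, two-pair, adjacent one-pair, diagonal one-pair). This bounds the collision sum by $10\,PS+10\,S^2\le 10\,PS+13\,S^2$, which after substituting $P=n^2\mytr\{(\bar\BA)^2\}$ and $S=\sum_i\mytr(\BA_i^2)$ is the claimed inequality, with slack in the $S^2$-coefficient. The main obstacle is precisely the non-sign-definite mixed triple products; once the $\BB_a$ telescoping is in place, the remaining steps are routine applications of the trace inequalities.
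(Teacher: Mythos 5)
Your proposal is correct, and it shares the paper's basic strategy---comparing the distinct-index sum to the unconstrained sum $\mytr\{(n\bar\BA)^4\}=n^4\mytr\{(\bar\BA)^4\}$ and controlling the coincidence terms via the trace inequalities of Lemma \ref{lemma:tao_matrix}---but the combinatorial bookkeeping is genuinely different. The paper peels off one index at a time, writing $\mathbf 1_{\{\ell\notin\{i,j,k\}\}}=1-\mathbf 1_{\{\ell=i\}}-\mathbf 1_{\{\ell=j\}}-\mathbf 1_{\{\ell=k\}}$ and iterating down to double sums; this yields an \emph{exact} closed-form identity for the distinct sum in terms of $n^4\mytr\{(\bar\BA)^4\}$, $n^2\sum_i\mytr\{\BA_i^2(\bar\BA)^2\}$, $n^2\sum_i\mytr\{(\BA_i\bar\BA)^2\}$, $n\sum_i\mytr(\BA_i^3\bar\BA)$, and pure pairwise terms, after which the single sign-indefinite cross term $8n\sum_i\mytr(\BA_i^3\bar\BA)$ is absorbed via Cauchy--Schwarz and the splitting $8nxy\le 4n^2y^2+4x^2$; that absorption is exactly where the paper's coefficients $10$ and $13$ arise. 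You instead classify collision tuples by their coincidence pattern (set partition up to cyclic rotation, with the correct multiplicities $1,4,3,6$) and resolve the only sign-indefinite patterns---one pair plus two distinct singletons---by the leave-one-out telescoping $\BB_a=n\bar\BA-\BA_a$, bounding $|x-y|\le x+y$ for the two resulting nonnegative traces; the remaining estimates ($\mytr(\BA_i^2\BA_j^2)\le s_is_j$, $\mytr\{(\BA_a\BB_a)^2\}\le\mytr(\BA_a^2\BB_a^2)$, $\BA_i^3\preceq\lambda_1(\BA_i)^2\BA_i$ with $\lambda_1(\BA_i)^2\le s_i$, and $\mytr(\BB_a^2)\le P$ by nonnegativity of the cross traces) all check out and use the same parts (i), (ii), (iv) of Lemma \ref{lemma:tao_matrix}. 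What each approach buys: the paper's sequential expansion gives an identity that could in principle be reused for exact computations, at the cost of more intricate algebra and a slightly lossier absorption step; your pattern-based argument is more modular, never needs the $8nxy$ splitting, and in fact delivers the marginally sharper bound $10PS+10S^2$ in place of the paper's $10PS+13S^2$, which is more than sufficient for the stated inequality.
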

\begin{proof}
    For distinct $i,j,k$, we have $\mathbf 1_{\{\ell \notin \{i,j,k\} \}} = 1 - \mathbf 1_{\{\ell = i \}} - \mathbf 1_{\{\ell = j \}} - \mathbf 1_{\{\ell = k \}}$.
    Hence
    {\small
        \begin{align*}
            &
            \sum_{i=1}^n 
            \sum_{j=1}^n 
            \sum_{k=1}^n 
            \sum_{\ell=1}^n 
            \mytr(\BA_i \BA_j \BA_k \BA_{\ell})
            \mathbf 1_{\{i,j,k,\ell \textrm{ are distinct}\}}
            \\
            =&
            n
            \sum_{i=1}^n 
            \sum_{j=1}^n 
            \sum_{k=1}^n 
            \mytr\left\{\BA_i \BA_j \BA_k 
                \bar \BA
            \right\}
            \mathbf 1_{\{i,j,k \textrm{ are distinct}\}}
            -
            2
            \sum_{i=1}^n 
            \sum_{j=1}^n 
            \sum_{k=1}^n 
            \mytr(\BA_i^2 \BA_j \BA_k)
            \mathbf 1_{\{i,j,k \textrm{ are distinct}\}}
            \\
             &
            -
            \sum_{i=1}^n 
            \sum_{j=1}^n 
            \sum_{k=1}^n 
            \mytr(\BA_i \BA_j \BA_k \BA_j)
            \mathbf 1_{\{i,j,k \textrm{ are distinct}\}}
            .
        \end{align*}
    }%
    Similarly, we have
    {\small
        \begin{align*}
            &
            \sum_{i=1}^n 
            \sum_{j=1}^n 
            \sum_{k=1}^n 
            \mytr\left\{\BA_i \BA_j \BA_k 
                \bar \BA
            \right\}
            \mathbf 1_{\{i,j,k \textrm{ are distinct}\}}
            \\
            =&
            n
            \sum_{i=1}^n 
            \sum_{j=1}^n 
            \mytr\left\{\BA_i \BA_j
                (\bar \BA)^2
            \right\}
            \mathbf 1_{\{i\neq j\}}
            -
            \sum_{i=1}^n 
            \sum_{j=1}^n 
            \mytr\left\{\BA_i \BA_j \BA_i 
                \bar \BA
            \right\}
            \mathbf 1_{\{i\neq j\}}
            -
            \sum_{i=1}^n 
            \sum_{j=1}^n 
            \mytr\left\{\BA_i \BA_j^2
                \bar \BA
            \right\}
            \mathbf 1_{\{i\neq j\}}
            \\
            =&
            n^3
            \mytr\left\{
                (\bar \BA)^4
            \right\}
            -
            2
            n
            \sum_{i=1}^n
            \mytr\left\{
                \BA_i^2
                (\bar \BA)^2
            \right\}
            -
            n
            \sum_{i=1}^n 
            \mytr\left\{ (\BA_i \bar \BA)^2 \right\}
            +
            2
            \sum_{i=1}^n 
            \mytr\left(\BA_i^3 
                \bar \BA
            \right)
            .
        \end{align*}
    }%
    And
    {\small
        \begin{align*}
            &
            \sum_{i=1}^n 
            \sum_{j=1}^n 
            \sum_{k=1}^n 
            \mytr(\BA_i^2 \BA_j \BA_k)
            \mathbf 1_{\{i,j,k \textrm{ are distinct}\}}
            \\
            =&
            n
            \sum_{i=1}^n 
            \sum_{j=1}^n 
            \mytr(\BA_i^2 \BA_j \bar \BA)
            \mathbf 1_{\{i \neq j\}}
            -
            \sum_{i=1}^n 
            \sum_{j=1}^n 
            \mytr(\BA_i^3 \BA_j)
            \mathbf 1_{\{i \neq j\}}
            -
            \sum_{i=1}^n 
            \sum_{j=1}^n 
            \mytr(\BA_i^2 \BA_j^2)
            \mathbf 1_{\{i \neq j\}}
            \\
            =&
            n^2
            \sum_{i=1}^n 
            \mytr\{\BA_i^2 (\bar \BA)^2\}
            -
            2 n
            \sum_{i=1}^n 
            \mytr(\BA_i^3 \bar \BA)
            +
            2
            \sum_{i=1}^n 
            \mytr(\BA_i^4)
            -
            \sum_{i=1}^n 
            \sum_{j=1}^n
            \mytr(\BA_i^2 \BA_{j}^2)
            .
        \end{align*}
    }%
    And
    {\small
        \begin{align*}
            &
            \sum_{i=1}^n 
            \sum_{j=1}^n 
            \sum_{k=1}^n 
            \mytr(\BA_i \BA_j \BA_k \BA_j)
            \mathbf 1_{\{i,j,k \textrm{ are distinct}\}}
            \\
            =&
            n
            \sum_{i=1}^n 
            \sum_{j=1}^n 
            \mytr(\BA_i \BA_j \bar \BA \BA_j)
            \mathbf 1_{\{i\neq j \}}
            -
            \sum_{i=1}^n 
            \sum_{j=1}^n 
            \mytr\{(\BA_i \BA_j)^2 \}
            \mathbf 1_{\{i\neq j \}}
            -
            \sum_{i=1}^n 
            \sum_{j=1}^n 
            \mytr(\BA_i \BA_j^3)
            \mathbf 1_{\{i\neq j \}}
            \\
            =&
            n^2
            \sum_{i=1}^n \mytr\{(\BA_i \bar \BA)^2\}
            -
            2
            n
            \sum_{i=1}^n \mytr( \BA_i^3 \bar \BA)
            -
            \sum_{i=1}^n 
            \sum_{j=1}^n 
            \mytr\{(\BA_i \BA_j)^2 \}
            +
            2
            \sum_{i=1}^n 
            \mytr (\BA_i^4)
            .
        \end{align*}
    }%
It follows that
    {\small
        \begin{align*}
            &
            \sum_{i=1}^n 
            \sum_{j=1}^n 
            \sum_{k=1}^n 
            \sum_{\ell=1}^n 
            \mytr(\BA_i \BA_j \BA_k \BA_{\ell})
            \mathbf 1_{\{i,j,k,\ell \textrm{ are distinct}\}}
            \\
            =
            &
            n^4
            \mytr\left\{
                (\bar \BA)^4
            \right\}
            -
            4
            n^2
            \sum_{i=1}^n
            \mytr\left\{
                \BA_i^2
                (\bar \BA)^2
            \right\}
            -
            2 n^2
            \sum_{i=1}^n 
            \mytr\left\{ (\BA_i \bar \BA)^2 \right\}
            +
            8 n
            \sum_{i=1}^n 
            \mytr\left(\BA_i^3 
                \bar \BA
            \right)
            \\
            &
            -6
            \sum_{i=1}^n 
            \mytr(\BA_i^4)
            +2
            \sum_{i=1}^n 
            \sum_{j=1}^n
            \mytr(\BA_i^2 \BA_{j}^2)
            +
            \sum_{i=1}^n 
            \sum_{j=1}^n 
            \mytr\{(\BA_i \BA_j)^2 \}
            .
        \end{align*}
    }%
    Thus, we have
    {\small
        \begin{align*}
            &
            \left|
            \sum_{i=1}^n 
            \sum_{j=1}^n 
            \sum_{k=1}^n 
            \sum_{\ell=1}^n 
            \mytr(\BA_i \BA_j \BA_k \BA_{\ell})
            \mathbf 1_{\{i,j,k,\ell \textrm{ are distinct}\}}
            \right|
            \\
            \leq
            &
            n^4
            \mytr\left\{
                (\bar \BA)^4
            \right\}
            +
            6
            n^2
            \sum_{i=1}^n
            \mytr\left\{
                \BA_i^2
                (\bar \BA)^2
        \right\}
            +
            8 n
            \left\{
            \sum_{i=1}^n 
            \mytr(\BA_i^4)
        \right\}^{1/2}
            \left\{
            \sum_{i=1}^n 
            \mytr\{\BA_i^2 (\bar \BA)^2\}
        \right\}^{1/2}
            +9
            \left\{
            \sum_{i=1}^n 
            \mytr(\BA_i^2 )
            \right\}^2
            \\
            \leq
            &
            n^4
            \mytr\left\{
                (\bar \BA)^4
            \right\}
            +
            10
            n^2
            \sum_{i=1}^n
            \mytr\left\{
                \BA_i^2
                (\bar \BA)^2
        \right\}
            +13
            \left\{
            \sum_{i=1}^n 
            \mytr(\BA_i^2 )
            \right\}^2
            \\
            \leq
            &
            n^4
            \mytr\left\{
                (\bar \BA)^4
            \right\}
            +
            10
            n^2
            \mytr\left\{
                (\bar \BA)^2
        \right\}
            \sum_{i=1}^n
            \mytr\left(
                \BA_i^2
            \right)
            +13
            \left\{
            \sum_{i=1}^n 
            \mytr(\BA_i^2 )
            \right\}^2
            .
        \end{align*}
    }%
    This completes the proof.
    
\end{proof}

\begin{lemma}\label{lemma:final_matrix_2}
    Suppose $\BA_1, \ldots, \BA_{n_1}$ and $\BB_1, \ldots, \BB_{n_2}$ are $p \times p$ positive semi-definite matrices.
    Let $\bar \BA = n_1^{-1} \sum_{i=1}^{n_1} \BA_i$ and $\bar \BB = n_2^{-1} \sum_{i=1}^{n_2} \BB_i$.
    Then
    {\small
        \begin{align*}
            &
            \left|
            \sum_{i=1}^{n_1}
            \sum_{j=1}^{n_1}
            \sum_{k=1}^{n_2}
            \sum_{\ell=1}^{n_2}
            \mytr(\BA_i \BB_{k} \BA_j \BB_{\ell})
            \mathbf{1}_{\{i \neq j\}}
            \mathbf{1}_{\{k \neq \ell\}}
            \right|
            \\
            \leq&
            \frac{1}{2}
            n_2^4
            \mytr\{( \bar \BA)^4\}
            +
            \frac{1}{2}
            n_1^4 \mytr\{ (\bar \BB )^4\}
            +
            n_2^2
            \mytr \{( \bar \BB )^2 \}
            \sum_{i=1}^{n_1}
            \mytr \{ \BA_i^2 \}
            +
            n_1^2
            \mytr \{(\bar \BA)^2\}
            \sum_{i=1}^{n_2}
        \mytr \{ ( \BB_{i})^2 \}
        +
        \left\{
            \sum_{i=1}^{n_1}
            \mytr ( \BA_i^2 )
        \right\}
        \left\{
            \sum_{i=1}^{n_2}
            \mytr ( \BB_{i}^2 )
        \right\}
            .
        \end{align*}
    }%
\end{lemma}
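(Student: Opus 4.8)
The plan is to remove the two indicator constraints by inclusion--exclusion, which reduces the restricted quadruple sum to unrestricted sums that collapse into products of the averages $\bar\BA$ and $\bar\BB$; this is analogous to the proof of Lemma \ref{lemma:final_matrix_1} but considerably simpler, since the two index pairs $(i,j)$ and $(k,\ell)$ decouple. Writing $S$ for the signed quadruple sum on the left-hand side, I would first expand
\begin{align*}
\mathbf 1_{\{i\neq j\}}\mathbf 1_{\{k\neq \ell\}}
= (1 - \mathbf 1_{\{i = j\}})(1 - \mathbf 1_{\{k=\ell\}})
= 1 - \mathbf 1_{\{i=j\}} - \mathbf 1_{\{k=\ell\}} + \mathbf 1_{\{i=j\}}\mathbf 1_{\{k=\ell\}},
\end{align*}
so that $S$ splits into four pieces indexed by which of the equalities $i=j$ and $k=\ell$ is imposed.

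Next I would evaluate each piece using $\sum_{i}\BA_i = n_1\bar\BA$ and $\sum_{k}\BB_k = n_2\bar\BB$. Summing freely over all four indices gives $n_1^2 n_2^2\mytr\{(\bar\BA\bar\BB)^2\}$; imposing $i=j$ collapses the two $\BA$-sums into one and yields $n_2^2\sum_{i}\mytr\{(\BA_i\bar\BB)^2\}$; imposing $k=\ell$ yields $n_1^2\sum_{k}\mytr\{(\bar\BA\BB_k)^2\}$; and imposing both yields $\sum_{i,k}\mytr\{(\BA_i\BB_k)^2\}$. Collecting signs gives the exact identity
\begin{align*}
S
= n_1^2 n_2^2 \mytr\{(\bar\BA\bar\BB)^2\}
+ \sum_{i=1}^{n_1}\sum_{k=1}^{n_2}\mytr\{(\BA_i\BB_k)^2\}
- n_2^2\sum_{i=1}^{n_1}\mytr\{(\BA_i\bar\BB)^2\}
- n_1^2\sum_{k=1}^{n_2}\mytr\{(\bar\BA\BB_k)^2\}.
\end{align*}
Each of these four terms is a sum of quantities of the form $\mytr\{(\BX\BY)^2\}$ with $\BX,\BY$ symmetric positive semi-definite, hence nonnegative, so by the triangle inequality $|S|$ is bounded by the sum of the four terms with all signs made positive.

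Finally I would bound the four terms individually using Lemma \ref{lemma:tao_matrix}. For the last three I would apply Lemma \ref{lemma:tao_matrix}(iv) to pass from $\mytr\{(\BX\BY)^2\}$ to $\mytr(\BX^2\BY^2)$ and then use $\mytr(\BX^2\BY^2)\le \mytr(\BX^2)\mytr(\BY^2)$, which follows from $\mytr(\BC\BD)\le \lambda_1(\BC)\mytr(\BD)\le \mytr(\BC)\mytr(\BD)$ for positive semi-definite $\BC,\BD$ (Lemma \ref{lemma:tao_matrix}(ii)); this reproduces exactly the terms $n_2^2\mytr\{(\bar\BB)^2\}\sum_i\mytr(\BA_i^2)$, $n_1^2\mytr\{(\bar\BA)^2\}\sum_k\mytr(\BB_k^2)$ and $\{\sum_i\mytr(\BA_i^2)\}\{\sum_k\mytr(\BB_k^2)\}$. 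The only step needing a little care is the leading term: Lemma \ref{lemma:tao_matrix}(iv) and then (i) give $\mytr\{(\bar\BA\bar\BB)^2\}\le \mytr\{(\bar\BA)^2(\bar\BB)^2\}\le \{\mytr((\bar\BA)^4)\}^{1/2}\{\mytr((\bar\BB)^4)\}^{1/2}$, and I would then split the weights asymmetrically via the arithmetic--geometric mean inequality,
\begin{align*}
n_1^2 n_2^2\{\mytr((\bar\BA)^4)\}^{1/2}\{\mytr((\bar\BB)^4)\}^{1/2}
= \{n_2^4\mytr((\bar\BA)^4)\}^{1/2}\{n_1^4\mytr((\bar\BB)^4)\}^{1/2}
\le \frac{1}{2} n_2^4\mytr\{(\bar\BA)^4\} + \frac{1}{2} n_1^4\mytr\{(\bar\BB)^4\},
\end{align*}
which is precisely the first two terms of the claimed bound. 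Summing the four estimates completes the proof. There is no genuine obstacle here beyond choosing this weighted pairing correctly and keeping the trace-inequality bookkeeping straight.
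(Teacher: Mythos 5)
Your proposal is correct and takes essentially the same route as the paper's own proof: the identical inclusion--exclusion expansion of the two indicators yields exactly the four-term identity for the quadruple sum that the paper derives, after which the terms are bounded by the same trace inequalities from Lemma \ref{lemma:tao_matrix} and the same weighted arithmetic--geometric mean split $n_1^2 n_2^2\{\mytr((\bar\BA)^4)\}^{1/2}\{\mytr((\bar\BB)^4)\}^{1/2}\leq \frac{1}{2}n_2^4\mytr\{(\bar\BA)^4\}+\frac{1}{2}n_1^4\mytr\{(\bar\BB)^4\}$ for the leading term. The only difference is presentational: you spell out the nonnegativity of each term and the trace-inequality bookkeeping that the paper applies tacitly.
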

\begin{proof}
    Note that
    {\small
        \begin{align*}
            \mathbf{1}_{\{i \neq j\}}
            \mathbf{1}_{\{k \neq \ell\}}
            =
            (1 - \mathbf{1}_{\{i = j\}})
            (1 - \mathbf{1}_{\{k =\ell\}})
            =
            1
            -
            \mathbf{1}_{\{i = j\}} 
            - 
            \mathbf{1}_{\{k = \ell\}}
            + 
            \mathbf{1}_{\{i = j, k = \ell\}}
            .
        \end{align*}
    }%
    Hence we have
    {\small
        \begin{align*}
            &
            \sum_{i=1}^{n_1}
            \sum_{j=1}^{n_1}
            \sum_{k=1}^{n_2}
            \sum_{\ell=1}^{n_2}
            \mytr(\BA_i \BB_{k} \BA_j \BB_{\ell})
            \mathbf{1}_{\{i \neq j\}}
            \mathbf{1}_{\{k \neq \ell\}}
            \\
            =&
            n_1^2 n_2^2 \mytr\{( \bar \BA \bar \BB)^2 \}
            -
            n_2^2
            \sum_{i=1}^{n_1}
        \mytr \{(\BA_i \bar \BB)^2 \}
            -
            n_1^2
            \sum_{i=1}^{n_2}
        \mytr \{ (\bar \BA \BB_{i})^2 \}
            +
            \sum_{i=1}^{n_1}
            \sum_{j=1}^{n_2}
        \mytr \{ (\BA_i \BB_{j})^2 \}
            .
        \end{align*}
    }%
    It follows that
    {\small
        \begin{align*}
            &
            \left|
            \sum_{i=1}^{n_1}
            \sum_{j=1}^{n_1}
            \sum_{k=1}^{n_2}
            \sum_{\ell=1}^{n_2}
            \mytr(\BA_i \BB_{k} \BA_j \BB_{\ell})
            \mathbf{1}_{\{i \neq j\}}
            \mathbf{1}_{\{k \neq \ell\}}
            \right|
            \\
            \leq&
            \frac{1}{2}
            n_2^4
            \mytr\{( \bar \BA)^4\}
            +
            \frac{1}{2}
            n_1^4 \mytr\{ (\bar \BB )^4\}
            +
            n_2^2
            \mytr \{( \bar \BB )^2 \}
            \sum_{i=1}^{n_1}
            \mytr \{ \BA_i^2 \}
            +
            n_1^2
            \mytr \{(\bar \BA)^2\}
            \sum_{i=1}^{n_2}
        \mytr \{ ( \BB_{i})^2 \}
        +
        \left\{
            \sum_{i=1}^{n_1}
            \mytr ( \BA_i^2 )
        \right\}
        \left\{
            \sum_{i=1}^{n_2}
            \mytr ( \BB_{i}^2 )
        \right\}
            .
        \end{align*}
    }%
    This completes the proof.
\end{proof}

\begin{lemma}\label{lemma:add_error}
    Suppose $\xi$ and $\eta$ are two random variables.
    Then
    {\small
    \begin{align*}
        \|
        \mathcal L (\xi + \eta) - \mathcal L (\xi)
        \|_3
        \leq
        \left\{\myE (\eta^2) \right\}^{1/2}
        .
    \end{align*}
}%
\end{lemma}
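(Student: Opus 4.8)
The plan is to unwind the definition of $\|\cdot\|_3$ and reduce the whole statement to the elementary fact that any admissible test function is $1$-Lipschitz. Recall that for a finite signed measure $\nu$ on $\mathbb R$ we have $\|\nu\|_3 = \sup_f |\int_{\mathbb R} f(x)\,\nu(\mathrm d x)|$, where the supremum ranges over all $f \in \mathscr C_b^3(\mathbb R)$ with $\sup_x |f(x)| \le 1$ and $\sup_x |f^{(\ell)}(x)| \le 1$ for $\ell = 1,2,3$. Applying this with $\nu = \mathcal L(\xi+\eta) - \mathcal L(\xi)$ and writing the integral as an expectation, it suffices to bound $|\myE f(\xi+\eta) - \myE f(\xi)|$ uniformly over all such $f$.

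First I would fix one admissible $f$. Since $\sup_x |f^{(1)}(x)| \le 1$, the function $f$ is $1$-Lipschitz, so by the mean value theorem $|f(\xi+\eta) - f(\xi)| \le |\eta|$ pointwise. Taking expectations and using the triangle inequality for integrals gives
\begin{align*}
    \left| \myE f(\xi+\eta) - \myE f(\xi) \right|
    \le
    \myE\left| f(\xi+\eta) - f(\xi) \right|
    \le
    \myE |\eta|.
\end{align*}
Because the boundedness of $f$ guarantees that both $\myE f(\xi+\eta)$ and $\myE f(\xi)$ are finite, the left-hand side is well defined, and the bound $\myE|\eta|$ does not depend on the particular choice of $f$.

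It then remains to pass from $\myE|\eta|$ to $\{\myE(\eta^2)\}^{1/2}$, which is immediate from the Cauchy--Schwarz (or Jensen) inequality: $\myE|\eta| \le \{\myE(\eta^2)\}^{1/2}$. If $\myE(\eta^2) = \infty$ the asserted inequality is trivially true, and otherwise both sides are finite and the chain above applies. Taking the supremum over all admissible $f$ yields $\|\mathcal L(\xi+\eta) - \mathcal L(\xi)\|_3 \le \{\myE(\eta^2)\}^{1/2}$, which is the claim. There is no real obstacle here; the only point worth stating carefully is that the Lipschitz bound is furnished precisely by the normalization $\sup_x|f^{(1)}(x)| \le 1$ built into the definition of $\|\cdot\|_3$, so the derivative constraints of higher order play no role in this particular estimate.
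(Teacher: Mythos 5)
Your proof is correct and matches the paper's argument essentially verbatim: the paper also bounds $|\myE f(\xi+\eta)-\myE f(\xi)|$ by $\sup_x|f^{(1)}(x)|\,\myE|\eta|$ (via a first-order Taylor/mean-value estimate, i.e.\ exactly your Lipschitz bound) and then applies $\myE|\eta|\leq\{\myE(\eta^2)\}^{1/2}$ before invoking the definition of $\|\cdot\|_3$. Your added remarks on the degenerate case $\myE(\eta^2)=\infty$ and on the irrelevance of the higher-derivative constraints are fine but inessential.
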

\begin{proof}
    For $f \in \mathscr C_b^3 (\mathbb R)$,
    Taylor's theorem implies that
    {\small
    \begin{align*}
        |
        \myE f (\xi + \eta) - 
        \myE f (\xi)
        |
        \leq
        \sup_{x\in \mathbb R} |f^{(1)}(x)|
        \myE |\eta|
        \leq
        \sup_{x\in \mathbb R} |f^{(1)}(x)|
        \left\{\myE (\eta^2) \right\}^{1/2}
        .
    \end{align*}
}%
    Then the conclusion follows from the definition of the norm $\| \cdot \|_3$.
\end{proof}

    \begin{lemma}\label{lemma:712}
Under Assumptions \ref{assumption:n} and \ref{assumption7}, as $n \to \infty$,
{\small
\begin{align*}
    \sigma_{T,n}^2
    =
    &
    (1+o(1)) 2   
    \mytr (\bPsi_n^2)
    .
\end{align*}
}%
    \end{lemma}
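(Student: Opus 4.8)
The plan is to expand the statistic as $T_{\mathrm{CQ}}(\BY_1,\BY_2) = A_1 + A_2 - B$, where
\[
A_k = \frac{2}{n_k(n_k-1)}\sum_{i=1}^{n_k}\sum_{j=i+1}^{n_k} Y_{k,i}^\myT Y_{k,j}, \qquad B = \frac{2}{n_1 n_2}\sum_{i=1}^{n_1}\sum_{j=1}^{n_2} Y_{1,i}^\myT Y_{2,j},
\]
and to compute the variance term by term. Since the $Y_{k,i}$ are independent with mean $\mathbf 0_p$, every summand above has mean zero, so $\myE\{T_{\mathrm{CQ}}(\BY_1,\BY_2)\} = 0$. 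First I would check that $A_1,A_2,B$ are pairwise uncorrelated: in any cross-product of two summands drawn from different blocks, at least one observation index appears exactly once, and conditioning on the remaining vectors isolates a factor with mean $\mathbf 0_p$, so the expectation vanishes. Hence $\sigma_{T,n}^2 = \myVar(A_1) + \myVar(A_2) + \myVar(B)$.

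Next I would evaluate the three variances exactly. The same index-coincidence argument shows that distinct summands within each block are uncorrelated, while for the diagonal terms independence and $\myVar(Y_{k,i}) = \bSigma_{k,i}$ give $\myE\{(Y_{k,i}^\myT Y_{k,j})^2\} = \mytr(\bSigma_{k,i}\bSigma_{k,j})$ for $i\neq j$ and $\myE\{(Y_{1,i}^\myT Y_{2,j})^2\} = \mytr(\bSigma_{1,i}\bSigma_{2,j})$. This yields
\[
\myVar(B) = \frac{4}{n_1 n_2}\mytr(\bar\bSigma_1\bar\bSigma_2), \qquad \myVar(A_k) = \frac{4}{n_k^2(n_k-1)^2}\sum_{i<j}\mytr(\bSigma_{k,i}\bSigma_{k,j}).
\]
Using the identity $\sum_{i<j}\mytr(\bSigma_{k,i}\bSigma_{k,j}) = \tfrac12[\,n_k^2\mytr(\bar\bSigma_k^2) - \sum_i\mytr(\bSigma_{k,i}^2)\,]$, which follows from $\sum_i \bSigma_{k,i} = n_k\bar\bSigma_k$, I would rewrite
\[
\myVar(A_k) = \frac{2}{(n_k-1)^2}\mytr(\bar\bSigma_k^2) - \frac{2}{n_k^2(n_k-1)^2}\sum_{i=1}^{n_k}\mytr(\bSigma_{k,i}^2).
\]

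Finally I would compare with $2\mytr(\bPsi_n^2) = 2n_1^{-2}\mytr(\bar\bSigma_1^2) + 2n_2^{-2}\mytr(\bar\bSigma_2^2) + 4(n_1 n_2)^{-1}\mytr(\bar\bSigma_1\bar\bSigma_2)$. The term $\myVar(B)$ reproduces the cross term exactly, so writing $R_k = \myVar(A_k) - 2n_k^{-2}\mytr(\bar\bSigma_k^2)$ it remains to show $R_1+R_2 = o\{\mytr(\bPsi_n^2)\}$. Assumption \ref{assumption:n} gives $n_k^2/(n_k-1)^2 \to 1$, so the first part of $R_k$ is $o\{n_k^{-2}\mytr(\bar\bSigma_k^2)\}$; Assumption \ref{assumption7} gives $n_k^{-2}\sum_i\mytr(\bSigma_{k,i}^2) = o\{\mytr(\bar\bSigma_k^2)\}$, so the second part is as well. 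Since $\bar\bSigma_1,\bar\bSigma_2$ are positive semi-definite, Lemma \ref{lemma:tao_matrix}, (ii) gives $\mytr(\bar\bSigma_1\bar\bSigma_2)\geq 0$ and hence $n_k^{-2}\mytr(\bar\bSigma_k^2) \leq \mytr(\bPsi_n^2)$, which upgrades $|R_k| = o\{n_k^{-2}\mytr(\bar\bSigma_k^2)\}$ to $|R_k| = o\{\mytr(\bPsi_n^2)\}$. Summing, $\sigma_{T,n}^2 = 2\mytr(\bPsi_n^2) + R_1 + R_2 = (1+o(1))\,2\mytr(\bPsi_n^2)$. The variance computation itself is routine once the uncorrelatedness is in hand; the one delicate point is this final conversion of the per-group remainder bounds into $o\{\mytr(\bPsi_n^2)\}$, which relies on positivity to dominate each group's trace by $\mytr(\bPsi_n^2)$.
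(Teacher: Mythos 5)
Your proposal is correct and follows essentially the same route as the paper: the exact variance formula you derive, $\sigma_{T,n}^2 = \sum_{k=1}^2 \frac{2}{(n_k-1)^2}\{\mytr(\bar\bSigma_k^2) - n_k^{-2}\sum_{i=1}^{n_k}\mytr(\bSigma_{k,i}^2)\} + \frac{4}{n_1 n_2}\mytr(\bar\bSigma_1\bar\bSigma_2)$, is precisely the one the paper asserts as its first display, after which the paper likewise invokes Assumption \ref{assumption7} to drop the $\sum_i \mytr(\bSigma_{k,i}^2)$ terms and Assumption \ref{assumption:n} to replace $(n_k-1)^{-2}$ by $n_k^{-2}$. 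Your additional details (the index-coincidence uncorrelatedness argument and the positivity-based domination $n_k^{-2}\mytr(\bar\bSigma_k^2)\leq\mytr(\bPsi_n^2)$ that upgrades the per-group remainders to $o\{\mytr(\bPsi_n^2)\}$) simply make explicit the bookkeeping the paper leaves tacit.
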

    \begin{proof}
We have
{\small
\begin{align*}
    \sigma_{T,n}^2
    =
    &
    \sum_{k=1}^2
    \frac{2}{(n_k - 1)^2}
    \left\{
        \mytr
        (
            \bar \bSigma_{k}^2
            )
        -
    \frac{1}{n_k^2}
        \sum_{i=1}^{n_k}
        \mytr(\bSigma_{k,i}^2)
    \right\}
    +
    \frac{4}{n_1 n_2} 
    \mytr
    (
        \bar \bSigma_1
        \bar \bSigma_2
        )
        \\
    =&
    (1+o(1))
    \left\{
    \sum_{k=1}^2
    \frac{2}{(n_k-1)^2}
        \mytr
        (
            \bar \bSigma_{k}^2
            )
    +
    \frac{4}{n_1 n_2} 
    \mytr
    (
        \bar \bSigma_1
        \bar \bSigma_2
        )
    \right\}
        \\
    =&
    (1+o(1)) 2   
        \mytr
        \left\{
        (
        n_1^{-1}
            \bar \bSigma_{1}
            +
        n_2^{-1}
            \bar \bSigma_{2}
            )^2
        \right\}
    ,
\end{align*}
}%
where the second equality follows from Assumption \ref{assumption7} and the third equality follows from Assumption \ref{assumption:n}.
The conclusion follows.
    \end{proof}

\begin{lemma}\label{lemma:630}
    Suppose $\bZeta_n $ is a $d_n$-dimensional standard normal random vector where $\{d_n\}$ is an arbitrary sequence of positive integers.
    Suppose $\BA_n$ is a $d_n \times d_n$ symmetric matrix and $\BB_n$ is an $r \times d_n$ matrix where $r$ is a fixed positive integer.
    Furthermore, suppose
    {\small
    \begin{align*}
        \limsup_{n \to \infty} 
        \mytr(\BA_n^2) 
        \in [0, +\infty)
        ,
        \quad
        \lim_{n\to \infty} \mytr(\BA_n^4)  = 0
        ,
        \quad
        \limsup_{n \to \infty} \|\BB_n \BB_n^\myT\|_F  \in [0, +\infty)
        .
    \end{align*}
}%
    Let $\{c_n\}$ be a sequence of positive numbers such that $| c_n - \{2 \mytr(\BA_n^2) \}^{1/2} | \to 0$.
    Let $\{\BD_n\}$ be a sequence of $r \times r$ matrices such that
    $\|\BD_n - \BB_n \BB_n^\myT\|_F \to 0$.
    Then
    {\small
    \begin{align*}
        \left\|
        \mathcal L
        \left(
            \bZeta_n^\myT \BA_n \bZeta_n - \mytr(\BA_n)
            + \bZeta_n^\myT \BB_n^\myT \BB_n \bZeta_n
            -\mytr( \BB_n^\myT \BB_n )
        \right)
        -
        \mathcal L
        \left(
            c_n
            \xi_0
            +
            \bxi_r^\myT
            \BD_n
            \bxi_r
            -\mytr(\BD_n)
        \right) 
        \right\|_3
        \to 0,
    \end{align*}
}%
where $\bxi_r$ is an $r$-dimensional standard normal random vector and $\xi_0$ is a standard normal random variable which is independent of $\bxi_r$.

\end{lemma}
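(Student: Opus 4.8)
The plan is to exploit that the two quadratic forms share the Gaussian vector $\bZeta_n$ but interact only through the fixed $r$-dimensional row space of $\BB_n$; once that interaction is shown negligible, the forms decouple into an asymptotically normal part and a fixed-dimensional part. Let $\BP_n$ be the orthogonal projection onto the row space of $\BB_n$, so $\myrank(\BP_n)\le r$, and set $\BQ_n=\BI-\BP_n$. Since $\bZeta_n$ is standard normal, $\BP_n\bZeta_n$ and $\BQ_n\bZeta_n$ are independent Gaussians, and since the rows of $\BB_n$ lie in the range of $\BP_n$ we have $\BB_n\BQ_n=\mathbf 0$, whence $\BB_n\bZeta_n=\BB_n\BP_n\bZeta_n$ is a function of $\BP_n\bZeta_n$ alone. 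Thus the second form $S_n:=\bZeta_n^\myT\BB_n^\myT\BB_n\bZeta_n-\mytr(\BB_n^\myT\BB_n)$ depends only on $\BP_n\bZeta_n$. Throughout I write $\|\BA_n\|_{\mathrm{op}}=\max_i|\lambda_i(\BA_n)|$ and note $\|\BA_n\|_{\mathrm{op}}\le\{\mytr(\BA_n^4)\}^{1/4}\to0$.

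Next I reduce the first form to a piece living on $\BQ_n\bZeta_n$. Writing $\tilde\BA_n=\BQ_n\BA_n\BQ_n$ and expanding $\bZeta_n=\BP_n\bZeta_n+\BQ_n\bZeta_n$, I get $\bZeta_n^\myT\BA_n\bZeta_n-\mytr(\BA_n)=U_n+\eta_n$, where $U_n:=\bZeta_n^\myT\tilde\BA_n\bZeta_n-\mytr(\tilde\BA_n)$ is a function of $\BQ_n\bZeta_n$ and $\eta_n$ collects the cross term $2\bZeta_n^\myT\BQ_n\BA_n\BP_n\bZeta_n$, the term $\bZeta_n^\myT\BP_n\BA_n\BP_n\bZeta_n$, and the deterministic shift $\mytr(\tilde\BA_n)-\mytr(\BA_n)=-\mytr(\BP_n\BA_n)$. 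Using $\myrank(\BP_n)\le r$ and $\|\BA_n\|_{\mathrm{op}}\to0$, each piece is $L^2$-small: for instance the cross-term variance is $O(\|\BA_n\BP_n\|_F^2)=O(r\|\BA_n\|_{\mathrm{op}}^2)\to0$, and $|\mytr(\BP_n\BA_n)|\le r\|\BA_n\|_{\mathrm{op}}\to0$, so $\{\myE(\eta_n^2)\}^{1/2}\to0$. By Lemma~\ref{lemma:add_error} I may replace $\bZeta_n^\myT\BA_n\bZeta_n-\mytr(\BA_n)$ by $U_n$ at cost $o(1)$ in $\|\cdot\|_3$, and crucially $U_n$ is independent of $S_n$. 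Treating $\BA_n-\tilde\BA_n$ as a rank-$\le 2r$, operator-norm-small perturbation gives $\mytr(\tilde\BA_n^2)=\mytr(\BA_n^2)+o(1)$ and $\mytr(\tilde\BA_n^4)\le\|\BA_n\|_{\mathrm{op}}^2\mytr(\BA_n^2)\to0$.

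I then settle the two pieces separately. For $U_n$, diagonalizing $\tilde\BA_n$ shows $U_n$ has the same law as $\sum_i\mu_i(g_i^2-1)$ with $\mu_i=\lambda_i(\tilde\BA_n)$ and $g_i$ i.i.d.\ $\mathcal N(0,1)$; a term-by-term Lindeberg swap of $\mu_i(g_i^2-1)$ for $\sqrt2\,\mu_i h_i$ (matching the first two moments, $h_i$ i.i.d.\ $\mathcal N(0,1)$), with a third-order Taylor bound, yields $\|\mathcal L(U_n)-\mathcal L(\{2\mytr(\tilde\BA_n^2)\}^{1/2}\xi_0)\|_3\le C\sum_i|\mu_i|^3\le C\{\mytr(\tilde\BA_n^4)\}^{1/4}\mytr(\tilde\BA_n^2)\to0$. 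Since $\mytr(\tilde\BA_n^2)=\mytr(\BA_n^2)+o(1)$, $|c_n-\{2\mytr(\BA_n^2)\}^{1/2}|\to0$, and $|\sqrt a-\sqrt b|\le\sqrt{|a-b|}$, another application of Lemma~\ref{lemma:add_error} gives $\|\mathcal L(U_n)-\mathcal L(c_n\xi_0)\|_3\to0$. For $S_n$, the law of $\BB_n\bZeta_n$ equals that of $(\BB_n\BB_n^\myT)^{1/2}\bxi_r$, so $S_n$ has the same law as $\bxi_r^\myT\BB_n\BB_n^\myT\bxi_r-\mytr(\BB_n\BB_n^\myT)$; its difference from $\bxi_r^\myT\BD_n\bxi_r-\mytr(\BD_n)$ is a centered quadratic form in the fixed-dimensional $\bxi_r$ with coefficient matrix $\BD_n-\BB_n\BB_n^\myT$, whose $L^2$ norm is $O(\|\BD_n-\BB_n\BB_n^\myT\|_F)\to0$, so Lemma~\ref{lemma:add_error} gives $\|\mathcal L(S_n)-\mathcal L(\bxi_r^\myT\BD_n\bxi_r-\mytr(\BD_n))\|_3\to0$.

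Finally I glue the pieces using the elementary fact that $\|\cdot\|_3$ is subadditive under independent convolution: if $U\perp S$ and $U'\perp S'$, then $\|\mathcal L(U+S)-\mathcal L(U'+S')\|_3\le\|\mathcal L(U)-\mathcal L(U')\|_3+\|\mathcal L(S)-\mathcal L(S')\|_3$, which I would verify by observing that for $f\in\mathscr C_b^3(\mathbb R)$ the smoothed function $\bar f(u)=\myE f(u+S)$ again lies in $\mathscr C_b^3(\mathbb R)$ with the same sup-bounds, so the two swaps may be performed independently. Applying this with the independent pairs $(U_n,S_n)$ and $(c_n\xi_0,\ \bxi_r^\myT\BD_n\bxi_r-\mytr(\BD_n))$, together with the $\eta_n$-replacement, yields the stated convergence. \textbf{The main obstacle} is the asymptotic decoupling in the second and third paragraphs: showing that the overlap of $\BA_n$ with the $r$-dimensional row space of $\BB_n$ is negligible, so that the first form is asymptotically Gaussian \emph{and} asymptotically independent of the second. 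This is exactly where the hypothesis $\mytr(\BA_n^4)\to0$ (forcing $\|\BA_n\|_{\mathrm{op}}\to0$) combines with the fixed rank $\myrank(\BP_n)\le r$.
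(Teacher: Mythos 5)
Your proof is correct, but it takes a genuinely different route from the paper's. The paper reduces the lemma, via the subsequence trick (passing to subsequences along which $2\mytr(\BA_n^2)\to\gamma^2$ and $\BB_n\BB_n^\myT\to\bOmega$), to the joint weak convergence of $\bigl(\bZeta_n^\myT\BA_n\bZeta_n-\mytr(\BA_n),\,\BB_n\bZeta_n\bigr)$ to $\mathcal N\bigl(\mathbf 0_{r+1},\mydiag(\gamma^2,\bOmega)\bigr)$, and proves this by an exact computation of the characteristic function of the joint vector (diagonalizing $\BA_n$, expanding $\sum_j\log(1-2ia\lambda_j(\BA_n))$ via Taylor's theorem, using $\lambda_1(\BA_n)\to0$); asymptotic independence of the quadratic and linear parts falls out of the factorizing limit of the characteristic function. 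You instead make the independence \emph{exact} at finite $n$: projecting onto the row space of $\BB_n$ splits $\bZeta_n$ into independent pieces $\BP_n\bZeta_n$ and $\BQ_n\bZeta_n$, the second form is a function of $\BP_n\bZeta_n$ alone, and the $L^2$-negligibility of the overlap terms (via $\|\BA_n\BP_n\|_F^2\leq r\|\BA_n\|_{\mathrm{op}}^2$ with $\|\BA_n\|_{\mathrm{op}}\leq\{\mytr(\BA_n^4)\}^{1/4}\to0$) lets Lemma \ref{lemma:add_error} reduce the first form to the $\BQ_n$-block. You then prove normality of that block by a real-variable Lindeberg swap with error $O(\sum_i|\mu_i|^3)\leq O(\{\mytr(\tilde\BA_n^4)\}^{1/4}\mytr(\tilde\BA_n^2))$ --- essentially the same swap the paper deploys inside the proof of Lemma \ref{lemma:713aa} --- and glue with the convolution subadditivity of $\|\cdot\|_3$, whose verification via $\bar f(u)=\myE f(u+S)\in\mathscr C_b^3(\mathbb R)$ is sound. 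What your route buys: it avoids subsequences entirely (the paper needs them to produce the limits $\gamma$ and $\bOmega$, whereas your estimates hold along the full sequence), avoids characteristic functions and the attendant complex-logarithm bookkeeping, and yields explicit $\|\cdot\|_3$-rates. What the paper's route buys: a single compact computation that handles the joint vector in one stroke, with no need for the projection decomposition, the remainder estimates, or the convolution lemma. Two small points worth writing carefully if you flesh this out: the deterministic shift $-\mytr(\BP_n\BA_n)$ exactly recenters $\bZeta_n^\myT\BP_n\BA_n\BP_n\bZeta_n$, so $\eta_n$ is mean-zero and only the two variance bounds are needed; and in the $S_n$ step the variance bound $\myVar\{\bxi_r^\myT\BM\bxi_r\}\leq 2\|\BM\|_F^2$ for $\BM=\BD_n-\BB_n\BB_n^\myT$ should be stated via the symmetrization $(\BM+\BM^\myT)/2$, since the lemma does not assume $\BD_n$ symmetric.
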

\begin{proof}
    The conclusion holds if and only if for any subsequence of $\{n\}$, there is a further subsequence along which the conclusion holds.
    Using this subsequence trick, we only need to prove that the conclusion holds for a subsequence of $\{n\}$.
    By taking a subsequence, we can assume without loss of generality that
    {\small
    \begin{align*}
        \lim_{n\to \infty} \{2\mytr(\BA_n^2) \}^{1/2} = \gamma ,
        \quad
        \lim_{n\to \infty} \mytr(\BA_n^4) =0,
        \quad
        \lim_{n\to \infty} \BB_n\BB_n^\myT = \bOmega,
    \end{align*}
}%
where $\gamma \geq 0$ and $\bOmega$ is an $r \times r$ positive semi-definite matrix.
    Then we have $\lim_{n\to \infty} c_n = \gamma$ and $\lim_{n \to \infty} \BD_n = \bOmega $.
    Consequently,
    $
        \mathcal L
        \left(
            c_n \xi_0
            +
            \bxi_r^\myT 
            \BD_n
            \bxi_r
            -\mytr(\BD_n)
        \right) 
    $
    converges weakly to
    $
        \mathcal L
        \left(
            \gamma \xi_0
            +
            \bxi_r^\myT \bOmega \bxi_r
            -\mytr(\bOmega)
        \right) 
    $.
    Hence we only need to prove that
    {\small
    \begin{align*}
        \left\|
        \mathcal L
        \left(
            \bZeta_n^\myT \BA_n \bZeta_n - \mytr(\BA_n)
            + \bZeta_n^\myT \BB_n^\myT \BB_n \bZeta_n
            - \mytr(\BB_n^\myT \BB_n)
        \right)
        -
        \mathcal L
        \left(
            \gamma \xi_0
            +
            \bxi_r^\myT \bOmega \bxi_r
            -\mytr(\bOmega)
        \right) 
        \right\|_3
        \to 0.
    \end{align*}
}%
    To prove this, it suffices to prove that
    {\small
    \begin{align*}
        \begin{pmatrix}
         \bZeta_n^\myT \BA_n \bZeta_n - \mytr(\BA_n)
            \\
         \BB_n \bZeta_n
        \end{pmatrix}
        \rightsquigarrow
        \mathcal N 
        \left(
            \mathbf 0_{r+1}
            ,
            \begin{pmatrix}
                \gamma^2 &0\\
                0 & \bOmega
            \end{pmatrix}
        \right)
        ,
    \end{align*}
}%
    where ``$\rightsquigarrow$'' means weak convergence.

Denote by $\BA_n = \BP_n \bLambda_n \BP_n^\myT$ the spectral decomposition of $\BA_n$ where $\bLambda_n = \mydiag\{\lambda_{1} (\BA_n), \ldots, \lambda_{d_n}(\BA_{n})\}$
and $\BP_n \in \mathbb R$ is an ${d_n \times d_n}$ orthogonal matrix.
    Define $\bZeta_n^* = \BP_n^\myT \bZeta_n$.
    Then $\bZeta_n^*$ is also a $d_n$-dimensional standard normal random vector and
    $
    \bZeta_n^\myT \BA_n \bZeta_n - \mytr(\BA_n) = \bZeta_n^{*\myT} \bLambda_n \bZeta_n^* - \mytr (\bLambda_n)
    $,
    $\BB_n \bZeta_n = \BB_n^* \bZeta_n^* $ where $\BB_n^* =  \BB_n \BP_n $.

    Fix $a \in \mathbb R$ and $\Bb \in \mathbb R^r$.
    {\small
    }%
    Let $\tilde b_{n, i}$ be the $i$th element of $\BB_n^{*\myT} \Bb$, $i=1, \ldots , d_n$.
    The characteristic function of the random vector $(\bZeta_n^{\myT} \BA_n \bZeta_n - \mytr(\BA_n) , (\BB_n \bZeta_n)^\myT)^\myT$ at $(a, \Bb^\myT)^\myT$ is 
    {\small
    \begin{align*}
        &
        \myE
        \exp
        \left\{
            i 
            \left(
            a (\bZeta_n^{\myT} \BA_n \bZeta_n - \mytr(\BA_n)) - \Bb^\myT \BB_n \bZeta_n
        \right)
    \right\}
        \\
=
        &
        \myE
        \exp
        \left\{
            i 
            \left(
            a (\bZeta_n^{*\myT} \bLambda_n \bZeta_n^* - \mytr(\bLambda_n)) - \Bb^\myT \BB_n^* \bZeta_n^*
        \right)
    \right\}
    \\
    =&
        \exp\left\{ 
            -\frac{1}{2}
            \sum_{j=1}^{d_n} \log (1- 2ia\lambda_{j}(\BA_n) )
            -\frac{1}{2} 
            \sum_{j=1}^{d_n} \frac{ \tilde b_{n,j}^2 }{1- 2ia\lambda_{j}(\BA_n)}
            -
            i
            a 
            \mytr(\BA_n)
    \right\},
    \end{align*}
}%
where the last equality can be obtained from the characteristic function of noncentral $\chi^2$ random variable, 
and for $\log$ functions, we put the cut along $(-\infty, 0]$, that is, $\log(z) = \log (|z|) + i \arg(z)$ with $-\pi < \arg(z) < \pi$.

    The condition $\lim_{n\to \infty} \mytr(\BA_n^4) = 0$ implies that $\lambda_{1}(\BA_n) \to 0$.
    Consequently, Taylor's theorem implies that
    {\small
    \begin{align*}
        \sum_{j=1}^{d_n} \log (1- 2ia\lambda_{j}(\BA_n))
        =
        \sum_{j=1}^{d_n}
        \left\{
            - 2ia\lambda_{j}(\BA_n) - \frac{1}{2} \left(2ia\lambda_{j}(\BA_n)\right)^2
            (1+e_{n,j})
    \right\},
    \end{align*}
}%
    where $\lim_{n\to \infty}\max_{j\in\{1, \ldots, d_n\}} |e_{n,j}| \to 0$.
    It follows that
    {\small
    \begin{align*}
        \sum_{j=1}^{d_n} \log (1- 2ia\lambda_j(\BA_n))
        =
        - 2ia
        \mytr(\BA_n)
        +
        (1+o(1)) 2 a^2 
        \mytr(\BA_n^2)
        .
    \end{align*}
}%
On the other hand,
{\small
\begin{align*}
\sum_{j=1}^{d_n} { \tilde b_{n,j}^2 }/{(1- 2ia\lambda_j(\BA_n))}
=
(1+o(1))\sum_{j=1}^n \tilde b_{n,j}^2 
=
(1+o(1))
\Bb^\myT \BB_n \BB_n^\myT \Bb
.
\end{align*}
}%
Thus,
{\small
    \begin{align*}
        \myE
        \left[
        \exp
        \left\{
            i 
            \left(
            a (\bZeta_n^{\myT} \BA_n \bZeta_n - \mytr(\BA_n)) - \Bb^\myT \BB_n \bZeta_n
        \right)
    \right\}
\right]
    =&
        \exp\left\{ 
            -
            (1+o(1))
            \frac{1}{2} 
            \left(
                2 a^2 \mytr(\BA_n^2)
            +
                \Bb^\myT \BB_n \BB_n^\myT \Bb
        \right)
    \right\}
    \\
    \to&
        \exp\left\{ 
            -
            \frac{1}{2} 
            \left(
                \gamma^2
                a^2
            +
                \Bb^\myT \bOmega \Bb
        \right)
    \right\}
    ,
    \end{align*}
}%
where
$
        \exp\left\{ 
            -
            \frac{1}{2} 
            \left(
                \gamma^2
                a^2
            +
                \Bb^\myT \bOmega \Bb
        \right)
    \right\}
$
is the characteristic function of the distribution
{\small
    \begin{align*}
        \mathcal N 
        \left(
            \mathbf 0_{r+1}
            ,
            \begin{pmatrix}
                \gamma^2 &0\\
                0 & \bOmega
            \end{pmatrix}
        \right)
        .
    \end{align*}
}%
    This completes the proof.
\end{proof}

\begin{lemma}\label{lemma:713aa}
    Suppose $\bPsi_n$ is a $p \times p$ symmetric matrix where $p$ is a function of $n$.
    {\small
    \begin{align*}
    \lim_{n \to \infty} \frac{\lambda_i(\bPsi_n)}{\{\mytr(\bPsi_n^2) \}^{1/2}} = \kappa_i
    ,
    \quad
    i = 1, 2, \ldots.
    \end{align*}
}%
Let $\bxi_p$ be a $p$-dimensional standard normal random vector and $\{\xi_i\}_{i=0}^\infty$ a sequence of independent standard normal random variables.
    Then as $n \to \infty$,
    {\small
\begin{align*}
\left\|
\mathcal L \left(
    \frac{
        \bxi_p^\myT
        \bPsi_n
    \bxi_p
    -\mytr
        \left(
            \bPsi_n
    \right)
        }{
        \left\{
            2
            \mytr(
            \bPsi_n^2
            )
        \right\}^{1/2}
    }
\right) 
-
\mathcal L 
\left(
        (1 - \sum_{i=1}^{\infty} \kappa_{i}^2)^{1/2}
        \xi_0
+
2^{-1/2}
\sum_{i=1}^{\infty} \kappa_i (\xi_i^2-1)
\right)
\right\|_3
\to 0.
\end{align*}
}%
\end{lemma}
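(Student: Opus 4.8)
The plan is to diagonalize the quadratic form, reduce it to a weighted sum of centered chi-squares, and then separate the finitely many ``large'' eigendirections (which produce the chi-squared terms in the limit) from the remaining ``small'' directions (which are collectively matched by a Gaussian), controlling the approximation by a Lindeberg-type third-moment bound and closing the argument with a double limit in $r$ and then $n$. First I would write the spectral decomposition $\bPsi_n = \BP_n \bLambda_n \BP_n^\myT$ with $\bLambda_n = \mydiag\{\lambda_1(\bPsi_n), \ldots, \lambda_p(\bPsi_n)\}$ and set $\bZeta_n = \BP_n^\myT \bxi_p$, which is again standard normal. Writing $\tilde\kappa_{n,j} = \lambda_j(\bPsi_n) / \{\mytr(\bPsi_n^2)\}^{1/2}$, so that $\sum_{j=1}^p \tilde\kappa_{n,j}^2 = 1$ and, by hypothesis, $\tilde\kappa_{n,j} \to \kappa_j$ for each fixed $j$ (in the positive semi-definite setting relevant to the application, the $\tilde\kappa_{n,j}$ are nonnegative and nonincreasing), the object of interest becomes
\begin{align*}
    Q_n := \frac{\bxi_p^\myT \bPsi_n \bxi_p - \mytr(\bPsi_n)}{\{2\mytr(\bPsi_n^2)\}^{1/2}} = \frac{1}{\surd 2}\sum_{j=1}^p \tilde\kappa_{n,j}(Z_{n,j}^2 - 1),
\end{align*}
where $Z_{n,1}, \ldots, Z_{n,p}$ are the coordinates of $\bZeta_n$, independent standard normal.

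For a fixed truncation level $r$, I would split $Q_n = H_n^{(r)} + T_n^{(r)}$ into the independent head $H_n^{(r)} = 2^{-1/2}\sum_{j=1}^r \tilde\kappa_{n,j}(Z_{n,j}^2-1)$ and tail $T_n^{(r)} = 2^{-1/2}\sum_{j=r+1}^p \tilde\kappa_{n,j}(Z_{n,j}^2-1)$, and let $G_n^{(r)} \sim \mathcal N(0, v_{n,r})$ with $v_{n,r} = \sum_{j>r}\tilde\kappa_{n,j}^2$ be an independent Gaussian. Since $\|\cdot\|_3$ is sub-convolutive (conditioning on the head, the test function $t \mapsto f(h+t)$ retains the same derivative bounds), I obtain $\|\mathcal L(Q_n) - \mathcal L(H_n^{(r)} + G_n^{(r)})\|_3 \le \|\mathcal L(T_n^{(r)}) - \mathcal L(G_n^{(r)})\|_3$. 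A Lindeberg swap---the same device as in the proof of Theorem \ref{thm:universality_GQF}, but for a sum of independent summands and hence strictly simpler, with the matched variances cancelling the first two Taylor terms---bounds the right-hand side by a constant times $\sum_{j>r}|\tilde\kappa_{n,j}|^3 \le \tilde\kappa_{n,r+1}\sum_{j>r}\tilde\kappa_{n,j}^2 \le \tilde\kappa_{n,r+1}$, whose $\limsup$ over $n$ equals $\kappa_{r+1}$.

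Next, for fixed $r$ I would let $n \to \infty$. Because $\tilde\kappa_{n,j} \to \kappa_j$ for $j \le r$ and $v_{n,r} \to v_r := 1 - \sum_{j=1}^r \kappa_j^2$, the independent pair converges weakly,
\begin{align*}
    H_n^{(r)} + G_n^{(r)} \rightsquigarrow S_\infty^{(r)} := (v_r)^{1/2}\xi_0 + 2^{-1/2}\sum_{j=1}^r \kappa_j(\xi_j^2 - 1),
\end{align*}
which, the limit being a fixed law, is equivalent to $\|\mathcal L(H_n^{(r)} + G_n^{(r)}) - \mathcal L(S_\infty^{(r)})\|_3 \to 0$. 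By the three-series theorem quoted in the Remark following Corollary \ref{corollary:the}, the series $\sum_{j}\kappa_j(\xi_j^2-1)$ converges almost surely, so $S_\infty^{(r)}$ tends almost surely, hence in $\|\cdot\|_3$, to the target variable $S_\infty$ as $r \to \infty$. Assembling the three pieces by the triangle inequality, taking $\limsup_n$ and then $r \to \infty$ (using $\kappa_{r+1} \to 0$, which holds since $\sum_j \kappa_j^2 \le 1$), yields $\|\mathcal L(Q_n) - \mathcal L(S_\infty)\|_3 \to 0$. I note that when only finitely many $\kappa_i$ are nonzero, say the first $s$, one may instead invoke Lemma \ref{lemma:630} directly with $r = s$, since then the residual satisfies $\mytr(\BA_n^4) \to 0$; the truncation is needed only to handle infinitely many nonzero $\kappa_i$.

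The hard part will be the interchange of limits. For any fixed $r$ the tail $T_n^{(r)}$ is \emph{not} asymptotically Gaussian: it still carries the chi-square lumps from the directions $r+1, r+2, \ldots$, since the ratio $\tilde\kappa_{n,r+1}^2 / v_{n,r}$ does not vanish, so one cannot simply apply a central limit theorem to the tail. The resolution is to quantify the non-Gaussianity of the tail by the summed-cubes quantity $\sum_{j>r}|\tilde\kappa_{n,j}|^3$, which is uniformly small---bounded by $\kappa_{r+1}$ in the $\limsup_n$ sense---once $r$ is large, even though the tail variance $v_{n,r}$ remains non-negligible. Producing a clean, $n$-uniform third-moment bound in the $\|\cdot\|_3$ metric (rather than relying on asymptotic normality of the tail) and then correctly ordering the double limit is the delicate step; a minor secondary point is the sign and indexing bookkeeping, which is harmless in the positive semi-definite case relevant to the application, where the eigenvalues are nonnegative and nonincreasing.
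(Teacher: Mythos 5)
Your proposal is correct and follows essentially the same route as the paper's proof: diagonalize so the statistic is $2^{-1/2}\sum_{j=1}^p \kappa_{j,n}(\zeta_j^2-1)$, truncate at a fixed level $r$, Lindeberg-swap each tail summand for a variance-matched Gaussian with a third-order Taylor bound controlled by $\sum_{j>r}\kappa_{j,n}^3 \leq \kappa_{r+1,n}$, converge the head via $\kappa_{j,n}\to\kappa_j$, and close with the three-series theorem and the $\delta$-then-$n$ (equivalently $\limsup_n$-then-$r$) ordering of limits. Your sub-convolutivity reduction to comparing only the tails is a slightly cleaner packaging of the same term-by-term swap the paper performs, and your observation that the tail is not asymptotically Gaussian (so only the uniform third-moment bound, not a CLT, can be used) matches the paper's reasoning exactly.
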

\begin{proof}
    Let $\kappa_{i,n} = \lambda_i (\bPsi_n) / \{\mytr(\bPsi_n^2) \}^{1/2}$.
    Then we have
    {\small
    \begin{align*}
    \mathcal L \left(
        \frac{
         \bxi_p^\myT
         \bPsi_n
         \bxi_p
            - 
            \mytr(\bPsi_n)
        }{
        \left\{
            2\mytr\left(
            \bPsi_n^2
        \right)
    \right\}^{1/2}
    }
\right)
        =
    \mathcal L 
    \left(
        2^{-1/2}\sum_{i=1}^p \kappa_{i,n} (\zeta_i^2 - 1)
    \right),
    \end{align*}
}%
    where $\{\zeta_i\}_{i=1}^\infty$ is a sequence of independent standard normal random variables which are independent of $\{\xi_i\}_{i=0}^\infty$.
From Fatou's lemma, we have $\sum_{i=1}^\infty \kappa_{i}^2 
\leq
\lim_{n\to \infty} \sum_{i=1}^\infty \kappa_{i,n}^2
= 1$.
    It follows from L\'evy's equivalence theorem and three-series theorem (see, e.g., \cite{dudleyProbability}, Theorem 9.7.1 and Theorem 9.7.3) that 
$
            \sum_{i=1}^r \kappa_{i} (\zeta_i^2 - 1)
$
converges weakly to 
$
            \sum_{i=1}^\infty \kappa_{i} (\zeta_i^2 - 1)
$
as $r\to \infty$.
Hence for any $\delta > 0$, there is a positive integer $r$ such that
{\small
\begin{align}
    &
    \Bigg\|
            \mathcal L
            \left(
                (1- \sum_{i=1}^\infty \kappa_i^2)^{1/2} \xi_0
                +
    2^{-1/2} 
            \sum_{i=1}^\infty \kappa_{i} (\zeta_i^2 - 1)
            \right)
            -
            \mathcal L
            \left(
                (1- \sum_{i=1}^r \kappa_i^2)^{1/2} 
                \xi_0
                +
    2^{-1/2} 
            \sum_{i=1}^r \kappa_{i} (\zeta_i^2 - 1)
            \right)
            \Bigg\|_3
            \leq \delta 
            .
            \label{eq:miaomiaoda3}
\end{align}
}%
By taking a possibly larger $r$, we can also assume that $\kappa_{r+1} \leq \delta $.
Now we fix such an $r$ and apply Lindeberg principle.
For $j =r+1, \ldots, p$,
define
{\small
\begin{align*}
    V_{j,0} = 
    2^{-1/2} 
    \sum_{i=1}^{j-1} \kappa_{i,n} (\zeta_i^2 - 1)
            +
            \sum_{i=j+1}^p \kappa_{i,n} \xi_i
.
\end{align*}
}%
Then we have
$ 
V_{j + 1, 0} + \kappa_{j + 1, n} \xi_{j+1} 
=
V_{j, 0} + 2^{-1/2} \kappa_{j, n}(\zeta_j^2 - 1)
$,
$j = r + 1, \ldots, p - 1$,
and
{\small
\begin{align*}
    &
    V_{r+1, 0} + \kappa_{r+1, n} \xi_{r+1} = 2^{-1/2} \sum_{i=1}^{r} \kappa_{i,n} (\zeta_i^2 - 1)+\sum_{i=r+1}^p \kappa_{i,n} \xi_i
,
\\
                                            &
    V_{p, 0} + 2^{-1/2} \kappa_{p, n} (\zeta_p^2 - 1) = 2^{-1/2} \sum_{i=1}^{p} \kappa_{i,n} (\zeta_i^2 - 1).
\end{align*}
}%
For $f \in \mathscr C_b^3 (\mathbb R)$,
we have
{\small
\begin{align}
    &
    \left\|
    f\left(
    2^{-1/2} 
    \sum_{i=1}^{p} \kappa_{i,n} (\zeta_i^2 - 1)
    \right)
    -
    f\left(
    2^{-1/2} 
    \sum_{i=1}^{r} \kappa_{i,n} (\zeta_i^2 - 1)
            +
            \sum_{i=r+1}^p \kappa_{i,n} \xi_i
        \right)
            \right\|_3
            \notag
            \\
=
    &
\left|
\myE
f(V_{p, 0} + 2^{-1/2} \kappa_{p, n} (\zeta_p^2 - 1) )
-
\myE
f(V_{r+1,0} + \kappa_{r+1, n} \xi_{r+1})
\right|
            \notag
\\
=
    &
\left|
\sum_{j=r+1}^{p}
(
\myE
f(
V_{j, 0} + 2^{-1/2} \kappa_{j, n}(\zeta_j^2 - 1)
)
-
\myE
f(V_{j,0} + \kappa_{j, n} \xi_{j})
)
\right|
            \notag
\\
\leq
    &
\sum_{j=r+1}^{p}
\left|
\myE
f(
V_{j, 0} + 2^{-1/2} \kappa_{j, n}(\zeta_j^2 - 1)
)
-
\myE
f(V_{j,0} + \kappa_{j, n} \xi_{j})
\right|
.
\label{eq:zuihouyitian1}
\end{align}
}%
For $j = r+1, \ldots, p$, we have
{\small
\begin{align*}
    &
    \Bigg|
f\left(
    V_{j, 0}
    +
    2^{-1/2} 
    \kappa_{j,n} (\zeta_j^2 - 1)
    \right)
    -
f\left(
    V_{j,0}
    \right)
    - \sum_{i=1}^2 \frac{1}{i!}
    \kappa_{j,n}^i 
    \left\{ 2^{-1/2}(\zeta_j^2 - 1) \right\}^i
    f^{(i)} \left(
        V_{j,0}
        \right)
        \Bigg|
    \leq
     \frac{1}{6}
     \kappa_{j,n}^3
    \left|2^{-1/2}(\zeta_j^2 - 1)\right|^3
     \sup_{x\in \mathbb R} |f^{(3)} (x)|
     ,
     \\
    &
    \Bigg|
f(V_{j,0} + \kappa_{j, n} \xi_{j})
    -
f\left(
    V_{j,0}
    \right)
    - \sum_{i=1}^2 \frac{1}{i!}
    \kappa_{j,n}^i \xi_j^i
    f^{(i)} \left(
        V_{j,0}
        \right)
        \Bigg|
    \leq
     \frac{1}{6}
     \kappa_{j,n}^3
     \left|\xi_j\right|^3
     \sup_{x\in \mathbb R} |f^{(3)} (x)|.
\end{align*}
}%
But
{\small
\begin{align*}
    \myE
    \left(\xi_j \right)
    =
    \myE
    \left\{ 2^{-1/2}(\zeta_j^2 - 1) \right\}
    =
    0
    ,\quad
    \myE
    \left(\xi_j^2 \right)
    =
    \myE
    \left[
    \left\{ 2^{-1/2}(\zeta_j^2 - 1) \right\}^2
\right]
    =
    1
.
\end{align*}
}%
Thus, for $j = r+1, \ldots, p$, we have
{\small
    \begin{align}\label{eq:zuihouyitian2}
\left|
\myE
f(
V_{j, 0} + 2^{-1/2} \kappa_{j, n}(\zeta_j^2 - 1)
)
-
\myE
f(V_{j,0} + \kappa_{j, n} \xi_{j})
\right|
\leq
    \frac{1}{6}
    \kappa_{j, n}^3
    \myE
    \left\{
        |2^{-1/2} (\zeta_1^2 - 1 )|^3
        +
        |\xi_1|^3
    \right\}
    \sup_{x\in \mathbb R} |f^{(3)} (x)|
    .
\end{align} 
}%
From \eqref{eq:zuihouyitian1} and \eqref{eq:zuihouyitian2}, we have
{\small
\begin{align*}
    &
    \left\|
    f\left(
    2^{-1/2} 
    \sum_{i=1}^{p} \kappa_{i,n} (\zeta_i^2 - 1)
    \right)
    -
    f\left(
    2^{-1/2} 
    \sum_{i=1}^{r} \kappa_{i,n} (\zeta_i^2 - 1)
            +
            \sum_{i=r+1}^p \kappa_{i,n} \xi_i
        \right)
            \right\|_3
            \\
\leq
    &
    \frac{1}{6}
    \sum_{j = r+1}^p
    \kappa_{j, n}^3
    \myE
    \left\{
        |2^{-1/2} (\zeta_1^2 - 1 )|^3
        +
        |\xi_1|^3
    \right\}
    \sup_{x\in \mathbb R} |f^{(3)} (x)|
\\
\leq&
\frac{\kappa_{r+1, n}}{6} \myE \left\{
|2^{-1/2}(\zeta_1^2 - 1)|^3
+ 
|\xi_1|^3
\right\} 
    \sup_{x\in \mathbb R} |f^{(3)}(x)|
    ,
\end{align*}
}%
where the last inequality holds since
$
    \sum_{j=r+1}^p\kappa_{j,n}^3
    \leq
    \kappa_{r+1, n}
    \sum_{j=r+1}^p\kappa_{j,n}^2
$
and
$\sum_{j=1}^p \kappa_{j,n}^2 = 1$.
Note that $\lim_{n \to \infty} \kappa_{r+1, n} = \kappa_{r+1} \leq \delta$.
Hence we have
{\small
\begin{align}
    &
    \limsup_{n\to\infty}
    \left\|
    \mathcal L \left(
    2^{-1/2} 
    \sum_{i=1}^{p} \kappa_{i,n} (\zeta_i^2 - 1)
    \right)
    -
    \mathcal L \left(
    2^{-1/2} 
    \sum_{i=1}^{r} \kappa_{i,n} (\zeta_i^2 - 1)
            +
            \sum_{i=r+1}^p \kappa_{i,n} \xi_i
        \right)
            \right\|_3
            \notag
            \\
    \leq&
\frac{
\delta
    }{6} \myE \left\{
|2^{-1/2}(\zeta_1^2 - 1)|^3 
+ 
|\xi_1|^3 
\right\} 
    .
    \label{eq:miaomiaoda}
\end{align}
}%
Note that $\sum_{i=r+1}^p \kappa_{i,n} \xi_i$ has the same distribution as $( 1- \sum_{i=1}^r \kappa_{i,n}^2 )^{1/2} \xi_0$.
Then from Lemma \ref{lemma:add_error},
{\small
\begin{align}
    &
    \left\|
    \mathcal L \left(
    2^{-1/2} 
    \sum_{i=1}^{r} \kappa_{i,n} (\zeta_i^2 - 1)
            +
            \sum_{i=r+1}^p \kappa_{i,n} \xi_i
        \right)
        -
    \mathcal L \left(
( 1- \sum_{i=1}^r \kappa_{i}^2 )^{1/2} \xi_0
+
    2^{-1/2} 
    \sum_{i=1}^{r} \kappa_{i} (\zeta_i^2 - 1)
        \right)
        \right\|_3
        \notag
        \\
        \leq&
        \left|
( 1- \sum_{i=1}^r \kappa_{i,n}^2 )^{1/2} 
-
( 1- \sum_{i=1}^r \kappa_{i}^2 )^{1/2} 
        \right|
        \left\{
            \myE (\xi_0^2)
        \right\}^{1/2}
        +
        \left(
        \sum_{i=1}^r
        |\kappa_{i,n} - \kappa_i|
    \right)
        2^{-1/2}
        \left[
            \myE \left\{(\zeta_1^2 -1)^2 \right\}
        \right]^{1/2},
    \label{eq:miaomiaoda2}
\end{align}
}%
where the right hand side tends to $0$ as $n \to \infty$.
From \eqref{eq:miaomiaoda3}, \eqref{eq:miaomiaoda} and \eqref{eq:miaomiaoda2},
we have
{\small
\begin{align*}
    &
    \limsup_{n \to \infty}
    \Bigg\|
    \mathcal L \left(
    2^{-1/2} 
    \sum_{i=1}^{p} \kappa_{i,n} (\zeta_i^2 - 1)
    \right)
    -
            \mathcal L
            \left(
                (1- \sum_{i=1}^\infty \kappa_i^2)^{1/2} \xi_0
                +
    2^{-1/2} 
            \sum_{i=1}^\infty \kappa_{i} (\zeta_i^2 - 1)
            \right)
            \Bigg\|_3
            \\
    \leq&
    \left[
\frac{1}{6} \myE \left\{|\xi_1|^3 + |2^{-1/2}(\zeta_1^2 - 1)|^3 \right\} 
+1
\right]
    \delta 
            .
\end{align*}
}%
But $\delta$ is an arbitrary positive real number.
Hence the above limit must be $0$.
This completes the proof.
\end{proof}



\section{Proof of Lemma \ref{lemma:11_30}}

In this section, we provide the proof of Lemma \ref{lemma:11_30}.


Fix an arbitrary $p \times p$ positive semi-definite matrix $\BB$.
    Let $\BG = \bGamma_{k,i}^\myT \BB \bGamma_{k,i}$.
    We only need to show that
    {\small
    \begin{align*}
        \myE \{(Z_{k,i}^\myT \BG Z_{k,i})^2\}
        \leq 3C \{\mytr(\BG)\}^2.
    \end{align*}
}%
    Let $g_{i,j}$ denote the $(i,j)$th element of $\BG$.
    Then
    {\small
    \begin{align*}
        (Z_{k,i}^\myT \BG Z_{k,i})^2
        =&
        \Big(
        2
        \sum_{j_1=1}^{m_{k,i}}
        \sum_{j_2=j_1+1}^{m_{k,i}}
        g_{j_1,j_2} z_{k,i,j_1} z_{k,i,j_2}
        +
        \sum_{j_3 = 1}^{m_{k,i}}
        g_{j_3, j_3} z_{k,i,j_3}^2
        \Big)^2
        \\
        =&
        4
        \Big(
            \sum_{j_1=1}^{m_{k,i}}
            \sum_{j_2=j_1+1}^{m_{k,i}}
        g_{j_1,j_2} z_{k,i,j_1} z_{k,i,j_2}
        \Big)^2
        +
        \Big(
            \sum_{j_3 = 1}^{m_{k,i}}
        g_{j_3, j_3} z_{k,i,j_3}^2
        \Big)^2
        \\
         &
        +
        4
        \Big(
            \sum_{j_1=1}^{m_{k,i}}
            \sum_{j_2=j_1+1}^{m_{k,i}}
        g_{j_1,j_2} z_{k,i,j_1} z_{k,i,j_2}
        \Big)
        \Big(
            \sum_{j_3 = 1}^{m_{k,i}}
        g_{j_3, j_3} z_{k,i,j_3}^2
        \Big)
        .
    \end{align*}
}%
    From the condition \eqref{eq:11_30a}, the expectation of the cross term is zero, and
    {\small
    \begin{align*}
        \myE
        \Big\{
        \Big(
            \sum_{j_1=1}^{m_{k,i}}
            \sum_{j_2=j_1+1}^{m_{k,i}}
        g_{j_1,j_2} z_{k,i,j_1} z_{k,i,j_2}
        \Big)^2
        \Big\}
        =&
        \sum_{j_1=1}^{m_{k,i}}
        \sum_{j_2=j_1+1}^{m_{k,i}}
        g_{j_1,j_2}^2 \myE(z_{k,i,j_1}^2 z_{k,i,j_2}^2)
        \\
        \leq&
        (C/2)
        \mytr(\BG^2)
        \\
        \leq&
        (C/2)
        \{\mytr(\BG)\}^2
        ,
    \end{align*}
}%
where the last inequality holds since $\BG$ is positive semi-definite.
    On the other hand
    {\small
    \begin{align*}
        \myE
        \Big\{
        \Big(
            \sum_{j_3 = 1}^{m_{k,i}}
        g_{j_3, j_3} z_{k,i,j_3}^2
        \Big)^2
        \Big\}
        \leq
        C
        \Big(
            \sum_{j_3 = 1}^{m_{k,i}} g_{j_3, j_3}
        \Big)^2
        = C \{\mytr(\BG)\}^2
        .
    \end{align*}
}
    Hence the conclusion holds.

    \section{Proof of Theorem \ref{thm:universality_TCQ}}
    In this section, we provide the proof of Theorem \ref{thm:universality_TCQ}.

Let $Y_{k, i}^*$, $i = 1, \ldots, n_k$, $k = 1, 2$, be independent $p$-dimensional random vectors with $Y_{k, i}^* \sim \mathcal N (\mathbf 0_p, \bSigma_{k,i})$.
Let $\BY_k^* = (Y_{k,1}, \ldots, Y_{k,n_k})^\myT$, $k = 1, 2$.
First we apply Theorem \ref{thm:universality_GQF} to prove that
{\small
\begin{align}\label{eq:first_to_prove}
        \left\|
        \mathcal L \left(
            \frac{T_{\mathrm{CQ}} (\BY_1, \BY_2) }{\sigma_{T,n} }
        \right)
        -
        \mathcal L \left(
            \frac{ T_{\mathrm{CQ}} (\BY_1^*, \BY_2^*) }{ \sigma_{T,n} }
        \right)
        \right\|_3
        \to 0
        .
    \end{align}
}%

Let 
{\small
\begin{align*}
\xi_{i} = 
\left\{
\begin{array}{ll}
    Y_{1,i}& \text{for } i = 1, \ldots, n_1,
    \\
    Y_{2, i-n_1}& \text{for }i = n_1 + 1, \ldots, n,
\end{array}
\right.
\quad
\text{and}
\quad
\eta_{i} = 
\left\{
\begin{array}{ll}
    Y_{1,i}^*& \text{for } i = 1, \ldots, n_1,
    \\
    Y_{2, i-n_1}^*& \text{for }i = n_1 + 1, \ldots, n.
\end{array}
\right.
\end{align*}
}%
Define
{\small
\begin{align*}
w_{i,j}(\Ba, \Bb) = 
\left\{
    \begin{array}{ll}
        \frac{2\Ba^\myT \Bb}{n_1 (n_1 - 1)\sigma_{T, n} }
&
\text{for } 1\leq i< j \leq n_1
,
\\
\frac{-2\Ba^\myT \Bb}{n_1 n_2 \sigma_{T, n} } 
&
\text{for } 1\leq i \leq n_1  \text{ and }  n_1+1 \leq j \leq n
,
\\
\frac{2\Ba^\myT \Bb}{n_2 (n_2 - 1)\sigma_{T, n} } 
&
\text{for } n_1 + 1\leq i< j \leq n
.
    \end{array}
\right.
\end{align*}
}%
With the above definitions,
we have
$W(\xi_1, \ldots, \xi_n) = T_{\mathrm{CQ}} (\BY_1, \BY_2) / \sigma_{T, n} $
and
$W(\eta_1, \ldots, \eta_n) = 
T_{\mathrm{CQ}} ( \BY_1^*, \BY_2^* ) / \sigma_{T, n} $,
where the function $W(\cdot, \ldots, \cdot)$ is defined in Section \ref{sec:univer}.

It follows from Assumption \ref{assumption:wangwang} and the fact that $Y_{k,i}^*$ has the same first two moments as $Y_{k,i}$ that
Assumptions \ref{assumption:oh1} and \ref{assumption:oh2} hold.
By direct calculation, we have
{\small
\begin{align*}
    \sigma_{i,j}^2=
    \left\{
    \begin{array}{ll}
        \frac{4 \mytr( \bSigma_{1,i}\bSigma_{1,j} )}{n_1^2 (n_1 - 1)^2 \sigma_{T,n}^2 } 
&
\text{for } 1\leq i< j \leq n_1
,
\\
\frac{4\mytr(\bSigma_{1,i} \bSigma_{2, j - n_1})}{n_1^2 n_2^2 \sigma_{T,n}^2 }
&
\text{for } 1\leq i \leq n_1 \text{ and }  n_1+1 \leq j \leq n
,
\\
\frac{4 \mytr(\bSigma_{2,i-n_1}\bSigma_{2,j-n_1}) }{n_2^2 (n_2 - 1)^2 \sigma_{T,n}^2 }
&
\text{for } n_1 + 1\leq i< j \leq n
.
    \end{array}
        \right.
\end{align*}
}%
Consequently,
{\small
\begin{align*}
    \mathrm{Inf}_i
    =
    \left\{
    \begin{array}{ll}
        \frac{4 \mytr(\bar \bSigma_1 \bSigma_{1,i})}{n_1 (n_1 - 1)^2 \sigma_{T,n}^2 } 
        -
        \frac{4 \mytr(\bSigma_{1,i}^2 )}{n_1^2 (n_1 - 1)^2 \sigma_{T,n}^2 } 
        +
        \frac{4\mytr(\bar \bSigma_2 \bSigma_{1, i} )}{n_1^2 n_2 \sigma_{T,n}^2 }
&
\text{for } 1\leq i \leq n_1
,
\\
\frac{4 \mytr(\bar \bSigma_2 \bSigma_{2, i - n_1})}{n_2 (n_2 - 1)^2\sigma_{T,n}^2 } 
-
\frac{4 \mytr(\bSigma_{2, i - n_1}^2 )}{n_2^2 (n_2 - 1)^2\sigma_{T,n}^2 } 
        +
        \frac{4\mytr(\bar \bSigma_1 \bSigma_{2, i - n_1})}{n_1 n_2^2 \sigma_{T,n}^2 }
&
\text{for } n_1+1 \leq i \leq n
.
    \end{array}
        \right.
\end{align*}
}%
We have
{\small
\begin{align*}
\sum_{i=1}^n \mathrm{Inf}_i^{3/2} 
\leq
\left( \max_{i\in\{1, \ldots, n\}} \mathrm{Inf}_i \right)^{1/2}
\left(\sum_{i=1}^n \mathrm{Inf}_i\right)
\leq
\left( 
\sum_{i=1}^n \mathrm{Inf}_i^2 \right)^{1 / 4}
\left(\sum_{i=1}^n \mathrm{Inf}_i\right)
.
\end{align*}
}%
It can be seen that
$
\sum_{i=1}^n \mathrm{Inf}_i
= 2
$.
On the other hand, from Assumption \ref{assumption7},
{\small
\begin{align*}
    \sum_{i=1}^{n} \mathrm{Inf}_i^2
    \leq
    &
    \sum_{i=1}^{n_1}
    \left\{
    \frac{
        32 \mytr (\bar \bSigma_1^2) \mytr (\bSigma_{1,i}^2)
    }{ 
        n_1^2 (n_1 - 1)^4 \sigma_{T,n}^4
    }
    +
    \frac{32 \mytr (\bar \bSigma_2^2)\mytr ( \bSigma_{1,i}^2)}{n_1^4 n_2^2 \sigma_{T,n}^4}
\right\}
\\
&
+
    \sum_{i=1}^{n_2}
    \left\{
    \frac{
        32 \mytr (\bar \bSigma_2^2) \mytr (\bSigma_{2,i}^2)
    }{ 
        n_2^2 (n_2 - 1)^4 \sigma_{T,n}^4
    }
    +
    \frac{32 \mytr (\bar \bSigma_1^2)\mytr ( \bSigma_{2,i}^2)}{n_1^2 n_2^4 \sigma_{T,n}^4}
\right\}
\\
=&
o\left(
    \frac{
        \left\{\mytr (\bar \bSigma_1^2)
        \right\}^2
    }{ 
        (n_1 - 1)^4 \sigma_{T,n}^4
    }
+
    \frac{
        \left\{ \mytr (\bar \bSigma_2^2) \right\}^2
    }{ 
        (n_2 - 1)^4 \sigma_{T,n}^4
    }
    +
    \frac{\mytr (\bar \bSigma_1^2)\mytr (\bar \bSigma_2^2)}{n_1^2 n_2^2 \sigma_{T,n}^4}
    \right)
\\
=&
o\left(
    \frac{
        \left\{\mytr (\bar \bSigma_1^2)
        \right\}^2
    }{ 
        (n_1 - 1)^4 \sigma_{T,n}^4
    }
+
    \frac{
        \left\{ \mytr (\bar \bSigma_2^2) \right\}^2
    }{ 
        (n_2 - 1)^4 \sigma_{T,n}^4
    }
    \right)
    \\
    =&o(1).
\end{align*}
}%
It follows that $\sum_{i=1}^n \mathrm{Inf}_i^{3/2} \to 0$.

On the other hand,
from Assumption \ref{assumption:wangwang}, for all $1\leq i < j \leq n$,
{\small
\begin{align*}
    \max\left(
     \myE \{w_{i,j}(\xi_i, \xi_j)^4\} 
     ,
     \myE \{w_{i,j}(\xi_i, \eta_j)^4\} 
,
     \myE \{w_{i,j}(\eta_i, \xi_j)^4\} 
,
     \myE \{w_{i,j}(\eta_i, \eta_j)^4\} 
     \right)
     \leq \tau^2 \sigma_{i,j}^4.
\end{align*}
}%
Hence $\rho_n$ is upper bounded by the absolute constant $\tau^2$.
Thus, 
\eqref{eq:first_to_prove} holds.

Now we deal with the distribution of $T_{\mathrm{CQ}}(\BY_1^*, \BY_2^*) / \sigma_{T,n}$.
We have
{\small
\begin{align*}
    T_{\mathrm{CQ}} (\BY_1^*, \BY_2^*)
    =&
    \|\bar Y_1^* - \bar Y_2^* \|^2 -  
    \mytr(\bPsi_n)
     + 
     \sum_{k=1}^2 \frac{1}{n_k - 1} \left\{ \|\bar Y_k^*\|^2 - \frac{1}{n_k} \mytr (\bar \bSigma_k) \right\}
     \\
     &
     - \sum_{k=1}^2 \frac{1}{n_k (n_k - 1)}
     \sum_{i=1}^{n_k} \left\{\ \|Y_{k,i}^*\|^2 - \mytr(\bSigma_{k,i})\right\}
    ,
\end{align*}
}%
where $\bar Y_k^* = n_k^{-1} \sum_{i=1}^{n_k} Y_{k,i}$, $k = 1, 2$.
From Lemma \ref{lemma:add_error},
{\small
\begin{align}
    &
    \left\|
    \mathcal L \left(
        \frac{
        T_{\mathrm{CQ}} (\BY_1^*,, \BY_2^*)
    }{\sigma_{T, n}}
    \right)
    -
    \mathcal L \left(
        \frac{
            \|\bar Y_1^* - \bar Y_2^* \|^2 
            -\mytr(\bPsi_n)
        }{
        \sigma_{T,n}
    }
    \right)
    \right\|_3
    \notag
    \\
    \leq&
    \frac{1}{\sigma_{T, n}}
     \sum_{k=1}^2 
\frac{1}{n_k - 1} 
    \left[
    \myE
    \left\{
        \left( \|\bar Y_k^*\|^2 - \frac{1}{n_k} \mytr (\bar \bSigma_k) \right)^2
    \right\}
    \right]^{1/2}
    \notag
    \\
        &
    +
    \frac{1}{\sigma_{T, n}}
    \sum_{k=1}^2 \frac{1}{n_k (n_k - 1)}
    \left[
        \myE \left\{ \left(
                \sum_{i=1}^{n_k} \left(\ \|Y_{k,i}^*\|^2 - \mytr(\bSigma_{k,i})\right)
        \right)^2\right\}
    \right]^{1/2}
    \notag
    \\
    =&
    \frac{1}{\sigma_{T, n}}
    \sum_{k=1}^2 
    \frac{1}{n_k (n_k - 1)}
    \left\{ 2 \mytr(\bar \bSigma_k^2) \right\}^{1/2}
    +
    \frac{1}{\sigma_{T, n}}
    \sum_{k=1}^2
    \frac{1}{n_k (n_k - 1)} \left\{ 2\sum_{i=1}^{n_k} \mytr(\bSigma_{k,i}^2) \right\}^{1/2}
    \notag
    \\
    =& o(1)
    ,
    \label{eq:second_to_prove}
\end{align}
}%
where the last equality follows from Assumption \ref{assumption7}.

Note that $\bar Y_1^* - \bar Y_2^* \sim \mathcal N (\mathbf 0_p, \bPsi_n )$.
Then from Lemma \ref{lemma:add_error}, 
{\small
\begin{align}
    &
    \left\|
    \mathcal L \left(
        \frac{
            \|\bar Y_1^* - \bar Y_2^* \|^2 -
            \mytr(\bPsi_n)
        }{
        \sigma_{T,n}
    }
    \right)
    -
    \mathcal L \left(
        \frac{
            \|\bar Y_1^* -  \bar Y_2^* \|^2 -
            \mytr(\bPsi_n)
        }{
        \left\{
            2\mytr(
                \bPsi_n^2
                )
            \right\}^{1/2}
    }
\right)
    \right\|_3
    \notag
    \\
    \leq&
    \left|
    \frac{
        \left\{
            2\mytr\left(
                \bPsi_n^2
            \right)
        \right\}^{1/2}
    }
    {
    \sigma_{T,n}
}
    -1
    \right|
    \left[
    \myE
    \left[
        \frac{
        \|\bar Y_1^* - \bar Y_2^* \|^2 -  
        \mytr(\bPsi_n)
        }{
        \left\{
            2\mytr\left(
                \bPsi_n^2
            \right)
        \right\}^{1/2}
    }
\right]^2
\right]^{1/2}
\notag
    \\
    =&
    \left|
    \frac{
        \left\{
            2\mytr\left(
                \bPsi_n^2
            \right)
        \right\}^{1/2}
    }
    {
    \sigma_{T,n}
}
    -1
    \right|
    \notag
    \\
    \to& 0
    ,
    \label{eq:third_to_prove}
\end{align}
}%
where the last equality follows from Lemma \ref{lemma:712}.
Then the conclusion follows from \eqref{eq:first_to_prove}, \eqref{eq:second_to_prove} and \eqref{eq:third_to_prove}.

\section{Proof of Corollary \ref{corollary:the}}
In this section, we provide the proof of Corollary \ref{corollary:the}.

    Since ${T_{\mathrm{CQ}} (\BY_1, \BY_2) }/{\sigma_{T,n} }$ has zero mean and unit variance, 
    the distribution of
$
        {T_{\mathrm{CQ}} (\BY_1, \BY_2) }/{\sigma_{T,n} }
    $ is uniformly tight.
    From Theorem \ref{thm:universality_TCQ}, 
    $
        {T_{\mathrm{CQ}} (\BY_1, \BY_2) }/{\sigma_{T,n} }
    $
    and
    $
    (
         \bxi_p^\myT
         \bPsi_n
         \bxi_p
            - 
            \mytr(\bPsi_n)
            )
            /
        \{
            2\mytr(
            \bPsi_n^2
        )
    \}^{1/2}
    $
    share the same possible asymptotic distributions.
    Hence we only need to find all possible asymptotic distributions of
$
    (
         \bxi_p^\myT
         \bPsi_n
         \bxi_p
            - 
            \mytr(\bPsi_n)
            )
            /
        \{
            2\mytr(
            \bPsi_n^2
        )
    \}^{1/2}
    $.

    Let $\nu$ be a possible asymptotic distribution of
$
    (
         \bxi_p^\myT
         \bPsi_n
         \bxi_p
            - 
            \mytr(\bPsi_n)
            )
            /
        \{
            2\mytr(
            \bPsi_n^2
        )
    \}^{1/2}
    $.
    We show that $\nu$ can be represented in the form of \eqref{eq:representation}.
    Note that there is a subsequence of $\{n\}$ along which
$
    (
         \bxi_p^\myT
         \bPsi_n
         \bxi_p
            - 
            \mytr(\bPsi_n)
            )
            /
        \{
            2\mytr(
            \bPsi_n^2
        )
    \}^{1/2}
    $
    converges weakly to $\nu$.
Denote $\kappa_{i,n} = \lambda_i (\bPsi_n) / \{\mytr(\bPsi_n^2) \}^{1/2}$, $i = 1, 2, \ldots$.
By Cantor's diagonalization trick (see, e.g., \cite{Simon2015RealAnalysis}, Section 1.5),
there exists a further subsequence along which
$
    \lim_{n\to\infty}
            \kappa_{i,n}
            = \kappa_i
            $, $i = 1, 2, \ldots$,
where $\kappa_1, \kappa_2, \ldots$ are real numbers in $[0,1]$.
Then Lemma \ref{lemma:713aa} implies that
along this further subsequence,
{\small
\begin{align*}
    \left\|
    \mathcal L
    \left(
        \frac{
         \bxi_p^\myT
         \bPsi_n
         \bxi_p
            - 
            \mytr(\bPsi_n)
        }
        {
        \{
            2\mytr(
            \bPsi_n^2
        )
    \}^{1/2}
}
            \right)
            -\mathcal L
            \left(
                (1- \sum_{i=1}^\infty \kappa_i^2)^{1/2} \xi_0
                +
    2^{-1/2} 
            \sum_{i=1}^\infty \kappa_{i} (\zeta_i^2 - 1)
            \right)
            \right\|_3
            \to 0.
\end{align*}
}%
Thus, $\nu =
            \mathcal L
            \left(
                (1- \sum_{i=1}^\infty \kappa_i^2)^{1/2} \xi_0
                +
    2^{-1/2} 
            \sum_{i=1}^\infty \kappa_{i} (\zeta_i^2 - 1)
            \right)
$.

Now we prove that for any sequence of positve numbers $\{\kappa_i\}_{i=1}^\infty$ such that $\sum_{i=1}^\infty \kappa_i^2 \in [0,1]$,
there exits a sequence $\{\bPsi_n\}_{n=1}^\infty$ such that
    $
            \mathcal L
            \left(
                (1- \sum_{i=1}^\infty \kappa_i^2)^{1/2} \xi_0
                +
    2^{-1/2} 
            \sum_{i=1}^\infty \kappa_{i} (\zeta_i^2 - 1)
            \right)
    $ 
    is the asymptotic distribution of
        $
        \left(
             \bxi_p^\myT
         \bPsi_n
         \bxi_p
            - 
            \mytr(\bPsi_n)
        \right)
        /{
        \{
            2\mytr(
            \bPsi_n^2
        )
    \}^{1/2}
    }
    $.
    To construct the sequence $\{\bPsi_n\}_{n=1}^\infty$, we take $p = n^2$ and
    let $\bPsi_n = \mydiag(\kappa_{1,n}, \ldots, \kappa_{n^2,n})$, where $\kappa_{i,n} = \kappa_i$ for $i \in \{1, \ldots,  n\}$ and
    $\kappa_{i,n} = \{(1- \sum_{i=1}^n \kappa_i^2)/(n^2 - n)\}^{1/2}$ for $i  \in \{n + 1, \ldots, n^2\}$.
    Then $\sum_{i=1}^p \kappa_{i,n}^2 = 1$ and $\lim_{n\to \infty} \kappa_{i,n} = \kappa_i$, $i= 1, 2, \ldots$.
    It is straightforward to show that
        $
        \left(
             \bxi_p^\myT
         \bPsi_n
         \bxi_p
            - 
            \mytr(\bPsi_n)
        \right)
        /{
        \{
            2\mytr(
            \bPsi_n^2
        )
    \}^{1/2}
    }
    $
    converges weakly to 
    $
            \mathcal L
            \left(
                (1- \sum_{i=1}^\infty \kappa_i^2)^{1/2} \xi_0
                +
    2^{-1/2} 
            \sum_{i=1}^\infty \kappa_{i} (\zeta_i^2 - 1)
            \right)
    $.
    This completes the proof.

\section{Proof of Theorem \ref{thm:final_thm}}
In this section, we provide the proof of Theorem \ref{thm:final_thm}.

If the conclusion holds for the case that $\epsilon_{1,1}^*$ is a standard normal random variable, 
    then Lemma \ref{lemma:universality_rad} implies that it also holds for the case of Rademacher random variable.
Hence without loss of generality, we assume that $\epsilon_{1,1}^*$ is a standard normal random variable.

We begin with some basic notations and facts that are useful for our proof.
Denote by $\bPsi_n = \BU \bLambda \BU^\myT$ the spectral decomposition of $\bPsi_n$ where $\bLambda = \mydiag\{\lambda_{1}(
    \bPsi_n
), \ldots, \lambda_{p}(
\bPsi_n
)\}$ and $\BU$ is an orthogonal matrix.
Let $\BU_{m} \in \mathbb R^{p\times m}$ be the first $m$ columns of $\BU$ and
$\tilde \BU_{m} \in \mathbb R^{p \times (p-m)}$ be the last $p-r$ columns of $\BU$.
Then we have
$\BU_m^\myT \tilde \BU_m = \BO_{m \times (p-m)}$
and
$
\BU_{m}
\BU_{m}^{\myT}
+
\tilde \BU_{m}
\tilde \BU_{m}^{\myT}
= \BI_p
$.
For $k =1, 2$,
we have
{\small
\begin{align}\label{eq:bound_mp1_eigenvalue}
    \lambda_1 (\tilde \BU_m^{\myT} \bar \bSigma_k \tilde \BU_m)
    \leq
    n_k
    \lambda_1 (
      \tilde \BU_m^{\myT}
      \bPsi_n
\tilde \BU_m
)
    =
    n_k
    \lambda_{m+1}(
    \bPsi_n
    )
    .
\end{align}
}%

To prove the conclusion, we apply the subsequence trick.
That is, for any subsequence of $\{n\}$, we prove that there is a further subsequence along which the conclusion holds.
For any subsequence of $\{n\}$,
by Cantor's diagonalization trick (see, e.g., \cite{Simon2015RealAnalysis}, Section 1.5),
there exists a further subsequence along which
{\small
\begin{align}\label{eq:subsequence_limit}
    \frac{
        \lambda_{i}
        \left(
            \bPsi_n
    \right)
}
    {
        \left\{
        \mytr
        \left(
            \bPsi_n^2
        \right)
    \right\}^{1/2}
    }
    \to \kappa_{i},
    \quad
    i = 1, 2, \ldots
    .
\end{align}
}%
Thus, we only need to prove the conclusion with the additional condition \eqref{eq:subsequence_limit}.
Without loss of generality, we assume that \eqref{eq:subsequence_limit} holds for the full sequence $\{n\}$.

From Fatou's lemma, we have $\sum_{i=1}^\infty \kappa_{i}^2 \leq 1$.
Now we claim that there exists a sequence  $\{r_n^*\}$ of non-decreasing integers which tends to infinity such that 
{\small
\begin{align}\label{eq:wahaha}
    \frac{
        \sum_{i=1}^{r_n^*} 
        \lambda_{i}^2
        \left(
            \bPsi_n
    \right)
    }
    {
        \mytr
        \left(
            \bPsi_n^2
        \right)
    }
    \to \sum_{i=1}^{\infty} \kappa_{i}^2
    .
\end{align}
}%
We defer the proof of this fact to Lemma \ref{lemma:low_sequence} in Section \ref{sec:deferred}.

Fix a positive integer $r$.
Then for large $n$, we have $r_n^* > r$.
Let $\check \BU_{r}$ be the $(r+1)$th to the $r_n^*$th columns of $\BU$.
Then $\check \BU_{r} \in \mathbb R^{p \times (r_n^* - r)}$ is a column orthogonal matrix such that
$
\check \BU_{r}
\check \BU_{r}^{\myT}
=
\BU_{r_n^*}
\BU_{r_n^*}^{\myT}
-
\BU_{r}
\BU_{r}^{\myT}
$.
We can decompose the identity matrix into the sum of three mutually orthogonal projection matrices as $\BI_p = \BU_{r} \BU_{r}^{\myT}
+
\check \BU_{r} \check \BU_{r}^{\myT}
+
\tilde \BU_{r_n^*} \tilde \BU_{r_n^*}^{\myT}
$.
Then we have
$
    T_{\mathrm{CQ}}(
    E^*
    ;
    \tilde \BX_1, \tilde \BX_2
    )
    =
    T_{1,r} + T_{2,r} + T_{3,r}
$, where
{\small
\begin{align*}
    T_{1,r} = &
    T_{\mathrm{CQ}}(
    E^*
    ;
    \tilde \BX_1 \BU_r, \tilde \BX_2 \BU_r
    ),
    \quad
    T_{2,r} = 
    T_{\mathrm{CQ}}(
    E^*
    ;
    \tilde \BX_1 \check \BU_r, \tilde \BX_2 \check \BU_r
    ),
    \quad
    T_{3,r} = 
    T_{\mathrm{CQ}}(
    E^*
    ;
    \tilde \BX_1 \tilde \BU_{r_n^*}, \tilde \BX_2 \tilde \BU_{r_n^*}
    )
    .
\end{align*}
}%

Let $\{\xi_i\}_{i=0}^\infty$ be a sequence of independent standard normal random variables.
We claim that 
{\small
\begin{align}\label{eq:conclusion_T13}
    \left\|\mathcal L \left(
        \frac{T_{1,r} + T_{3,r}}{
            \sigma_{T,n}
        }
        \mid
        \tilde \BX_1,
        \tilde \BX_2
    \right) - 
    \mathcal L 
    \left(
            (1 - \sum_{i=1}^{\infty} \kappa_{i}^2)^{1/2}
            \xi_0
    +
    2^{-1/2}\sum_{i=1}^r \kappa_i (\xi_i^2-1)
    \right)
    \right\|_3
    \xrightarrow{P} 0 .
\end{align}
}%
The above quantity is a continuous function of $\tilde \BX_1$ and $\tilde \BX_2$ and is thus measurable.
We defer the proof of \eqref{eq:conclusion_T13} to Lemma \ref{lemma:T13} in Section \ref{sec:deferred}.
It is known that for random variables in $\mathbb R$, convergence in probability is metrizable; see, e.g., \cite{dudleyProbability}, Section 9.2.
As a standard property of metric space,
\eqref{eq:conclusion_T13} also holds when $r$ is replaced by certain $r_n$ with $r_n \to \infty$ and $ r_n \leq r_n^* $.
Also note that L\'evy's equivalence theorem and three-series theorem (see, e.g., \cite{dudleyProbability}, Theorem 9.7.1 and Theorem 9.7.3) implies that
$
    \sum_{i=1}^{r_n} \kappa_i (\xi_i^2-1)
$
converges weakly to 
$
    \sum_{i=1}^{\infty} \kappa_i (\xi_i^2-1)
$.
Thus,
{\small
\begin{align}\label{eq:conclusion_T13_2}
    \left\|\mathcal L \left(
        \frac{T_{1,r_n} + T_{3,r_n}}{
            \sigma_{T,n}
        }
        \mid \tilde \BX_1, \tilde \BX_2
    \right) - 
    \mathcal L 
    \left(
            (1 - \sum_{i=1}^{\infty} \kappa_{i}^2)^{1/2}
            \xi_0
    +
    2^{-1/2}
    \sum_{i=1}^{\infty} \kappa_i (\xi_i^2-1)
    \right)
    \right\|_3
    \xrightarrow{P} 0 .
\end{align}
}%

Now we deal with $T_{2, r_n}$.
From Lemma \ref{lemma:variance},
we have
{\small
\begin{align*}
        \myE
        \left(
    \frac{
        T_{2, r_n}^2
        }
        {
            \sigma_{T,n}^2
        }
        \mid
        \tilde \BX_1,
        \tilde \BX_2
        \right)
        =&
    \frac{
        2 \mytr\left\{\left(\check \BU_{r_n}^{\myT}\bPsi_n \check \BU_{r_n}\right)^2 \right\}
    }{
    \sigma_{T,n}^2
}
+o_P(1)
=
        \sum_{i=1}^{r_n^*} 
    \frac{
        \lambda_{i}^2
        \left(\bPsi_n \right)
    }
    {
        \mytr
        \left(
            \bPsi_n^2
        \right)
    }
-
        \sum_{i=1}^{r_n} 
    \frac{
        \lambda_{i}^2
        (\bPsi_n)
    }
    {
        \mytr
        \left(
            \bPsi_n^2
        \right)
    }
+o_P(1)
.
\end{align*}
}%
From Fatou's lemma,
{\small
\begin{align*}
    \liminf_{n\to\infty}
        \sum_{i=1}^{r_n} 
    \frac{
        \lambda_{i}^2
        (\bPsi_n)
    }
    {
        \mytr
        \left(
            \bPsi_n^2
        \right)
    }
    \geq 
    \sum_{i=1}^\infty
    \liminf_{n\to\infty}
    \frac{
        \lambda_{i}^2
        (\bPsi_n)
    }
    {
        \mytr
        \left(
            \bPsi_n^2
        \right)
    }
    = \sum_{i=1}^{\infty} \kappa_{i}^2
    .
\end{align*}
}%
Combining the above inequality and \eqref{eq:wahaha} leads to
{\small
\begin{align*}
        \myE
        \left(
    \frac{
        T_{2, r_n}^2
        }
        {
            \sigma_{T,n}^2
        }
        \mid
        \tilde \BX_1,
        \tilde \BX_2
        \right)
        =&
o_P(1)
.
\end{align*}
}%
Then from Lemma \ref{lemma:add_error},
{\small
\begin{align*}
    \left\|
    \mathcal L \left(
        \frac{T_{\mathrm{CQ}}( E^* ; \tilde \BX_1, \tilde \BX_2 ) }{
            \sigma_{T,n}
        }
        \mid \tilde \BX_1, \tilde \BX_2
    \right) 
    -
    \mathcal L \left(
        \frac{T_{1,r_n} + T_{3,r_n}}{
            \sigma_{T,n}
        }
    \right) 
    \right\|_3
    \xrightarrow{P} 0 .
\end{align*}
}%
Combining the above equality and \eqref{eq:conclusion_T13_2} leads to
{\small
\begin{align}
    \label{eq:wuwuwuwu}
    \left\|
    \mathcal L \left(
        \frac{T_{\mathrm{CQ}}( E^* ; \tilde \BX_1, \tilde \BX_2 ) }{
            \sigma_{T,n}
        }
        \mid \tilde \BX_1, \tilde \BX_2
    \right) 
    -
    \mathcal L 
    \left(
            (1 - \sum_{i=1}^{\infty} \kappa_{i}^2)^{1/2}
            \xi_0
    +
    2^{-1/2}
    \sum_{i=1}^{\infty} \kappa_i (\xi_i^2-1)
    \right)
    \right\|_3
    \xrightarrow{P} 0 .
\end{align}
}%
Then the conclusion follows from \eqref{eq:wuwuwuwu} and Lemma \ref{lemma:713aa}.

\section{Proof of Corollary \ref{corollary:the2}}

    Using the subsequence trick, it suffices to prove the conclusion for a subsequence of $\{n\}$.
    Then from Corollary \ref{corollary:the},
    we can assume without loss of generality that
    {\small
    \begin{align}\label{eq:7142}
        \left\|
    \mathcal L \left(
        \frac{
         \bxi_p^\myT
         \bPsi_n
         \bxi_p
            - 
            \mytr(\bPsi_n)
        }{
        \left\{
            2\mytr\left(
            \bPsi_n^2
        \right)
    \right\}^{1/2}
    }
\right)
-
    \mathcal L 
    \left(
            (1 - \sum_{i=1}^{\infty} \kappa_{i}^2)^{1/2}
            \xi_0
    +
    2^{-1/2}
    \sum_{i=1}^{\infty} \kappa_i (\xi_i^2-1)
    \right)
    \right\|_3
    \to 0,
    \end{align}
}%
    where
$\{\xi_i \}_{i=0}^\infty$ is a sequence of independent standard normal random variables,
$\{\kappa_i\}_{i=1}^\infty$ is a sequence of positive numbers such that $\sum_{i=1}^\infty \kappa_i^2 \in [0, 1]$.
Let $\tilde F(\cdot)$ denote the cumulative distribution function of
$
            (1 - \sum_{i=1}^{\infty} \kappa_{i}^2)^{1/2}
            \xi_0
    +
    2^{-1/2}
    \sum_{i=1}^{\infty} \kappa_i (\xi_i^2-1)
$.
We claim that $\tilde F(\cdot)$
is continuous and strictly increasing on the interval $\{x \in \mathbb R: \tilde F(x) > 0 \}$.
We defer the proof of this fact to Lemma \ref{lemma:714} in Section \ref{sec:deferred}.
This fact, combined with \eqref{eq:7142}, leads to
{\small
\begin{align}\label{eq:714OhOh}
    \sup_{x\in \mathbb R}|G_n(x) - \tilde F(x) | = o(1),
    \quad
    G_n^{-1}(1-\alpha) = \tilde F^{-1} (1-\alpha) + o(1).
\end{align}
}%
Furthermore,
in view of Theorem \ref{thm:final_thm} and \eqref{eq:7142}, we have
{\small
\begin{align}\label{eq:714Oh}
    \frac{\hat F_{\mathrm{CQ}}^{-1} (1 - \alpha)}{\sigma_{T,n}}
=\tilde F^{-1} (1-\alpha) + o_P(1)
.
\end{align}
}%

    We have
    {\small
    \begin{align*}
        &
    \Pr \left\{
    T_{\mathrm{CQ}} (\BX_1, \BX_2) > \hat F_{\mathrm{CQ}}^{-1} (1-\alpha)
\right\}
\\
        =&
    \Pr \left\{
        \frac{
        T_{\mathrm{CQ}} (\BX_1, \BX_2) - \|\mu_1 - \mu_2\|^2
        }{
        \sigma_{T,n}
    }
    +
    \tilde{F}^{-1} (1-\alpha)
    -
    \frac{
        \hat F_{\mathrm{CQ}}^{-1} (1-\alpha) 
    }
    {\sigma_{T,n}}
    >
    \tilde{F}^{-1} (1-\alpha)
    -
    \frac{
     \|\mu_1 - \mu_2\|^2
    }{
        \sigma_{T,n}
}
\right\}
    \end{align*}
}%
    Note that
    {\small
    \begin{align*}
        \frac{
        T_{\mathrm{CQ}} (\BX_1, \BX_2)
        - 
        \|\mu_1 - \mu_2\|^2
        }{
        \sigma_{T,n}
    }
        =
        \frac{
        T_{\mathrm{CQ}} (\BY_1, \BY_2)
        }{
        \sigma_{T,n}
    }
        +
        \frac{
        2(\mu_1 - \mu_2)^\myT (\bar Y_1 - \bar Y_2)
        }{
        \sigma_{T,n}
    }
    =
        \frac{
        T_{\mathrm{CQ}} (\BY_1, \BY_2)
        }{
        \sigma_{T,n}
    }
        +
        o_P(1)
        ,
    \end{align*}
}%
    where the last equality holds since
    {\small
\begin{align*}
    \myVar\left(
        \frac{2(\mu_1 - \mu_2)^\myT (\bar Y_1 - \bar Y_2)}{\sigma_{T,n}}
    \right)
    =
    (1+o(1))
    \frac{
    2 (\mu_1 - \mu_2)^\myT \bPsi_n (\mu_1 - \mu_2)
    }{
    \mytr(\bPsi_n^2)
}
=o(1)
.
\end{align*}
}%
Then it follows from Theorem \ref{thm:universality_TCQ}, equality \eqref{eq:714Oh} and the fact that $\tilde F(\cdot)$ is continuous that
{\small
\begin{align*}
    \sup_{x\in \mathbb R}
    \left|
    \Pr\left(
        \frac{
        T_{\mathrm{CQ}} (\BX_1, \BX_2) - \|\mu_1 - \mu_2\|^2
        }{
        \sigma_{T,n}
    }
    +
    \tilde{F}^{-1} (1-\alpha)
    -
    \frac{
        \hat F_{\mathrm{CQ}}^{-1} (1-\alpha) 
    }
    {\sigma_{T,n}}
    \leq x
\right)
-\tilde F (x)
\right|
= o(1).
\end{align*}
}%
Therefore,
{\small
\begin{align*}
    \Pr \left\{
    T_{\mathrm{CQ}} (\BX_1, \BX_2) > \hat F_{\mathrm{CQ}}^{-1} (1-\alpha)
\right\}
=
&
1 - \tilde F\left(
    \tilde F^{-1}(1-\alpha) - \frac{\|\mu_1 - \mu_2\|^2}{\sigma_{T,n}}
\right)
+o(1)
\\
=
&
1 - G_n\left(
    G_n^{-1}(1-\alpha) - \frac{\|\mu_1 - \mu_2\|^2}{
        \left\{ 2 \mytr (\bPsi_n^2) \right\}^{1/2}
}
\right)
+o(1)
,
\end{align*}
}%
where the last equality follows from \eqref{eq:714OhOh}.
This completes the proof.

\section{Deferred proofs} \label{sec:deferred}
In this section, we provide proofs of some intermediate results in our proofs of main results.
Some results in this section are also used in the main text.
\begin{lemma}\label{lemma:low_sequence}
    Suppose the conditions of Theorem \ref{thm:final_thm} hold.
    Furthermore, suppose the condition \eqref{eq:subsequence_limit} holds.
    Then there exists
a sequence 
$\{r_n^*\}$
of non-decreasing integers which tends to infinity such that 
\eqref{eq:wahaha} holds.
\end{lemma}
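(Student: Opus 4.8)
The plan is to reduce \eqref{eq:wahaha} to a standard diagonalization over the normalized eigenvalues. Write $\kappa_{i,n} = \lambda_i(\bPsi_n)/\{\mytr(\bPsi_n^2)\}^{1/2}$, so that $\sum_{i=1}^\infty \kappa_{i,n}^2 = 1$ for every $n$ (using the convention $\lambda_i(\bPsi_n)=0$ for $i>p$), and $\kappa_{i,n} \to \kappa_i$ for each fixed $i$ by \eqref{eq:subsequence_limit}. Set $S = \sum_{i=1}^\infty \kappa_i^2$, which lies in $[0,1]$ by Fatou's lemma. Since $\sum_{i=1}^{r_n^*}\kappa_{i,n}^2$ is exactly the left-hand side of \eqref{eq:wahaha}, the goal is to produce non-decreasing integers $r_n^* \to \infty$ with $\sum_{i=1}^{r_n^*}\kappa_{i,n}^2 \to S$.

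First I would record the inner limit: for each \emph{fixed} $r$, the partial sum $\sum_{i=1}^r \kappa_{i,n}^2$ is a finite sum of $n$-indexed sequences each converging to $\kappa_i^2$, hence $\sum_{i=1}^r \kappa_{i,n}^2 \to \sum_{i=1}^r \kappa_i^2$ as $n\to\infty$. I would then choose a strictly increasing sequence of integers $N_1 < N_2 < \cdots$ such that $|\sum_{i=1}^r \kappa_{i,n}^2 - \sum_{i=1}^r \kappa_i^2| \le 1/r$ for all $n \ge N_r$. If the later application also requires $r_n^* \le p$, I would additionally enlarge each $N_r$ so that the dimension at index $n$ is at least $r$ whenever $n \ge N_r$, which is possible because $p\to\infty$.

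Next I would define $r_n^* = \max\{r : N_r \le n\}$ for $n \ge N_1$ and $r_n^* = 1$ otherwise. This sequence is non-decreasing by construction, and it tends to $\infty$ because each $N_r$ is finite. By definition $N_{r_n^*} \le n$, so the approximation bound gives $|\sum_{i=1}^{r_n^*}\kappa_{i,n}^2 - \sum_{i=1}^{r_n^*}\kappa_i^2| \le 1/r_n^* \to 0$; and since $r_n^* \to \infty$, the deterministic tail satisfies $\sum_{i=1}^{r_n^*}\kappa_i^2 \to S$. Combining these two facts by the triangle inequality yields \eqref{eq:wahaha}.

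The argument is essentially a one-line diagonalization, so I expect no serious obstacle; the only care needed is to make the $N_r$ strictly increasing (so that $r_n^*$ is well defined, non-decreasing, and divergent) and, if required downstream, to interleave the dimension condition $p \ge r$ into the choice of $N_r$ so that $r_n^*$ never exceeds the number of available eigenvalues.
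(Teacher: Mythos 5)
Your proof is correct and is essentially the same diagonalization as the paper's: the paper chooses thresholds $n_m$ with $|\sum_{i=1}^{m}\kappa_{i,n}^2-\sum_{i=1}^{m}\kappa_i^2|<1/m$ for $n>n_m$ and sets $r_n^*=m$ on $(n_m,n_{m+1}]$, which is exactly your $r_n^*=\max\{r:N_r\le n\}$, followed by the same $1/r_n^*$ bound and tail-convergence argument. Your extra remark about interleaving the condition $p\ge r$ into the choice of $N_r$ is a harmless refinement the paper leaves implicit.
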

\begin{proof}
    For any fixed positive integer $m$,
    we have
    {\small
    \begin{align*}
    \frac{
    \sum_{i=1}^{m}
    \lambda_{i}^2(\bPsi_n)
    }
    {
        \mytr(\bPsi_n^2)
    }
    \to \sum_{i=1}^{m} \kappa_{i}^2
    .
    \end{align*}
}%
Therefore, there exists an $n_m$ such that for any $n > n_m$,
{\small
    \begin{align*}
    \left|
    \frac{
    \sum_{i=1}^{m}
    \lambda_{i}^2(\bPsi_n)
    }
    {
        \mytr(\bPsi_n^2)
    }
    - \sum_{i=1}^{m} \kappa_{i}^2
    \right|
    <\frac{ 1}{m}
    .
    \end{align*}
}%
We can without loss of generality and assume $n_1 < n_2 <\cdots$ since otherwise we can enlarge some $n_m$.
    Define $r_n^* = m$ for $ n_m < n \leq n_{m+1} $, $m =1, 2, \ldots$ and $r_n^* = 1$ for $n \leq n_1$.
    By definition, $r_n^*$ is non-decreasing and $\lim_{n\to \infty} r_n^* = \infty$.
    Also, for any $n > n_1$,
    {\small
    \begin{align*}
    \left|
    \frac{
    \sum_{i=1}^{ r_n^*}
    \lambda_{i}^2(\bPsi_n)
    }
    {
        \mytr(\bPsi_n^2)
    }
    - \sum_{i=1}^{r_n^*} \kappa_{i}^2
    \right|
    <\frac{ 1}{r_n^*}
    .
    \end{align*}
}%
    Thus,
    {\small
    \begin{align*}
        \lim_{n \to \infty}
    \left|
    \frac{
    \sum_{i=1}^{r_n^*}
    \lambda_{i}^2(\bPsi_n)
    }
    {
        \mytr(\bPsi_n^2)
    }
    -
    \sum_{i=1}^{r_n^*} \kappa_{i}^2
    \right|
    =
    0
    .
    \end{align*}
}%
The conclusion follows from the above limit and the fact that
$
    \sum_{i=1}^{r_n^*} \kappa_{i}^2  \to
    \sum_{i=1}^{\infty} \kappa_{i}^2 
$.
\end{proof}

\begin{lemma}\label{lemma:small_lemma}
Suppose Assumption \ref{assumption:wangwang} holds.
Then for any $p\times p$ positive semi-definite matrix $\BB$ and $k =1 , 2$,
{\small
}%
    {\small
\begin{align*}
    \myE \{(\tilde X_{k, i}^\myT \BB \tilde X_{k,i})^2 \}
    \leq 
    \tau
\{
\myE (\tilde X_{k, i}^\myT \BB \tilde X_{k,i})
\}^2
    .
\end{align*}
}%
\end{lemma}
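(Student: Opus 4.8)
The plan is to reduce the claim to Assumption~\ref{assumption:wangwang} applied to the individual centred observations. Since $X_{k,j} = Y_{k,j} + \mu_k$, the mean $\mu_k$ cancels in the difference, so that $\tilde X_{k,i} = (X_{k,2i} - X_{k,2i-1})/2 = (Y_{k,2i} - Y_{k,2i-1})/2$. Writing $U = Y_{k,2i}$ and $V = Y_{k,2i-1}$, these are independent and mean-zero with $\myVar(U) = \bSigma_{k,2i}$, $\myVar(V) = \bSigma_{k,2i-1}$, and $\tilde X_{k,i}^\myT \BB \tilde X_{k,i} = \tfrac14 (U-V)^\myT \BB (U-V)$. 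Both $U$ and $V$ satisfy Assumption~\ref{assumption:wangwang}, which is the leverage I will use.

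First I would expand the quadratic form. Setting $P = U^\myT \BB U$, $Q = U^\myT \BB V$, $R = V^\myT \BB V$, and writing $a = \mytr(\BB \bSigma_{k,2i})$, $b = \mytr(\BB \bSigma_{k,2i-1})$, the identity $(U-V)^\myT \BB (U-V) = P - 2Q + R$ together with the independence of $U,V$ and $\myE(U) = \myE(V) = \mathbf 0_p$ kills every term carrying an odd power of $U$ or $V$. This gives $\myE(Q) = 0$, hence $\myE(\tilde X_{k,i}^\myT \BB \tilde X_{k,i}) = (a+b)/4$, and since $\myE(PQ) = \myE(QR) = 0$ and $\myE(PR) = ab$ by independence,
\begin{align*}
    \myE\big\{(\tilde X_{k,i}^\myT \BB \tilde X_{k,i})^2\big\}
    = \tfrac{1}{16}\big\{ \myE(P^2) + \myE(R^2) + 4\,\myE(Q^2) + 2ab \big\}.
\end{align*}

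The heart of the argument is to bound the three surviving terms. Assumption~\ref{assumption:wangwang}, applied separately to $U$ and $V$ with the same positive semi-definite $\BB$, yields $\myE(P^2) \le \tau a^2$ and $\myE(R^2) \le \tau b^2$. For the cross term I would condition on $V$ to obtain $\myE(Q^2) = V^\myT \BB \bSigma_{k,2i} \BB V$ in expectation, i.e.\ $\myE(Q^2) = \mytr(\BB \bSigma_{k,2i} \BB \bSigma_{k,2i-1})$. Writing this as $\mytr(MN)$ with the positive semi-definite factors $M = \BB^{1/2} \bSigma_{k,2i} \BB^{1/2}$ and $N = \BB^{1/2} \bSigma_{k,2i-1} \BB^{1/2}$, the trace inequality of Lemma~\ref{lemma:tao_matrix}(ii) gives $\mytr(MN) \le \lambda_1(M)\,\mytr(N) \le \mytr(M)\,\mytr(N) = ab$, so $\myE(Q^2) \le ab$.

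Assembling these bounds, the bracketed quantity is at most $\tau a^2 + \tau b^2 + 4ab + 2ab = \tau a^2 + \tau b^2 + 6ab$. The only delicate point, and the main obstacle to closing the estimate, is matching the constant on the cross term: one needs $6ab \le 2\tau ab$, which holds precisely because Assumption~\ref{assumption:wangwang} guarantees $\tau \ge 3$ (and $a,b \ge 0$). Consequently the bracket is bounded by $\tau(a+b)^2$, and dividing by $16$ yields $\myE\{(\tilde X_{k,i}^\myT \BB \tilde X_{k,i})^2\} \le \tau\{(a+b)/4\}^2 = \tau\{\myE(\tilde X_{k,i}^\myT \BB \tilde X_{k,i})\}^2$, as required.
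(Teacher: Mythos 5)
Your proof is correct and follows essentially the same route as the paper's: expand $\tilde X_{k,i}^\myT \BB \tilde X_{k,i} = \tfrac14(P-2Q+R)$, kill the odd-moment terms, bound $\myE(P^2)$ and $\myE(R^2)$ by Assumption \ref{assumption:wangwang}, bound the cross term $\mytr(\BB\bSigma_{k,2i}\BB\bSigma_{k,2i-1})$ by $\mytr(\BB\bSigma_{k,2i})\mytr(\BB\bSigma_{k,2i-1})$ via the PSD factorization $\BB^{1/2}\bSigma\BB^{1/2}$, and invoke $\tau \ge 3$ to absorb the cross terms into $\tau(a+b)^2/16$. You correctly identified the one delicate point, namely that $\tau \ge 3$ is exactly what makes the constants match, which is precisely where the paper's proof also uses that hypothesis.
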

\begin{proof}
    We have
    {\small
    \begin{align*}
        \myE \{(\tilde X_{k, i}^\myT \BB \tilde X_{k, i})^2 \}
    =&
    \frac{1}{16}
        \myE \{(Y_{k, 2i}^\myT \BB Y_{k, 2i} )^2\}
    +
    \frac{1}{16}
        \myE \{(Y_{k, 2i-1}^\myT \BB Y_{k, 2i-1} )^2\}
        \\
     &
    +
    \frac{1}{8}
        \myE \{
            (Y_{k, 2i}^\myT \BB Y_{k, 2i} )
            (Y_{k, 2i-1}^\myT \BB Y_{k, 2i-1} )
        \}
    +
    \frac{1}{4}
         \myE \{(Y_{k, 2i}^\myT \BB Y_{k, 2i-1} )^2\}
    \\
    \leq &
    \frac{\tau}{16}
    \{\mytr(\BB \bSigma_{k, 2i}) \}^2
    +
    \frac{\tau}{16}
    \{\mytr(\BB \bSigma_{k, 2i-1}) \}^2
        \\
     &
    +
    \frac{1}{8}
        \mytr ( \BB \bSigma_{k, 2i} )
        \mytr ( \BB \bSigma_{k, 2i-1} )
    +
    \frac{1}{4}
    \mytr (
    \BB \bSigma_{k, 2i} 
    \BB \bSigma_{k,2i-1} 
    )
    ,
    \end{align*}
}%
    where the last inequality follows from Assumption \ref{assumption:wangwang}.
    Note that
    {\small
    \begin{align*}
    \mytr (
    \BB \bSigma_{k, 2i} 
    \BB \bSigma_{k,2i-1} 
    )
    =&
    \mytr \{
        (\BB^{1/2} \bSigma_{k, 2i} \BB^{1/2})
        (\BB^{1/2} \bSigma_{k,2i-1} \BB^{1/2})
\}
\\
\leq&
    \mytr(\BB^{1/2} \bSigma_{k, 2i} \BB^{1/2}) 
    \mytr(\BB^{1/2} \bSigma_{k, 2i-1}\BB^{1/2}) 
    \\
=&
    \mytr(\BB \bSigma_{k, 2i}) 
    \mytr(\BB \bSigma_{k, 2i-1}) 
    .
    \end{align*}
}%
    It follows from the above two inequalities and the condition $\tau \geq 3$ that
    {\small
    \begin{align*}
        \myE \{(\tilde X_{k, i}^\myT \BB \tilde X_{k,i})^2 \}
        \leq 
        \frac{\tau}{16}
    \{\mytr(\BB \bSigma_{k, 2i}) 
    +
    \mytr(\BB \bSigma_{k, 2i-1}) \}^2
    =
    \tau
    \{
    \myE (\tilde X_{k, i}^\myT \BB \tilde X_{k,i})
    \}^2
        .
    \end{align*}
}%
    This completes the proof.
\end{proof}

\begin{lemma}\label{lemma:variance}
    Suppose Assumptions \ref{assumption:n}, \ref{assumption:wangwang} and \ref{assumption7} hold, and $\sigma_{T,n}^2 > 0$ for all $n$.
Let $\{\BB_n \}$  be a sequence of matrices where $\BB_n \in \mathbb R^{p \times m_n }$ is column orthogonal and the column number $m_n \leq p$.
Let $E^* = (\epsilon^*_{1,1}, \ldots, \epsilon^*_{1, m_1}, \epsilon^*_{2,1}, \ldots, \epsilon^*_{2, m_2})^\myT$, where $\epsilon^*_{k,i}$, $i = 1, \ldots, m_k$, $k = 1, 2$, are independent random variables with $\myE (\epsilon^*_{k,i}) = 0 $ and $\myVar (\epsilon^*_{k,i}) = 1$.
Then as $n \to \infty$,
{\small
\begin{align*}
    &
    \frac{
    \myVar\{
T_{\mathrm{CQ}}(
E^* ;
\tilde \BX_1 \BB_n, \tilde \BX_2 \BB_n
)
    \mid \tilde \BX_1, \tilde \BX_2 \}
    }{
    \sigma_{T,n}^2
}
    =
    \frac{2 \mytr\{(\BB_n^\myT \bPsi_n \BB_n)^2\}}{ \sigma_{T,n}^2 }
    +o_P(1).
\end{align*}
}%
\end{lemma}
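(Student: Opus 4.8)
The plan is to condition on $\tilde\BX_1,\tilde\BX_2$ and exploit the fact that, given the data, $T_{\mathrm{CQ}}(E^*;\tilde\BX_1\BB_n,\tilde\BX_2\BB_n)$ is a generalized quadratic form in the independent mean-zero unit-variance multipliers $\{\epsilon^*_{k,i}\}$. Writing $V_{k,i}=\BB_n^\myT\tilde X_{k,i}$, the randomized statistic equals $\sum_{\{a,b\}}c_{a,b}\,\epsilon^*_a\epsilon^*_b$, a sum over unordered pairs of distinct multiplier indices with data-dependent coefficients $c_{a,b}$. Since $\myE(\epsilon^*_a\epsilon^*_b)=0$, $\myVar(\epsilon^*_a\epsilon^*_b)=1$, and the products over distinct pairs are pairwise uncorrelated, the conditional variance is exactly the sum of squared coefficients,
\begin{align*}
\hat V_n
&:=\myVar\{T_{\mathrm{CQ}}(E^*;\tilde\BX_1\BB_n,\tilde\BX_2\BB_n)\mid\tilde\BX_1,\tilde\BX_2\}
\\
&=\sum_{k=1}^2\frac{4}{m_k^2(m_k-1)^2}\sum_{1\le i<j\le m_k}(V_{k,i}^\myT V_{k,j})^2
+\frac{4}{m_1^2m_2^2}\sum_{i=1}^{m_1}\sum_{j=1}^{m_2}(V_{1,i}^\myT V_{2,j})^2.
\end{align*}
This identity uses only the first two moments and the independence of the multipliers, so it is insensitive to whether $\epsilon^*_{1,1}$ is normal or Rademacher. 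It then suffices to prove $\hat V_n=2\mytr\{(\BB_n^\myT\bPsi_n\BB_n)^2\}+o_P(\sigma_{T,n}^2)$, which I would establish by a bias--variance decomposition followed by Chebyshev's inequality, dividing throughout by $\sigma_{T,n}^2$ and using Lemma~\ref{lemma:712} to replace $\sigma_{T,n}^2$ by $\{1+o(1)\}2\mytr(\bPsi_n^2)$ wherever convenient.

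For the mean, independence of distinct observations gives $\myE\{(V_{k,i}^\myT V_{k,j})^2\}=\mytr(\BA_{k,i}\BA_{k,j})$ for $i\neq j$, where $\BA_{k,i}=\BB_n^\myT\tilde\bSigma_{k,i}\BB_n$ is the covariance of $V_{k,i}$ and $\tilde\bSigma_{k,i}=\myVar(\tilde X_{k,i})=\tfrac14(\bSigma_{k,2i}+\bSigma_{k,2i-1})$, with the analogous cross-group identity. Summing and using $\sum_{i,j}\mytr(\BA_{k,i}\BA_{k,j})=m_k^2\mytr(\bar\BA_k^2)$ with $\bar\BA_k=\BB_n^\myT\bar{\tilde\bSigma}_k\BB_n$, the leading part of $\myE(\hat V_n)$ is $\sum_k 2(m_k-1)^{-2}\mytr(\bar\BA_k^2)+4(m_1m_2)^{-1}\mytr(\bar\BA_1\bar\BA_2)$. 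The decisive algebraic identity is $\bar{\tilde\bSigma}_k=\tfrac12\bar\bSigma_k$ when $n_k$ is even (with an $O(m_k^{-1})$ relative remainder when $n_k$ is odd), together with $m_k=\lfloor n_k/2\rfloor$; substituting these turns the leading part into $2\mytr\{(n_1^{-1}\BB_n^\myT\bar\bSigma_1\BB_n+n_2^{-1}\BB_n^\myT\bar\bSigma_2\BB_n)^2\}=2\mytr\{(\BB_n^\myT\bPsi_n\BB_n)^2\}$. The diagonal corrections $2m_k^{-2}(m_k-1)^{-2}\sum_i\mytr(\BA_{k,i}^2)$ and the odd-$n_k$ remainder are shown to be $o(\sigma_{T,n}^2)$ using $\mytr(\BA_{k,i}^2)\le\mytr(\tilde\bSigma_{k,i}^2)$ (column-orthogonality of $\BB_n$ with Lemma~\ref{lemma:tao_matrix}), the bound $\sum_i\mytr(\tilde\bSigma_{k,i}^2)\lesssim\sum_j\mytr(\bSigma_{k,j}^2)$, and Assumption~\ref{assumption7}.

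The harder step is the concentration bound $\myVar(\hat V_n)=o(\sigma_{T,n}^4)$. I would expand $\myVar(\hat V_n)$ into the variances of the individual terms $(V_a^\myT V_b)^2$ and the covariances between two such terms, noting that two distinct pairs share either no index (the terms are then independent and contribute nothing) or exactly one index. The single-term variances are controlled through $\myE\{(V_a^\myT V_b)^4\}\le\tau^2\mytr(\BA_a\BA_b)^2$, obtained by applying the moment inequality of Lemma~\ref{lemma:small_lemma} twice, once conditionally on $V_a$ with the rank-one matrix $V_aV_a^\myT$ and once with the matrix $\BA_b$. The one-shared-index covariances reduce, after conditioning on the common factor $V_a$, to $\myE\{(V_a^\myT\BA_b V_a)(V_a^\myT\BA_d V_a)\}\le\tau\,\mytr(\BA_a\BA_b)\mytr(\BA_a\BA_d)$, again via Lemma~\ref{lemma:small_lemma} and Cauchy--Schwarz. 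Summing these bounds against the $m_k^{-4}$-type scalings produces quantities of the form $\mytr(\bar\BA_k^2)\sum_i\mytr(\BA_{k,i}^2)$ and $\{\sum_i\mytr(\BA_{k,i}^2)\}^2$, for which repeated use of the trace Cauchy--Schwarz inequality in Lemma~\ref{lemma:tao_matrix} is made. Dividing by $\sigma_{T,n}^4\asymp\{\mytr(\bPsi_n^2)\}^2$, which dominates $n_k^{-4}\{\mytr(\bar\bSigma_k^2)\}^2$ since $\mytr(\bPsi_n^2)\ge n_k^{-2}\mytr(\bar\bSigma_k^2)$ (the cross term $\mytr(\bar\bSigma_1\bar\bSigma_2)$ being nonnegative by Lemma~\ref{lemma:tao_matrix}), and invoking Assumption~\ref{assumption7} to make $\sum_i\mytr(\bSigma_{k,i}^2)=o(n_k^2\mytr(\bar\bSigma_k^2))$, each such term is $o(1)$. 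I expect this variance estimate---the bookkeeping of the covariance terms and the verification that every residual trace sum is $o(\{\mytr(\bPsi_n^2)\}^2)$---to be the main obstacle. Chebyshev's inequality then yields $\hat V_n=2\mytr\{(\BB_n^\myT\bPsi_n\BB_n)^2\}+o_P(\sigma_{T,n}^2)$, which is exactly the asserted expansion.
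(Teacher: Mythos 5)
Your proposal is correct and follows essentially the same route as the paper's proof: the exact conditional-variance identity for the quadratic form in the multipliers, the mean computation via $\tilde\bSigma_{k,i}=\tfrac14(\bSigma_{k,2i-1}+\bSigma_{k,2i})$ reducing the leading term to $2\mytr\{(\BB_n^\myT\bPsi_n\BB_n)^2\}$ with remainders killed by Assumption \ref{assumption7}, and a Chebyshev/second-moment concentration step in which disjoint-pair terms cancel and shared-index terms are bounded through the fourth-moment inequality of Lemma \ref{lemma:small_lemma}. The only (immaterial) difference is that you bound the one-shared-index covariances by conditioning on the common vector, whereas the paper applies Cauchy--Schwarz to reduce them to fourth moments before invoking that lemma.
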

\begin{proof}
    We have
    {\small
    \begin{align*}
    &
    \myVar\{
T_{\mathrm{CQ}}(
E^*;
\tilde \BX_1 \BB_n, \tilde \BX_2 \BB_n
) 
    \mid \tilde \BX_1, \tilde \BX_2 \}
    \\
    =&
    \sum_{k=1}^2
        \frac{4 
            \sum_{i=1}^{m_k}
            \sum_{j=i+1}^{m_k}
            (\tilde X_{k, i}^\myT \BB_n \BB_n^\myT \tilde X_{k,j})^2
        }{m_k^2 (m_k - 1)^2 
     } 
     +
\frac{4
            \sum_{i=1}^{m_1}
            \sum_{j=1}^{m_2}
            (\tilde X_{1, i}^\myT \BB_n \BB_n^\myT \tilde X_{2,j })^2
        }{m_1^2 m_2^2  
    }
.
\end{align*}
}%

Fisrt we deal with $
\sum_{i=1}^{m_k}
\sum_{j=i+1}^{m_k}
(\tilde X_{k, i}^\myT 
\BB_n \BB_n^\myT
\tilde X_{k,j})^2
$, $k = 1, 2$.
We have
{\small
\begin{align*}
    &
    \myE 
    \left\{
            \sum_{i=1}^{m_k}
            \sum_{j=i+1}^{m_k}
            (\tilde X_{k, i}^\myT 
\BB_n \BB_n^\myT
            \tilde X_{k,j})^2
    \right\}
    \\
    =&
    \frac{1}{16}
            \sum_{i=1}^{m_k}
            \sum_{j=i+1}^{m_k}
            \mytr (
            \BB_n^\myT (\bSigma_{k, 2j-1}+\bSigma_{k, 2j})
            \BB_n \BB_n^\myT (\bSigma_{k, 2i-1}+ \bSigma_{k, 2i})
            \BB_n 
            )
            \\
    =&
    \frac{1}{32}
        \left(
            \sum_{i=1}^{2m_k}
            \sum_{j=1}^{2m_k}
            \mytr (
            \BB_n^\myT \bSigma_{k, j}
            \BB_n \BB_n^\myT \bSigma_{k, i}
            \BB_n 
            )
            -
            \sum_{i=1}^{m_k}
            \mytr\{ (
            \BB_n^\myT (\bSigma_{k, 2i-1}+ \bSigma_{k, 2i})
            \BB_n 
            )^2
        \}
        \right)
.
\end{align*}
}%
Hence
{\small
\begin{align}
    &
    \left|
    \myE 
    \left\{
            \sum_{i=1}^{m_k}
            \sum_{j=i+1}^{m_k}
            (\tilde X_{k, i}^\myT 
\BB_n \BB_n^\myT
            \tilde X_{k,j})^2
    \right\}
    -
    \frac{1}{32}
    n_k^2
    \mytr \{ (\BB_n^\myT \bar \bSigma_k \BB_n )^2 \}
    \right|
    \notag
    \\
    \leq&
    \frac{1}{16}
     n_k \mytr(\BB_n^\myT \bSigma_{k,n_k} \BB_n \BB_n^\myT \bar \bSigma_{k} \BB_n)
    +
    \frac{1}{16}
\sum_{i=1}^{n_k}\mytr \{ (\BB_n^\myT \bSigma_{k,i}  \BB_n)^2 \}
\notag
\\
    \leq&
    \frac{1}{16}
    \left\{
n_k^2 \mytr (\bar \bSigma_{k}^2)
\mytr (\bSigma_{k,n_k}^2)
\right\}^{1/2}
    +
    \frac{1}{16}
\sum_{i=1}^{n_k}\mytr (\bSigma_{k,i}^2) 
\notag
\\
    =& o\left(
        n_k^2 \mytr ( \bar \bSigma_{k}^2 )
    \right)
    ,
    \label{eq:71}
\end{align}
}%
where the last equality follows from Assumption \ref{assumption7}.

Now we compute the variance of $\sum_{i=1}^{m_k}
            \sum_{j=i+1}^{m_k}
            (\tilde X_{k, i}^\myT 
\BB_n \BB_n^\myT
            \tilde X_{k,j})^2
            $.
Note that
{\small
\begin{align*}
    &
    \left[
        \myE
    \left\{
            \sum_{i=1}^{m_k}
            \sum_{j=i+1}^{m_k}
            (\tilde X_{k, i}^\myT \BB_n \BB_n^\myT \tilde X_{k,j})^2
        \right\}
    \right]^2
    \\
        =&
            \sum_{i=1}^{m_k}
            \sum_{j=i+1}^{m_k}
            [
            \myE
            \{
            (\tilde X_{k, i}^\myT
                \BB_n \BB_n^\myT
            \tilde X_{k,j})^2
        \}
    ]^2
            +
            2
            \sum_{i=1}^{m_k}
            \sum_{j=i+1}^{m_k}
            \sum_{\ell=j+1}^{m_k}
            \myE \{(\tilde X_{k, i}^\myT
                \BB_n \BB_n^\myT
            \tilde X_{k,j})^2\}
            \myE \{(\tilde X_{k, i}^\myT
                \BB_n \BB_n^\myT
            \tilde X_{k,\ell})^2\}
            \\
    &
            +
            2
            \sum_{i=1}^{m_k}
            \sum_{j=i+1}^{m_k}
            \sum_{\ell=j+1}^{m_k}
            \myE \{(\tilde X_{k, i}^\myT
                \BB_n \BB_n^\myT
            \tilde X_{k,j})^2\}
            \myE \{(\tilde X_{k, j}^\myT
                \BB_n \BB_n^\myT
            \tilde X_{k,\ell})^2\}
            \\
    &
            +
            2
            \sum_{i=1}^{m_k}
            \sum_{j=i+1}^{m_k}
            \sum_{\ell=j+1}^{m_k}
            \myE \{(\tilde X_{k, i}^\myT
                \BB_n \BB_n^\myT
            \tilde X_{k,\ell})^2\}
            \myE \{(\tilde X_{k, j}^\myT
                \BB_n \BB_n^\myT
            \tilde X_{k,\ell})^2\}
        \\
    & 
            +
            2
            \sum_{i=1}^{m_k}
            \sum_{j=i+1}^{m_k}
            \sum_{\ell=j+1}^{m_k}
            \sum_{r=\ell+1}^{m_k}
            \myE \{(\tilde X_{k, i}^\myT 
                \BB_n \BB_n^\myT
            \tilde X_{k,j})^2\}
            \myE \{(\tilde X_{k, \ell}^\myT 
                \BB_n \BB_n^\myT
            \tilde X_{k,r})^2\}
            \\
    &
            +2
            \sum_{i=1}^{m_k}
            \sum_{j=i+1}^{m_k}
            \sum_{\ell=j+1}^{m_k}
            \sum_{r=\ell+1}^{m_k}
            \myE \{(\tilde X_{k, i}^\myT
                \BB_n \BB_n^\myT
            \tilde X_{k,\ell})^2\}
            \myE \{(\tilde X_{k, j}^\myT
                \BB_n \BB_n^\myT
            \tilde X_{k,r})^2\}
            \\
    &
            +2
            \sum_{i=1}^{m_k}
            \sum_{j=i+1}^{m_k}
            \sum_{\ell=j+1}^{m_k}
            \sum_{r=\ell+1}^{m_k}
            \myE \{(\tilde X_{k, i}^\myT 
                \BB_n \BB_n^\myT
            \tilde X_{k,r})^2\}
            \myE \{(\tilde X_{k, j}^\myT 
                \BB_n \BB_n^\myT
            \tilde X_{k,\ell})^2\}
            .
\end{align*}
}%
We denote the above $7$ terms by $C_{1}, \ldots, C_{7}$.
On the other hand,
{\small
\begin{align*}
    &
    \left\{
            \sum_{i=1}^{m_k}
            \sum_{j=i+1}^{m_k}
            (\tilde X_{k, i}^\myT \BB_n \BB_n^\myT \tilde X_{k,j})^2
        \right\}^2
        \\
    =&
            \sum_{i=1}^{m_k}
            \sum_{j=i+1}^{m_k}
            (\tilde X_{k, i}^\myT
                \BB_n \BB_n^\myT
            \tilde X_{k,j})^4
            +
            2
            \sum_{i=1}^{m_k}
            \sum_{j=i+1}^{m_k}
            \sum_{\ell=j+1}^{m_k}
            (\tilde X_{k, i}^\myT
                \BB_n \BB_n^\myT
            \tilde X_{k,j})^2
            (\tilde X_{k, i}^\myT
                \BB_n \BB_n^\myT
            \tilde X_{k,\ell})^2
            \\
    &
            +
            2
            \sum_{i=1}^{m_k}
            \sum_{j=i+1}^{m_k}
            \sum_{\ell=j+1}^{m_k}
            (\tilde X_{k, i}^\myT
                \BB_n \BB_n^\myT
            \tilde X_{k,j})^2
            (\tilde X_{k, j}^\myT
                \BB_n \BB_n^\myT
            \tilde X_{k,\ell})^2
            \\
    &
            +
            2
            \sum_{i=1}^{m_k}
            \sum_{j=i+1}^{m_k}
            \sum_{\ell=j+1}^{m_k}
            (\tilde X_{k, i}^\myT
                \BB_n \BB_n^\myT
            \tilde X_{k,\ell})^2
            (\tilde X_{k, j}^\myT
                \BB_n \BB_n^\myT
            \tilde X_{k,\ell})^2
        \\
    & 
            +
            2
            \sum_{i=1}^{m_k}
            \sum_{j=i+1}^{m_k}
            \sum_{\ell=j+1}^{m_k}
            \sum_{r=\ell+1}^{m_k}
            (\tilde X_{k, i}^\myT 
                \BB_n \BB_n^\myT
            \tilde X_{k,j})^2
            (\tilde X_{k, \ell}^\myT 
                \BB_n \BB_n^\myT
            \tilde X_{k,r})^2
            \\
    &
            +2
            \sum_{i=1}^{m_k}
            \sum_{j=i+1}^{m_k}
            \sum_{\ell=j+1}^{m_k}
            \sum_{r=\ell+1}^{m_k}
            (\tilde X_{k, i}^\myT
                \BB_n \BB_n^\myT
            \tilde X_{k,\ell})^2
            (\tilde X_{k, j}^\myT
                \BB_n \BB_n^\myT
            \tilde X_{k,r})^2
            \\
    &
            +2
            \sum_{i=1}^{m_k}
            \sum_{j=i+1}^{m_k}
            \sum_{\ell=j+1}^{m_k}
            \sum_{r=\ell+1}^{m_k}
            (\tilde X_{k, i}^\myT 
                \BB_n \BB_n^\myT
            \tilde X_{k,r})^2
            (\tilde X_{k, j}^\myT 
                \BB_n \BB_n^\myT
            \tilde X_{k,\ell})^2
            .
\end{align*}
}%
We denote the above $7$ terms by $T_{1}, \ldots, T_{7}$.
It can be seen that for $i =5, 6, 7$, $\myE (T_i) = C_i$.
Thus,
{\small
\begin{align*}
    \myVar
    \left\{
            \sum_{i=1}^{m_k}
            \sum_{j=i+1}^{m_k}
            (\tilde X_{k, i}^\myT \BB_n \BB_n^\myT \tilde X_{k,j})^2
        \right\}
        =
        \sum_{i=1}^4 \myE(T_i)
        -
        \sum_{i=1}^4 C_i
        \leq
        \sum_{i=1}^4 \myE(T_i).
\end{align*}
}%
Note that for $k = 1, 2$ and $i,j, \ell \in \{1, \ldots,  n_k \} $,
{\small
\begin{align*}
    \myE
    \left\{
        (\tilde X_{k, i}^\myT 
\BB_n \BB_n^\myT
        \tilde X_{k,j})^2
        (\tilde X_{k, i}^\myT
\BB_n \BB_n^\myT
        \tilde X_{k,\ell})^2
    \right\}
            \leq&
            \left[
                \myE
                \left\{
            (\tilde X_{k, i}^\myT
\BB_n \BB_n^\myT
            \tilde X_{k,j})^4
        \right\}
            \myE 
            \left\{
            (\tilde X_{k, i}^\myT
\BB_n \BB_n^\myT
            \tilde X_{k,\ell})^4
        \right\}
    \right]^{1/2}
    .
\end{align*}
}%
Consequently,
{\small
\begin{align*}
    \myVar
    \left\{
            \sum_{i=1}^{m_k}
            \sum_{j=i+1}^{m_k}
            (\tilde X_{k, i}^\myT \BB_n \BB_n^\myT \tilde X_{k,j})^2
        \right\}
    \leq
    \sum_{i=1}^4 \myE(T_i)
    \leq
    \sum_{i=1}^{m_k}
    \left[
        \sum_{j = 1}^{m_k}
            \left[
                \myE
                \left\{
            (\tilde X_{k, i}^\myT
\BB_n \BB_n^\myT
            \tilde X_{k,j})^4
        \right\}
    \right]^{1/2}
        \mathbf 1_{\{ j \neq i\}}
    \right]^2
    .
\end{align*}
}%
From Lemma \ref{lemma:small_lemma}, for $k = 1,2$ and distinct $i, j \in \{1, \ldots, n_k\}$,
{\small
\begin{align*}
    \myE
    \{
    (\tilde X_{k, i}^\myT 
\BB_n \BB_n^\myT
    \tilde X_{k,j})^4
\}
    \leq &
    \myE
    \{
    ( \tilde X_{k,j}^\myT
\BB_n \BB_n^\myT
    \tilde X_{k, i} \tilde X_{k, i}^\myT
\BB_n \BB_n^\myT
    \tilde X_{k,j})^2
\}
\\
    \leq &
    \frac{\tau^2}{256}
    \Big\{
        \mytr (
\BB_n^\myT
\bSigma_{k, 2i-1}
\BB_n \BB_n^\myT
\bSigma_{k, 2j-1}
\BB_n 
)
+
        \mytr (
\BB_n^\myT
\bSigma_{k, 2i-1}
\BB_n \BB_n^\myT
\bSigma_{k, 2j}
\BB_n
)
\\
         &+
        \mytr (
\BB_n^\myT
\bSigma_{k, 2i}
\BB_n \BB_n^\myT
\bSigma_{k, 2j-1}
\BB_n 
)
+
        \mytr (
\BB_n^\myT
\bSigma_{k, 2i}
\BB_n \BB_n^\myT
\bSigma_{k, 2j} 
\BB_n 
)
\Big\}^2
    .
\end{align*}
}%
Thus,
{\small
\begin{align*}
    \myVar
    \left\{
            \sum_{i=1}^{m_k}
            \sum_{j=i+1}^{m_k}
            (\tilde X_{k, i}^\myT \BB_n \BB_n^\myT \tilde X_{k,j})^2
        \right\}
        \leq&
    \frac{\tau^2}{128}
    n_k^2
    \sum_{i=1}^{n_k}
    \left\{
        \mytr (
\BB_n^\myT
\bSigma_{k, i}
\BB_n \BB_n^\myT
\bar \bSigma_{k}
\BB_n 
)
\right\}^2
\\
\leq&
    \frac{\tau^2}{128}
    n_k^2
    \sum_{i=1}^{n_k}
        \mytr \{ (
\BB_n^\myT
\bSigma_{k, i}
\BB_n 
)^2 \}
\mytr \{(
\BB_n^\myT
\bar \bSigma_{k}
\BB_n 
)^2\}
\\
= & 
o\left(
    n_k^4
    \left(
        \mytr (
\bar \bSigma_{k}
^2)
\right)^2
\right)
    ,
\end{align*}
}%
where the last equality follows from Assumption \ref{assumption7}.
Combining the above bound and \eqref{eq:71} leads to
{\small
\begin{align}\label{eq:72}
\sum_{i=1}^{m_k}
\sum_{j=i+1}^{m_k}
(\tilde X_{k, i}^\myT \BB_n \BB_n^\myT \tilde X_{k,j})^2
=
    \frac{1}{32}
    n_k^2
    \mytr \{ (\BB_n^\myT \bar \bSigma_k \BB_n )^2 \}
    +
    o
    \left(n_k^2 \mytr(\bar \bSigma_k^2)\right)
    .
\end{align}
}%

Now we deal with
$
    \sum_{i=1}^{m_1} \sum_{j=1}^{m_2} (\tilde X_{1,i}^\myT
\BB_n \BB_n^\myT
    \tilde X_{2,j})^2
    $.
We have
{\small
\begin{align*}
    \myE\left(
        \sum_{i=1}^{m_1} \sum_{j=1}^{m_2} (\tilde X_{1,i}^\myT
\BB_n \BB_n^\myT
    \tilde X_{2,j})^2
\right)
=
\frac{1}{16}
\sum_{i=1}^{m_1}
\sum_{j=1}^{m_2}
\mytr (
\BB_n^\myT (\bSigma_{1, 2i-1}+\bSigma_{1, 2i})\BB_n 
\BB_n^\myT (\bSigma_{2, 2j-1}+\bSigma_{2, 2j})\BB_n
)
.
\end{align*}
}%
Hence from Assumption \ref{assumption7},
{\small
\begin{align*}
    &
    \left|
    \myE\left(
        \sum_{i=1}^{m_1} \sum_{j=1}^{m_2} (\tilde X_{1,i}^\myT
\BB_n \BB_n^\myT
    \tilde X_{2,j})^2
\right)
-
\frac{1}{16}
n_1 n_2 \mytr( \BB_n^\myT \bar \bSigma_1 \BB_n 
\BB_n^\myT \bar \bSigma_2 \BB_n
)
\right|
\\
\leq &
\frac{1}{16}
n_2
\mytr( \BB_n^\myT \bSigma_{1,n_1} \BB_n  \BB_n^\myT \bar \bSigma_{2} \BB_n )
+
\frac{1}{16}
n_1
\mytr( \BB_n^\myT \bSigma_{2,n_2} \BB_n  \BB_n^\myT \bar \bSigma_{1} \BB_n )
\\
\leq &
\frac{1}{16}
\left\{
    n_2^2
    \mytr\{(\BB_n^\myT \bSigma_{1, n_1} \BB_n)^2\}
    \mytr\{(\BB_n^\myT \bar \bSigma_{2} \BB_n)^2\}
\right\}^{1/2}
+
\frac{1}{16}
\left\{
    n_1^2
    \mytr\{(\BB_n^\myT \bSigma_{2, n_2} \BB_n)^2\}
    \mytr\{(\BB_n^\myT \bar \bSigma_{1} \BB_n)^2\}
\right\}^{1/2}
\\
=&
o\left[
\left\{
    n_1^2
    n_2^2
    \mytr(\bar \bSigma_{1}^2 )
    \mytr(\bar \bSigma_{2}^2 )
\right\}^{1/2}
\right]
.
\end{align*}
}%
Now we compute the variance of 
$
        \sum_{i=1}^{m_1} \sum_{j=1}^{m_2} (\tilde X_{1,i}^\myT
\BB_n \BB_n^\myT
    \tilde X_{2,j})^2
$.
Note that
{\small
\begin{align*}
    &
    \left(
        \sum_{i=1}^{m_1} \sum_{j=1}^{m_2} (\tilde X_{1,i}^\myT
\BB_n \BB_n^\myT
    \tilde X_{2,j})^2
\right)^2
\\
    =&
\sum_{i=1}^{m_1}
\sum_{j=1}^{m_2}
(\tilde X_{1,i}^\myT
\BB_n \BB_n^\myT
    \tilde X_{2,j})^4
    +
    2
\sum_{i=1}^{m_1}
\sum_{\ell=1}^{m_2}
\sum_{r=\ell+1}^{m_2}
(\tilde X_{1,i}^\myT
\BB_n \BB_n^\myT
\tilde X_{2,\ell})^2
(\tilde X_{1,i}^\myT
\BB_n \BB_n^\myT
\tilde X_{2,r})^2
\\
     &+
    2
\sum_{i=1}^{m_1}
\sum_{j=i+1}^{m_1}
\sum_{\ell=1}^{m_2}
(\tilde X_{1,i}^\myT
\BB_n \BB_n^\myT
\tilde X_{2,\ell})^2
(\tilde X_{1,j}^\myT
\BB_n \BB_n^\myT
\tilde X_{2,\ell})^2
\\
     &+
    4
\sum_{i=1}^{m_1}
\sum_{j=i+1}^{m_1}
\sum_{\ell=1}^{m_2}
\sum_{r=\ell+1}^{m_2}
(\tilde X_{1,i}^\myT
\BB_n \BB_n^\myT
\tilde X_{2,\ell})^2
(\tilde X_{1,j}^\myT
\BB_n \BB_n^\myT
\tilde X_{2,r})^2
.
\end{align*}
}%
Hence
{\small
\begin{align*}
    &
    \myVar
    \left(
        \sum_{i=1}^{m_1} \sum_{j=1}^{m_2} (\tilde X_{1,i}^\myT
\BB_n \BB_n^\myT
    \tilde X_{2,j})^2
\right)
\\
\leq
&
\sum_{i=1}^{m_1}
\sum_{j=1}^{m_2}
\myE\{
(\tilde X_{1,i}^\myT
\BB_n \BB_n^\myT
    \tilde X_{2,j})^4
\}
    +
    2
\sum_{i=1}^{m_1}
\sum_{\ell=1}^{m_2}
\sum_{r=\ell+1}^{m_2}
\myE\{
(\tilde X_{1,i}^\myT
\BB_n \BB_n^\myT
\tilde X_{2,\ell})^2
(\tilde X_{1,i}^\myT
\BB_n \BB_n^\myT
\tilde X_{2,r})^2
\}
\\
     &+
    2
\sum_{i=1}^{m_1}
\sum_{j=i+1}^{m_1}
\sum_{\ell=1}^{m_2}
    \myE\{
(\tilde X_{1,i}^\myT
\BB_n \BB_n^\myT
\tilde X_{2,\ell})^2
(\tilde X_{1,j}^\myT
\BB_n \BB_n^\myT
\tilde X_{2,\ell})^2
\}
\\
    \leq&
    \sum_{i=1}^{m_1} 
    \left[\sum_{j=1}^{m_2} 
        \left[
\myE\left\{
(\tilde X_{1,i}^\myT
\BB_n \BB_n^\myT
    \tilde X_{2,j})^4
\right\}
\right]^{1/2}
    \right]^2
    +
    \sum_{j=1}^{m_2} 
    \left[\sum_{i=1}^{m_1}
        \left[
\myE\left\{
(\tilde X_{1,i}^\myT
\BB_n \BB_n^\myT
    \tilde X_{2,j})^4
\right\}
\right]^{1/2}
    \right]^2
    \\
    \leq&
    \frac{\tau^2}{128}
    n_2^2
    \sum_{i=1}^{n_1}
    \left\{
        \mytr(\BB_n^\myT \bSigma_{1,i} \BB_n \BB_n^\myT \bar \bSigma_{2} \BB_n)
    \right\}^2
    +
    \frac{\tau^2}{128}
    n_1^2
    \sum_{j=1}^{n_2}
    \left\{
        \mytr(\BB_n^\myT \bar \bSigma_{1} \BB_n \BB_n^\myT \bSigma_{2, j} \BB_n)
    \right\}^2
    \\
    \leq&
    \frac{\tau^2}{128}
    n_2^2
    \sum_{i=1}^{n_1}
        \mytr (\bSigma_{1,i}^2) 
        \mytr (
    \bar \bSigma_{2}^2) 
    +
    \frac{\tau^2}{128}
    n_1^2
    \sum_{j=1}^{n_2}
        \mytr (\bSigma_{2,i}^2) 
        \mytr (
    \bar \bSigma_{1}^2) 
    \\
    =&
    o\left(
        n_1^2 n_2^2
        \mytr(\bar \bSigma_1^2)
        \mytr(\bar \bSigma_2^2)
    \right)
    .
\end{align*}
}%
Thus,
{\small
\begin{align*}
        \sum_{i=1}^{m_1} \sum_{j=1}^{m_2} (\tilde X_{1,i}^\myT
\BB_n \BB_n^\myT
    \tilde X_{2,j})^2
    =&
\frac{1}{16}
n_1 n_2 \mytr( \BB_n^\myT \bar \bSigma_1 \BB_n 
\BB_n^\myT \bar \bSigma_2 \BB_n)
+
o_P\left[
\left\{
    n_1^2
    n_2^2
    \mytr(\bar \bSigma_{1}^2 )
    \mytr(\bar \bSigma_{2}^2 )
\right\}^{1/2}
\right]
.
\end{align*}
}%
It follows from \eqref{eq:72} and the above equality that
{\small
\begin{align*}
    &
    \sum_{k=1}^2
        \frac{4 
            \sum_{i=1}^{m_k}
            \sum_{j=i+1}^{m_k}
            (\tilde X_{k, i}^\myT \BB_n \BB_n^\myT \tilde X_{k,j})^2
        }{m_k^2 (m_k - 1)^2 
     } 
     +
\frac{4
            \sum_{i=1}^{m_1}
            \sum_{j=1}^{m_2}
            (\tilde X_{1, i}^\myT \BB_n \BB_n^\myT \tilde X_{2,j })^2
        }{m_1^2 m_2^2  
    }
    \\
    =&
    (1+o(1))
    \left[
    \sum_{k=1}^2
        \frac{
            2 \mytr\{(\BB_n^\myT \bar \bSigma_k \BB_n)^2\}
        }{ n_k^2 
     } 
     +
\frac{4
    \mytr(\BB_n^\myT \bar \bSigma_1 \BB_n  \BB_n^\myT \bar \bSigma_2 \BB_n ) 
        }{n_1 n_2
    }
\right]
\\
&
+
o_P\left[
    \sum_{k=1}^2
        \frac{
            \mytr(\bar \bSigma_k^2)
        }{ n_k^2 
     } 
    +
    \frac{
\left\{
    \mytr(\bar \bSigma_{1}^2 )
    \mytr(\bar \bSigma_{2}^2 )
\right\}^{1/2}
}{n_1 n_2}
\right]
.
\end{align*}
}%
Then the conclusion follows.
\end{proof}

\begin{lemma}\label{lemma:universality_rad}
    Suppose the conditions of Theorem \ref{thm:final_thm} hold.
    Let $E = (\epsilon_{1,1}, \ldots, \epsilon_{1, m_1}, \epsilon_{2,1}, \ldots, \epsilon_{2, m_2})^\myT$, where $\epsilon_{k,i}$, $i = 1, \ldots, m_k$, $k = 1, 2$, are independent Rademacher random variables.
    Let $E_k^* = (\epsilon^*_{1,1}, \ldots, \epsilon^*_{1, m_1}, \epsilon^*_{2,1}, \ldots, \epsilon^*_{2, m_2})^\myT$, where $\epsilon^*_{k,i}$, $i = 1, \ldots, m_k$, $k = 1, 2$, are independent standard normal random variables.
    Then as $n \to \infty$,
    {\small
\begin{align*}
    \left\|
    \mathcal L \left(
        \frac{T_{\mathrm{CQ}}( E^* ; \tilde \BX_1, \tilde \BX_2 ) }{
            \sigma_{T,n}
        }
        \mid \tilde \BX_1, \tilde \BX_2
    \right) 
    -
    \mathcal L \left(
        \frac{T_{\mathrm{CQ}}( E ; \tilde \BX_1, \tilde \BX_2 ) }{
            \sigma_{T,n}
        }
        \mid \tilde \BX_1, \tilde \BX_2
    \right) 
    \right\|_3
    \xrightarrow{P} 0 .
\end{align*}
}%
\end{lemma}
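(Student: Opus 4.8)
The plan is to apply the conditional form of the universality result for generalized quadratic forms, Theorem \ref{thm:universality_GQF}, treating the two collections of randomization variables as the underlying independent random elements and conditioning throughout on the data $\tilde\BX_1,\tilde\BX_2$. Following the identification spelled out in Section \ref{sec:univer}, I would set $\xi_i=\epsilon_{1,i}$, $\eta_i=\epsilon^*_{1,i}$ for $i=1,\dots,m_1$ and $\xi_{m_1+i}=\epsilon_{2,i}$, $\eta_{m_1+i}=\epsilon^*_{2,i}$ for $i=1,\dots,m_2$, with $w_{i,j}(a,b)$ equal to $ab$ times the data-dependent constants $2\tilde X_{\cdot}^\myT\tilde X_{\cdot}/\{m_k(m_k-1)\sigma_{T,n}\}$ or $-2\tilde X_{\cdot}^\myT\tilde X_{\cdot}/(m_1m_2\sigma_{T,n})$. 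With $m=m_1+m_2$ this gives $W(\xi_1,\dots,\xi_m)=T_{\mathrm{CQ}}(E;\tilde\BX_1,\tilde\BX_2)/\sigma_{T,n}$ and $W(\eta_1,\dots,\eta_m)=T_{\mathrm{CQ}}(E^*;\tilde\BX_1,\tilde\BX_2)/\sigma_{T,n}$, so the quantity to be bounded is exactly $\|\mathcal L\{W(\eta)\mid\tilde\BX_1,\tilde\BX_2\}-\mathcal L\{W(\xi)\mid\tilde\BX_1,\tilde\BX_2\}\|_3$.

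Next I would check that Assumptions \ref{assumption:oh1} and \ref{assumption:oh2} hold almost surely conditional on the data. The moment conditions (a) are immediate because the inner products are fixed and the randomization variables are bounded or Gaussian; the centering conditions (b) hold since $\myE(\epsilon_{k,i})=\myE(\epsilon^*_{k,i})=0$; and the covariance-matching conditions (c) of Assumption \ref{assumption:oh2} hold because each $w_{i,j}$ is linear in each argument and both the Rademacher and the standard normal law have unit second moment, so every pairing reduces to the same data-dependent constant. The key observation is that the conditional constant $\rho_n$ is bounded by an absolute number: since $\myE(\epsilon^4)\le 3$ for both distributions and $\sigma_{i,j}^2$ equals the squared coefficient, one gets $\myE\{w_{i,j}(\cdot,\cdot)^4\}\le 9\,\sigma_{i,j}^4$ in all four cases, whence $\rho_n\le 9$.

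Theorem \ref{thm:universality_GQF} then yields, conditionally, the bound $3^{-1/4}\rho_n^{3/4}\sum_{i=1}^m\mathrm{Inf}_i^{3/2}$, so it remains to prove $\sum_{i=1}^m\mathrm{Inf}_i^{3/2}\xrightarrow{P}0$, where the influences $\mathrm{Inf}_i$ are now random quadratics in the data. Using $\sum_i\mathrm{Inf}_i^{3/2}\le(\sum_i\mathrm{Inf}_i^2)^{1/4}\sum_i\mathrm{Inf}_i$ together with the identity $\sum_i\mathrm{Inf}_i=2\,\myVar\{T_{\mathrm{CQ}}(E;\tilde\BX_1,\tilde\BX_2)\mid\tilde\BX_1,\tilde\BX_2\}/\sigma_{T,n}^2$, which equals $2+o_P(1)$ by Lemma \ref{lemma:variance} with $\BB_n=\BI_p$ and Lemma \ref{lemma:712}, the problem reduces to showing $\sum_i\mathrm{Inf}_i^2\xrightarrow{P}0$.

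This last reduction is the hard part. I would bound the data-expectation $\myE(\sum_i\mathrm{Inf}_i^2)$ and invoke Markov's inequality. Expanding $\mathrm{Inf}_i^2$ produces cross terms such as $\myE\{(\tilde X_{k,i}^\myT\tilde X_{k,j})^2(\tilde X_{k,i}^\myT\tilde X_{k,\ell})^2\}$; conditioning first on $\tilde X_{k,i}$ turns the inner expectations into quadratic forms $\tilde X_{k,i}^\myT\BB\tilde X_{k,i}$, whose products are controlled by Lemma \ref{lemma:small_lemma} (itself a consequence of Assumption \ref{assumption:wangwang}). The resulting trace expressions are then shown to be negligible compared with $\{\mytr(\bPsi_n^2)\}^2$ by Assumption \ref{assumption7} and the trace inequalities of Lemma \ref{lemma:tao_matrix}, in a computation closely paralleling the variance estimates already carried out in the proof of Lemma \ref{lemma:variance}. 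The delicate point worth emphasizing is that, although $\mathrm{Inf}_i^2$ is formally an eighth-order expression in the original observations, the conditioning argument only ever requires fourth-moment control of each $\tilde X_{k,i}$, so Assumption \ref{assumption:wangwang} is exactly strong enough; finally, since $E^*$ and $E$ are interchangeable in the symmetric bound, the case of normal $\epsilon^*_{1,1}$ treated here gives the full statement.
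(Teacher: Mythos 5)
Your proposal is correct and follows essentially the same route as the paper's proof: a conditional application of Theorem \ref{thm:universality_GQF} with the Rademacher/normal randomization variables as the random elements, the bound $\rho_n \leq 9$, the reduction $\sum_i \mathrm{Inf}_i^{3/2} \leq (\sum_i \mathrm{Inf}_i^2)^{1/4} \sum_i \mathrm{Inf}_i$ with $\sum_i \mathrm{Inf}_i = 2 + o_P(1)$ via Lemma \ref{lemma:variance}, and finally $\sum_i \mathrm{Inf}_i^2 \xrightarrow{P} 0$ by bounding the unconditional expectation through Lemma \ref{lemma:small_lemma} and Assumption \ref{assumption7}. The only cosmetic differences are that the paper evaluates the cross terms via Cauchy--Schwarz before invoking the fourth-moment bound (equivalent to your conditioning step) and states $\rho_n = 9$ exactly, neither of which affects the argument.
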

\begin{proof}
    {\small
    }%
We apply Theorem \ref{thm:universality_GQF} conditioning on $\tilde \BX_1$ and $\tilde \BX_2$.
Define 
{\small
\begin{align*}
\xi_{i} = 
\left\{
\begin{array}{ll}
    \epsilon_{1,i}& \text{for } i = 1, \ldots, m_1,
    \\
    \epsilon_{2, i-m_1}& \text{for }i = m_1 + 1, \ldots, m_1 + m_2,
\end{array}
\right.
\quad
\text{and}
\quad
\eta_{i} = 
\left\{
\begin{array}{ll}
    \epsilon_{1,i}^*& \text{for } i = 1, \ldots, m_1,
    \\
    \epsilon_{2, i-m_1}^*& \text{for }i = m_1 + 1, \ldots, m_1 + m_2.
\end{array}
\right.
\end{align*}
}%
Define
{\small
\begin{align*}
w_{i,j}(a, b) = 
\left\{
    \begin{array}{ll}
        \frac{2 a b \tilde X_{1, i}^\myT \tilde X_{1,j} }{m_1 (m_1 - 1)
            \sigma_{T,n}
    } 
&
\text{for } 1\leq i< j \leq m_1
,
\\
\frac{-2 a b\tilde X_{1, i}^\myT \tilde X_{2,j - m_1 }}{m_1 m_2 
    \sigma_{T,n}
} 
&
\text{for } 1\leq i \leq m_1,  m_1+1 \leq j \leq m_1 + m_2
,
\\
\frac{2ab\tilde X_{2, i - m_1 }^\myT \tilde X_{2,j - m_1 }}{m_2 (m_2 - 1) 
    \sigma_{T,n}
} 
&
\text{for } m_1 + 1\leq i< j \leq m_1 + m_2
.
    \end{array}
\right.
\end{align*}
}%
With the above definitions,
we have
$W(\xi_1, \ldots, \xi_n) = 
T_{\mathrm{CQ}}(E; \tilde \BX_1, \tilde \BX_2) 
/ \sigma_{T,n}
$
and
$W(\eta_1, \ldots, \eta_n) = 
T_{\mathrm{CQ}}(E^*; \tilde \BX_1, \tilde \BX_2) 
/ \sigma_{T,n}
$.
It can be easily seen that
Assumptions \ref{assumption:oh1} and \ref{assumption:oh2} hold.
By direct calculation, we have
{\small
\begin{align*}
    \sigma_{i,j}^2=
    \left\{
    \begin{array}{ll}
        \frac{4 
            (\tilde X_{1, i}^\myT \tilde X_{1,j})^2
        }{m_1^2 (m_1 - 1)^2 
        \sigma_{T,n}^2
     } 
&
\text{for } 1\leq i< j \leq m_1
,
\\
\frac{4
            (\tilde X_{1, i}^\myT \tilde X_{2,j -m_1 })^2
        }{m_1^2 m_2^2  
        \sigma_{T,n}^2
    }
&
\text{for } 1\leq i \leq m_1,  m_1+1 \leq j \leq m_1 + m_2
,
\\
\frac{4 
            (\tilde X_{2, i - m_1 }^\myT \tilde X_{2,j -m_1 })^2
    }{m_2^2 (m_2 - 1)^2 
    \sigma_{T,n}^2
}
&
\text{for } m_1 + 1\leq i< j \leq m_1 + m_2
.
    \end{array}
        \right.
\end{align*}
}%
Hence
{\small
\begin{align*}
    \mathrm{Inf}_i
    =
    \left\{
    \begin{array}{ll}
        \frac{4 
            \sum_{j=1}^{m_1} (\tilde X_{1, i}^\myT \tilde X_{1,j})^2
            \mathbf 1_{\{ j \neq i\}}
        }{m_1^2 (m_1 - 1)^2 
\sigma_{T,n}^2
    } 
        +
\frac{4
        \sum_{j =1}^{m_2} (\tilde X_{1, i}^\myT \tilde X_{2, j})^2
}{m_1^2 m_2^2 
\sigma_{T,n}^2
}
&
\text{for } 1\leq i \leq m_1
,
\\
        \frac{4 
            \sum_{j = 1}^{m_2} (\tilde X_{2, i-m_1}^\myT \tilde X_{2,j})^2
            \mathbf 1_{\{ j \neq i\} }
        }{m_2^2 (m_2 - 1)^2
\sigma_{T,n}^2
} 
        +
\frac{4
        \sum_{j =1}^{m_1} (\tilde X_{1, j}^\myT \tilde X_{2, i - m_1 })^2
}{m_1^2 m_2^2 
\sigma_{T,n}^2
}
&
\text{for } m_1+1 \leq i \leq m_1 + m_2
.
    \end{array}
        \right.
\end{align*}
}%
It can be easily seen that $\rho_n = 9$ for the above defined random variables.
From Theorem \ref{thm:universality_GQF}, it suffices to prove that $\sum_{i=1}^{m_1 + m_2} \mathrm{Inf}_i^{3/2} \xrightarrow{P} 0$.
We have
{\small
\begin{align*}
    \sum_{i=1}^{m_1 + m_2} \mathrm{Inf}_i^{3/2} 
\leq
\left( \max_{i\in\{1, \ldots, m_1 + m_2 \}} \mathrm{Inf}_i \right)^{1/2}
\left(\sum_{i=1}^{m_1 + m_2} \mathrm{Inf}_i\right)
\leq
\left( 
\sum_{i=1}^{m_1 + m_2} \mathrm{Inf}_i^2 
\right)^{1/4}
\left(\sum_{i=1}^{m_1 + m_2} \mathrm{Inf}_i\right)
.
\end{align*}
}%
But
{\small
\begin{align*}
    \sum_{i=1}^{m_1 + m_2} \mathrm{Inf}_i
    =&
\sum_{k=1}^2
        \frac{4 
            \sum_{i=1}^{m_k} \sum_{j=1}^{m_k} (\tilde X_{k, i}^\myT \tilde X_{k, j})^2
            \mathbf 1_{\{ j \neq i\}}
        }{m_k^2 (m_k - 1)^2 
\sigma_{T,n}^2
    } 
        +
\frac{8
        \sum_{i =1}^{m_1}
        \sum_{j =1}^{m_2}
        (\tilde X_{2, i}^\myT \tilde X_{1, j})^2
}{m_1^2 m_2^2 
\sigma_{T,n}^2
}
\\
=&
2
    \frac{
    \myVar\{
T_{\mathrm{CQ}}(
E^* ;
\tilde \BX_1 , \tilde \BX_2 
)
    \mid \tilde \BX_1, \tilde \BX_2 \}
    }{
    \sigma_{T,n}^2
}
\\
=&
2+o_P(1),
\end{align*}
}%
where the last equality follows from Lemma \ref{lemma:variance}.
Hence it suffices to prove that
$
\sum_{i=1}^{m_1 + m_2} \mathrm{Inf}_i^2  \xrightarrow{P} 0
$.
We have
{\small
\begin{align*}
    \myE \left(
        \sum_{i=1}^{m_1}
        \mathrm{Inf}_i^2
    \right)
    \leq&
        \frac{32
        }{m_1^4 (m_1 - 1)^4
\sigma_{T,n}^4
    } 
    \myE
    \left[
        \sum_{i=1}^{m_1}
        \left\{
            \sum_{j=1}^{m_1} (\tilde X_{1, i}^\myT \tilde X_{1,j})^2
            \mathbf 1_{\{ j \neq i\}}
        \right\}^2
    \right]
    \\
    &
        +\frac{32}{m_1^4 m_2^4 \sigma_{T, n}^4}
        \myE
        \left[
        \sum_{i=1}^{m_1}
        \left\{
        \sum_{j =1}^{m_2} (\tilde X_{1, i}^\myT \tilde X_{2, j})^2
    \right\}^2
\right]
.
\end{align*}
}%
We have
{\small
\begin{align*}
    &
    \myE
    \left[
        \sum_{i=1}^{m_1}
        \left\{
            \sum_{j=1}^{m_1} (\tilde X_{1, i}^\myT \tilde X_{1,j})^2
            \mathbf 1_{\{ j \neq i\}}
        \right\}^2
    \right]
    \\
    \leq&
        \sum_{i=1}^{m_1}
            \sum_{j=1}^{m_1}
            \sum_{\ell=1}^{m_1}
            \left[
    \myE
    \left\{
            (\tilde X_{1, i}^\myT \tilde X_{1,j})^4
        \right\}
    \myE
    \left\{
            (\tilde X_{1, i}^\myT \tilde X_{1,\ell})^4
        \right\}
        \right]^{1/2}
            \mathbf 1_{\{ j \neq i\}}
            \mathbf 1_{\{ \ell \neq i\}}
            \\
    \leq&
    \frac{\tau^2}{256}
        \sum_{i=1}^{m_1}
            \sum_{j=1}^{m_1}
            \sum_{\ell=1}^{m_1}
    \left\{
        \sum_{i' = 2i-1}^{2i}
        \sum_{j' = 2j-1}^{2j}
        \mytr (\bSigma_{1,i'}\bSigma_{1,j'})
    \right\}
    \left\{
        \sum_{i' = 2i-1}^{2i}
        \sum_{\ell' = 2\ell-1}^{2\ell}
        \mytr (\bSigma_{1,i'}\bSigma_{1,\ell'})
        \right\}
        \\
    \leq&
    \frac{\tau^2}{256}
    n_1^2
        \sum_{i=1}^{m_1}
    \left\{
        \sum_{i' = 2i-1}^{2i}
        \mytr (\bSigma_{1,i'}\bar \bSigma_{1})
    \right\}^2
        \\
    \leq&
    \frac{\tau^2}{128}
    n_1^2
        \sum_{i=1}^{n_1}
        \mytr (\bSigma_{1,i}^2)
        \mytr (\bar \bSigma_{1}^2)
        \\
    =&
    o
    \left[
        n_1^4
    \left\{
        \mytr(\bar \bSigma_1^2)
    \right\}^2
\right]
,
\end{align*}
}%
where the second inequality follows from Lemma \ref{lemma:small_lemma}
and the last equality follows from Assumption \ref{assumption7}.
On the other hand,
{\small
\begin{align*}
\myE
\left[
\sum_{i=1}^{m_1}
\left\{
\sum_{j =1}^{m_2} (\tilde X_{1, i}^\myT \tilde X_{2, j})^2
\right\}^2
\right]
\leq&
\sum_{i=1}^{m_1}
\sum_{j =1}^{m_2} 
\sum_{\ell =1}^{m_2} 
\left[
\myE
\left\{
(\tilde X_{1, i}^\myT \tilde X_{2, j})^4
\right\}
\myE
\left\{
(\tilde X_{1, i}^\myT \tilde X_{2, \ell})^4
\right\}
\right]^{1/2}
\\
\leq&
\frac{\tau^2}{128}
n_2^2 \sum_{i=1}^{n_1} \mytr(\bSigma_{1,i}^2) \mytr(\bar \bSigma_{2}^2)
\\
=&
o\left\{
    n_1^2 n_2^2
    \mytr(\bar \bSigma_1^2)
    \mytr(\bar \bSigma_2^2)
\right\}.
\end{align*}
}%
Thus,
{\small
\begin{align*}
    \sum_{i=1}^{m_1} \mathrm{Inf}_i^2
    =o_P
    \left[
        \frac{
            \left\{\mytr(\bar \bSigma_1^2)\right\}^2
        }{n_1^4 \sigma_{T,n}^4}
        +
        \frac{
    \mytr(\bar \bSigma_1^2)
    \mytr(\bar \bSigma_2^2)
        }
        {n_1^2 n_2^2 \sigma_{T,n}^4}
    \right]
    =o_P
    \left[
        \frac{
            \left\{\mytr(\bar \bSigma_1^2)\right\}^2
        }{n_1^4 \sigma_{T,n}^4}
        +
        \frac{
            \left\{\mytr(\bar \bSigma_2^2)\right\}^2
        }{n_2^4 \sigma_{T,n}^4}
    \right]
    =o_P(1)
    .
\end{align*}
}%
Similarly,
{\small
\begin{align*}
    \sum_{i=m_1 + 1}^{ m_1 + m_2 } \mathrm{Inf}_i^2
    =o_P(1)
    .
\end{align*}
}%
This completes the proof.
\end{proof}

\begin{lemma}\label{lemma:T13}
Suppose the conditions of Theorem \ref{thm:final_thm} hold.
Furthermore, suppose the condition \eqref{eq:subsequence_limit} holds.
Then \eqref{eq:conclusion_T13} holds.
\end{lemma}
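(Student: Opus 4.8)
The plan is to work conditionally on $\tilde\BX_1,\tilde\BX_2$ and, since $\epsilon^*_{1,1}$ is taken to be standard normal, to regard $T_{1,r}+T_{3,r}$ as a centered Gaussian quadratic form in the randomization vector $E^*$, to which Lemma \ref{lemma:630} can be applied with $\bZeta_n=E^*$. First I would note that, because the $T_{\mathrm{CQ}}$ form contains no squared randomization terms, $T_{3,r}/\sigma_{T,n}=E^{*\myT}\BA_n E^*$ for a data-dependent symmetric matrix $\BA_n$ with zero diagonal (so $\mytr(\BA_n)=0$), while $T_{1,r}$ equals $\|V\|^2$ minus a diagonal correction, where $V=m_1^{-1}\sum_i\epsilon^*_{1,i}\BU_r^\myT\tilde X_{1,i}-m_2^{-1}\sum_j\epsilon^*_{2,j}\BU_r^\myT\tilde X_{2,j}\in\mathbb R^r$ is, conditionally, an $r$-dimensional centered Gaussian vector. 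Setting $\BB_n E^*=V/\sigma_{T,n}^{1/2}$ exhibits $T_{1,r}/\sigma_{T,n}$ as $\bZeta_n^\myT\BB_n^\myT\BB_n\bZeta_n-\mytr(\BB_n^\myT\BB_n)$ up to the diagonal-correction and normalization ($m_k(m_k-1)$ versus $m_k^2$) errors, whose conditional $L^2$-norms I would show tend to $0$ in probability using Assumption \ref{assumption:wangwang} (via Lemma \ref{lemma:small_lemma}) and Assumption \ref{assumption7}; Lemma \ref{lemma:add_error} then absorbs these errors into the $\|\cdot\|_3$ distance.

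The three data functionals feeding Lemma \ref{lemma:630} would then be shown to converge in probability. For the ``flat'' part I would invoke Lemma \ref{lemma:variance} with $\BB_n=\tilde\BU_{r_n^*}$ to get $2\mytr(\BA_n^2)=\myE\{(T_{3,r}/\sigma_{T,n})^2\mid\tilde\BX_1,\tilde\BX_2\}\xrightarrow{P}\sum_{i>r_n^*}\lambda_i^2(\bPsi_n)/\mytr(\bPsi_n^2)\to 1-\sum_{i=1}^\infty\kappa_i^2$, where the last step is \eqref{eq:wahaha}; hence $c_n=\{2\mytr(\BA_n^2)\}^{1/2}\to(1-\sum_i\kappa_i^2)^{1/2}$. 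For the low-rank part I would show the conditional covariance $\BB_n\BB_n^\myT$, an $r\times r$ matrix since $r$ is fixed, concentrates on its mean $\BU_r^\myT\bPsi_n\BU_r/\sigma_{T,n}=\{1+o(1)\}\,\mydiag(\lambda_1(\bPsi_n),\dots,\lambda_r(\bPsi_n))/\{2\mytr(\bPsi_n^2)\}^{1/2}\to\mydiag(\kappa_1,\dots,\kappa_r)/\surd2$, the entrywise concentration again following from second-moment bounds and Assumption \ref{assumption7}. Plugging $\gamma^2=1-\sum\kappa_i^2$ and $\bOmega=\mydiag(\kappa_1/\surd2,\dots,\kappa_r/\surd2)$ into the conclusion of Lemma \ref{lemma:630} yields the limit $(1-\sum\kappa_i^2)^{1/2}\xi_0+2^{-1/2}\sum_{i=1}^r\kappa_i(\xi_i^2-1)$, which is exactly \eqref{eq:conclusion_T13}. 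Since our $\BA_n,\BB_n$ are random whereas Lemma \ref{lemma:630} is stated deterministically, I would pass through the subsequence trick: along every subsequence extract a further one on which all three functionals converge almost surely, apply Lemma \ref{lemma:630} pathwise, and use the metrizability of convergence in probability together with the fact that the limiting law is non-random.

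The most delicate step will be verifying $\mytr(\BA_n^4)\xrightarrow{P}0$, the hypothesis of Lemma \ref{lemma:630} that forces the bulk part to be asymptotically Gaussian. Here $\BA_n$ is built from the projected covariances $\tilde\BU_{r_n^*}^\myT\bSigma_{k,i}\tilde\BU_{r_n^*}$, and I expect to bound $\myE\{\mytr(\BA_n^4)\}$ by expanding the fourth-order sums and applying the trace inequalities of Lemma \ref{lemma:final_matrix_1} and Lemma \ref{lemma:final_matrix_2}; the point is that every resulting term carries a factor controlled by the largest bulk eigenvalue, so that \eqref{eq:bound_mp1_eigenvalue} together with $\lambda_{r_n^*+1}(\bPsi_n)/\{\mytr(\bPsi_n^2)\}^{1/2}\to 0$ (because $r_n^*\to\infty$ and $\kappa_{r_n^*}\to0$) and Assumption \ref{assumption7} drives the bound to $0$. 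A secondary difficulty is the joint dependence of $T_{1,r}$ and $T_{3,r}$ on the common vector $E^*$, but under Gaussian randomization the relevant cross term is a third moment and hence vanishes, so Lemma \ref{lemma:630} already delivers the required asymptotic independence of the normal and centered $\chi^2$ components.
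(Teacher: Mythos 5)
Your proposal is correct and follows essentially the same route as the paper's proof: the same completion of $T_{1,r}$ to the full quadratic form $\|V\|^2$ minus centering terms (the paper's $T_{1,r}^*$), with the diagonal and $m_k(m_k-1)$-versus-$m_k^2$ errors absorbed via Lemma \ref{lemma:small_lemma}, Assumption \ref{assumption7} and Lemma \ref{lemma:add_error}; the same application of Lemma \ref{lemma:630} with $\bZeta_n = E^*$; and the same three verifications — $2\mytr(\BA_n^2) \to 1-\sum_{i=1}^\infty \kappa_i^2$ via Lemma \ref{lemma:variance} and \eqref{eq:wahaha}, $\mytr(\BA_n^4) = o_P(1)$ via Lemmas \ref{lemma:final_matrix_1}--\ref{lemma:final_matrix_2}, \eqref{eq:bound_mp1_eigenvalue} and $\lambda_{r_n^*+1}(\bPsi_n)/\{\mytr(\bPsi_n^2)\}^{1/2} \to 0$, and $\BB_n\BB_n^\myT \to 2^{-1/2}\mydiag(\kappa_1,\ldots,\kappa_r)$ — followed by the identical subsequence trick to apply the deterministic Lemma \ref{lemma:630} pathwise. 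No substantive gap.
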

\begin{proof}
    Let $E_1^* = (e_{1,1}^*, \ldots, e_{1,m_1}^*)^\myT$ and 
$E_2^* = (e_{2,1}^*, \ldots, e_{2,m_2}^*)^\myT$.
Define
{\small
\begin{align*}
T_{1,r}^* =
     \left\|
     \frac{1}{m_1}
\BU_r^{\myT}
\tilde \BX_1^\myT
E_1^{*}
-
\frac{1}{m_2}
\BU_r^{\myT}
\tilde \BX_{2}^\myT
E_2^*
     \right\|^2
     -
     \frac{1}{m_1^2}
     \|\tilde \BX_1 \BU_r\|_F^2
     -
     \frac{1}{m_2^2} 
     \|\tilde \BX_2 \BU_r\|_F^2
     .
\end{align*}
}%
From Lemma \ref{lemma:add_error}
it suffices to prove
{\small
\begin{align}\label{eq:gugu1}
    \myE
    \left\{
        \frac{
    (
    T_{1,r} -
    T_{1,r}^*
    )^2
    }{
    \sigma_{T,n}^2
}
    \mid
    \tilde \BX_1, \tilde \BX_2
    \right\}
    \xrightarrow{P} 0
    ,
\end{align}
}%
and
{\small
\begin{align}\label{eq:gugu2}
    \left\|
    \mathcal L
    \left(
        \frac{
T_{1,r}^*
+
T_{3,r}
}{
\sigma_{T,n}
}
\mid
    \tilde \BX_1, \tilde \BX_2
\right)
-
    \mathcal L 
    \left(
            (1 - \sum_{i=1}^{\infty} \kappa_{i}^2)^{1/2}
            \xi_0
    +
    2^{-1/2}\sum_{i=1}^r \kappa_i (\xi_i^2-1)
    \right)
\right\|_3
\xrightarrow{P} 0
.
\end{align}
}%

To prove \eqref{eq:gugu1}, we only need to show that 
$\myE \{ (T_{1,r} - T_{1,r}^*)^2\} = o(\sigma_{T,n}^2)$ where the expectation is unconditional.
It is straightforward to show that
{\small
\begin{align*}
     T_{1,r}
     -T_{1,r}^*
     =
     &
     \sum_{k=1}^2
     \frac{
\left\|
\BU_r^{\myT}
\tilde \BX_k^\myT
E_k^{*}
\right\|^2
-
     \left\|\tilde \BX_k \BU_r \right\|_F^2
     }{m_k^2 (m_k - 1)}
-
\sum_{k=1}^2
\sum_{i=1}^{m_k}
\frac{
 (\epsilon_{k,i}^{*2}-1) \left\| \BU_r^{\myT} \tilde X_{k,i} \right\|^2
}{m_k (m_k - 1)}
.
\end{align*}
}%
For $k = 1, 2$, we have
{\small
\begin{align*}
    \myE
    \left\{
        \left(
\sum_{i=1}^{m_k}
            \frac{
 (\epsilon_{k,i}^{*2}-1)
\left\| \BU_r^{\myT} \tilde X_{k,i} \right\|^2
}{
m_k (m_k - 1)
}
    \right)^2
\right\}
    =&
    \frac{
2
}
{m_k^2 (m_k - 1)^2}
\sum_{i=1}^{m_k}
\myE 
\{
( \tilde X_{k,i}^\myT \BU_r \BU_r^{\myT} \tilde X_{k,i})^2
\}
.
\end{align*}
}%
But
{\small
\begin{align}\label{eq:miaomiao}
\sum_{i=1}^{m_k}
\myE 
\{
( \tilde X_{k,i}^\myT \BU_r \BU_r^{\myT} \tilde X_{k,i})^2
\}
\leq&
\frac{
\tau
}{8}
\sum_{i=1}^{n_k} \{\mytr(\BU_r^{\myT} \bSigma_{k,i} \BU_r )\}^2
\leq
\frac{\tau r}{8}
\sum_{i=1}^{n_k} \mytr\{(\BU_r^{\myT} \bSigma_{k,i} \BU_r )^2\}
,
\end{align}
}%
where the first inequality follows from Lemma \ref{lemma:small_lemma} and the second inequality follows from Cauchy-Schwarz inequality.
Note that $\mytr\{(\BU_r^{\myT} \bSigma_{k,i} \BU_r )^2\} \leq \mytr(\bSigma_{k,i}^2)$.
Thus,
{\small
\begin{align*}
    \myE
    \left\{
        \left(
\sum_{i=1}^{m_k}
            \frac{
 (\epsilon_{k,i}^{*2}-1)
\left\| \BU_r^{\myT} \tilde X_{k,i} \right\|^2
}{
m_k (m_k - 1)
}
    \right)^2
\right\}
    =&
O
    \left(
        \frac{1}{n_k^4}
    \sum_{i=1}^{n_k} \mytr( \bSigma_{k,i}^2 )
\right)
=
o\left(
    \frac{1}{n_k^2}
        \mytr
        (\bar \bSigma_k^2
        )
\right)
=
o(
\sigma_{T,n}^2
)
,
\end{align*}
}%
where the second equality follows from Assumption \ref{assumption7}.
On the other hand, for $k = 1, 2$, we have
{\small
\begin{align*}
     \myE
     \left\{
     \left(
         \frac{
\left\|
\BU_r^{\myT}
\tilde \BX_k^\myT
E_k^{*}
\right\|^2
-
     \|\tilde \BX_k \BU_r\|_F^2
 }{
 m_k^2 (m_k - 1)
 }
 \right)^2
 \right\}
 =
 \frac{2}{m_k^4 (m_k - 1)^2}
 \myE
 \left[
 \mytr 
 \left\{
 (
\BU_r^{\myT} \tilde \BX_k^\myT
 \tilde \BX_k \BU_r
)^2
\right\}
\right]
.
\end{align*}
}%
But
{\small
\begin{align*}
    \myE
    \left[
    \mytr
    \left\{
 (
\BU_r^{\myT} \tilde \BX_k^\myT
 \tilde \BX_k \BU_r
)^2
\right\}
\right]
=&
 \sum_{i=1}^{m_k}
    \myE
    \left[
    \mytr
    \left\{
 (
 \BU_r^{\myT} \tilde X_{k, i}
 \tilde X_{k, i}^\myT \BU_r
)^2
\right\}
\right]
\\
 &
+
\frac{1}{8}
\sum_{i=1}^{m_k}
\sum_{j=i+1}^{m_k}
    \mytr
    \left\{
 (
 \BU_r^{\myT} 
 (
 \bSigma_{k, 2i-1}
 +
 \bSigma_{k, 2i}
 )
 \BU_r
)(
 \BU_r^{\myT} 
 (
 \bSigma_{k, 2j-1}
 +
 \bSigma_{k, 2j}
 )
 \BU_r
)
\right\}
\\
\leq&
\frac{
\tau r
}{8}
\sum_{i=1}^{n_k}
    \mytr
\{
    (
 \BU_r^{\myT} 
 \bSigma_{k,i}
 \BU_r
    )^2
\}
+
\frac{1}{8}
\sum_{i=1}^{n_k}
\sum_{j=i+1}^{n_k}
\mytr
    \{
 (
 \BU_r^{\myT} 
 \bSigma_{k,i}
 \BU_r 
)
 (
 \BU_r^{\myT} 
 \bSigma_{k,j}
 \BU_r
)
\}
\\
\leq&
\frac{
    \tau r
}{8}
    n_k^2
\mytr\{
    (
    \BU_r^{\myT}
    \bar \bSigma_k
    \BU_r
)^2\}
\\
\leq&
\frac{\tau r }{8}
    n_k^2
    \mytr(
    \bar \bSigma_k^2)
,
\end{align*}
}%
where the first inequality follows from \eqref{eq:miaomiao}.
It follows that
{\small
\begin{align*}
     \myE
     \left\{
     \left(
         \frac{
\left\|
\BU_r^{\myT}
\tilde \BX_k^\myT
E_k^{*}
\right\|^2
-
     \|\tilde \BX_k \BU_r\|_F^2
 }{
 m_k^2 (m_k - 1)
 }
 \right)^2
 \right\}
 =o\left(
     \sigma_{T,n}^2
 \right)
 .
\end{align*}
}%
Thus, \eqref{eq:gugu1} holds.

Now we apply Lemma \ref{lemma:630} to prove \eqref{eq:gugu2}.
In Lemma \ref{lemma:630}, we take
$\bZeta_n = E^*$,
and
{\small
\begin{align*}
    \BA_n
    =
    \begin{pmatrix}
        \BA_{1,1} \quad & \BA_{1,2}
        \\
        \BA_{1,2}^\myT \quad & \BA_{2,2}
    \end{pmatrix}
    ,
\quad
    \BB_n = 
    \sigma_{T,n}^{-1/2}
    \left(
        \frac{1}{m_1}
\BU_r^{\myT}
\tilde \BX_1^\myT
,
\quad
-\frac{1}{m_2}
\BU_r^{\myT}
\tilde \BX_{2}^\myT
\right)
    ,
\end{align*}
}%
where
{\small
\begin{align*}
    \BA_{1,1}
    =&
    \frac{1}{
        \sigma_{T,n} m_1 (m_1 - 1)
    }
    \begin{pmatrix}
        0 & 
        \tilde X_{1,1}^\myT \tilde \BU_{r_n^*} \tilde \BU_{r_n^*}^{ \myT} \tilde X_{1,2}
          &\cdots &
          \tilde X_{1,1}^\myT \tilde \BU_{r_n^*} \tilde \BU_{r_n^*}^{\myT} \tilde X_{1,m_1}
\\
    \tilde X_{1,2}^\myT \tilde \BU_{r_n^*} \tilde \BU_{r_n^*}^{\myT} \tilde X_{1,1}
    & 
    0
          &\cdots &
          \tilde X_{1,2}^\myT \tilde \BU_{r_n^*} \tilde \BU_{r_n^*}^{\myT} \tilde X_{1,m_1}
\\
\vdots & \vdots & \ddots & \vdots 
\\
\tilde X_{1,m_1}^\myT \tilde \BU_{r_n^*} \tilde \BU_{r_n^*}^{\myT} \tilde X_{1,1}
    & 
    \tilde X_{1,m_1}^\myT \tilde \BU_{r_n^*} \tilde \BU_{r_n^*}^{\myT} \tilde X_{1,2}
          &\cdots &
          0
    \end{pmatrix}
    ,
    \\
    \BA_{1,2}
    =&
    -
    \frac{1}{
        \sigma_{T,n}
        m_1 m_2
    }
    \begin{pmatrix}
        \tilde X_{1, 1}^\myT \tilde \BU_{r_n^*} \tilde \BU_{r_n^*}^{\myT} \tilde X_{2,1 } 
&
\tilde X_{1, 1}^\myT \tilde \BU_{r_n^*} \tilde \BU_{r_n^*}^{\myT} \tilde X_{2,2 } 
& \cdots &
 \tilde X_{1, 1}^\myT \tilde \BU_{r_n^*} \tilde \BU_{r_n^*}^{\myT} \tilde X_{2,m_2 } 
\\
\tilde X_{1, 2}^\myT \tilde \BU_{r_n^*} \tilde \BU_{r_n^*}^{\myT} \tilde X_{2,1 } 
&
\tilde X_{1, 2}^\myT \tilde \BU_{r_n^*} \tilde \BU_{r_n^*}^{ \myT} \tilde X_{2,2 } 
& \cdots &
\tilde X_{1, 2}^\myT \tilde \BU_{r_n^*} \tilde \BU_{r_n^*}^{\myT} \tilde X_{2,m_2 } 
\\
\vdots & \vdots & \ddots & \vdots 
\\
\tilde X_{1, m_1}^\myT \tilde \BU_{r_n^*} \tilde \BU_{r_n^*}^{\myT} \tilde X_{2,1 } 
&
\tilde X_{1, m_1}^\myT \tilde \BU_{r_n^*} \tilde \BU_{r_n^*}^{\myT} \tilde X_{2,2 } 
& \cdots &
\tilde X_{1, m_1}^\myT \tilde \BU_{r_n^*} \tilde \BU_{r_n^*}^{\myT} \tilde X_{2,m_2 } 
    \end{pmatrix}
    ,
    \\
    \BA_{2,2}
    =&
    \frac{1}{
        \sigma_{T,n}
        m_2 (m_2 - 1)
    }
    \begin{pmatrix}
        0 & 
        \tilde X_{2,1}^\myT \tilde \BU_{r_n^*} \tilde \BU_{r_n^*}^{\myT} \tilde X_{2,2}
          &\cdots &
          \tilde X_{2,1}^\myT \tilde \BU_{r_n^*} \tilde \BU_{r_n^*}^{\myT} \tilde X_{2,m_2 }
\\
\tilde X_{2,2}^\myT \tilde \BU_{r_n^*} \tilde \BU_{r_n^*}^{\myT} \tilde X_{2,1}
    & 
    0
          &\cdots &
          \tilde X_{2,2}^\myT \tilde \BU_{r_n^*} \tilde \BU_{r_n^*}^{\myT} \tilde X_{2,m_2 }
\\
\vdots & \vdots & \ddots & \vdots 
\\
\tilde X_{2,m_2 }^\myT \tilde \BU_{r_n^*} \tilde \BU_{r_n^*}^{\myT} \tilde X_{2,1}
    & 
    \tilde X_{2,m_2 }^\myT \tilde \BU_{r_n^*} \tilde \BU_{r_n^*}^{\myT} \tilde X_{2,2}
          &\cdots &
          0
    \end{pmatrix}
    .
\end{align*}
}%
Then we have
{\small
\begin{align*}
    \bzeta_n^\myT \BA_n \bzeta_n
    -\mytr(\BA_n)
    =
    \frac{
T_{\mathrm{CQ}}(
E^*;
\tilde \BX_1 \tilde \BU_{r_n^*}, \tilde \BX_2 \tilde \BU_{r_n^*}
) 
}{\sigma_{T,n}}
=
\frac{T_{3,r}}{
    \sigma_{T,n}
}
,
\end{align*}
}%
and
{\small
\begin{align*}
    \bzeta_n^\myT
    \BB_n^\myT \BB_n
    \bzeta_n
    -
    \mytr(\BB_n^\myT \BB_n)
    =
    \frac{T_{1,r}^*}
    {\sigma_{T,n}}
    .
\end{align*}
}%

From Lemma \ref{lemma:variance},
{\small
\begin{align}
    2
\mytr(\BA_n^2) =
&
\frac{
    \myVar\{
T_{\mathrm{CQ}}(
E^*;
\tilde \BX_1 \tilde \BU_{r_n^*}, \tilde \BX_2 \tilde \BU_{r_n^*}
) 
\mid \tilde \BX_1 , \tilde \BX_2
\}
}
{\sigma_{T,n}^2}
\notag
\\
=&
\frac{ \mytr\{(
            \tilde \BU_{r_n^*}^{\myT}
            \bPsi_n 
\tilde \BU_{r_n^*}
)^2\} }{ 
        \mytr
        (
        \bPsi_n^2
            )
}
    +o_P(1)
\notag
\\
=&
1-
    \frac{
        \sum_{i=1}^{r_n^*} \lambda_{i}^2
        (
        \bPsi_n
        )
    }
    {
        \mytr
        \left(
            \bPsi_n^2
    \right)
    }
    +o_P(1)
\notag
    \\
    =& 
    1 - \sum_{i=1}^{\infty} \kappa_{i}^2
    + o_P(1)
    ,
    \label{eq:79c1}
\end{align}
}%
where the last equality follows from \eqref{eq:wahaha}.
On the other hand, 
{\small
\begin{align*}
    \mytr(\BA_n^4)
    \leq &
    8
    \left\{
    \mytr
    \left\{
    \begin{pmatrix}
        \BA_{1,1}
        & \BO_{n_1 \times n_2}
        \\
            \BO_{n_2 \times n_1}
        &
        \BA_{2,2}
    \end{pmatrix}^4
\right\}
    +
    \mytr
    \left\{
    \begin{pmatrix}
        \BO_{n_1 \times n_1}
        & \BA_{1,2}
        \\
        \BA_{1,2}^\myT
        &
        \BO_{n_2 \times n_2}
    \end{pmatrix}^4
\right\}
\right\}
\\
    =
    &
        8\mytr (\BA_{1,1}^4)
        +
        8\mytr (\BA_{2,2}^4)
        +
        16\mytr \{(\BA_{1,2} \BA_{1,2}^\myT )^2\}
    .
\end{align*}
}%
For $i,j = 1, \ldots, m_1$, let $w_{i,j} = 
            \tilde X_{1,i}^\myT \tilde \BU_{r_n^*} \tilde \BU_{r_n^*}^{\myT} \tilde X_{1, j}
$.
We have
{\small
\begin{align*}
    \sigma_{T,n}^4
    m_1^4 (m_1 -1)^4
    \mytr(\BA_{1,1}^4)
    =
    &
    \sum_{i=1}^{m_1}
    \sum_{j=1}^{m_1}
    \sum_{k=1}^{m_1}
    \sum_{\ell=1}^{m_1}
    w_{i,j}
    w_{j,k}
    w_{k,\ell}
    w_{\ell,i}
    \mathbf 1_{\{i \neq j \}}
    \mathbf 1_{\{j \neq k \}}
    \mathbf 1_{\{k \neq \ell \}}
    \mathbf 1_{\{\ell \neq i \}}
    .
\end{align*}
}%
We split the above sum into the following four cases: $k = i, \ell = j$; $k = i, \ell \neq j$; $k \neq i, \ell = j$; $k \neq i, \ell \neq j$.
The second and the third cases result in the same sum.
Then we have
{\small
\begin{align}
    \sigma_{T,n}^4
    m_1^4 (m_1 -1)^4
    \mytr(\BA_{1,1}^4)
    =
    &
    \sum_{i=1}^{m_1}
    \sum_{j=1}^{m_1}
    w_{i,j}^4
    \mathbf 1_{\{i \neq j \}}
    +
    2
    \sum_{i=1}^{m_1}
    \sum_{j=1}^{m_1}
    \sum_{k=1}^{m_1}
    w_{i,j}^2
    w_{i,k}^2
    \mathbf 1_{\{i, j, k \text{ are distinct} \}}
    \notag
    \\
    &
    +
    \sum_{i=1}^{m_1}
    \sum_{j=1}^{m_1}
    \sum_{k=1}^{m_1}
    \sum_{\ell=1}^{m_1}
    w_{i,j}
    w_{j,k}
    w_{k,\ell}
    w_{\ell,i}
    \mathbf 1_{\{i, j, k, \ell \text{ are distinct} \}}
    .
    \label{eq:threeterms}
\end{align}
}%

First we deal with the first two terms of \eqref{eq:threeterms}.
    For distinct $i,j \in \{1, \ldots, m_1\}$,
    two applications of Lemma \ref{lemma:small_lemma} yield
    {\small
    \begin{align*}
        \myE( w_{i,j}^4)
    =&
    \myE
    \left(
        \tilde X_{1,i}^\myT \tilde \BU_{r_n^*} \tilde \BU_{r_n^*}^{ \myT} \tilde X_{1,j}
        \tilde X_{1,j}^\myT \tilde \BU_{r_n^*} \tilde \BU_{r_n^*}^{\myT} \tilde X_{1, i}
    \right)^2
    \\
    \leq&
    \frac{\tau^2}{256}
    \Big\{
        \mytr(
        \tilde \BU_{r_n^*}^{\myT}
        \bSigma_{1,2i-1}
        \tilde \BU_{r_n^*}
        \tilde \BU_{r_n^*}^{\myT}
        \bSigma_{1,2j-1}
        \tilde \BU_{r_n^*}
        )
        +
        \mytr(
        \tilde \BU_{r_n^*}^{\myT}
        \bSigma_{1,2i-1}
        \tilde \BU_{r_n^*}
        \tilde \BU_{r_n^*}^{\myT}
        \bSigma_{1,2j}
        \tilde \BU_{r_n^*}
        )
        \\
        &
        \quad
        \quad
        +
        \mytr(
        \tilde \BU_{r_n^*}^{\myT}
        \bSigma_{1,2i}
        \tilde \BU_{r_n^*}
        \tilde \BU_{r_n^*}^{\myT}
        \bSigma_{1,2j-1}
        \tilde \BU_{r_n^*}
        )
        +
        \mytr(
        \tilde \BU_{r_n^*}^{\myT}
        \bSigma_{1,2i}
        \tilde \BU_{r_n^*}
        \tilde \BU_{r_n^*}^{\myT}
        \bSigma_{1,2j}
        \tilde \BU_{r_n^*}
        )
    \Big\}^2
    .
    \end{align*}
}%
    Consequently,
    {\small
    \begin{align}
        &
        \myE
        \left\{
    \sum_{i=1}^{m_1}
    \sum_{j=1}^{m_1}
    w_{i,j}^4
    \mathbf 1_{\{i \neq j \}}
    +
    2
    \sum_{i=1}^{m_1}
    \sum_{j=1}^{m_1}
    \sum_{k=1}^{m_1}
    w_{i,j}^2
    w_{i,k}^2
    \mathbf 1_{\{i, j, k \text{ are distinct} \}}
\right\}
\notag
    \\
        \leq&
        2
    \sum_{i=1}^{m_1}
    \left[
    \sum_{j=1}^{m_1}
    \{
        \myE
        (
    w_{i,j}^4
    )
\}^{1/2}
    \mathbf 1_{\{i \neq j \}}
\right]^2
\notag
    \\
        \leq&
        \frac{\tau^2}{128}
        n_1^2
    \sum_{i=1}^{m_1}
    \left\{
        \mytr(\tilde \BU_{r_n^*}^{\myT} \bSigma_{1, 2i-1} \tilde \BU_{r_n^*} \tilde \BU_{r_n^*}^{\myT} \bar \bSigma_{1} \tilde \BU_{r_n^*})
        +
        \mytr(\tilde \BU_{r_n^*}^{\myT} \bSigma_{1, 2i} \tilde \BU_{r_n^*} \tilde \BU_{r_n^*}^{\myT} \bar \bSigma_{1} \tilde \BU_{r_n^*})
    \right\}^2
\notag
    \\
        \leq&
        \frac{\tau^2}{64}
        n_1^2
    \sum_{i=1}^{n_1}
    \left\{
        \mytr(\tilde \BU_{r_n^*}^{\myT} \bSigma_{1, i} \tilde \BU_{r_n^*} \tilde \BU_{r_n^*}^{\myT} \bar \bSigma_{1} \tilde \BU_{r_n^*})
    \right\}^2
\notag
    \\
        \leq&
        \frac{\tau^2}{64}
        n_1^2
    \mytr
        (
        \bar \bSigma_{1}^2
        )
    \sum_{i=1}^{n_1}
        \mytr
            (\bSigma_{1, i}^2)
\notag
            \\
        =&o\left[
        n_1^4
    \left\{
        \mytr
            (\bar \bSigma_{1}^2)
    \right\}^2
    \right],
    \label{eq:mumu}
    \end{align}
}%
    where the last equality follows from Assumption \ref{assumption7}.
Now we deal with the third term of \eqref{eq:threeterms}.
For distinct $i, j, k, \ell \in \{1, \ldots, m_1\}$, we have
{\small
\begin{align*}
    &
    \myE
    (
    w_{i,j}
    w_{j,k}
    w_{k, \ell}
    w_{\ell, i}
    )
    \\
    =&
    \frac{1}{256}
     \mytr
     \left\{
        \tilde \BU_{r_n^*}^{ \myT}
        \left(\sum_{i^\dagger = 2i-1}^{2i}
        \bSigma_{1, i^\dagger}
        \right)
        \tilde \BU_{r_n^*}
    \tilde \BU_{r_n^*}^{\myT}
        \left(
    \sum_{j^\dagger = 2j-1}^{2j}
        \bSigma_{1, j^\dagger}
        \right)
    \tilde \BU_{r_n^*}
    \tilde \BU_{r_n^*}^{\myT}
    \left(
    \sum_{k^\dagger = 2k-1}^{2k}
        \bSigma_{1, k^\dagger}
    \right)
    \tilde \BU_{r_n^*}
    \tilde \BU_{r_n^*}^{\myT}
    \left(
    \sum_{\ell^\dagger = 2\ell - 1}^{2\ell}
        \bSigma_{1, \ell^\dagger}
    \right)
    \tilde \BU_{r_n^*}
\right\}
    .
\end{align*}
}%
Then from Lemma \ref{lemma:final_matrix_1}, we have
{\small
\begin{align*}
    &
    \myE
    \left\{
    \sum_{i=1}^{m_1}
    \sum_{j=1}^{m_1}
    \sum_{k=1}^{m_1}
    \sum_{\ell=1}^{m_1}
    w_{i,j}
    w_{j,k}
    w_{k,\ell}
    w_{\ell,i}
    \mathbf 1_{\{i, j, k, \ell \text{ are distinct} \}}
\right\}
\\
    =&
    O\left[
            n_1^4
            \mytr\left\{
                (
                \tilde \BU_{r_n^*}^{\myT}
                    \bar \bSigma_{1}
                \tilde \BU_{r_n^*}
                )^4
            \right\}
            +
            n_1^2
            \mytr\left(
                    \bar \bSigma_{1}^2
                \right)
        \sum_{i=1}^{n_1}
            \mytr\left(
                \bSigma_{1,i}^2
            \right)
            +
            \left\{
                \sum_{i=1}^{n_1}
            \mytr
                (
                    \bSigma_{1,i}^2
                )
            \right\}^2
        \right]
\\
= &
O\left[
                n_1^6
                \left\{
                    \lambda_{r_n^* + 1}
                (
                \bPsi_n
    )
\right\}^2
                \mytr
                (
        \bar \bSigma_{1}^2
                )
\right]
+
o\left[ n_1^4 
\{
                \mytr
                (
        \bar \bSigma_{1}^2
                )
            \}^2
        \right]
                ,
\end{align*}
}%
where the last equality follows from \eqref{eq:bound_mp1_eigenvalue} and Assumption \ref{assumption7}.
It follows from \eqref{eq:mumu} and the above bound that
{\small
    \begin{align*}
        \mytr(\BA_{1,1}^4) =
        O_P
        \left\{
            \frac{
                \left\{
                \lambda_{r_n^* + 1}
                \left(
                    \bPsi_n
                \right)
            \right\}^2
                }{
                \sigma_{T,n}^2
        }
            \frac{
            \mytr
            (
            \bar \bSigma_{1}^2
            )
            }{
                n_1^2
                \sigma_{T,n}^2
        }
    \right\}
        +
        o_P
        \left[
            \left\{
                \frac{\mytr(\bar \bSigma_1^2)}{n_1^2 \sigma_{T,n}^2}
            \right\}^2
        \right]
        =o_P(1)
        .
    \end{align*}
}%
    Similarly, we have
        $\mytr(\BA_{2,2}^4) =
        o_P
        \left(
            1
        \right)
$.

    Now we deal with $\mytr\{(\BA_{1,2} \BA_{1,2}^\myT)^2\}$.
    For $i = 1, \ldots, m_1$, $j =1, \ldots, m_2$,
    let $w_{i,j}^* = \tilde X_{1,i}^\myT \tilde \BU_{r_n^*} \tilde \BU_{r_n^*}^{\myT} \tilde X_{2, j} $.
    Then
    {\small
    \begin{align}
        \sigma_{T,n}^4
        m_1^4 
        m_2^4 
        \mytr\{(\BA_{1,2} \BA_{1,2}^\myT)^2\}
        =
        &
        \sum_{i=1}^{m_1}
        \sum_{k=1}^{m_2}
        w_{i,k}^{*4}
        +
        \sum_{i=1}^{m_1}
        \sum_{j=1}^{m_1}
        \sum_{k=1}^{m_2}
        w_{i,k}^{*2}
        w_{j,k}^{*2}
        \mathbf 1_{\{i \neq j\}}
        +
        \sum_{i=1}^{m_1}
        \sum_{k=1}^{m_2}
        \sum_{\ell=1}^{m_2}
        w_{i,k}^{*2}
        w_{i,\ell}^{*2}
        \mathbf 1_{\{k \neq \ell\}}
        \notag
        \\
        &
        +
        \sum_{i=1}^{m_1}
        \sum_{j=1}^{m_1}
        \sum_{k=1}^{m_2}
        \sum_{\ell=1}^{m_2}
        w_{i,k}^*
        w_{j,k}^*
        w_{i,\ell}^*
        w_{j,\ell}^*
        \mathbf 1_{\{i \neq j\}}
        \mathbf 1_{\{k \neq \ell\}}
        .
        \label{eq:threeterms2}
    \end{align}
}%
    First we deal with the first three terms of \eqref{eq:threeterms2}.
    From Lemma \ref{lemma:small_lemma},
for $i = 1, \ldots, m_1$ and $ j = 1, \ldots, m_2$,
{\small
\begin{align*}
    \myE(w_{i,j}^{*4})
    =&
    \myE
    \{
    (
\tilde X_{1,i}^\myT \tilde \BU_{r_n^*} \tilde \BU_{r_n^*}^{\myT} \tilde X_{2, j} 
\tilde X_{2,j}^\myT \tilde \BU_{r_n^*} \tilde \BU_{r_n^*}^{\myT} \tilde X_{1, i} 
)^2
\}
\\
    \leq &
\frac{\tau^2}{256}
\Big\{
    \mytr(
        \tilde \BU_{r_n^*}^{\myT} 
        \bSigma_{1,2i-1}
        \tilde \BU_{r_n^*} 
        \tilde \BU_{r_n^*}^{\myT} 
        \bSigma_{2,2j-1}
        \tilde \BU_{r_n^*} 
    )
    +
    \mytr(
        \tilde \BU_{r_n^*}^{\myT} 
        \bSigma_{1,2i-1}
        \tilde \BU_{r_n^*} 
        \tilde \BU_{r_n^*}^{\myT} 
        \bSigma_{2,2j}
        \tilde \BU_{r_n^*} 
    )
    \\
         & \quad \quad
    +
    \mytr(
        \tilde \BU_{r_n^*}^{\myT} 
        \bSigma_{1,2i}
        \tilde \BU_{r_n^*} 
        \tilde \BU_{r_n^*}^{\myT} 
        \bSigma_{2,2j-1}
        \tilde \BU_{r_n^*} 
    )
    +
    \mytr(
        \tilde \BU_{r_n^*}^{\myT} 
        \bSigma_{1,2i}
        \tilde \BU_{r_n^*} 
        \tilde \BU_{r_n^*}^{\myT} 
        \bSigma_{2,2j}
        \tilde \BU_{r_n^*} 
    )
\Big\}^2
    .
\end{align*}
}%
Consequently,
{\small
\begin{align}
    &
    \myE \left\{
        \sum_{i=1}^{m_1}
        \sum_{k=1}^{m_2}
        w_{i,k}^{*4}
        +
        \sum_{i=1}^{m_1}
        \sum_{j=1}^{m_1}
        \sum_{k=1}^{m_2}
        w_{i,k}^{*2}
        w_{j,k}^{*2}
        \mathbf 1_{\{i \neq j\}}
        +
        \sum_{i=1}^{m_1}
        \sum_{k=1}^{m_2}
        \sum_{\ell=1}^{m_2}
        w_{i,k}^{*2}
        w_{i,\ell}^{*2}
        \mathbf 1_{\{k \neq \ell\}}
    \right\}
    \notag
    \\
    \leq&
        \sum_{i=1}^{m_1}
        \left[
        \sum_{j=1}^{m_2}
        \left\{
        \myE(
        w_{i,j}^{*4}
        )
    \right\}^{1/2}
\right]^2
        +
        \sum_{j=1}^{m_2}
        \left[
        \sum_{i=1}^{m_1}
        \left\{
        \myE(
        w_{i,j}^{*4}
        )
    \right\}^{1/2}
\right]^2
    \notag
\\
    \leq&
    \frac{\tau^2 n_2^2}{128}
    \sum_{i=1}^{n_1}
    \left\{
    \mytr(
    \tilde \BU_{r_n^*}^{\myT} \bSigma_{1,i} \tilde \BU_{r_n^*}
    \tilde \BU_{r_n^*}^{\myT} \bar \bSigma_{2} \tilde \BU_{r_n^*}
    ) 
    \right\}^2
    +
    \frac{\tau^2 n_1^2}{128}
    \sum_{j=1}^{n_2}
    \left\{
    \mytr(
    \tilde \BU_{r_n^*}^{\myT} \bar \bSigma_{1} \tilde \BU_{r_n^*}
    \tilde \BU_{r_n^*}^{\myT} \bSigma_{2,j} \tilde \BU_{r_n^*}
    ) 
    \right\}^2
    \notag
\\
    \leq&
    \frac{\tau^2 n_2^2}{128}
 \mytr(
     \bar \bSigma_{2}^2
     )
    \sum_{i=1}^{n_1}
    \mytr(
 \bSigma_{1,i}^2
 )
    +
    \frac{\tau^2 n_1^2}{128}
 \mytr(
     \bar \bSigma_{1}^2
     )
    \sum_{j=1}^{n_2}
    \mytr(
 \bSigma_{2,j}^2
 )
    \notag
 \\
    =&
    o\left(
        n_1^2 n_2^2
        \mytr(\bar \bSigma_1^2)
        \mytr(\bar \bSigma_2^2)
    \right)
    \notag
 \\
    =&
    o\left[
        n_1^4 n_2^4
        \left\{\mytr(\bPsi_n^2)\right\}^2
    \right]
    ,
    \label{eq:ohohoh}
\end{align}
}%
where the second last equality follows from
Assumption \ref{assumption7}.
Now we deal with the fourth term of \eqref{eq:threeterms2}.
    For distinct $i, j \in \{1, \ldots, m_1 \}$ and 
    distinct
    $ k, \ell \in \{1, \ldots, m_2 \}$, 
we have
{\small
\begin{align*}
    &
        \myE
        (
        w_{i,k}^*
        w_{j,k}^*
        w_{i,\ell}^*
        w_{j,\ell}^*
        )
        \\
        =&
    \frac{1}{256}
        \mytr
        \left\{
            \tilde \BU_{r_n^*}^{\myT}
            \left(\sum_{i^\dagger = 2i-1}^{2i}
            \bSigma_{1,i^\dagger} \right) \tilde \BU_{r_n^*}
            \tilde \BU_{r_n^*}^{\myT}
            \left(\sum_{k^\dagger = 2k-1}^{2k}
            \bSigma_{2, k^\dagger} \right) \tilde \BU_{r_n^*}
            \tilde \BU_{r_n^*}^{\myT}
            \left(\sum_{j^\dagger = 2j-1}^{2j}
            \bSigma_{1,j^\dagger} \right) \tilde \BU_{r_n^*}
            \tilde \BU_{r_n^*}^{\myT}
            \left(\sum_{\ell^\dagger = 2\ell-1}^{2\ell}
            \bSigma_{2,\ell^\dagger} \right) \tilde \BU_{r_n^*}
    \right\}
        .
\end{align*}
}%
Then from Lemma \ref{lemma:final_matrix_2}, 
we have
{\small
\begin{align*}
    &
    \myE
    \left(
        \sum_{i=1}^{m_1}
        \sum_{j=1}^{m_1}
        \sum_{k=1}^{m_2}
        \sum_{\ell=1}^{m_2}
        w_{i,k}^*
        w_{j,k}^*
        w_{i,\ell}^*
        w_{j,\ell}^*
        \mathbf 1_{\{i \neq j\}}
        \mathbf 1_{\{k \neq \ell\}}
    \right)
    \\
    =&
    O\Bigg[
        n_2^4
        \mytr \left\{
        \left(
        \tilde \BU_{r_n^*}^{\myT} \bar \bSigma_1 \tilde \BU_{r_n^*}
    \right)^4
        \right\}
    +
        n_1^4
        \mytr \left\{
        \left(
        \tilde \BU_{r_n^*}^{\myT} \bar \bSigma_2 \tilde \BU_{r_n^*}
    \right)^4
        \right\}
    \\
     &
     \quad
     \quad
    +
            n_2^2
            \mytr ( \bar \bSigma_2^2 ) 
            \sum_{i=1}^{n_1}
            \mytr ( \bSigma_{1,i}^2 )
            +
            n_1^2
            \mytr (\bar \bSigma_1^2)
            \sum_{i=1}^{n_2}
            \mytr ( \bSigma_{2,i}^2)
        +
        \left\{
            \sum_{i=1}^{n_1}
            \mytr ( \bSigma_{1,i}^2 )
        \right\}
        \left\{
            \sum_{i=1}^{n_2}
            \mytr ( \bSigma_{2,i}^2 )
        \right\}
    \Bigg]
    \\
    =&
    O\Bigg[
    n_1^4
    n_2^4
    \left\{
        \lambda_{r_n^* + 1} (\bPsi_n)
    \right\}^2
    \mytr\left(
        \bPsi_n^2
    \right)
    \Bigg]
    +
    o\left[
        n_1^4 n_2^4
        \left\{
            \mytr(\bPsi_n^2)
        \right\}^2
    \right]
    ,
\end{align*}
}%
where the last equality follows from
\eqref{eq:bound_mp1_eigenvalue} and
Assumption \ref{assumption7}.
It follows from the above inequality and \eqref{eq:ohohoh} that
{\small
\begin{align*}
    \mytr\{(\BA_{1,2} \BA_{1,2}^\myT)^2\}
    =
    &
    O_P\left[
        \frac{
            \left\{
    \lambda_{r_n^* + 1}
    (\bPsi_n)
\right\}^2
        }
        { \sigma_{T,n}^2 }
    \right]
    + 
    o_P(1)
    =o_P(1)
    .
\end{align*}
}%
    Thus, we have 
    {\small
    \begin{align}
    \mytr(\BA^4) = o_P(1).
    \label{eq:79c2}
    \end{align}
}%

    Now we deal with $\BB_n \BB_n^\myT$.
We have
{\small
\begin{align*}
    \BB_n \BB_n^\myT
    =
    \sigma_{T,n}^{-1}
    \left(
        \frac{1}{m_1^2}
        \BU_r^{\myT} \tilde \BX_1^\myT \tilde \BX_1 \BU_r
        +
        \frac{1}{m_2^2}
        \BU_r^{\myT} \tilde \BX_2^\myT \tilde \BX_2 \BU_r
    \right)
    .
\end{align*}
}%
For $k = 1,2$, we have
{\small
\begin{align*}
    \myE (\BU_r^{\myT} \tilde \BX_k^\myT \tilde \BX_k \BU_r)
=
\frac{1}{4}
\sum_{i=1}^{m_k}
\BU_r^{\myT} 
(\bSigma_{k, 2i-1} + \bSigma_{k, 2i}) \BU_r
    .
\end{align*}
}%
Consequently,
{\small
\begin{align*}
    \myE 
    \left\|
    \BU_r^{\myT} \tilde \BX_k^\myT \tilde \BX_k \BU_r
    -
\frac{n_k}{4}
    \BU_r^{\myT} \bar \bSigma_k \BU_r
    \right\|_F^2
    \leq&
\sum_{i=1}^{m_k}
    \myE 
    \left\|
    \BU_r^{\myT} \tilde X_{k,i} \tilde X_{k,i}^\myT \BU_r
    -
\frac{1}{4}
\BU_r^{\myT} 
(\bSigma_{k, 2i-1} + \bSigma_{k, 2i}) \BU_r
    \right\|_F^2
    \\
        &
    +
    \frac{1}{16}\|\BU_r^\myT \bSigma_{k, n_k} \BU_r\|_F^2
    \\
    \leq&
\sum_{i=1}^{m_k}
    \myE 
    \left\|
    \BU_r^\myT
    \tilde X_{k,i}
    \right\|^4
    +
    \frac{1}{16} \mytr(\bSigma_{k,n_k}^2)
    \\
    \leq&
\tau r
\sum_{i=1}^{n_k} \mytr( \bSigma_{k,i}^2 )
\\
    =& 
        o\left\{
    n_k^2
        \mytr
        (\bar \bSigma_k^2
        )
    \right\}
    .
\end{align*}
}%
where the third inequality follows from \eqref{eq:miaomiao} and the last equality follows from Assumption \ref{assumption7}.
Thus, 
{\small
\begin{align*}
\BU_r^{\myT} \tilde \BX_k^\myT \tilde \BX_k \BU_r = 
\frac{n_k}{4} \BU_r^{ \myT} \bar \bSigma_k \BU_r
+
o_p\left[
n_k
\left\{
 \mytr (\bar \bSigma_k^2)
\right\}^{1/2}
\right]
.
\end{align*}
}%
It follows that
{\small
\begin{align}
    \BB_n \BB_n^\myT = 
    \sigma_{T,n}^{-1}
\BU_r^{\myT}
\bPsi_n
\BU_r
    +
    o_P\left(1\right)
=
2^{-1/2}
\mydiag(\kappa_1, \ldots, \kappa_r)
    +
    o_P\left(1\right)
    .
    \label{eq:79c3}
\end{align}
}%

Note that we have proved that \eqref{eq:79c1}, \eqref{eq:79c2} and \eqref{eq:79c3} hold in probability.
Then for every subsequence of $\{n\}$, there is a further subsequence along which these three equalities hold almost surely, and consequently \eqref{eq:gugu2} holds almost surely by Lemma \ref{lemma:630}.
That is, \eqref{eq:gugu2} holds in probability.
This completes the proof.
\end{proof}

\begin{lemma}\label{lemma:714}
Suppose $\{\xi_i \}_{i=0}^\infty$ is a sequence of independent standard normal random variables
and
$\{\kappa_i\}_{i=1}^\infty$ is a sequence of positive numbers such that $\sum_{i=1}^\infty \kappa_i^2 \in [0, 1]$.
Then the cumulative distribution function $F(\cdot)$ of
$
            (1 - \sum_{i=1}^{\infty} \kappa_{i}^2)^{1/2}
            \xi_0
    +
    2^{-1/2}
    \sum_{i=1}^{\infty} \kappa_i (\xi_i^2-1)
$
is continuous and strictly increasing on the interval $\{x \in \mathbb R: F(x) > 0 \}$.
\end{lemma}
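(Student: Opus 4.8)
The plan is to split according to whether the Gaussian component is present. Write $c = (1 - \sum_{i=1}^\infty \kappa_i^2)^{1/2} \ge 0$, and recall (by L\'evy's equivalence theorem and the three-series theorem) that $\sum_{i=1}^\infty \kappa_i(\xi_i^2 - 1)$ converges almost surely, so the random variable in question, call it $S$, is well defined. In both cases I would isolate a single summand that already has a density and treat the remainder as an independent perturbation: set $S = D + R$ with $D$ and $R$ independent, where $D = c\,\xi_0$ when $c > 0$, and $D = 2^{-1/2}\kappa_j(\xi_j^2 - 1)$ when $c = 0$ (in the latter case $\sum_i \kappa_i^2 = 1$ guarantees some $\kappa_j > 0$, and I fix one such $j$), and $R = S - D$ is the almost surely convergent remainder. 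The observation driving the argument is that $D$ has a density $g$ that is strictly positive on an open half-line $(\ell_D, \infty)$: for the Gaussian case $\ell_D = -\infty$, while for the centered chi-square case $g$ is the shifted $\chi^2_1$ density, positive on $(-2^{-1/2}\kappa_j, \infty)$, so $\ell_D = -2^{-1/2}\kappa_j$.

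First I would establish continuity of $F$. Since $D$ has a density and is independent of $R$, the convolution identity $f_S(x) = \int_{\mathbb R} g(x - y)\, d\mu_R(y)$, with $\mu_R$ the law of $R$, shows that $S$ is absolutely continuous. An absolutely continuous law has no atoms, so $F$ is continuous on all of $\mathbb R$, which settles the continuity assertion.

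The remaining task is strict monotonicity on $\{x : F(x) > 0\}$. As $F$ has no atoms, $F(x) > 0$ if and only if $x > \ell$, where $\ell = \operatorname{ess\,inf} S$, so $\{x : F(x) > 0\} = (\ell, \infty)$ (all of $\mathbb R$ when $\ell = -\infty$). I would show that $\ell = \ell_D + \operatorname{ess\,inf} R$ and that $f_S(x) > 0$ for every $x > \ell$. Fixing such an $x$, the set $\{y : y < x - \ell_D\}$ satisfies $x - \ell_D > \operatorname{ess\,inf} R$, hence has positive $\mu_R$-measure by the definition of the essential infimum; moreover, every such $y$ gives $x - y > \ell_D$, so $g(x - y) > 0$. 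Consequently $f_S(x) \ge \int_{\{y < x - \ell_D\}} g(x - y)\, d\mu_R(y) > 0$. Strict monotonicity then follows, since for $\ell < a < b$ one gets $F(b) - F(a) = \int_a^b f_S(x)\, dx > 0$.

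The soft parts, namely absolute continuity via convolution and the no-atoms argument, are routine. The step I expect to demand the most care is the positivity argument for $f_S$ on the whole half-line $(\ell, \infty)$, that is, verifying that the support of $S$ is a gap-free interval with the density strictly positive throughout its interior, together with the identification of $\operatorname{ess\,inf} S$. This is exactly where the one-sided, unbounded support of the chi-square summand must be exploited, and where one must allow $\operatorname{ess\,inf} R$ to be either finite or $-\infty$; the convolution and positive-measure argument sketched above is arranged to handle both cases uniformly.
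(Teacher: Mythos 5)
Your proposal is correct and follows essentially the same route as the paper's proof: isolate one independent summand whose law has a density supported on an open half-line, obtain continuity of $F$ by convolution, and obtain strict increase by combining positivity of that density with positive mass of the independent remainder near the bottom of its support. The only differences are cosmetic: the paper always uses the $\chi^2$ summand as the smoothing component (splitting on whether some $\kappa_j > 0$ rather than on whether the Gaussian coefficient $(1-\sum_{i=1}^\infty \kappa_i^2)^{1/2}$ vanishes), and it phrases strict monotonicity through interval probabilities at arbitrary support points of the remainder $\zeta$ instead of through the essential infimum and pointwise positivity of the convolved density, two formulations of the same argument.
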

\begin{proof}
If $\kappa_i = 0$, $i =1, 2, \ldots$, then 
the conclusion holds since
$
            (1 - \sum_{i=1}^{\infty} \kappa_{i}^2)^{1/2}
            \xi_0
    +
    2^{-1/2}
    \sum_{i=1}^{\infty} \kappa_i (\xi_i^2-1)
    = \xi_0
$ is a standard normal random variable.
Otherwise, 
we can assume ithout loss of generality that $\kappa_1 > 0$.
Let $\zeta = 
            (1 - \sum_{i=1}^{\infty} \kappa_{i}^2)^{1/2}
            \xi_0
            -2^{-1/2} \kappa_1
    +
    2^{-1/2}
    \sum_{i=2}^{\infty} \kappa_i (\xi_i^2-1)
$.
Then
$
            (1 - \sum_{i=1}^{\infty} \kappa_{i}^2)^{1/2}
            \xi_0
    +
    2^{-1/2}
    \sum_{i=1}^{\infty} \kappa_i (\xi_i^2-1)
    =
    2^{-1/2}
    \kappa_1 \xi_1^2
    + \zeta
$.
Let $f_1(\cdot)$ denote the probability density function of $
    2^{-1/2}
    \kappa_1 \xi_1^2
$.
Let $F_{\zeta}(\cdot)$ denote the cumulative distribution function of $\zeta$.
Then
$
    2^{-1/2}
    \kappa_1 \xi_1^2
    + \zeta
$
has density function
{\small
\begin{align*}
    f(x)  =   \int_{-\infty}^{+\infty}f_1 (x - t) \, \mathrm d F_{\zeta}(t).
\end{align*}
}%
As a result, $F(\cdot)$ is continuous.

Now we prove that $F(\cdot)$ is 
strictly increasing on the interval $\{x \in \mathbb R: F(x) > 0 \}$.
Let $c$ be a point in the support of $\mathcal L (\zeta)$.
For any real number $a$ such that $a > c$ and for any $\delta > 0$,
{\small
\begin{align*}
    &
    \Pr \left\{
    2^{-1/2}
    \kappa_1 \xi_1^2
    + \zeta
    \in (a, a + \delta)
\right\}
\\
\geq&
    \Pr \left\{
    2^{-1/2}
    \kappa_1 \xi_1^2
    \in ( a - c + \delta/4, a- c + 3\delta/4 )
\right\}
    \Pr \left\{
    \zeta
    \in ( c - \delta/4, c + \delta/4 )
\right\}
\end{align*}
}%
Since $c$ is in the support of $\mathcal L (\zeta)$, we have $\Pr( c - \delta / 4 < \zeta < c + \delta / 4 ) > 0$; see, e.g., \cite{Cohn2013Measure}, Section 7.4.
Thus,
$
    \Pr \left\{
    2^{-1/2}
    \kappa_1 \xi_1^2
    + \zeta
    \in (a, a + \delta)
\right\}
>0
$.
Therefore, $F(\cdot)$ is strictly increasing on the interval $(c, + \infty)$.
Then the conclusion follows from the fact that $c$ is an arbitrary point in the support of $\mathcal L (\zeta)$.
\end{proof}

\end{document}